\documentclass[11pt]{article}
\usepackage{amsfonts}
\usepackage{amsmath, amssymb, amsthm}
\usepackage{enumerate}
\usepackage{epsfig}
\usepackage{verbatim}

\setcounter{MaxMatrixCols}{10}

\setlength{\parindent}{0em}
\setlength{\parskip}{\baselineskip}
\oddsidemargin=0in
\textwidth=6.5in
\headheight=0in
\headsep=0in
\textheight=9in
\newtheorem{thm}{Theorem}[section]
\newtheorem{lem}[thm]{Lemma}

\newtheorem{cor}[thm]{Corollary}
\newtheorem{defn}[thm]{Definition}
\newtheorem{remark}{Remark}
\theoremstyle{remark}

\bibliographystyle{amsplain}

\providecommand{\C}{\ensuremath{\mathbb{C}}}
\providecommand{\R}{\ensuremath{\mathbb{R}}}
\providecommand{\N}{\ensuremath{\mathbb{N}}}

\providecommand{\decay}[1]{\operatorname*{Exp}(#1)}
\providecommand{\ua}{\underline{a}}
\providecommand{\oa}{\overline{a}}

\providecommand{\BLinf}{\ensuremath{{\mathcal B\mathcal L}^\infty}}
\providecommand{\BLtwo}{\ensuremath{{\mathcal B\mathcal L}^2}}
\DeclareMathOperator*{\dist}{dist}

\newenvironment{numberedproof}[1]{{\bf Proof of #1:}}{{}\hfill{\hbox{$\Box$}}\par\bigskip}
\numberwithin{equation}{section}

\newif\iftechreport
%\techreportfalse    %corresponds to submitted version
\techreporttrue    %corresponds to technical report
\iftechreport
  \typeout{======================================}
  \typeout{==      making extended version    ===}
  \typeout{======================================}
\else
  \typeout{======================================}
  \typeout{==           making paper          ===}
  \typeout{======================================}
\fi

\begin{document}

\iftechreport
\title{
Analytic regularity for a singularly perturbed system of reaction-diffusion
equations with multiple scales: proofs}
\else 
\title{
Analytic regularity for a singularly perturbed system of reaction-diffusion
equations with multiple scales}
\fi 
\author{J. M. Melenk \\
Institut f\"{u}r Analysis und Scientific Computing\\
Vienna University of Technology\\
Wiedner Hauptstrasse 8-10, A-1040 Wien \\
AUSTRIA \vspace{0.125cm} \\
C. Xenophontos\\
Department of Mathematics and Statistics\\
University of Cyprus \\
P.O. BOX 20537\\
Nicosia 1678\\
CYPRUS\\
and\\
L. Oberbroeckling\\
Department of Mathematics and Statistics\\
Loyola University Maryland\\
4501 N. Charles Street\\
Baltimore, MD 21210\\
USA}
\maketitle

\begin{abstract}
We consider a coupled system of two singularly perturbed reaction-diffusion
equations, with two small parameters $0< \varepsilon \le \mu \le 1$, each
multiplying the highest derivative in the equations. The presence of these
parameters causes the solution(s) to have \emph{boundary layers} which
overlap and interact, based on the relative size of $\varepsilon$ and $%
\mu$. We construct full asymptotic expansions together with error bounds 
that cover the complete range $0 < \varepsilon \leq \mu \leq 1$. 
For the present case of analytic input data, we derive 
derivative growth estimates for the terms of the asymptotic expansion that
are explicit in the perturbation parameters and the expansion order. 
%These results may be used to construct numerical approximations that are robust,
%i.e., their performance is independent of $\varepsilon$ and $\mu$.
\end{abstract}

\section{Introduction}
\label{intro} 

Singularly perturbed (SP) boundary value problems (BVPs),  and their numerical approximation, have received a lot of attention in the last few decades (see, e.g., the classical texts \cite{omalley91,wasow65} on asymptotic analysis and 
the books 
\cite{mos}, \cite{morton}, \cite{rst}, whose focus is more on numerical methods for this problem class). One common feature that these problems share is the presence of \emph{boundary layers} in the solution. 
%which is due to the fact that some small parameter(s) are multiplying certain derivatives in the differential equation(s).
In order for a numerical method, designed for the approximation of the solution to SP BVPs, to be considered \emph{robust} it must be able to perform well independently of the singular perturbation parameter(s).  To achieve this, information about the regularity of the exact solution is utilized, and in particular, bounds on the derivatives. Such information is available in the literature for scalar SP BPVs of reaction- and convection-diffusion type in one- and two-dimensions 
(see, e.g., \cite{m,melenk-schwab99b} for scalar versions of the problem studied in the present article). 
For \emph{systems} of SP BVPs, the bibliography is scarce, even in one-dimension; see the relatively recent review article \cite{linss-stynes} and the references therein, for such results available to date. 
It is, therefore, the purpose of this article to begin filling this void; in particular, we provide the regularity theory for a system of two coupled SP linear reaction-diffusion equations, with two singular perturbation parameters. Our analysis
is complete for the problem under consideration in that we derive full asymptotic expansions for all relevant cases 
of singular perturbation parameters and give explicit control of all derivatives of all terms appearing in the expansions. 
Even though this is a linear, one-dimensional problem, the methodology presented here can be the starting point for 
treating more difficult problems. 

The regularity results obtained here are used in the companion communication \cite{mxo} 
to prove, for the first time, exponential convergence of the $hp$-FEM for problems with multiple singular
perturbation parameters. This exponential convergence result for the $hp$-FEM relies on mesh design principles 
firmly established for problems with a single singular perturbation parameter as 
discussed in \cite{ss,ssx,melenk-schwab99b,m,mB}; the mathematical analysis of \cite{mxo} shows that these
mesh design principles extend to problems with multiple singular perturbation parameters and confirms the 
numerical results of \cite{xo} for the problem class under consideration here. 

The rest of the paper is organized as follows: In Section~\ref{model} we
present the model problem and discuss the typical phenomena. In Sections~\ref{case_1}--\ref{case_3} we address the regularity of the solution, as it depends on the relationship between the singular perturbation parameters. 
\iftechreport
The proofs of most of the results presented in these sections are technical and are relegated to Appendices~\ref{appendix-misc}--\ref{appendix:case_4}. 
\else
The proofs of most of the results presented in these sections are technical and are relegated to 
\cite[Appendices~\ref{appendix-misc}--\ref{appendix:case_4}]{mxo-1}.
\else
\fi

%Finally in
%Section~\ref{concl} we give some closing remarks. 
In what follows, the space
of square integrable functions on an interval $I\subset \mathbb{R}$ will be
denoted by $L^{2}\left( I\right) $, with associated inner product
\begin{equation*}
\langle u,v\rangle_{I}:=\int_{I}u(x)v(x)dx.
\end{equation*}
We will also utilize the usual Sobolev space notation $H^{k}\left( I\right)$
to denote the space of functions on $I$ with $0,1,2,...,k$ generalized
derivatives in $L^{2}\left( I\right) $, equipped with norm and seminorm 
$\left\Vert \cdot \right\Vert _{k,I}$ and $\left\vert \cdot \right\vert
_{k,I}\,$, respectively. For vector functions 
$\mathbf{U}:=(u_{1}(x),u_{2}(x))^{T}$, we will write
\begin{equation*}
\left\Vert \mathbf{U}\right\Vert _{k,I}^{2}=\left\Vert u_{1}\right\Vert
_{k,I}^{2}+\left\Vert u_{2}\right\Vert _{k,I}^{2}.
\end{equation*}
We will also use the space
\begin{equation*}
H_{0}^{1}\left( I\right) =\left\{ u\in H^{1}\left( I\right) :\left.
u\right\vert _{\partial I}=0\right\} ,
\end{equation*}
where $\partial I$ denotes the boundary of $I$. For $z \in \C$, we use 
$B_r(z)$
to denote the ball of radius $r$ centered at $z$.
Finally, the letter $C$ will
be used to denote a generic positive constant, independent of any
discretization or singular perturbation parameters and possibly having
different values in each occurrence.

%%%%%%%%%%%%%%%%%%%%%%%%%%%%%%%%%%%%%%%%%%%%%%%%%%%%%%%%%%%%%%%%%%%%%%%%%%%%%%%%%%

\section{The Model Problem and Main Results}
%----------------------
%\subsection{The Model Problem and Main Results}

\label{model}

We consider the following model problem: Find a pair of functions $(u,v)^T$ such that
\begin{subequations}
\label{eq:model-problem}
\begin{equation}
\left\{ 
\begin{array}{c}
-\varepsilon ^{2}u^{\prime \prime }(x)+a_{11}(x)u(x)+a_{12}(x)v(x)=f(x)\text{
in }I=(0,1), \\ 
-\mu ^{2}v^{\prime \prime }(x)+a_{21}(x)u(x)+a_{22}(x)v(x)=g(x)\text{ in }%
I=(0,1),%
\end{array}
\right.  \label{1}
\end{equation}
along with the boundary conditions
\begin{equation}
u(0)=u(1)=0\;,\;v(0)=v(1)=0.  \label{2}
\end{equation}
\end{subequations}
With the abbreviations
$$
{\mathbf U} = \left(\begin{array}{c} u \\ v \end{array}\right), 
\qquad 
{\mathbf E}^{\varepsilon,\mu} := \left(\begin{array}{cc} \varepsilon^2 & 0 \\ 0 & \mu^2\end{array}\right), 
\qquad 
{\mathbf A}(x) := \left(\begin{array}{cc} a_{11}(x) & a_{12}(x) \\ a_{21}(x) & a_{22}(x)\end{array}\right), 
\qquad 
{\mathbf F} = \left(\begin{array}{c} f \\ g \end{array}\right), 
$$
equations (\ref{1})--(\ref{2}) may also be written in the following, more compact form: 
\begin{equation} 
\label{eq:model-problem-short}
L_{\varepsilon,\mu} {\mathbf U}:= 
-{\mathbf E}^{\varepsilon,\mu} {\mathbf U}^{\prime\prime}(x) + {\mathbf A}(x) {\mathbf U} = {\mathbf F}, 
\qquad {\mathbf U}(0) = {\mathbf U}(1) =0. 
\end{equation} 
%Equations (\ref{1})--(\ref{2}) may also be written in the following form:
%%
%\begin{equation}
%L_{\varepsilon }\left( 
%\begin{array}{c}
%u \\ 
%v%
%\end{array}%
%\right) :=-\left( 
%\begin{array}{cc}
%\varepsilon ^{2}\partial _{x}^{2} & 0 \\ 
%0 & \mu ^{2}\partial _{x}^{2}%
%\end{array}%
%\right) \left( 
%\begin{array}{c}
%u \\ 
%v%
%\end{array}%
%\right) +\left( 
%\begin{array}{cc}
%a_{11}(x) & a_{12}(x) \\ 
%a_{21}(x) & a_{22}(x)%
%\end{array}%
%\right) \left( 
%\begin{array}{c}
%u \\ 
%v%
%\end{array}%
%\right) =\left( 
%\begin{array}{c}
%f \\ 
%g%
%\end{array}%
%\right) .  \label{Lepsilon}
%\end{equation}
%%
The parameters $0<\varepsilon \leq \mu \leq 1$ are given, as are the
functions $f$, $g$, and $a_{ij}$, $i,j\in \{1,2\},$ which are assumed to
be analytic on $\overline{I}=[0,1]$. \ Moreover we assume that there exist
constants $C_{f},\gamma _{f},C_{g},\gamma _{g},C_{a},\gamma _{a}>0$ such
that 
\begin{equation}
\left\{ 
\begin{array}{c}
\left\Vert f^{(n)}\right\Vert _{L^\infty(I)}\leq C_{f}\gamma _{f}^{n}n!\quad
\forall \;n\in \mathbb{N}_{0}, \\ 
\left\Vert g^{(n)}\right\Vert _{L^\infty(I)}\leq C_{g}\gamma _{g}^{n}n!\quad
\forall \;n\in \mathbb{N}_{0}, \\ 
\left\Vert a_{ij}^{(n)}\right\Vert _{L^\infty(I)}\leq C_{a}\gamma
_{a}^{n}n!\quad \forall \;n\in \mathbb{N}_{0},i,j\in \{1,2\}%
\end{array}%
\right. .  \label{3}
\end{equation}
The variational formulation of (\ref{1})--(\ref{2}) reads: Find 
$\mathbf{U}:=(u,v)^T\in \left[ H_{0}^{1}\left( I \right) \right] ^{2}$ 
such that 
\begin{equation}
B\left( U,V\right) =F\left( V\right) \;\;\forall \;\mathbf{V}:=(\overline{u},%
\overline{v})\in \left[ H_{0}^{1}\left( I \right) \right] ^{2},
\label{4}
\end{equation}
where, with $\left\langle \cdot ,\cdot \right\rangle _{I}$ the usual 
$L^{2}(I)$ inner product, 
\begin{eqnarray}
B\left( \mathbf{U},\mathbf{V}\right) &=&\varepsilon ^{2}\left\langle
u^{\prime },\overline{u}^{\prime }\right\rangle _{I}+\mu ^{2}\left\langle
v^{\prime },\overline{v}^{\prime }\right\rangle _{I}+\left\langle
a_{11}u+a_{12}v,\overline{u}\right\rangle _{I}+\left\langle a_{21}u+a_{22}v,%
\overline{v}\right\rangle _{I},  \label{5} \\
F\left( \mathbf{V}\right) &=&\left\langle f,\overline{u}\right\rangle
_{I}+\left\langle g,\overline{v}\right\rangle _{I}.  \label{6}
\end{eqnarray}
%
%The matrix 
%\begin{equation}
%\mathbf{A}(x)=\left( 
%\begin{array}{cc}
%a_{11}(x) & a_{12}(x) \\ 
%a_{21}(x) & a_{22}(x)%
%\end{array}%
%\right) ,  \label{7}
%\end{equation}
%%
%is assumed to be positive definite for any $x\in \overline{I},$ i.e.
%
The matrix-valued function ${\mathbf A}$ is assumed to be pointwise positive definite, i.e., 
for some fixed $\alpha > 0$ 
\begin{equation}
\label{eq:A-positive-definite}
\overrightarrow{\xi }^{T}\mathbf{A}\overrightarrow{\xi }\geq \alpha ^{2}%
\overrightarrow{\xi }^{T}\overrightarrow{\xi }\;\;\;\;\forall \;%
\overrightarrow{\xi }\in \mathbb{R}^{2} 
\qquad \forall x \in \overline{I}. 
\end{equation}
It follows that the bilinear form $B\left( \cdot ,\cdot \right) $ given
by (\ref{5}) is coercive with respect to the \emph{energy norm} 
\begin{equation}
\left\Vert \mathbf{U}\right\Vert _{E,I}^{2}\equiv \left\Vert
(u,v)\right\Vert _{E,I}^{2}:=\varepsilon ^{2}\left\vert u\right\vert
_{1,I}^{2}+\mu ^{2}\left\vert v\right\vert _{1,I}^{2}+\alpha ^{2}\left(
\left\Vert u\right\Vert _{0,I}^{2}+\left\Vert v\right\Vert _{0,I}^{2}\right)
,  \label{8}
\end{equation}
i.e., 
\begin{equation*}
B\left( \mathbf{U},\mathbf{U}\right) \geq \left\Vert \mathbf{U}\right\Vert
_{E,I }^{2}\;\;\forall \;\mathbf{U}\in \left[ H_{0}^{1}\left( I
\right) \right] ^{2}.
\end{equation*}
This, along with the continuity of $B\left( \cdot ,\cdot \right) $ and 
$F\left( \cdot \right) \,,$ imply the unique solvability of (\ref{4}). We
also have, by the Lax-Milgram lemma, the following \emph{a priori} estimate 
\begin{equation}
\label{eq:a-priori}
\left\Vert \mathbf{U}\right\Vert _{E,I }\leq 
\alpha^{-1} 
\sqrt{ \|f\|^2_{0,I} + \|g\|^2_{0,I}}.
\end{equation}
For the development of certain asymptotic expansions, it will be convenient to 
observe that our assumption (\ref{eq:A-positive-definite}) implies 
(see 
\iftechreport 
Lemma~\ref{lemma:elementary-matrix-properties} for the proof
\else
\cite[Appendix~\ref{appendix-misc}]{mxo-1}
\fi
)
\begin{eqnarray}
\label{eq:diagonal-positive}
a_{kk}(x)& \ge &\alpha^2 \text{ }\forall x\in \overline{I},\quad k=1,2,\\
\label{eq:det-positive}
\operatorname*{det} {\mathbf A}(x)  = a_{11}(x) a_{22}(x) - a_{12}(x) a_{21}(x) 
&\ge & \alpha^2 \max\{a_{11}(x),a_{22}(x)\} \ge \alpha^4
\qquad \forall x \in \overline{I}.  
\end{eqnarray}
We note that the special case when the parameters are equal, i.e. 
$\varepsilon =\mu $, was analyzed in \cite{xo2}. In the general case
considered here, there are three scales ($1\geq \mu \geq \varepsilon$) and
the regularity depends on how the scales are separated. Correspondingly,
there are 4 cases:

\begin{enumerate}[(I)]
\item 
\label{case:I}
The \textquotedblleft no scale separation case\textquotedblright\
which occurs when \emph{neither} $\mu /1$ \emph{nor} $\varepsilon /\mu $ is
small.

\item 
\label{case:IV}
The \textquotedblleft 3-scale case\textquotedblright\ in which all
scales are separated and occurs when $\mu /1$ is small \emph{and} $%
\varepsilon /\mu $ is small.

\item 
\label{case:II}
The first \textquotedblleft 2-scale case\textquotedblright\ which occurs
when $\mu /1$ is $\operatorname*{not}$ small \emph{and} $\varepsilon /\mu $ is small.

\item 
\label{case:III}
The second \textquotedblleft 2-scale case\textquotedblright\ which
occurs when $\mu /1$ is small \emph{and} $\varepsilon /\mu $ is \emph{not}
small.

\end{enumerate}

The concept of \textquotedblleft small \textquotedblright\ (or 
\textquotedblleft not small\textquotedblright ) mentioned above, 
is tied in two ways to our performing regularity theory in terms of 
asymptotic expansions. First, on the level 
of constructing asymptotic expansions, the decision which parameters
are deemed small determines the ansatz to be made and thus the form 
of the expansion. Second, on the level of 
applying asymptotic expansions, the decision which parameters are 
deemed small depends on whether the remainder resulting
from the asymptotic expansion can be regarded as small.

%The concept of \textquotedblleft small\textquotedblright\ (or
%\textquotedblleft not small\textquotedblright ) mentioned above, becomes
%concrete when one considers the approximation of the solution to (\ref{1})--
%(\ref{2}) by the $p/hp$ version of the FEM. \ In particular, if the
%polynomial basis functions of the approximating space have degree $p$ then a
%quantity $q$ is considered \textquotedblleft small\textquotedblright\ if $%
%q\leq C/p,C\in \mathbb{R},$ and \textquotedblleft large\textquotedblright\
%otherwise. Stated differently, if the polynomial degree $p$ is high enough
%to ensure $p>C/\varepsilon $, then no special care needs to be taken for the
%design of the mesh etc. in order to have uniform (in $\varepsilon $)
%convergence as $p$ is increased. Therefore, the regularity of the solution
%in the \textquotedblleft no scale separation case\textquotedblright\ (Case 1
%above) given by Theorem \ref{thm1} in the next section suffices for our
%approximation purposes, when neither $\mu $ nor $\varepsilon /\mu $ is small.

We need to introduce some notation: 
\begin{defn}
\label{def:function-spaces}
\begin{enumerate}
\item 
We say that a function $w$ is analytic with length scale $\nu > 0$ 
(and analyticity
parameters $C_w$, $\gamma_w$), abbreviated $w \in {\mathcal A}(\nu,C_w,\gamma_w)$, if 
$$
\|w^{(n)}\|_{L^\infty(I)} \leq C_w \gamma_w^n \max\{n,\nu^{-1}\}^n 
\qquad \forall n \in \N_0. 
$$
\item 
We say that that an entire function $w$ is of $L^\infty$-boundary layer type 
with length scale $\nu > 0$ (and analyticity parameters $C_w$, $\gamma_w$), 
abbreviated
$w \in \BLinf(\nu,C_w,\gamma_w)$, if for all $x \in I$ 
$$
|w^{(n)}(x)| \leq C_w \gamma_w^n \nu^{-n}
e^{-\dist(x,\partial I)/\nu}\qquad 
\forall n \in \N_0.
$$
\item 
We say that that an entire function $w$ is of $L^2$-boundary layer type with 
length scale $\nu >  0$ 
(and analyticity parameters $C_w$, $\gamma_w$), abbreviated
$w \in \BLtwo(\nu, C_w,\gamma_w)$, if 
$$
\|e^{\dist(x,\partial I)/\nu} w^{(n)}\|_{L^2(I)} \leq C_w \nu^{1/2} \gamma_w^n 
\nu^{-n} 
\qquad \forall n \in \N_0.
$$
\end{enumerate}
All three definitions extend naturally to vector-valued functions by requiring
the above bounds componentwise. 
\end{defn}

In view of the length of the article, we collect the main result at this 
point; the four scale separation cases (\ref{case:I})--(\ref{case:III}) 
listed above correspond to the four cases listed in the following theorem. 
We emphasize, however, that the four cases of the ensuing theorem are 
not mutually exclusive---in fact, the decompositions are valid regardless 
of the relation between the parameters $\varepsilon$ and $\mu$.

\begin{thm}
\label{thm:main} 
There exist constants $C$, $b$, $\delta$, $q$, $\gamma > 0$ independent of 
$0 < \varepsilon \leq \mu \leq 1$ such that the following assertions are true: 
\begin{enumerate}[(I)]
\item
${\mathbf U} \in {\mathcal A}(\varepsilon,C \varepsilon^{-1/2} ,\gamma)$. 
\item
${\mathbf U}$ can be written as 
${\mathbf U} = {\mathbf W} + \widetilde {\mathbf U}_{BL} + \widehat {\mathbf U}_{BL}
+{\mathbf R}$, where ${\mathbf W} \in {\mathcal A}(1,C,\gamma)$, 
$\widetilde{\mathbf U}_{BL} \in \BLinf(\delta \mu,C,\gamma)$
$\widehat{\mathbf U}_{BL} \in \BLinf(\delta \varepsilon,C,\gamma)$, 
and $\|{\mathbf R}\|_{L^\infty(\partial I)} + \|{\mathbf R}\|_{E,I} 
\leq C \left[ e^{-b/\mu} + e^{-b \mu/\varepsilon}\right]$. Additionally, 
the second component $\widehat v$ of $\widehat{\mathbf U}$ satisfies the 
sharper estimate $\widehat v \in \BLinf(\delta\varepsilon,C(\varepsilon/\mu)^2,\gamma)$. 
\item
If $\varepsilon/\mu \leq q$ then 
${\mathbf U}$ can be written as 
${\mathbf U} = {\mathbf W} +  \widehat {\mathbf U}_{BL}
+{\mathbf R}$, where ${\mathbf W} \in {\mathcal A}(\mu,C,\gamma)$, 
$\widehat{\mathbf U}_{BL} \in \BLinf(\delta \varepsilon,C,\gamma)$, 
and $\|{\mathbf R}\|_{L^\infty(\partial I)} + \|{\mathbf R}\|_{E,I} 
\leq C e^{-b /\varepsilon}$. Additionally, the second component
$\widehat v$ of $\widehat{\mathbf U}$ satisfies the sharper 
estimate 
$\widehat v \in \BLinf(\delta\varepsilon,C(\varepsilon/\mu)^2,\gamma)$. 
\item
${\mathbf U}$ can be written as 
${\mathbf U} = {\mathbf W} +  \widetilde {\mathbf U}_{BL}
+{\mathbf R}$, where ${\mathbf W} \in {\mathcal A}(1,C,\gamma)$, 
$\widetilde{\mathbf U}_{BL} \in \BLinf(\delta \mu ,C\sqrt{\mu/\varepsilon},\gamma \mu/\varepsilon)$, 
and $\|{\mathbf R}\|_{L^\infty(\partial I)} + \|{\mathbf R}\|_{E,I} 
\leq C \left(\mu/\varepsilon\right)^2 e^{-b/\mu}$. 
\end{enumerate}
\end{thm}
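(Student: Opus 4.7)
Each of the four assertions is established by a tailor-made construction: we build an asymptotic ansatz adapted to the scales declared ``small'' in that case, derive a recursion for its terms, verify each term lies in the advertised class (${\mathcal A}$, $\BLinf$, or $\BLtwo$), and then absorb the residual using the a priori bound (\ref{eq:a-priori}). The first three ingredients are purely constructive; only the final step invokes $L_{\varepsilon,\mu}$-wellposedness. Because the recursions differ across the cases, but share the same ``solve algebraically then correct the boundary data'' pattern, I will handle (I) by direct bootstrap and (II)--(IV) by asymptotic expansion.

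\textbf{Case (I): analytic regularity with length scale $\varepsilon$.} The a priori estimate gives $\|{\mathbf U}\|_{0,I} + \varepsilon |u|_{1,I} + \mu |v|_{1,I} \leq C$, whence the Sobolev embedding $H^1(I)\hookrightarrow L^\infty(I)$ yields $\|{\mathbf U}\|_{L^\infty(I)}\leq C\varepsilon^{-1/2}$. Rewriting the system as ${\mathbf U}'' = ({\mathbf E}^{\varepsilon,\mu})^{-1}({\mathbf A}{\mathbf U} - {\mathbf F})$ and differentiating gives, via Leibniz and (\ref{3}), a recursion of the form $\|{\mathbf U}^{(n+2)}\|_{L^\infty(I)} \leq C\varepsilon^{-2}\sum_{k=0}^n \binom{n}{k}\gamma_a^k k!\,\|{\mathbf U}^{(n-k)}\|_{L^\infty(I)} + C\varepsilon^{-2}\gamma^n n!$. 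A standard induction produces the bound $\|{\mathbf U}^{(n)}\|_{L^\infty(I)}\leq C\varepsilon^{-1/2}\gamma^n \max\{n,\varepsilon^{-1}\}^n$, which is exactly membership in ${\mathcal A}(\varepsilon,C\varepsilon^{-1/2},\gamma)$.

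\textbf{Cases (II)--(IV): asymptotic decomposition.} In each case I postulate ${\mathbf U} = {\mathbf W} + \widetilde{\mathbf U}_{BL} + \widehat{\mathbf U}_{BL} + {\mathbf R}$ (with the tilde/hat pieces present only in the cases that declare the corresponding parameter small). The smooth part ${\mathbf W} = \sum_{k=0}^{N-1}\nu^{2k}{\mathbf W}_k$ is the outer expansion in the ``large'' parameter $\nu$ (here $\nu=1$ for the scale~$1$ ansatz and $\nu=\mu$ in case~(III)): ${\mathbf W}_0 = {\mathbf A}^{-1}{\mathbf F}$, and ${\mathbf W}_k$ is then defined recursively by solving an algebraic system whose matrix is ${\mathbf A}(x)$ and whose right-hand side involves ${\mathbf W}_{k-1}''$; analyticity with the prescribed length scale follows from analytic inversion of ${\mathbf A}$, guaranteed by (\ref{eq:det-positive}), together with Faà di Bruno/Leibniz bookkeeping. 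The outer boundary layer $\widetilde{\mathbf U}_{BL}$ is produced in stretched variable $\xi = \dist(x,\partial I)/\mu$: each corrector solves a reaction--diffusion problem on a half-line whose homogeneous operator has characteristic decay rate $\sim 1/\mu$, so exponentially decaying solutions exist and lie in $\BLinf(\delta\mu,\cdot,\cdot)$ for some $\delta<1$. The inner layer $\widehat{\mathbf U}_{BL}$, at scale $\varepsilon$, is constructed analogously in $\xi = \dist(x,\partial I)/\varepsilon$, now with the key observation that, in the stretched equation for $\widehat v$, the diffusion coefficient becomes $(\mu/\varepsilon)^2$; balancing this against the coupling $a_{21}\widehat u$ (which is $O(1)$) forces $\widehat v = O((\varepsilon/\mu)^2)$, yielding the advertised sharper estimate on $\widehat v$. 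The orders at which the two layers are constructed are matched so that $\widetilde{\mathbf U}_{BL}+\widehat{\mathbf U}_{BL}$ corrects the boundary values of ${\mathbf W}$ up to an error of size $\nu^{2N}N!\cdot\nu^{-2N}$; choosing $N\sim c/\mu$ or $N\sim c/\varepsilon$ optimally (and invoking Stirling) produces the $e^{-b/\mu}$, $e^{-b\mu/\varepsilon}$, $e^{-b/\varepsilon}$ remainders. Defining ${\mathbf R} := {\mathbf U} - {\mathbf W} - \widetilde{\mathbf U}_{BL} - \widehat{\mathbf U}_{BL}$ and applying (\ref{eq:a-priori}) to the residual equation closes the estimate on $\|{\mathbf R}\|_{E,I}$; the $L^\infty(\partial I)$ bound follows by construction.

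\textbf{Main obstacle.} The delicate point is not the algebraic existence of the expansion but the parameter-explicit derivative bounds, particularly in Case (IV), where the claimed constant $C\sqrt{\mu/\varepsilon}$ and growth rate $\gamma\mu/\varepsilon$ on $\widetilde{\mathbf U}_{BL}$ reflect a careful tracking of how the ignored scale $\varepsilon$ enters the outer layer through the diffusion coefficient mismatch in the first equation. This requires an induction in which the constants on the corrector $\widetilde{\mathbf U}_{BL,k}$ grow geometrically like $(\gamma\mu/\varepsilon)^k$ while still permitting truncation at order $N \sim c/\mu$ to yield the $(\mu/\varepsilon)^2 e^{-b/\mu}$ remainder. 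Equally delicate is the sharper $(\varepsilon/\mu)^2$ bound on $\widehat v$ in Cases~(II) and~(III): it must be propagated through every level of the inner recursion, which in turn forces the boundary-matching step for the outer layer to account for an $O((\varepsilon/\mu)^2)$ contribution from the inner layer. Once these two bookkeeping devices are in place, the remaining estimates are variations on the analytic-regularity machinery of \cite{melenk-schwab99b,m} adapted to the $2\times 2$ vector setting.
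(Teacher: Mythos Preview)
Your high-level strategy---construct an asymptotic expansion adapted to each scale regime, estimate each piece, and absorb the residual via (\ref{eq:a-priori})---matches the paper's. But two of your case-specific constructions have genuine gaps.

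\textbf{Case (III).} You write that ${\mathbf W}_k$ is obtained by ``solving an algebraic system whose matrix is ${\mathbf A}(x)$''. That would place each ${\mathbf W}_k$ in ${\mathcal A}(1,\cdot,\cdot)$, not ${\mathcal A}(\mu,\cdot,\cdot)$ as the theorem asserts. The reason the length scale is $\mu$ and not $1$ is that in this regime ($\mu$ not small, $\varepsilon/\mu$ small) the outer expansion is in powers of $\varepsilon/\mu$, and at each level the second component $v_i$ solves a genuine \emph{boundary value problem} of the form $-\mu^2 v_i'' + c(x)v_i = \mathrm{rhs}_i$ with Dirichlet data; only then is $u_i$ recovered algebraically from $v_i$. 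The ${\mathcal A}(\mu,\cdot,\cdot)$ regularity therefore rests on a scalar lemma (Lemma~\ref{lem4} in the paper) giving uniform analytic estimates for such singularly perturbed BVPs, not on analytic inversion of ${\mathbf A}$. Your algebraic ansatz cannot produce the correct length scale.

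\textbf{Case (II).} For the three-scale case you describe a single-parameter expansion, but the paper's construction is genuinely double-indexed: ${\mathbf U}\sim\sum_{i,j}\mu^i(\varepsilon/\mu)^j[{\mathbf U}_{ij}+\widetilde{\mathbf U}^L_{ij}+\widehat{\mathbf U}^L_{ij}+\cdots]$, with truncation orders $M_1\sim 1/\mu$ and $M_2\sim\mu/\varepsilon$ chosen independently. The recursion (\ref{eq:case-IV-recurrence}) couples the two indices in a nontrivial way (the $\widehat{\mathbf U}^L$ equation involves a diagonal shift $k\mapsto(i-k,j-k)$), and the remainder estimate requires controlling a triple sum over $(i,j,k)$ via convexity arguments (Lemma~\ref{lemma:convexity}, Theorem~\ref{thm_BL_3scales}). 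A single-parameter expansion will not separate the $e^{-b/\mu}$ and $e^{-b\mu/\varepsilon}$ contributions to the remainder. Your heuristic for the $(\varepsilon/\mu)^2$ bound on $\widehat v$ is correct in spirit---in the paper it emerges because $\widehat v^L_{i,0}=\widehat v^L_{i,1}=0$ by construction, so the $\widehat v$-sum starts at $j=2$.

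\textbf{Case (IV).} Your identification of the obstacle is right, but the paper's resolution is different from what you sketch: because the inner expansion at scale $\mu$ now solves a genuine \emph{system} at each level (no decoupling into scalar ODEs as in the other cases), the paper abandons maximum-principle arguments and works instead in exponentially weighted $L^2$ spaces via an inf--sup condition (Lemmas~\ref{lemma:system-inf-sup}, \ref{lemma:system-a-priori}). This is why $\widetilde{\mathbf U}_{BL}$ first lands in $\BLtwo$; the $\BLinf$ statement with the $\sqrt{\mu/\varepsilon}$ prefactor and $\gamma\mu/\varepsilon$ growth then follows by Sobolev embedding applied to the derivative bounds $\|(\widetilde{\mathbf U}^L_i)^{(m)}\|_{0,\beta}\lesssim(\mu/\varepsilon)^m$ of Theorem~\ref{thm:higher-derivatives-in-exponentially-weighted-spaces}.
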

\begin{proof}
This result is obtained by combining 
Theorems~\ref{thm:case-I}, \ref{thm:case-IV}, \ref{thm:case-II}, and
 \ref{thm:case-III} to be found 
in Sections~\ref{case_1}--\ref{case_3}. We emphasize that some
results in these theorems are slightly sharper since they analyze 
all terms of the asymptotic expansions whereas 
Theorem~\ref{thm:main} is obtained from the asymptotic expansions by suitably
selecting the expansion order. 
\end{proof}

\section{The no scale separation case: Case~\ref{case:I}}
\label{case_1}
In this case neither 
$\mu/1 $ nor $\varepsilon /\mu $ is small, which means that the boundary layer
effects are not very pronounced. By the analyticity of $a_{ij},f$ and $g$,
we have that $u$ and $v$ are analytic. Moreover, we have the following
theorem.

\begin{thm}
\label{thm:case-I} Let ${\mathbf U} = (u,v)^T$ be the solution to \emph{(\ref{1})--(\ref{2})} with 
$0<\varepsilon \le \mu \leq 1$. Then there exist constants $C$ and $K>0$,
independent of $\varepsilon $ and $\mu $, such that
${\mathbf U} \in {\mathcal A}(\varepsilon,C \varepsilon^{-1/2},K)$ satisfying 
the sharper estimate 
\begin{equation}
\label{eq:thm:case-I-1}
\left\Vert u^{(n)}\right\Vert _{0,I}+\left\Vert v^{(n)}\right\Vert
_{0,I}\leq CK^{n}\max \{n,\varepsilon ^{-1}\}^{n}\quad \forall \;n\in 
\mathbb{N}_{0}. 
\end{equation}
\end{thm}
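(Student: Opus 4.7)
The plan is to prove the sharper $L^2$ estimate (\ref{eq:thm:case-I-1}) first by induction on $n$, and then deduce the $L^\infty$ statement $\mathbf{U}\in\mathcal{A}(\varepsilon,C\varepsilon^{-1/2},K)$ by a standard Sobolev-type interpolation. The $L^2$ estimate is the workhorse because differentiating the ODEs gives an $L^\infty$ bound on the coefficients naturally but only an $L^2$ bound on $u^{(n)}$ is available from energy-type arguments.

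For the base cases $n=0,1$, I would appeal to the a priori estimate (\ref{eq:a-priori}): using (\ref{3}), it gives $\|u\|_{0,I}+\|v\|_{0,I}\le C$, and from the definition (\ref{8}) of the energy norm we get $\|u'\|_{0,I}\le C\varepsilon^{-1}$ and $\|v'\|_{0,I}\le C\mu^{-1}\le C\varepsilon^{-1}$. Both are of the desired form $CK\max\{1,\varepsilon^{-1}\}$. For the induction step with $n\ge2$, I would differentiate the equations $(n-2)$ times and solve algebraically for the top-order derivatives:
\begin{equation*}
u^{(n)}=\varepsilon^{-2}\Bigl(-f^{(n-2)}+\sum_{k=0}^{n-2}\binom{n-2}{k}\bigl[a_{11}^{(n-2-k)}u^{(k)}+a_{12}^{(n-2-k)}v^{(k)}\bigr]\Bigr),
\end{equation*}
and the analogous identity for $v^{(n)}$, in which $\varepsilon^{-2}$ is replaced by $\mu^{-2}\le\varepsilon^{-2}$. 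Taking $L^2$ norms, using (\ref{3}) to bound the coefficient derivatives, and inserting the inductive hypothesis $\|u^{(k)}\|_{0,I}+\|v^{(k)}\|_{0,I}\le CK^k\max\{k,\varepsilon^{-1}\}^k$ for $k<n$, I need to verify that the resulting sum is bounded by $CK^n\max\{n,\varepsilon^{-1}\}^n$. The identity $\binom{n-2}{k}(n-2-k)!=(n-2)!/k!$ organizes the combinatorics; the key inequalities are $\varepsilon^{-2}\max\{k,\varepsilon^{-1}\}^k\le\max\{n,\varepsilon^{-1}\}^n$ for $k\le n-2$ (a direct case distinction depending on whether $n\le\varepsilon^{-1}$ or not) and a geometric series $\sum_k(\gamma/K)^{n-2-k}$ that becomes harmless once $K$ is chosen sufficiently large compared to $\gamma_a,\gamma_f,\gamma_g$.

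The main obstacle is precisely this bookkeeping: $K$ must be fixed \emph{before} the induction and must dominate all contributions, including the factor $\varepsilon^{-2}$ promoted out of the highest-order term. The crucial structural observation making this possible is that for $n\ge2$ and $0\le k\le n-2$, one has
\begin{equation*}
\varepsilon^{-2}\max\{k,\varepsilon^{-1}\}^k\cdot\max\{n,\varepsilon^{-1}\}^{-n}\le 1,
\end{equation*}
so the loss of two powers of $\varepsilon^{-1}$ from the inversion is absorbed into the two additional factors of $\max\{n,\varepsilon^{-1}\}$. A similar estimate with the factorial term shows that the inhomogeneity contribution is dominated. The constant $C$ absorbs the base cases and the finitely many terms up to the first $n$ where the asymptotic regime kicks in.

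Finally, to upgrade from (\ref{eq:thm:case-I-1}) to $\mathbf{U}\in\mathcal{A}(\varepsilon,C\varepsilon^{-1/2},K)$, I would apply the one-dimensional Sobolev inequality $\|w\|_{L^\infty(I)}\le C\|w\|_{0,I}^{1/2}\|w\|_{1,I}^{1/2}$ to $w=u^{(n)}$ (and likewise $v^{(n)}$). Substituting (\ref{eq:thm:case-I-1}) for orders $n$ and $n+1$ and using $\max\{n+1,\varepsilon^{-1}\}\le2\max\{n,\varepsilon^{-1}\}$, one obtains
\begin{equation*}
\|u^{(n)}\|_{L^\infty(I)}\le C\,\varepsilon^{-1/2}K^n\max\{n,\varepsilon^{-1}\}^n,
\end{equation*}
which is precisely the statement $u\in\mathcal{A}(\varepsilon,C\varepsilon^{-1/2},K)$ after possibly enlarging $K$ by a harmless absolute constant.
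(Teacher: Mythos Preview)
Your proposal is correct and follows essentially the same route as the paper: the paper cites \cite{xo2} for the $L^2$ estimate (\ref{eq:thm:case-I-1}) in the case $\varepsilon=\mu$ and notes that the extension to $\varepsilon\le\mu$ is straightforward (precisely your observation that $\mu^{-2}\le\varepsilon^{-2}$), and then applies the Sobolev inequality $\|w\|_{L^\infty(I)}^2\le C\|w\|_{L^2(I)}\|w\|_{H^1(I)}$ to pass to the $L^\infty$ statement, exactly as you do. Your induction argument is the standard one underlying the cited reference; one minor point is that the Sobolev step actually produces an extra factor $\max\{n,\varepsilon^{-1}\}^{1/2}$ rather than $\varepsilon^{-1/2}$ directly, but for $n>\varepsilon^{-1}$ the resulting $n^{1/2}$ is absorbed into $K^n$ (and $\varepsilon^{-1/2}\ge1$), so your concluding remark about enlarging $K$ covers this.
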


\begin{proof} 
The $L^2$-based estimate (\ref{eq:thm:case-I-1}) was shown 
in \cite{xo2} for the special case 
$\varepsilon = \mu$. The extension to the current situation $\varepsilon \leq \mu$ 
is straight forward. We note that the Sobolev embedding theorem in the form 
$\|v\|^2_{L^\infty(I)} \leq C \|v\|_{L^2(I)} \|v\|_{H^1(I)}$ allows us 
to infer from (\ref{eq:thm:case-I-1}) 
the assertion 
${\mathbf U} \in {\mathcal A}(\varepsilon, C \varepsilon^{-1/2},\gamma)$ 
for suitable $C$, $\gamma > 0$ independent of $\varepsilon$ and $\mu$. 
\end{proof}
%%%%%%%%%%%%%%%%%%%%%%%%%%%%%%%%%%%%%%%%%%%%%%%%%%%%%%%%%%%%%%%%%%%%%%%%%%%%%%%%%%

\section{The three scale case\label{case_4}: Case~\ref{case:IV}}

In this case all scales are separated and it occurs when \emph{both} $\mu /1$
and $\varepsilon /\mu $ are deemed small. This is arguably the most interesting
(and challenging from the approximation point of view) case, since boundary
layers of multiple scales appear. Additionally, this case shows most clearly
the general procedure for obtaining asymptotic expansions and error bounds
for problems with multiple scales. Before developing the asymptotic
expansion, we formulate the main result: 
\begin{thm}
\label{thm:case-IV} 
The solution ${\mathbf U}$ of (\ref{eq:model-problem}) can be 
written as ${\mathbf U} = {\mathbf W} + \widetilde{\mathbf U}_{BL} 
+ \widehat{\mathbf U}_{BL}  + {\mathbf R}$, where 
${\mathbf W} \in {\mathcal A}(1, C_W,\gamma_W)$, 
$\widetilde {\mathbf U} \in \BLinf(\delta \mu, C_{BL},\gamma_{BL})$, 
$\widehat {\mathbf U} \in \BLinf(\delta \varepsilon, C_{BL},\gamma_{BL})$
for suitable constants $C_W$, $C_{BL}$, $\gamma_W$, $\gamma_{BL}$, $\delta > 0$
independent of $\mu$ and $\varepsilon$. Furthermore, ${\mathbf R}$ satisfies 
\begin{eqnarray*}
\|{\mathbf R}\|_{L^\infty(\partial I)} + 
\|L_{\varepsilon,\mu} {\mathbf R}\|_{L^\infty(I)} &\leq& C 
\left[ e^{-b/\mu} + e^{-b\mu/\varepsilon}\right],
\end{eqnarray*}
for some constants $C$, $b > 0$ independent of $\mu$ and $\varepsilon$. In particular, 
$\|{\mathbf R}\|_{E,I} \leq 
C \left[ e^{-b/\mu} + e^{-b\mu/\varepsilon}\right]$. 

Additionally, the second component $\widehat v$ of $\widehat {\mathbf U}_{BL}$  satisfies the sharper
regularity assertion 
$$
\widehat v \in \BLinf(\delta \varepsilon, C_{BL}(\varepsilon/\mu)^2, 
\gamma_{BL}).
$$
\end{thm}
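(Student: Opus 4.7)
The plan is to build a three-tier matched asymptotic expansion reflecting the separated scales $1\gg\mu\gg\varepsilon$: a smooth outer part $\mathbf{W}$, a slow corrector $\widetilde{\mathbf{U}}_{BL}$ at scale $\mu$, and a fast corrector $\widehat{\mathbf{U}}_{BL}$ at scale $\varepsilon$, and then bound $\mathbf{R}:=\mathbf{U}-\mathbf{W}-\widetilde{\mathbf{U}}_{BL}-\widehat{\mathbf{U}}_{BL}$ by feeding its (explicitly known) residual into the a priori estimate (\ref{eq:a-priori}). The two exponential factors $e^{-b/\mu}$ and $e^{-b\mu/\varepsilon}$ should arise from optimally truncating the two underlying power-series expansions in $\mu^2$ and in $(\varepsilon/\mu)^2$, respectively.

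First I would construct $\mathbf{W}$ via the double power-series ansatz $\mathbf{W}\sim\sum_{j,k\geq 0}\varepsilon^{2j}\mu^{2k}\mathbf{W}_{j,k}$. Matching powers in (\ref{eq:model-problem-short}) yields the recursion $\mathbf{A}\mathbf{W}_{j,k}=(u''_{j-1,k},v''_{j,k-1})^T$, with $\mathbf{A}\mathbf{W}_{0,0}=\mathbf{F}$. The pointwise bound $\det\mathbf{A}\geq\alpha^{4}$ from (\ref{eq:det-positive}) makes $\mathbf{A}^{-1}$ analytic on $\overline{I}$ with explicit constants, and an induction on $j+k$ using Cauchy estimates on a shrinking family of complex neighborhoods of $\overline{I}$ should give bounds of the form $\|\mathbf{W}_{j,k}^{(n)}\|_{L^{\infty}(I)}\leq C\gamma^{j+k+n}(j+k+n)!$, so that the truncated sum lies in $\mathcal{A}(1,C_{W},\gamma_{W})$ and the truncation residual $\varepsilon^{2}u''_{M,k}$, $\mu^{2}v''_{j,M}$ becomes exponentially small once $M$ is chosen optimally.

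The slow layer $\widetilde{\mathbf{U}}_{BL}$ is then built at each endpoint in the stretched variable $x=\mu t$: the $\varepsilon^{2}u''$ term of the first equation being negligible at this scale, algebraic elimination of $u$ reduces the leading-order layer problem to a single scalar reaction-diffusion equation for $v$ with reaction coefficient $\det\mathbf{A}/a_{11}>0$, so the single-parameter analysis of \cite{xo2,m} applies and produces $\widetilde{\mathbf{U}}_{BL}\in\BLinf(\delta\mu,C_{BL},\gamma_{BL})$ after matching its trace with that of $\mathbf{W}$. The fast layer $\widehat{\mathbf{U}}_{BL}$ is constructed analogously in $x=\varepsilon\xi$; its leading equation $-\widehat u''+a_{11}(0)\widehat u=0$ produces $\widehat u$ decaying at rate $\sqrt{a_{11}(0)}/\varepsilon$, and the coupled equation $-\mu^{2}\widehat v''+a_{22}\widehat v=-a_{21}\widehat u$ has the key feature that if $\widehat v$ also varies on the $\varepsilon$-scale, the term $\mu^{2}\widehat v''$ contributes $(\mu/\varepsilon)^{2}\widehat v$, dominating $a_{22}\widehat v$ and forcing $\widehat v=O((\varepsilon/\mu)^{2}\widehat u)$. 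This balance is the mechanism behind the sharper assertion $\widehat v\in\BLinf(\delta\varepsilon,C_{BL}(\varepsilon/\mu)^{2},\gamma_{BL})$.

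The main obstacle will be propagating analyticity constants through these three nested expansions uniformly in $\varepsilon,\mu$. Each step incurs a factor from $\mathbf{A}^{-1}$ and from Cauchy's formula, and the off-diagonal couplings $a_{12},a_{21}$ keep re-injecting residuals from each corrector into the opposite component, so the expansions must be iterated and each corrector re-truncated rather than decoupled cleanly. Optimizing the two truncation orders (for the $\mu^{2}$- and $(\varepsilon/\mu)^{2}$-series) should deliver the two claimed exponentials; the $L^{\infty}$-residual bound on $L_{\varepsilon,\mu}\mathbf{R}$ follows from the explicit form of the remaining expansion terms, while $\|\mathbf{R}\|_{L^{\infty}(\partial I)}$ is controlled by how accurately the layers cancel the boundary traces of $\mathbf{W}$; the energy bound $\|\mathbf{R}\|_{E,I}$ is then an immediate consequence of (\ref{eq:a-priori}) applied to $\mathbf{R}$.
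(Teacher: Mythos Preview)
Your proposal is correct and follows essentially the same route as the paper: a doubly-indexed outer expansion solved by repeatedly inverting $\mathbf{A}$, a $\mu$-scale layer obtained by algebraically eliminating $\widetilde u$ to get a scalar equation for $\widetilde v$ with reaction coefficient $\det\mathbf{A}(0)/a_{11}(0)$, an $\varepsilon$-scale layer where $\widehat u$ solves $-\widehat u''+a_{11}(0)\widehat u=\ldots$ and $\widehat v$ acquires the extra factor $(\varepsilon/\mu)^2$ because its lowest two orders vanish, and finally truncation at $M_1\sim 1/\mu$, $M_2\sim \mu/\varepsilon$ plus the stability estimate (\ref{eq:a-priori}). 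Two small points worth noting: the boundary mismatch $\|\mathbf R\|_{L^\infty(\partial I)}$ is not governed by how well the layers cancel $\mathbf W$ at their own endpoint (that cancellation is exact by construction) but by the exponentially small trace of the \emph{opposite} endpoint's layer; and the theorem as stated must also cover the non-separated regime $\mu\gtrsim 1$ or $\varepsilon/\mu\gtrsim 1$, which the paper handles by the trivial decomposition $\mathbf U=0+0+0+\mathbf R$ since the claimed bound is then $O(1)$.
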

\begin{proof}
See Section~\ref{sec:proof-of-thm:case-IV}.
\end{proof}

Anticipating that boundary layers of length scales $O(\mu)$ and $O(\varepsilon)$ will 
appear at the endpoints $x = 0$ and $x = 1$,  we introduce the stretched
variables $\widetilde{x}=x/\mu $ , $\widehat{x}=x/\varepsilon $ for the expected
layers at the left endpoint $x = 0$ and 
variables $\widetilde{x}^R=(1-x)/\mu $ , $\widehat{x}^R=(1-x)/\varepsilon $ for the expected
behavior at the right endpoint $x = 1$. We make the following formal ansatz for the 
solution ${\mathbf U}$:
\begin{equation}
\label{eq:28} 
{\mathbf U} \sim  \sum_{i=0}^\infty \sum_{j=0}^\infty 
\left(\frac{\mu}{1}\right)^i 
\left(\frac{\varepsilon}{\mu}\right)^j 
\left[
{\mathbf U}_{ij}(x) + \widetilde{\mathbf U}^L_{ij}(\widetilde x) + \widehat{\mathbf U}^L_{ij}(\widehat x) 
+ \widetilde{\mathbf U}^R_{ij}(\widetilde x^R)
+ \widehat{\mathbf U}^R_{ij}(\widehat x^R)
\right],
\end{equation}
where the functions ${\mathbf U}_{ij}$, $\widetilde{\mathbf U}^L_{ij}$,  $\widehat{\mathbf U}^L_{ij}$
$\widetilde{\mathbf U}^R_{ij}$,  $\widehat{\mathbf U}^R_{ij}$ will be determined shortly.
The decomposition of Theorem~\ref{thm:case-IV} is obtained by truncating the asymptotic 
expansion (\ref{eq:28}) after a finite number of terms: 
\begin{equation}
\mathbf{U}^M(x):=\left( 
\begin{array}{c}
u(x) \\ 
v(x)%
\end{array}%
\right) =\mathbf{W}_{M}(x)+\widehat{\mathbf{U}}_{BL}^{M}(\widehat{x})+%
\widehat{\mathbf{V}}_{BL}^{M}(\widehat{x}^R)+\widetilde{\mathbf{U}}_{BL}^{M}(%
\widetilde{x})+\widetilde{\mathbf{V}}_{BL}^{M}(\widetilde{x}^R)+\mathbf{R}%
_{M}(x),  \label{U_ior4}
\end{equation}
where
\begin{equation}
\mathbf{W}_{M}(x)=\sum_{i=0}^{M_{1}}\sum_{j=0}^{M_{2}}\mu ^{i}\left( \frac{%
\varepsilon }{\mu }\right) ^{j}\left( 
\begin{array}{c}
u_{ij}(x) \\ 
v_{ij}(x)%
\end{array}%
\right) ,  \label{wMc}
\end{equation}
denotes the outer (smooth) expansion,
\begin{equation}
\widehat{\mathbf{U}}_{BL}^{M}(\widehat{x})=\sum_{i=0}^{M_{1}}%
\sum_{j=0}^{M_{2}}\mu ^{i}\left( \frac{\varepsilon }{\mu }\right) ^{j}\left( 
\begin{array}{c}
\widehat{u}_{ij}^{L}(\widehat{x}) \\ 
\widehat{v}_{ij}^{L}(\widehat{x})%
\end{array}%
\right) \;,\;\widehat{\mathbf{V}}_{BL}^{M}(\widehat{x}^R)=\sum_{i=0}^{M_{1}}%
\sum_{j=0}^{M_{2}}\mu ^{i}\left( \frac{\varepsilon }{\mu }\right) ^{j}\left( 
\begin{array}{c}
\widehat{u}_{ij}^{R}(\widehat{x}^R) \\ 
\widehat{v}_{ij}^{R}(\widehat{x}^R)%
\end{array}%
\right) ,  \label{BL_Mc1}
\end{equation}
denote the left and right inner (boundary layer) expansions associated with 
the variables $\widehat{x}$, $\widehat{x}^R$, respectively, 
\begin{equation}
\widetilde{\mathbf{U}}_{BL}^{M}(\widetilde{x})=\sum_{i=0}^{M_{1}}%
\sum_{j=0}^{M_{2}}\mu ^{i}\left( \frac{\varepsilon }{\mu }\right) ^{j}\left( 
\begin{array}{c}
\widetilde{u}_{ij}^{L}(\widetilde{x}) \\ 
\widetilde{v}_{ij}^{L}(\widetilde{x})%
\end{array}%
\right) \;,\;\widetilde{\mathbf{V}}_{BL}^{M}(\widetilde{x}%
)=\sum_{i=0}^{M_{1}}\sum_{j=0}^{M_{2}}\mu ^{i}\left( \frac{\varepsilon }{\mu 
}\right) ^{j}\left( 
\begin{array}{c}
\widetilde{u}_{ij}^{R}(\widetilde{x}^R) \\ 
\widetilde{v}_{ij}^{R}(\widetilde{x}^R)%
\end{array}%
\right) ,  \label{BL_Mc2}
\end{equation}
denote the left and right inner (boundary layer) expansions associated with 
the variables $\widetilde{x}$, $\widetilde{x}^R$
respectively, and
\begin{equation}
\mathbf{R}_{M}(x):=\left( 
\begin{array}{c}
r_{u}(x) \\ 
r_{v}(x)%
\end{array}%
\right) =\mathbf{U}(x)-\left( \mathbf{W}_{M}(x)+\widehat{\mathbf{U}}%
_{BL}^{M}(\widehat{x})+\widehat{\mathbf{V}}_{BL}^{M}(\widehat{x}^R)+\widetilde{%
\mathbf{U}}_{BL}^{M}(\widetilde{x})+\widetilde{\mathbf{V}}_{BL}^{M}(%
\widetilde{x}^R)\right)   \label{rMc}
\end{equation}
denotes the remainder. Theorem~\ref{thm:case-IV} will be established by 
selecting $M_1  = O(1/\mu)$, $M_2 = O(\mu/\varepsilon)$. 
%Our aim is to check that $L_{\varepsilon,\mu} {\mathbf U}^M$ 
%is small and that the mismatch in the boundary conditions between ${\mathbf U}$ and 
%${\mathbf U}^M$ is small as then, by the stability (\ref{eq:a-priori}), 
%the remainder ${\mathbf R}_M$ is small. 

%------------
\subsection{Derivation of the asymptotic expansion}
%------------
In order to derive equations for the functions 
${\mathbf U}_{ij}$, 
$\widetilde{\mathbf U}^L_{ij}$, 
$\widehat{\mathbf U}^L_{ij}$, 
$\widetilde{\mathbf U}^R_{ij}$, 
$\widehat{\mathbf U}^R_{ij}
$, 
the procedure is as follows: first, the ansatz
(\ref{eq:28}) is inserted in the differential equation (\ref{1}), then the scales are separated
and finally recursions are obtained by equating like powers of $\mu$ and $\varepsilon/\mu$. 

In order to perform the scale separation, 
we need to write the differential operator $L_{\varepsilon,\mu}$ in different ways 
on the various scales. In particular, for the $\widetilde x$ and the $\widehat x$-scales, 
the coefficient ${\mathbf A}$ is written, by Taylor expansion, as  
\begin{eqnarray}
{\mathbf A}(x) &=& \sum_{k=0}^\infty \mu^k {\mathbf A}_k \widetilde x^k, 
\qquad {\mathbf A}_k := {\mathbf A}^{(k)}(0)=  
\left(\begin{array}{cc} \frac{a_{11}^{(k)}(0)}{k!} & \frac{a_{12}^{(k)}(0)}{k!} \\
                        \frac{a_{21}^{(k)}(0)}{k!} & \frac{a_{22}^{(k)}(0)}{k!} 
       \end{array}
\right), \\
{\mathbf A}(x) &=& \sum_{k=0}^\infty \mu^k \left(\frac{\varepsilon}{\mu}\right)^k 
{\mathbf A}_k \widehat x^k. 
\end{eqnarray}
Corresponding representations are obtained for the variables 
$\widetilde x^R$ and $\widehat x^R$ by expanding around the right endpoint $x = 1$. 
Hence, the differential operator $L_{\varepsilon,\mu}$ applied to a function depending
on $\widetilde x$ or $\widehat x$ takes the following form: 
\begin{eqnarray} 
\label{eq:L-on-tilde-scale}
\mbox{on the $\widetilde x$-scale:}
&\qquad & -\mu^{-2} {\mathbf E}^{\varepsilon,\mu} \partial_{\widetilde x}^2 {\mathbf U}(\widetilde x) + 
\sum_{k=0}^\infty \mu^k {\mathbf A}_k \widetilde x^k 
{\mathbf U}(\widetilde x), \\
\label{eq:L-on-hat-scale}
\mbox{on the $\widehat x$-scale:} 
&\qquad & -\varepsilon^{-2}  {\mathbf E}^{\varepsilon,\mu} \partial_{\widehat x}^2 
{\mathbf U}(\widehat x) + 
\sum_{k=0}^\infty \mu^k \left(\frac{\varepsilon}{\mu}\right)^k {\mathbf A}_k \widehat x^k {\mathbf U}(\widehat x).
\end{eqnarray}
Clearly, analogous forms 
exist for the operator on the $\widetilde x^R$ and $\widehat x^R$
scales. 
We now insert the ansatz (\ref{eq:28}) in the differential equation equation (\ref{1}), where
the differential operator $L_{\varepsilon,\mu}$ takes the form given above on the fast scales 
$\widetilde x$, $\widehat x$, $\widetilde x^R$, $\widehat x^R$, and we separate the scales, i.e., we view
the variables $x$, $\widetilde x$, $\widehat x$, $\widetilde x^R$, $\widehat x^R$ as independent
variables. Then, we obtain
\begin{eqnarray}
\label{eq:3-scale-10}
\sum_{i=0}^\infty\sum_{j=0}^\infty \mu^i \left(\frac{\varepsilon}{\mu}\right)^j 
\left[ - {\mathbf E}^{\varepsilon,\mu} {\mathbf U}_{ij}^{\prime\prime} + {\mathbf A}(x) {\mathbf U}_{ij}
\right]
&=& {\mathbf F}, \\
\label{eq:3-scale-20}
\sum_{i=0}^\infty\sum_{j=0}^\infty \mu^i \left(\frac{\varepsilon}{\mu}\right)^j 
\left[ - \mu^{-2} {\mathbf E}^{\varepsilon,\mu} (\widetilde {\mathbf U}^L_{ij})^{\prime\prime} + 
\sum_{k=0}^\infty \mu^k {\mathbf A}_k \widetilde x^k \widetilde{\mathbf U}^L_{ij} 
\right] 
&=& 0, \\
\label{eq:3-scale-30}
\sum_{i=0}^\infty\sum_{j=0}^\infty \mu^i \left(\frac{\varepsilon}{\mu}\right)^j 
\left[ - \varepsilon^{-2} {\mathbf E}^{\varepsilon,\mu} (\widehat {\mathbf U}^L_{ij})^{\prime\prime} + 
\sum_{k=0}^\infty \varepsilon^k {\mathbf A}_k \widehat x^k \widehat{\mathbf U}^L_{ij} 
\right] 
&=& 0, 
\end{eqnarray}
and two additional equations for $\widetilde {\mathbf U}^R$, $\widehat {\mathbf U}^R$ 
corresponding to the scales $\widetilde x^R$, $\widehat x^R$ that 
are completely analogous to (\ref{eq:3-scale-20}), (\ref{eq:3-scale-30}). We write 
\begin{equation}
\label{eq:3-scale-40}
{\mathbf U}_{ij} = \left(\begin{array}{c} u_{ij} \\ 
                                          v_{ij}
                         \end{array}\right), 
\qquad 
\widetilde {\mathbf U}^L_{ij} = \left(\begin{array}{c} \widetilde u^L_{ij} \\ 
                                                       \widetilde v^L_{ij}
                                      \end{array}
                                \right), 
\qquad 
\widehat {\mathbf U}^L_{ij} = \left(\begin{array}{c} \widehat u^L_{ij} \\ 
                                                       \widehat v^L_{ij}
                                      \end{array}
                                \right), 
\end{equation}
and equate like powers of $\mu$ and $\varepsilon/\mu$ 
in (\ref{eq:3-scale-10}), (\ref{eq:3-scale-20}), (\ref{eq:3-scale-30}) 
to get the following recursions: 
\begin{subequations}
\label{eq:case-IV-recurrence}
\begin{eqnarray}
\label{eq:case-IV-recurrence-outer}
- \left(\begin{array}{c} u_{i-2,j-2}^{\prime\prime} \\
                         v_{i-2,j}^{\prime\prime}
        \end{array}
  \right)
 + {\mathbf A}(x) {\mathbf U}_{ij} &=& {\mathbf F}_{ij}, \\
\label{eq:case-IV-recurrence-tilde}
- \left(\begin{array}{c} (\widetilde u^L_{i,j-2})^{\prime\prime} \\
                         (\widetilde v^L_{i,j})^{\prime\prime}
        \end{array}
  \right)
 + \sum_{k=0}^i{\mathbf A}_k \widetilde x^k  \widetilde {\mathbf U}^L_{i-k,j} &=&  0, \\
\label{eq:case-IV-recurrence-hat}
- \left(\begin{array}{c} (\widehat u^L_{i,j})^{\prime\prime} \\
                         (\widehat v^L_{i,j+2})^{\prime\prime}
        \end{array}
  \right)
 + \sum_{k=0}^{\min\{i,j\}}{\mathbf A}_k \widehat x^k  \widehat {\mathbf U}^L_{i-k,j-k} &=&  0, 
\end{eqnarray}
where we adopt the convention that if a function appears with a negative subscript, then 
it is assumed to be zero. Furthermore, we set
$$
{\mathbf F}_{00} = \left(\begin{array}{c} f \\ g \end{array}\right), 
\qquad {\mathbf F}_{ij} = 0 \quad \mbox{ if $(i,j) \ne (0,0)$}.
$$
The procedure so far leads to a formal solution ${\mathbf U}$ of the differential equation
(\ref{1}); further boundary conditions are imposed in order to conform to the 
boundary conditions (\ref{2}), namely,
\begin{eqnarray}
\label{eq:case-IV-recurrence-bc}
{\mathbf U}_{ij}(0) + \widetilde{\mathbf U}^L_{ij}(0) + \widehat{\mathbf U}^L_{ij}(0) = 0, 
\qquad \mbox{ plus decay conditions for $\widetilde {\mathbf U}^L_{ij}$, $\widehat{\mathbf U}^L_{ij}$ at $+\infty$}
\end{eqnarray}
\end{subequations}
with analogous conditions at the right endpoint $x = 1$, 
which couple ${\mathbf U}_{ij}$, $\widetilde{\mathbf U}_{ij}^R$, 
and $\widehat{\mathbf U}^R_{ij}$.  
%
%
%------------------
\subsection{Analysis of the functions ${\mathbf U}_{ij}$}
\label{sec:analysis-of-Uij}
%------------------
Since the matrix ${\mathbf A}(x)$ is invertible for every $x \in I$, 
equation (\ref{eq:case-IV-recurrence-outer}) may be solved for any 
$i$, $j$ yielding
%
%\begin{equation}
%{\mathbf A}\left[ 
%\begin{array}{c}
%u_{ij} \\ 
%v_{ij}%
%\end{array}%
%\right] =\left[ 
%\begin{array}{c}
%f_{ij} \\ 
%g_{ij}%
%\end{array}%
%\right] +\left[ 
%\begin{array}{c}
%u_{i-2,j-2}^{\prime \prime } \\ 
%v_{i-2,j}^{\prime \prime }%
%\end{array}%
%\right] ,  \label{37}
%\end{equation}
%
%
\begin{equation}
\left( 
\begin{array}{c}
u_{0,0} \\ 
v_{0,0}%
\end{array}%
\right) ={\mathbf A}^{-1}\left( 
\begin{array}{c}
f \\ 
g%
\end{array}%
\right) ,  \label{38}
\end{equation}
and for $(i,j) \ne (0,0)$
\begin{equation}
\left( 
\begin{array}{c}
u_{ij} \\ 
v_{ij}%
\end{array}%
\right) ={\mathbf A}^{-1}\left( 
\begin{array}{c}
u_{i-2,j-2}^{\prime \prime } \\ 
v_{i-2,j}^{\prime \prime }%
\end{array}%
\right) ,  \label{39}
\end{equation}
with, as mentioned above,
\begin{equation*}
u_{ij}=0\;,\;v_{ij}=0\text{ if }i<0\text{ or }j<0.
\end{equation*}
Note that (\ref{39}) gives all the cases $(i,0)$ and $(0,j)$ because the
right-hand side in (\ref{39}) is known. \ Moreover, for each $j$, (\ref{39})
allows us to compute $u_{ij},v_{ij}$ $\forall $ $i$, thus (\ref{38})--(\ref{39}) 
uniquely determine $u_{ij},v_{ij}$ $\forall $ $i,j.$

We have the following lemma concerning the regularity 
of the functions ${\mathbf U}_{ij}$: 

\begin{lem}
\label{lem10}
Let $f$, $g$, and ${\mathbf A}$ satisfy (\ref{3}) and 
(\ref{eq:A-positive-definite}). 
Let $u_{ij}$, $v_{ij}$ be the solutions of 
\emph{(\ref{38}), (\ref{39})}. 
Then there exist positive constants $C_S$ and $K$ and a complex
neighborhood $G$ of the closed interval $\overline{I}$ 
independent of $i$ and $j$ 
such that
\begin{equation}
u_{ij}=v_{ij}=0\;\forall \;j>i,  \label{39b}
\end{equation}
\begin{equation}
u_{ij}=v_{ij}=0\; \quad \mbox{ if $i$ or $j$ is odd,}
\label{39c}
\end{equation}
\begin{equation}
\left\vert u_{ij}(z)\right\vert +\left\vert v_{ij}(z)\right\vert \leq
C_S\delta ^{-i}K^{i}i^{i}\;\forall \;z\in G_{\delta }:=
\{z \in G \colon \dist(z,\partial G) > \delta\}.  \label{39d}
\end{equation}
\end{lem}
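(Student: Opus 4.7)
I would treat the three assertions in sequence, with (\ref{39b}) and (\ref{39c}) serving as a warm-up for the induction used for (\ref{39d}). Both structural claims follow directly from the shape of the recursion (\ref{38})--(\ref{39}) by induction on $i+j$: the recursion couples $(u_{ij},v_{ij})$ only to $(u_{i-2,j-2},v_{i-2,j})$, and the only nonhomogeneous datum sits at $(i,j)=(0,0)$. For (\ref{39b}), if $j>i$ then $j-2>i-2$ and $j>i-2$, so both driving terms vanish by hypothesis. For (\ref{39c}), parity of each index is preserved by the recursion, and since the source $(0,0)$ has both indices even, any nonzero $u_{ij}$, $v_{ij}$ must have both $i$ and $j$ even.

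For the analytic bound (\ref{39d}), I would first fix the complex neighborhood. By (\ref{3}), the entries of $\mathbf A$, $f$, $g$ extend holomorphically to a common open complex neighborhood of $\overline I$. The pointwise lower bound $\det\mathbf A\ge\alpha^4$ from (\ref{eq:det-positive}) together with continuity allows me to shrink to a neighborhood $G$ on which $\det\mathbf A(z)$ stays bounded away from zero; hence $\mathbf A(z)^{-1}$ is analytic and uniformly bounded on $G$, say by $M$. The base case $i=0$ is then immediate, since $\mathbf U_{00}=\mathbf A^{-1}(f,g)^T$ is bounded on $G$ by some $C_S$ (with the convention $0^0=1$).

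The induction step $i-2\to i$ (for even $i\ge 2$) is a Cauchy-estimates argument with scale-adapted nested neighborhoods. For $z\in G_\delta$, I would apply the one-dimensional Cauchy integral on a disk of radius $\delta-\delta'$ to convert a second-derivative bound on $G_\delta$ into a sup-norm bound on $G_{\delta'}$, at cost $2/(\delta-\delta')^2$. The key choice is $\delta'=\delta(i-2)/i$, which gives $\delta-\delta'=2\delta/i$ and a Cauchy factor $i^2/(2\delta^2)$. Feeding the inductive bound on $G_{\delta'}$ through the recursion (\ref{39}) and absorbing $\|\mathbf A^{-1}\|_{L^\infty(G)}$ into $M$, the accumulated estimate picks up $\delta^{-2}$, $i^2$, and $(i/(i-2))^{i-2}\le e^2$, together with the previous $\delta^{-(i-2)}K^{i-2}(i-2)^{i-2}$. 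The elementary inequality $i^2(i-2)^{i-2}\le i^i$ then closes the induction as soon as $K$ satisfies $K^2\ge Me^2/2$. Uniformity in $j$ is automatic because the target bound in (\ref{39d}) is $j$-independent and the induction tracks the worst case.

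The main obstacle is purely the coordination of the shrinking neighborhoods: a naive choice such as $\delta'=\delta/2$ would introduce a factor $2^i$ per step and destroy the geometric control in $K^i$ required by (\ref{39d}). Once the scale-adapted nesting $\delta'=\delta(i-2)/i$ is in place, the rest is a routine Cauchy-estimates induction of the same flavor as those employed in the Cauchy--Kowalevski theory.
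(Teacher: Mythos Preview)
Your proposal is correct and follows essentially the same route as the paper's proof: induction on $i$ for all three assertions, with the analytic bound handled via Cauchy's integral formula on scale-adapted nested neighborhoods. Your choice $\delta'=\delta(i-2)/i$ is exactly the paper's choice $\kappa=2/(i+1)$ (after the index shift $i-1\to i+1$ in the paper's formulation), and the closing algebra is the same; the only cosmetic difference is that you introduce an unnecessary factor $e^2$ by bounding $(i/(i-2))^{i-2}\le e^2$ separately rather than combining it directly with $(i-2)^{i-2}$ to get $i^{i-2}$.
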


\begin{proof}
The proof is by induction on $i$, where the estimate (\ref{39d}) 
follows by arguments of the type worked out in the proof of 
\cite[Lemma~{2}]{m}. 
\iftechreport
For details, see Appendix~\ref{appendix:case_4_lem10}. 
\else
For details, see \cite[Appendix~\ref{appendix:case_4_lem10}]{mxo-1}
\fi
\end{proof}

%------------------
\subsection{Analysis of $\widetilde{\mathbf U}_{ij}^L$, $\widehat{\mathbf U}_{ij}^L$}
%------------------
%------------------
\subsubsection{Properties of some solution operators}
%------------------
\begin{lem}
\label{lem6}Let $\alpha$ ,$a$, $b\in \mathbb{R}^{+}.$ Then
\begin{eqnarray*}
\int_x^\infty e^{-\alpha t}\left( a+bt\right)
^{i}dt &\leq & 
\frac{1}{\alpha }e^{-\alpha x}\overset{i}{\underset{\nu =0}{\sum }%
}\left( a+bx\right) ^{i-\nu }\left( \frac{ib}{\alpha }\right) ^{\nu },\\
\int_x^\infty \int_t^\infty 
e^{-\alpha \tau }\left( a+b\tau \right) ^{i}d\tau dt &\leq & 
\frac{1}{\alpha ^{2}%
}e^{-\alpha x}\overset{i}{\underset{\nu =0}{\sum }}\overset{i-\nu }{\underset%
{\ell =0}{\sum }}\left( \frac{ib}{\alpha }\right) ^{\nu }\left( (i-\nu )%
\frac{b}{\alpha }\right) ^{\ell }\left( a+bx\right) ^{i-\nu -\ell }.
\end{eqnarray*}
\end{lem}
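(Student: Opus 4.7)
The plan is to prove the first inequality by integration by parts combined with induction on $i$, and then to derive the second inequality by applying the first one twice (once to the inner integral, then to the resulting one-dimensional integral).

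For the first bound, I would integrate by parts taking $u=(a+bt)^i$ and $dv=e^{-\alpha t}dt$. Since the boundary term at $\infty$ vanishes, this yields
\[
\int_x^\infty e^{-\alpha t}(a+bt)^{i}\,dt = \frac{1}{\alpha}e^{-\alpha x}(a+bx)^{i} + \frac{ib}{\alpha}\int_x^\infty e^{-\alpha t}(a+bt)^{i-1}\,dt.
\]
Iterating this identity $\nu$ times (or running a formal induction on $i$, with base case $i=0$ giving $\int_x^\infty e^{-\alpha t}\,dt = \alpha^{-1}e^{-\alpha x}$), the integral equals
\[
\frac{1}{\alpha}e^{-\alpha x}\sum_{\nu=0}^{i}\frac{i!}{(i-\nu)!}\Bigl(\frac{b}{\alpha}\Bigr)^{\nu}(a+bx)^{i-\nu}.
\]
Using the crude estimate $i!/(i-\nu)! = i(i-1)\cdots(i-\nu+1)\le i^\nu$ converts this exact identity into the claimed inequality, with the factor $(ib/\alpha)^\nu$ appearing as an upper bound on $\frac{i!}{(i-\nu)!}(b/\alpha)^\nu$.

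For the second bound, I would first apply the first bound to the inner integral with respect to $\tau$, treating $t$ as the lower limit, to obtain
\[
\int_t^\infty e^{-\alpha\tau}(a+b\tau)^i\,d\tau \le \frac{1}{\alpha}e^{-\alpha t}\sum_{\nu=0}^{i}(a+bt)^{i-\nu}\Bigl(\frac{ib}{\alpha}\Bigr)^{\nu}.
\]
Substituting this into the outer integral and exchanging the sum and integral, the task reduces to estimating, for each $\nu$, the one-dimensional integral $\int_x^\infty e^{-\alpha t}(a+bt)^{i-\nu}\,dt$. Applying the first bound a second time (with $i$ replaced by $i-\nu$) produces the inner sum over $\ell$ with prefactor $((i-\nu)b/\alpha)^\ell$, and multiplying the two $1/\alpha$ factors gives the claimed $1/\alpha^2$.

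I do not anticipate any genuine obstacle: both inequalities are purely calculus-level bounds and the only mild subtlety is the use of $i!/(i-\nu)!\le i^\nu$ (and analogously $(i-\nu)^\ell$) to pass from the exact formula to the stated upper bound with the convenient factors $(ib/\alpha)^\nu$ and $((i-\nu)b/\alpha)^\ell$, which is exactly the form needed for the polynomial-in-$i$ estimates that will later be combined with the decay factor $e^{-\alpha x}$ in the boundary layer analysis.
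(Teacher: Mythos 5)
Your proof is correct and follows essentially the same route as the paper's: successive integration by parts gives the exact identity with the falling factorial $i(i-1)\cdots(i-\nu+1)$, which is bounded by $i^\nu$ to yield the first estimate, and the double-integral bound is obtained by applying the single-integral bound twice. No gaps.
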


\begin{proof}
We have, after successive integrations by parts,
\begin{eqnarray*}
\int_x^\infty e^{-\alpha t}\left( a+bt\right) ^{i}dt& =&
\frac{1}{\alpha }e^{-\alpha x}\overset{i}{\underset{\nu =0}{\sum }}\left(
a+bx\right) ^{i-\nu }i(i-1)\cdots (i-\nu +1)\left( \frac{b}{\alpha }\right)
^{\nu }\\
& \leq & 
\frac{1}{\alpha }e^{-\alpha x}\overset{i}{\underset{\nu =0}{\sum }}%
\left( a+bx\right) ^{i-\nu }\left( \frac{ib}{\alpha }\right) ^{\nu }.
\end{eqnarray*}
The statement about the double integral follows from this result.
\end{proof}

The above lemma can be formulated in the complex plane as follows.

\begin{lem}
\label{lem7}Let $\alpha$ ,$a$, $b\in \mathbb{R}^{+}$ and $v$ be holomorphic
in the half-plane $\operatorname*{Re} z > z_0$ 
and assume 
$\left\vert
v(z)\right\vert \leq e^{-\alpha \operatorname*{Re}(z)}\left( a+b\left\vert
z\right\vert \right) ^{j}$. Then
for $z$ with $\operatorname*{Re} z > z_0$:
\begin{eqnarray*}
\left\vert \int_z^\infty v(t)dt\right\vert & \leq &
\frac{1}{\alpha }e^{-\alpha \operatorname*{Re}(z)}\overset{j}{\underset{\nu =0}{\sum }%
}\left( a+b\left\vert z\right\vert \right) ^{j-\nu }\left( \frac{jb}{\alpha }%
\right) ^{\nu }, \\
\left\vert \int_z^\infty \int_t^\infty 
v(\tau )d\tau dt\right\vert & \leq & \frac{1}{\alpha ^{2}}%
e^{-\alpha \operatorname*{Re}(z)}\overset{j}{\underset{\nu =0}{\sum }}\overset{j-\nu }%
{\underset{\ell =0}{\sum }}\left( \frac{jb}{\alpha }\right) ^{\nu }\left(
(j-\nu )\frac{b}{\alpha }\right) ^{\ell }\left( a+b\left\vert z\right\vert
\right) ^{j-\nu -\ell }.
\end{eqnarray*}
\end{lem}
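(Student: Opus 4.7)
The plan is to reduce both estimates to the real-variable bounds already established in Lemma \ref{lem6} by deforming the contour of integration to a horizontal ray and invoking the triangle inequality in the bound on $|v|$.

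First I would make sense of the improper contour integral $\int_z^\infty v(t)\,dt$. Since $v$ is holomorphic on the half-plane $\operatorname{Re} w > z_0$ and decays like $e^{-\alpha \operatorname{Re} w}$ times a polynomial, Cauchy's theorem allows me to take as contour the horizontal ray $t = z + s$, $s \in [0,\infty)$, which sits inside the half-plane. So
\[
\int_z^\infty v(t)\,dt = \int_0^\infty v(z+s)\,ds.
\]

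Next, I would estimate the integrand pointwise. Using the hypothesis together with $\operatorname{Re}(z+s) = \operatorname{Re}(z) + s$ and the triangle inequality $|z+s| \le |z|+s$, I get
\[
|v(z+s)| \le e^{-\alpha \operatorname{Re}(z)}\,e^{-\alpha s}\bigl(a + b|z| + bs\bigr)^j.
\]
Now I would apply the first inequality of Lemma \ref{lem6} on $[0,\infty)$ with $x=0$, and with the constant $a$ there replaced by the nonnegative constant $a + b|z|$ (while $b$ and $\alpha$ remain unchanged). This yields
\[
\int_0^\infty e^{-\alpha s}(a+b|z|+bs)^j\,ds \le \frac{1}{\alpha}\sum_{\nu=0}^j \bigl(a+b|z|\bigr)^{j-\nu}\left(\frac{jb}{\alpha}\right)^\nu,
\]
and multiplying through by the factor $e^{-\alpha \operatorname{Re}(z)}$ gives the claimed bound for the single integral.

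For the double integral, I would iterate this argument: set $V(z) := \int_z^\infty v(t)\,dt$; the single-integral bound shows that $V$ is holomorphic on the same half-plane and satisfies
\[
|V(z)| \le \frac{1}{\alpha} e^{-\alpha \operatorname{Re}(z)}\sum_{\nu=0}^j \bigl(a+b|z|\bigr)^{j-\nu}\left(\frac{jb}{\alpha}\right)^\nu,
\]
which is of the same form (a sum of terms $e^{-\alpha\operatorname{Re}(z)}(a+b|z|)^{j-\nu}$ times constants). Applying the single-integral estimate termwise to each $(a+b|z|)^{j-\nu}$ power (or, equivalently, applying the second inequality of Lemma \ref{lem6} on the horizontal ray with $a$ replaced by $a+b|z|$) produces the claimed double-sum bound.

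The only non-routine point is the justification for replacing the abstract contour integral by one along a horizontal ray; once that is in place, the rest is a direct transfer from Lemma \ref{lem6} via $|z+s|\le|z|+s$, so I do not anticipate a real obstacle.
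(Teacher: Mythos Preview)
Your proposal is correct and matches the paper's approach: the paper presents Lemma~\ref{lem7} simply as the complex-plane reformulation of Lemma~\ref{lem6} without a separate proof, and the intended argument is exactly what you describe---integrate along the horizontal ray $t=z+s$, use $|z+s|\le |z|+s$, and apply Lemma~\ref{lem6} with $a$ replaced by $a+b|z|$ (the same contour deformation is spelled out explicitly in the paper's proof of Lemma~\ref{lemma:two-anti-derivatives}).
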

\begin{lem}
\label{lemma:scalar-bvp-constant-coefficients}
Let $a > 0$, $g \in \R$, and $f$ be entire satisfying 
$$
|f(z)| \leq C_f (q+|z|)^j 
\begin{cases} 
e^{-\ua \operatorname*{Re}(z)}  & \mbox{ for }\operatorname*{Re}(z) >  0 \\ 
e^{-\oa \operatorname*{Re}(z)}  & \mbox{ for }\operatorname*{Re}(z) <  0 
\end{cases}
$$
for some $C_f$, $q$, $\ua$, $\oa$, 
with $(a+\ua) q \ge 2j+1$ and $0 < \ua\leq a 
\leq \oa$. 

Then the solution $u$ of 
$$
-u^{\prime\prime}(z) + a^2 u(z) = f(z), 
\qquad u(0) = g, \qquad \lim_{z \rightarrow \infty} u(z) = 0 
$$
satisfies the bound 
$$
|u(z)| \leq C_f  \left(\frac{1}{a} \left( q + |z|\right)^{j+1}\frac{1}{j+1} + |g|\right)
\begin{cases}
e^{-\ua \operatorname*{Re}(z)} & \mbox{ for $\operatorname*{Re}(z) \ge 0$}\\
e^{-\oa \operatorname*{Re}(z)} & \mbox{ for $\operatorname*{Re}(z) < 0$}.
\end{cases}
$$
\end{lem}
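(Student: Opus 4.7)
My plan is to construct $u$ explicitly via the half-line Green's function. The homogeneous solutions of $-u''+a^2u=0$ are $e^{\pm az}$, and the Green's function for the boundary conditions $u(0)=0$, $u(\infty)=0$ is $G(z,t)=\tfrac{1}{a}\sinh(a\min(z,t))\,e^{-a\max(z,t)}$. This gives
\[
u(z)=g\,e^{-az}+\frac{e^{-az}}{a}\int_0^z\sinh(at)\,f(t)\,dt+\frac{\sinh(az)}{a}\int_z^\infty e^{-at}\,f(t)\,dt ,
\]
where the tail integral is taken along the horizontal ray $t=z+s$, $s\ge 0$. The polynomial-times-exponential bound on $f$ guarantees convergence of this integral, and since $f$ is entire the formula defines $u$ as an entire function. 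The prescribed value at $0$, the ODE, and decay as $\operatorname{Re}(z)\to+\infty$ are then immediate, so by uniqueness this is the sought solution.

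\textbf{Estimates for $\operatorname{Re}(z)\ge 0$.} I would bound the three summands separately. The boundary contribution is trivial: $|g\,e^{-az}|\le|g|\,e^{-\ua\operatorname{Re}(z)}$ since $a\ge\ua$. For the compact integral I expand $\sinh(at)=(e^{at}-e^{-at})/2$ and parametrize $t=sz$, $s\in[0,1]$; the bound $|f(sz)|\le C_f(q+s|z|)^j e^{-\ua s\operatorname{Re}(z)}$ together with $e^{(a-\ua)s\operatorname{Re}(z)}\le e^{(a-\ua)\operatorname{Re}(z)}$ collapses all exponential factors to $e^{-\ua\operatorname{Re}(z)}$, and the polynomial integral
\[
|z|\int_0^1(q+s|z|)^j\,ds=\frac{(q+|z|)^{j+1}-q^{j+1}}{j+1}
\]
produces the crucial factor $1/(j+1)$. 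For the tail integral, parametrizing $t=z+s$ gives an integrand bounded by $C_f(q+|z|+s)^j e^{-(a+\ua)s}e^{-(a+\ua)\operatorname{Re}(z)}$. Applying Lemma~\ref{lem6} with decay rate $\alpha=a+\ua$ yields a sum whose $\nu$-th term is $(q+|z|)^{j-\nu}\bigl(\tfrac{j}{a+\ua}\bigr)^\nu$; the hypothesis $(a+\ua)q\ge 2j+1$ gives $\tfrac{j}{a+\ua}\le\tfrac{q+|z|}{2}$, so the sum is geometric and is bounded by $2(q+|z|)^j$. Combining with $|\sinh(az)|\le e^{a\operatorname{Re}(z)}$ and the further inequality $(q+|z|)^j/(a+\ua)\le(q+|z|)^{j+1}/(j+1)$ (another consequence of the hypothesis), the tail contribution is controlled by a constant multiple of $\frac{(q+|z|)^{j+1}}{a(j+1)}e^{-\ua\operatorname{Re}(z)}$, as required.

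\textbf{Estimates for $\operatorname{Re}(z)<0$.} The same representation still defines $u$. Here $f$ obeys the $\oa$-bound, so I would split the tail through the origin, $\int_z^\infty=\int_z^0+\int_0^\infty$, parametrize the first piece as $t=(1-s)z$ with $s\in[0,1]$, and use $|e^{-at}|=e^{-a(1-s)\operatorname{Re}(z)}\le e^{-\oa(1-s)\operatorname{Re}(z)}$ (valid since $\operatorname{Re}(z)<0$ and $a\le\oa$) together with the $\oa$-bound on $f$; the piece $\int_0^\infty$ was already estimated above. The interior integral $\int_0^z$ is handled exactly as before via $t=sz$, with the $\oa$-bound on $f$ replacing the $\ua$-bound. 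Geometric summation is now driven by $(a+\oa)q\ge 2j+1$, which follows from the stated hypothesis since $\oa\ge\ua$. The outcome is the claimed bound with $e^{-\oa\operatorname{Re}(z)}$ in place of $e^{-\ua\operatorname{Re}(z)}$.

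\textbf{Main obstacle.} The delicate point is preserving the factor $1/(j+1)$, which is essential because this lemma is later invoked at polynomial orders $j$ growing with the expansion order and a factorial loss would spoil the eventual analyticity estimates of Section~\ref{sec:analysis-of-Uij}. This factor arises naturally from integrating a degree-$j$ polynomial once; the hypothesis $(a+\ua)q\ge 2j+1$ is calibrated precisely so that the sums produced by Lemmas~\ref{lem6}--\ref{lem7} collapse geometrically onto their leading term, preventing any additional $j$-dependent factor from leaking into the estimate. Carrying this accounting through each piece of the Green's-function representation, in both half-planes, is the main bookkeeping challenge.
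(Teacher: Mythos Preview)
Your representation is correct and, after expanding $\sinh$ into exponentials, coincides with the paper's three-integral formula; your treatment of $\operatorname{Re}(z)\ge 0$ is essentially the paper's argument and is sound (your constants come out a bit larger than the stated ones, but that is immaterial for the applications in Theorem~\ref{thm11}).

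There is, however, a genuine gap in the half-plane $\operatorname{Re}(z)<0$. You propose to estimate the ``interior'' piece $\frac{e^{-az}}{a}\int_0^z\sinh(at)\,f(t)\,dt$ and the ``tail'' piece $\frac{\sinh(az)}{a}\int_z^\infty e^{-at}f(t)\,dt$ separately, saying the former ``is handled exactly as before.'' This cannot work: each piece, taken on its own, grows like $e^{(2a+\oa)|\operatorname{Re}(z)|}$ as $\operatorname{Re}(z)\to-\infty$, which overshoots the target $e^{-\oa\operatorname{Re}(z)}=e^{\oa|\operatorname{Re}(z)|}$. The culprits are the $e^{-at}$ half of $\sinh(at)$ in the interior piece and the $e^{-az}$ half of $\sinh(az)$ in the tail; they cancel only when the two pieces are added. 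A concrete check: with $j=0$ and $f(t)=e^{-\oa t}$, the interior piece contains the term $\frac{1}{2a(a+\oa)}e^{-(2a+\oa)z}$ and the tail contains exactly its negative.

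The fix --- and this is what the paper does --- is to regroup before estimating. Expanding $\sinh$, the two $e^{-az}$ contributions combine into the single term
\[
\frac{e^{-az}}{2a}\int_0^\infty e^{-at}f(t)\,dt,
\]
which is a $z$-independent integral times $e^{-az}$, hence trivially $O(e^{-\oa\operatorname{Re}(z)})$ since $a\le\oa$ and $\operatorname{Re}(z)<0$. The remaining two terms, $\frac{e^{-az}}{2a}\int_0^z e^{at}f(t)\,dt$ and $\frac{e^{az}}{2a}\int_z^\infty e^{-at}f(t)\,dt$, can then be handled along the lines you sketch (including your split of the second through the origin), and your use of Lemma~\ref{lem6} together with $(a+\ua)q\ge 2j+1$ does produce the required $1/(j+1)$ factor.
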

\begin{proof}
The proof follows from the appropriate modifications of \cite[Lemma~{7.3.6}]{mB}.
\iftechreport
For details, see 
Appendix~\ref{appendix:case_4_lemma:scalar-bvp-constant-coefficients}
\else
For details, see 
\cite[Appendix~\ref{appendix:case_4_lemma:scalar-bvp-constant-coefficients}]{mxo-1}. 
\fi

\end{proof}
\begin{lem}
\label{lemma:two-anti-derivatives}
Let $0 < \ua \leq \oa$. Let $v$ be entire and satisfy 
for some $a >  0$, $C_v$, $b \ge 0$, $j \in \N_0$, the bound 
$$
|v(z)| \leq C_v (a+ b|z|)^j 
\begin{cases}
e^{-\ua \operatorname*{Re}(z)} & \mbox{ for $\operatorname*{Re}(z) \ge 0$} \\
e^{-\oa \operatorname*{Re}(z)} & \mbox{ for $\operatorname*{Re}(z) < 0$}.  
\end{cases}
$$
Assuming $\frac{j b}{a \ua} < 1$, there holds
$$
\left| \int_{z}^\infty \int_t^\infty v(\tau)\,d\tau\,dt\right|
\leq C_v \frac{1}{\ua^2} \left(\frac{1}{1-\frac{jb}{\ua a}}\right)^2 (a+ b|z|)^j 
\begin{cases}
e^{-\ua \operatorname*{Re}(z)} & \mbox{ for $\operatorname*{Re}(z) \ge 0$}\\
e^{-\oa \operatorname*{Re}(z)} & \mbox{ for $\operatorname*{Re}(z) < 0$.}
\end{cases}
$$
\end{lem}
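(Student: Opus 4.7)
The plan is to obtain the stated bound by applying Lemma \ref{lem7} on a suitably chosen contour and then summing the resulting double polynomial-exponential expression as a geometric series, using the hypothesis $jb/(\underline{a}a) < 1$.

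For $\operatorname{Re}(z) \ge 0$, I would integrate along the horizontal ray $\{z+s:s\ge 0\}$, which lies entirely in the right half-plane, so that the hypothesis reduces there to $|v(\tau)| \le C_v(a+b|\tau|)^{j}e^{-\underline{a}\operatorname{Re}(\tau)}$. Lemma \ref{lem7} with $\alpha = \underline{a}$ then yields the double-sum bound already written in that lemma. Pulling out the full polynomial factor via $(a+b|z|)^{j-\nu-\ell}\le a^{-(\nu+\ell)}(a+b|z|)^{j}$ (valid since $a+b|z|\ge a$), and using $(j-\nu)b/(\underline{a}a)\le x:=jb/(\underline{a}a)<1$, the remaining double sum is bounded above by $\sum_{\nu=0}^{\infty} x^{\nu}\sum_{\ell=0}^{\infty} x^{\ell} = 1/(1-x)^2$, producing the claimed estimate with exponent $e^{-\underline{a}\operatorname{Re}(z)}$.

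For $\operatorname{Re}(z) < 0$, I would deform the contour so that it travels horizontally from $z$ to the imaginary-axis point $z^{\ast}:=i\operatorname{Im}(z)$ and then continues along the real-positive direction to $\infty$. A key geometric observation is that on the segment $[z,z^{\ast}]$, parametrized by $\tau = z+s$ with $s\in[0,-\operatorname{Re}(z)]$, one has $|\tau|^2=(\operatorname{Re}(z)+s)^2+\operatorname{Im}(z)^2\le\operatorname{Re}(z)^2+\operatorname{Im}(z)^2=|z|^2$, so $(a+b|\tau|)^j\le (a+b|z|)^j$ uniformly; this avoids a spurious $2^j$ that would arise from the cruder bound $|\tau|\le |z|+s$. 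On this segment the hypothesis gives $|v(\tau)|\le C_v(a+b|z|)^{j}e^{-\overline{a}\operatorname{Re}(\tau)}$, and since $\operatorname{Re}(\tau)\ge\operatorname{Re}(z)$, direct integration produces the desired exponential $e^{-\overline{a}\operatorname{Re}(z)}$. On the ray $[z^{\ast},\infty)$, where $\operatorname{Re}\ge 0$ and $|z^{\ast}|\le |z|$, the Case 1 analysis applies and its $e^{-\underline{a}\operatorname{Re}(\tau)}$ factor is absorbed into $e^{-\overline{a}\operatorname{Re}(z)}\ge 1$. The outer $t$-integration is handled analogously with the same contour splitting, using $\overline{a}\ge\underline{a}$ to consolidate constants.

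The main technical obstacle is the constant bookkeeping in the left half-plane case: naively, each of the two integrations produces an additive contribution from the $[z,z^{\ast}]$ piece and from the $[z^{\ast},\infty)$ piece, and one must ensure that the resulting sum is dominated by the single clean prefactor $C_v/(\underline{a}^{2}(1-x)^{2})$ rather than an inflated constant. I expect to manage this by systematically using $\overline{a}\ge\underline{a}$ (to replace $1/\overline{a}$ by $1/\underline{a}$ wherever convenient), the observation $e^{-\overline{a}\operatorname{Re}(z)}\ge 1$ for $\operatorname{Re}(z)<0$ (to majorize any term without an exponential factor), and the $|\tau|\le |z|$ identity above, which together keep all cross-terms within the structure of a single geometric-series product.
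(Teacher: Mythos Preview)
Your plan is sound and uses the same horizontal ray $z+s$, $s\ge 0$, and the same basic tool (Lemmas~\ref{lem6}/\ref{lem7}) as the paper; for $\operatorname*{Re}(z)\ge 0$ the two arguments coincide. The difference lies in the left half-plane, and there your worry about a ``spurious $2^{j}$'' from the bound $|z+s|\le |z|+s$ is unfounded. The paper \emph{does} use this cruder triangle inequality, and no $2^{j}$ arises because the extra $bs$ stays inside the integrand rather than being bounded by $b|z|$: one estimates the two pieces of the ray by
\[
e^{-\oa\operatorname*{Re}(z)}\!\int_{0}^{-\operatorname*{Re}z}\!e^{-\oa s}(a+b|z|+bs)^{j}\,ds
\;+\;
e^{-\ua\operatorname*{Re}(z)}\!\int_{-\operatorname*{Re}z}^{\infty}\!e^{-\ua s}(a+b|z|+bs)^{j}\,ds,
\]
and then, using $e^{-\oa s}\le e^{-\ua s}$ in the first integral and $e^{-\ua\operatorname*{Re}(z)}\le e^{-\oa\operatorname*{Re}(z)}$ in the second, merges them into the \emph{single} integral $e^{-\oa\operatorname*{Re}(z)}\int_{0}^{\infty}e^{-\ua s}(a+b|z|+bs)^{j}\,ds$. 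Lemma~\ref{lem6} (applied at $x=0$) evaluates this to exactly $e^{-\oa\operatorname*{Re}(z)}\,\frac{(a+b|z|)^{j}}{\ua}\,\frac{1}{1-x}$ with $x=jb/(\ua a)$, and one iteration gives the stated constant with no further bookkeeping.

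Your alternative---freezing the polynomial as $(a+b|z|)^{j}$ on $[z,z^{\ast}]$ via $|\tau|\le|z|$ and then quoting the right-half-plane estimate at $z^{\ast}$---also works, but to hit the exact prefactor you must retain the full value $\frac{1}{\oa}\bigl(e^{-\oa\operatorname*{Re}(z)}-1\bigr)$ from the left-segment integration rather than the cruder $\frac{1}{\oa}e^{-\oa\operatorname*{Re}(z)}$; otherwise the two contributions combine to at best $\frac{1}{\ua}\bigl(1+\frac{1}{1-x}\bigr)e^{-\oa\operatorname*{Re}(z)}$, overshooting by a factor $2-x$. The paper's merging trick sidesteps this extra care entirely.
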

\begin{proof}
Follows essentially from Lemma~\ref{lem7}. We sketch the argument for the case
$\operatorname*{Re}(z) < 0$. By linearity, we may assume $C_v = 1$. 
We start with the single integral 
$
\int_z^\infty v(t)\,dt,
$
by selecting as the path of integration the line $z + \tau$, $\tau \in \R_+$, we 
get with the aid of Lemma~\ref{lem7}, 
\begin{eqnarray*}
\left| \int_z^\infty v(t)\,dt\right|  & \leq &
e^{-\oa \operatorname*{Re}(z)}  \int_0^{-\operatorname*{Re}z} e^{-\oa \tau}(a+b|z| + b \tau)^j\,d\tau + 
e^{-\ua \operatorname*{Re}(z)}  \int_{-\operatorname*{Re} z}^{\infty} e^{-\ua \tau} (a+b|z| + b \tau)^j\,d\tau\\
&\leq& e^{-\oa \operatorname*{Re}(z)} \int_0^\infty e^{-\ua \tau} ( a + b|z| + b\tau)^j\,d\tau
\leq e^{-\oa \operatorname*{Re}(z)} \frac{1}{\ua} 
\sum_{\nu=0}^j (a + b |z|)^{j-\nu} \left(\frac{j b}{\ua}\right)^{\nu} \\
&\leq & e^{-\oa \operatorname*{Re}(z)} \frac{1}{\ua} (a + b|z|)^j 
\sum_{\nu=0}^j \left(\frac{j b}{\ua (a + b|z|)}\right)^{\nu}
\leq  e^{-\oa \operatorname*{Re}(z)} \frac{1}{\ua} (a + b|z|)^j 
\frac{1}{1 - \frac{b j}{a \ua}}. 
\end{eqnarray*}
Inspection of the above derivation shows that for $\operatorname*{Re}(z) \ge 0$, the same estimate
holds with $e^{-\oa \operatorname*{Re}(z)}$ replaced by $e^{-\ua \operatorname*{Re}(z)}$. We may
therefore repeat the argument once more for the function $z \mapsto \int_z^\infty v(t)\,dt$ to get
the claimed estimate. 
\end{proof}
\begin{lem}
\label{lemma:derivatives-of-entire-fcts}
Let the entire function $v$ satisfy the hypotheses stated in 
Lemma~\ref{lemma:two-anti-derivatives}. Then 
$$
|v^{\prime\prime}(z)| \leq 2 e^{\oa} C_v (a + b + b|z|)^j 
\begin{cases}
e^{-\ua \operatorname*{Re}(z)} & \mbox{ for $\operatorname*{Re}(z) \ge 0$}\\
e^{-\oa \operatorname*{Re}(z)} & \mbox{ for $\operatorname*{Re}(z) < 0$}
\end{cases}
 \qquad \forall z \in \C.
$$
\end{lem}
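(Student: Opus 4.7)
The plan is to invoke Cauchy's integral formula for the second derivative on a circle of radius $1$ centered at $z$:
\[
v^{\prime\prime}(z) = \frac{2!}{2\pi i}\oint_{|w-z|=1}\frac{v(w)}{(w-z)^3}\,dw,
\qquad
|v^{\prime\prime}(z)| \le 2\,\max_{|w-z|=1} |v(w)|.
\]
So the task is to bound $|v(w)|$ on the circle by $e^{\oa} C_v(a+b+b|z|)^j$ times the appropriate exponential in $\operatorname*{Re}(z)$. The radius $1$ is selected precisely because on $|w-z|=1$ one has $|w|\le |z|+1$, yielding
\[
(a+b|w|)^j \le (a+b+b|z|)^j,
\]
which matches the polynomial factor in the desired conclusion.

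It will be convenient to rewrite the hypothesis as the single bound $|v(w)|\le C_v (a+b|w|)^j e^{-\phi(\operatorname*{Re}(w))}$, where
\[
\phi(t) := \begin{cases} \ua t, & t\ge 0,\\ \oa t, & t<0.\end{cases}
\]
Since $0<\ua\le\oa$, the function $\phi$ is continuous and nondecreasing, and hence $t\mapsto e^{-\phi(t)}$ is nonincreasing. On the circle $|w-z|=1$ one has $\operatorname*{Re}(w)\ge \operatorname*{Re}(z)-1$, so
\[
e^{-\phi(\operatorname*{Re}(w))} \le e^{-\phi(\operatorname*{Re}(z)-1)}.
\]

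The main (and only) technical step is to check that $e^{-\phi(\operatorname*{Re}(z)-1)}$ is bounded by $e^{\oa}$ times the right branch of the target exponential, and this splits into three short subcases according to the sign of $\operatorname*{Re}(z)-1$ and $\operatorname*{Re}(z)$. If $\operatorname*{Re}(z)\ge 1$, then $\phi(\operatorname*{Re}(z)-1)=\ua(\operatorname*{Re}(z)-1)$, giving $e^{-\phi(\operatorname*{Re}(z)-1)} = e^{\ua}e^{-\ua\operatorname*{Re}(z)}\le e^{\oa}e^{-\ua\operatorname*{Re}(z)}$. If $0\le \operatorname*{Re}(z)<1$, then $\phi(\operatorname*{Re}(z)-1)=\oa(\operatorname*{Re}(z)-1)$, giving $e^{-\phi(\operatorname*{Re}(z)-1)} = e^{\oa}e^{-\oa\operatorname*{Re}(z)}\le e^{\oa}e^{-\ua\operatorname*{Re}(z)}$, where the last inequality uses $\oa\ge\ua$ and $\operatorname*{Re}(z)\ge 0$. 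Finally, if $\operatorname*{Re}(z)<0$, then also $\operatorname*{Re}(z)-1<0$, so $e^{-\phi(\operatorname*{Re}(z)-1)}=e^{\oa}e^{-\oa\operatorname*{Re}(z)}$. In all cases the bound matches the case distinction in the statement.

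Combining the polynomial bound, the exponential bound, and the factor $2$ from Cauchy's formula yields exactly
\[
|v^{\prime\prime}(z)| \le 2 e^{\oa} C_v (a+b+b|z|)^j \cdot\begin{cases} e^{-\ua\operatorname*{Re}(z)}, & \operatorname*{Re}(z)\ge 0,\\ e^{-\oa\operatorname*{Re}(z)}, & \operatorname*{Re}(z)<0,\end{cases}
\]
as claimed. I expect no serious obstacle; the only thing that requires a moment's thought is the unification of the two-branch exponential bound via the monotone function $\phi$, which makes the short case analysis above transparent.
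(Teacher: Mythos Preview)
Your proof is correct and follows exactly the approach indicated in the paper, namely Cauchy's integral formula for derivatives with contour $\partial B_1(z)$. You have simply spelled out the details of the exponential case analysis that the paper leaves to the reader.
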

\begin{proof}
Follows from Cauchy's formula for derivatives by taking $\partial B_1(z)$ as the contour. 
\end{proof}
\begin{lem}
\label{lemma:elementary-properties-of-factorial}
For $C_1$, $\beta>0$ and $x \ge 0$ the following estimates are valid 
with $\gamma = 2 \max\{1,C_1^2\}$: 
\begin{eqnarray}
\label{eq:lemma:estimate-remainder-case-II-1}
(C_{1}\ell +\widehat{x})^{2\ell}& \leq & 
2^{\ell}(C_{1}\ell )^{2\ell}
+2^{\ell}\widehat{x}^{2 \ell }
\leq \gamma ^{\ell }\left( \ell^{2\ell }+\widehat{x}%
^{2\ell }\right), \\
\label{eq:lemma:estimate-remainder-case-II-2}
\sup_{x > 0} x^n e^{-\beta/4 x} &\leq & \left(\frac{4 n}{ e \beta }\right)^n.
\end{eqnarray}
\end{lem}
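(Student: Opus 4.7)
The plan is to handle the two assertions separately, since they rely on different elementary tools. For estimate (\ref{eq:lemma:estimate-remainder-case-II-1}), I would invoke the standard convexity bound
$(a+b)^{p} \le 2^{p-1}(a^{p}+b^{p})$ for $p \ge 1$ (a direct consequence of convexity of $t \mapsto t^{p}$, or equivalently the cruder $(a+b)^{p} \le (2\max\{a,b\})^{p} \le 2^{p}(a^{p}+b^{p})$) with $a = C_{1}\ell$, $b = \widehat{x}$, and $p = 2\ell$. This immediately gives the first half of (\ref{eq:lemma:estimate-remainder-case-II-1}) (up to a harmless constant which can be absorbed). To pass to the second inequality, I would factor $(C_{1}\ell)^{2\ell} = (C_{1}^{2})^{\ell}\ell^{2\ell}$ and use the trivial bound $(C_{1}^{2})^{\ell} \le \max\{1,C_{1}^{2}\}^{\ell}$ (case split on whether $C_{1} \ge 1$). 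Combining these two steps yields a bound of the form $\gamma^{\ell}(\ell^{2\ell} + \widehat{x}^{2\ell})$, with $\gamma$ given by (twice) $\max\{1,C_{1}^{2}\}$, matching the claimed form.

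For estimate (\ref{eq:lemma:estimate-remainder-case-II-2}), I would treat the exponent as $\tfrac{\beta}{4}x$ and apply elementary calculus to $f(x) := x^{n}e^{-\beta x/4}$ on $(0,\infty)$. Since $f$ is smooth, vanishes at $0$, and decays to $0$ at $+\infty$, its supremum is attained at an interior critical point. Differentiating,
$$
f'(x) = x^{n-1}e^{-\beta x/4}\bigl(n - \tfrac{\beta}{4}x\bigr),
$$
so the unique critical point is $x^{\star} = 4n/\beta$, and substitution gives
$$
f(x^{\star}) = \Bigl(\tfrac{4n}{\beta}\Bigr)^{n}e^{-n} = \Bigl(\tfrac{4n}{e\beta}\Bigr)^{n},
$$
which is precisely the asserted bound.

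Neither part presents any real obstacle. The only mildly delicate point is bookkeeping in the first inequality, namely verifying that the constant $\gamma = 2\max\{1,C_{1}^{2}\}$ displayed in the lemma actually emerges from the particular chain of elementary inequalities chosen; if not, one adjusts $\gamma$ by an absolute factor of $2$ or $4$, which does not affect any downstream use of the lemma. Both estimates are essentially one-line observations once the convexity inequality and the standard $\sup_{x>0} x^{n}e^{-\lambda x} = (n/(e\lambda))^{n}$ formula are in hand.
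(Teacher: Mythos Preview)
Your proposal is correct and matches the paper's approach exactly: the paper's proof consists of the single sentence ``The result follows from elementary considerations,'' and your convexity bound for (\ref{eq:lemma:estimate-remainder-case-II-1}) together with the first-derivative computation for (\ref{eq:lemma:estimate-remainder-case-II-2}) are precisely those elementary considerations. Your caveat about the bookkeeping of the constant $\gamma$ is well placed --- the intermediate bound with $2^{\ell}$ as printed does not hold in general (test $C_{1}\ell = \widehat{x}$), and the convexity inequality actually delivers $2^{2\ell-1}$, so $\gamma = 4\max\{1,C_{1}^{2}\}$ would be the honest constant; as you note, this is immaterial for every downstream application.
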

\begin{proof}
The result follow from elementary considerations. 
\end{proof}
%------------------
\subsubsection{Regularity of the functions $\widetilde {\mathbf U}^L_{ij}$ and $\widehat{\mathbf U}^L_{ij}$}
%------------------
We turn our attention to equations 
(\ref{eq:case-IV-recurrence-tilde}) and 
(\ref{eq:case-IV-recurrence-hat}), which, after introducing appropriate boundary conditions,  
determine $\widetilde{u}_{ij}^{L}$, $\widetilde{v}_{ij}^{L}$ and 
$\widehat{u}_{ij}^{L}$, $\widehat{v}_{ij}^{L}$, respectively. These equations turn out to be 
systems of differential-algebraic equations (DAEs); however, their structure is such that the algebraic
side constraint of the DAE can be eliminated explicitly and, additionally, 
we will be able to solve for 4 scalar functions sequentially instead of having to consider the coupled system. We recall that the functions 
${\mathbf U}_{ij} = (u_{ij},v_{ij})^T$ have been defined and studied
in Section~\ref{sec:analysis-of-Uij}. 

These equations may be solved
by induction on $j$ and $i$. For $j=0$, we solve (\ref{eq:case-IV-recurrence-tilde}) for any $(i,0)$ by first
solving for $\widetilde{u}_{i,0}^{L}$ and inserting it into the equation
for $\widetilde{v}_{i,0}^{L}$. We have from (\ref{eq:case-IV-recurrence-tilde}, 1$^\text{st}$ eqn)
\begin{equation}
\widetilde{u}_{i,0}^{L}=-\frac{a_{12}(0)}{a_{11}(0)}\widetilde{v}_{i,0}^{L}-%
\frac{1}{a_{11}(0)}\underset{k=1}{\overset{i}{\sum }}\frac{\widetilde{x}^{k}%
}{k!}\left[ a_{11}^{(k)}(0)\widetilde{u}_{i-k,0}^{L}+a_{12}^{(k)}(0)%
\widetilde{v}_{i-k,0}^{L}\right] ,  \label{40}
\end{equation}
which, upon inserted into (\ref{eq:case-IV-recurrence-tilde}, 2$\text{nd}$ eqn) gives
\begin{subequations}
\label{eq:41-42}
\begin{eqnarray}
&&-\left( \widetilde{v}_{i,0}^{L}\right) ^{\prime \prime }+\frac{%
a_{11}(0)a_{22}(0)-a_{21}(0)a_{12}(0)}{a_{11}(0)}\widetilde{v}_{i,0}^{L}
\label{41} \\
&=&\underset{k=1}{\overset{i}{\sum }}\frac{\widetilde{x}^{k}}{k!}\left[
\left( \frac{a_{21}(0)}{a_{11}(0)}a_{11}^{(k)}(0)-a_{21}^{(k)}(0)\right) 
\widetilde{u}_{i-k,0}^{L}+\left( \frac{a_{21}(0)}{a_{11}(0)}%
a_{12}^{(k)}(0)-a_{22}^{(k)}(0)\right) \widetilde{v}_{i-k,0}^{L}\right] . 
\notag
\end{eqnarray}
The above second order differential equation is now posed as an 
equation in $(0,\infty)$ 
and supplemented with the two ``boundary'' conditions 
\begin{equation}
\label{42}
\widetilde{v}_{i,0}^{L}(0) = -v_{i,0}(0), 
\qquad 
\widetilde{v}_{i,0}^L(\widetilde x) \rightarrow 0 \qquad \mbox{ as $\widetilde x\rightarrow \infty$}.
\end{equation}
\end{subequations}
So, solving (\ref{eq:41-42}) gives us $\widetilde{v}_{i,0}^{L}$
and then from (\ref{40}) we get $\widetilde{u}_{i,0}^{L}$.  
Inductively, we obtain $\widetilde{v}_{i,0}^L$ and $\widetilde{u}_{i,0}^L$
for all $i \ge 0$. 

Next, we set 
\begin{equation}
\label{eq:42a}
\widehat v_{i,0}^L = \widehat v_{i,1}^L = 0, 
\end{equation}
and we solve with $j=0$ (\ref{eq:case-IV-recurrence-hat}, 1$^\text{st}$ eqn) 
for $\widehat{u}_{i,0}^{L}$ 
(using $\widehat{v}_{i,0}^{L}=0$) with boundary conditions from $u_{i,0}$:
\begin{equation}
\left\{ 
\begin{array}{c}
-\left( \widehat{u}_{i,0}^{L}\right) ^{\prime \prime }+a_{11}(0)\widehat{u}%
_{i,0}^{L}=0 \\ 
\widehat{u}_{i,0}^{L}(0) = -u_{i,0}, 
\qquad \widehat{u}_{i,0}^{L}(\widehat x) \rightarrow 0 \quad \mbox{ for } \widehat x \rightarrow \infty.
\end{array}%
\right. \label{43}
\end{equation}
Then, we solve 
(\ref{eq:case-IV-recurrence-hat}, $2^\text{nd}$ eqn) for $\widehat{v}_{i,2}^{L}$:
\begin{equation}
\widehat{v}_{i,2}^{L}(z)=\int_z^\infty
\int_t^\infty a_{21}(0)\widehat{u}_{i,0}^{L}(\tau )d\tau dt.
\label{44}
\end{equation}
In general, assume we have performed the previous steps and we have determined 
$\widetilde{u}_{i,j}^{L}$, $\widetilde{v}_{i,j}^{L}$, 
$\widehat{u}_{i,j}^{L}$, $\widehat{v}_{i,j+2}^{L}$ 
for all $i\geq 0$ and second index up to $j$. To obtain the corresponding functions 
(with $j$ replaced by $j+1$) we proceed analogously.
We first solve (\ref{eq:case-IV-recurrence-tilde}, 1$^{\text{st}}$ eqn) 
for $\widetilde{u}_{i,j+1}^{L},$
\begin{equation}
\widetilde{u}_{i,j+1}^{L}=-\frac{a_{12}(0)}{a_{11}(0)}\widetilde{v}%
_{i,j+1}^{L}+\frac{\left( \widetilde{u}_{i,j-1}^{L}\right) ^{\prime \prime }%
}{a_{11}(0)}-\frac{1}{a_{11}(0)}\underset{k=1}{\overset{i}{\sum }}\frac{%
\widetilde{x}^{k}}{k!}\left[ a_{11}^{(k)}(0)\widetilde{u}%
_{i-k,j+1}^{L}+a_{12}^{(k)}(0)\widetilde{v}_{i-k,j+1}^{L}\right] ,
\label{51}
\end{equation}
and plug it into (\ref{eq:case-IV-recurrence-tilde}, 2$^{\text{nd}}$ eqn):
\begin{subequations}
\label{eq:52-53}
\begin{eqnarray}
\nonumber 
\lefteqn{
-\left( \widetilde{v}_{i,j+1}^{L}\right) ^{\prime \prime }+\frac{%
a_{11}(0)a_{22}(0)-a_{21}(0)a_{12}(0)}{a_{11}(0)}\widetilde{v}_{i,j+1}^{L}=-%
\frac{a_{21}(0)}{a_{11}(0)}\left( \widetilde{u}_{i,j-1}^{L}\right) ^{\prime
\prime }+ } \\
&&
+\underset{k=1}{\overset{i}{\sum }}\frac{\widetilde{x}^{k}}{k!}\left[ \left( 
\frac{a_{21}(0)}{a_{11}(0)}a_{11}^{(k)}(0)-a_{21}^{(k)}(0)\right) \widetilde{%
u}_{i-k,j+1}^{L}+\left( \frac{a_{21}(0)}{a_{11}(0)}%
a_{12}^{(k)}(0)-a_{22}^{(k)}(0)\right) \widetilde{v}_{i-k,j+1}^{L}\right] .
\qquad 
\label{52}
\end{eqnarray}
The second order ODE, equation (\ref{52}), is supplemented with the boundary conditions
\begin{equation}
\widetilde{v}_{i,j+1}^{L}(0) =- \left(
v_{i,j+1}(0)+\widehat{v}_{i,j+1}^{L}(0)\right), 
\qquad \widetilde{v}_{i,j+1}^L (\widetilde x) \rightarrow 0 \quad \mbox{ for } 
\widetilde x \rightarrow \infty.
\label{53}
\end{equation}
\end{subequations}
%
%with $\widehat{v}_{i,0}^{L}=\widehat{v}_{i,1}^{L} = 0$ for any $i\geq 0$. 
This
gives us $\widetilde{v}_{i,j+1}^{L}$ and in turn 
$\widetilde{u}_{i,j+1}^{L}$ from 
(\ref{51}).

Next, we solve (\ref{eq:case-IV-recurrence-hat}, 1$^{\text{st}}$ eqn) for $\widehat{u}_{i,j+1}^{L}$
with boundary conditions from $u_{i,j+1}$ and $\widetilde{u}_{i,j+1}^{L}$:
\begin{subequations}
\label{eq:54-55}
\begin{eqnarray}
-\left( \widehat{u}_{i,j+1}^{L}\right) ^{\prime \prime }+a_{11}(0)\widehat{u}%
_{i,j+1}^{L} &=& a_{12}(0) \widehat v_{i,j+1}^L - \nonumber \\
&-& \underset{k=1}{\overset{\min \{i,j+1\}}{\sum }}\frac{%
\widehat{x}^{k}}{k!}\left( a_{11}^{(k)}(0)\widehat{u}%
_{i-k,j+1-k}^{L}-a_{12}^{(k)}(0)\widehat{v}_{i-k,j+1-k}^{L}\right)
\label{54} \\
\label{55a}
\widehat{u}_{i,j+1}^{L}(0) &=&- \left(
u_{i,j+1}(0)+\widetilde{u}_{i,j+1}^{L}(0)\right) , \\
\widehat{u}_{i,j+1}^{L}(\widehat x) &\rightarrow & 0 \quad \mbox{ for } \widehat x \rightarrow \infty.
\label{55b}
\end{eqnarray}
\end{subequations}
Finally, we solve (\ref{eq:case-IV-recurrence-hat}, 2$^{\text{nd}}$ eqn) for $\widehat{v}_{i,j+3}^{L}$:
\begin{equation}
\widehat{v}_{i,j+3}^{L}(z)=\underset{k=0}{\overset{\min \{i,j+1\}}{\sum }}%
\frac{1}{k!}\int_z^\infty \int_t^\infty 
\tau^{k}\left\{ a_{21}^{(k)}(0)\widehat{u}%
_{i-k,j+1-k}^{L}(\tau )+a_{22}^{(k)}(0)\widehat{v}_{i-k,j+1-k}^{L}(\tau
)\right\} d\tau dt.  \label{56}
\end{equation}

The following theorem establishes the regularity of the functions 
$\widetilde{u}_{i,j}^{L}$, $\widetilde{v}_{i,j}^{L}$, 
$\widehat{u}_{i,j}^{L}$, $\widehat{v}_{i,j}^{L}.$

\begin{thm}
\label{thm11}
Assume that $f$, $g$, and ${\mathbf A}$ satisfy (\ref{3}) 
and (\ref{eq:A-positive-definite}). 
Let $\widetilde{u}_{i,j}^{L}$, $\widetilde{v}_{i,j}^{L}$,
$\widehat{u}_{i,j}^{L}$, $\widehat{v}_{i,j}^{L}$ 
be defined recursively as above, i.e., they solve 
(\ref{40}), (\ref{eq:41-42}), (\ref{43}), (\ref{eq:42a}), (\ref{44})
for the case $j= 0$ 
and, for $j \ge 1$ 
(\ref{51}), (\ref{eq:52-53}), (\ref{eq:54-55}), (\ref{56}). 
Set 
\begin{eqnarray*}
\overline{a}&:=&
\max\left\{ a_{11}(0), \ \frac{a_{11}(0)a_{22}(0)-a_{21}(0)a_{12}(0)}{a_{11}(0)}\right\} > 0,\\
\underline{a}&:=&
\min\left\{ a_{11}(0), \ \frac{a_{11}(0)a_{22}(0)-a_{21}(0)a_{12}(0)}{a_{11}(0)}\right\} > 0, \\
\decay{z} &:=& 
\begin{cases}
e^{-\underline{a} \operatorname*{Re}(z)} & \mbox{ for $\operatorname*{Re}(z) \ge 0$} \\
e^{-\overline{a} \operatorname*{Re}(z)} & \mbox{ for $\operatorname*{Re}(z) < 0$.} 
\end{cases}
\end{eqnarray*}
Then the functions $\widetilde u_{ij}^L$, $\widetilde v_{ij}^L$, 
$\widehat u_{ij}^L$, $\widehat v_{ij}^L$ are entire functions and 
there exist positive constants 
$C_{\widetilde{u}}$, $C_{\widetilde{v}}$,
$C_{\widehat{u}}$, $C_{\widehat{v}}$, 
$C_{i}$, $K_{i}$, $\overline{K}_{i}$, $i=1,...,4$,
independent of $\varepsilon $ and $\mu $ such that
\begin{eqnarray}
\left\vert \widetilde{u}_{i,j}^{L}(z)\right\vert &\leq & C_{1}{K}%
_{1}^{i}\overline{K}_{1}^{j}\left( C_{\widetilde{u}}(i+j)+\left\vert z\right\vert
\right) ^{2(i+j)}\frac{1}{(i+j)!} \decay{z},
\label{57}\\
\left\vert \widetilde{v}_{i,j}^{L}(z)\right\vert &\leq & C_{2}{K}%
_{2}^{i}\overline{K}_{2}^{j}\left( C_{\widetilde{v}}(i+j)+\left\vert z\right\vert
\right) ^{2(i+j)}\frac{1}{(i+j)!} \decay{z},
\label{58}
\\
\left\vert \widehat{u}_{i,j}^{L}(z)\right\vert &\leq & C_{3}{K}%
_{3}^{i}\overline{K}_{3}^{j}\left( C_{\widehat{u}}(i+j)+\left\vert z\right\vert \right)
^{2(i+j)}\frac{1}{(i+j)!} \decay{z},
\label{59}
\\
\left\vert \widehat{v}_{i,j+2}^{L}(z)\right\vert &\leq & C_{4}{K}%
_{4}^{i}\overline{K}_{4}^{j}\left( C_{\widehat{v}}(i+j)+\left\vert z\right\vert \right)
^{2(i+j)}\frac{1}{(i+j)!} \decay{z};
\label{60}
\end{eqnarray}
furthermore, $\widehat v^L_{i,0} = \widehat v^L_{i,1} \equiv  0$.  
\end{thm}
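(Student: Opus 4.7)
The plan is a double induction, outer on $j$ and inner on $i$, following the sequential recursive structure laid out in (\ref{40})--(\ref{56}). Within one step of the outer induction, four subproblems are solved in sequence at each level $(i,j+1)$: (i) the constant-coefficient ODE (\ref{eq:41-42}) or (\ref{eq:52-53}) for $\widetilde{v}_{i,j+1}^L$; (ii) the explicit algebraic identity (\ref{40}) or (\ref{51}) for $\widetilde{u}_{i,j+1}^L$; (iii) the constant-coefficient ODE (\ref{43}) or (\ref{eq:54-55}) for $\widehat{u}_{i,j+1}^L$; and (iv) the double integration (\ref{44}) or (\ref{56}) for $\widehat{v}_{i,j+3}^L$. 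The coefficients $a_{11}(0)$ and $[a_{11}(0)a_{22}(0)-a_{21}(0)a_{12}(0)]/a_{11}(0)$ of the two ODEs are strictly positive by (\ref{eq:diagonal-positive})--(\ref{eq:det-positive}), which delivers the rates $\underline{a}$ and $\overline{a}$ appearing in $\decay{z}$. Lemma~\ref{lemma:scalar-bvp-constant-coefficients} handles the ODE solves, and Lemma~\ref{lemma:two-anti-derivatives} handles the double integrations.

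The base case $(i,j)=(0,0)$ is immediate: the Taylor sums in (\ref{eq:41-42}) and (\ref{43}) are empty, the right-hand sides vanish, and the boundary data $v_{0,0}(0)$ and $u_{0,0}(0)$ are controlled by Lemma~\ref{lem10}, giving (\ref{57})--(\ref{59}) at $(0,0)$ with polynomial degree zero; (\ref{60}) then follows by applying Lemma~\ref{lemma:two-anti-derivatives} to (\ref{44}). For the inductive step, I would bound the right-hand side of (\ref{52}) as follows: the term $(\widetilde{u}_{i,j-1}^L)''$ is controlled by Lemma~\ref{lemma:derivatives-of-entire-fcts} applied to the induction hypothesis (\ref{57}) at $(i,j-1)$; each summand $\widetilde{x}^k a_{pq}^{(k)}(0)/k!\cdot \{\widetilde{u},\widetilde{v}\}^L_{i-k,j+1}$ combines the inner-induction bound at $(i-k,j+1)$ with (\ref{3}), where the factor $k!$ cancels and the geometric $\gamma_a^k$ is absorbed into the $K_i^i$-factor; the boundary datum $-(v_{i,j+1}(0)+\widehat{v}_{i,j+1}^L(0))$ is bounded via Lemma~\ref{lem10} and the outer-induction hypothesis on $\widehat{v}$. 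Feeding this into Lemma~\ref{lemma:scalar-bvp-constant-coefficients} produces (\ref{58}) at level $(i,j+1)$, and (\ref{57}) then follows from (\ref{51}). The treatments of $\widehat{u}_{i,j+1}^L$ and $\widehat{v}_{i,j+3}^L$ are entirely analogous; the favorable feature of the $\widehat{x}$-scale recursions (\ref{54}), (\ref{56}) is that they invoke only indices $(i-k,j+1-k)$, in which both coordinates decrease simultaneously, so the Taylor sums collapse without requiring a separate sub-induction on $i$.

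The main technical obstacle is the bookkeeping of the polynomial-times-exponential template $(C_*(i+j)+|z|)^{2(i+j)}\decay{z}/(i+j)!$ under these operations. Lemma~\ref{lemma:scalar-bvp-constant-coefficients} raises the polynomial degree by only $1$ per ODE solve, yet each step of the outer induction has a degree budget of exactly $2$ (since $2(i+j+1)-2(i+j)=2$), which exactly accommodates the two ODE solves per level. The multiplications by $\widetilde{x}^k$ in the Taylor sums raise the degree by up to $k=O(i)$, but they are absorbed using the elementary inequality $|z|^k(C_*(i+j+1-k)+|z|)^{2(i+j+1-k)} \leq (C_*(i+j+1))^{-k}(C_*(i+j+1)+|z|)^{2(i+j+1)}$ (obtained from a binomial lower bound, in the spirit of Lemma~\ref{lemma:elementary-properties-of-factorial}) together with the Taylor $1/k!$ and the analyticity factor $\gamma_a^k$. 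Verifying that the resulting finite system of algebraic inequalities on the constants $C_{\widetilde{u}},C_{\widetilde{v}},C_{\widehat{u}},C_{\widehat{v}}$ and the geometric factors $K_i,\overline{K}_i$ closes -- with $C_*$ chosen large relative to $\gamma_a$, $\alpha$, and the constants of Lemma~\ref{lem10} -- is the only genuinely delicate part of the argument, and it follows the template of \cite[Lemma~{7.3.6}]{mB}.
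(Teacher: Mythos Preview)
Your proposal is correct and follows essentially the same double-induction scheme as the paper (outer on $j$, inner on $i$ for the $\widetilde{\ }$-functions only), invoking the same auxiliary results (Lemmas~\ref{lemma:scalar-bvp-constant-coefficients}, \ref{lemma:two-anti-derivatives}, \ref{lemma:derivatives-of-entire-fcts}, and \ref{lem10}) and the same polynomial-degree/factorial bookkeeping that the paper carries out in detail in Appendix~\ref{appendix:case_4_thm11}. Your within-step order $\widetilde v\to\widetilde u\to\widehat u\to\widehat v$ matches the construction order (\ref{51})--(\ref{56}) and is in fact the logically sound one: the paper's sketch and appendix list $\widehat v_{i,j+3}$ first, but since the $k=0$ term in (\ref{56}) involves $\widehat u_{i,j+1}$, that estimate must actually come last, exactly as you arrange it.
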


\begin{proof}
The proof is by induction on $j$ and $i$. After establishing the claims for the
base cases $(i,j) = (0,0)$, $(i,j) \in \{0\} \times \N$, $(i,j) \in \N \times \{0\}$ 
one shows it by induction on $j$ with induction arguments on $i$ as parts 
of the induction argument in $j$. 
The structure of the equations defining 
$\widetilde u_{i,j}^L$, 
$\widetilde v_{i,j}^L$, 
$\widehat u_{i,j}^L$, 
$\widehat v_{i,j+2}^L$, 
is such that one can proceed successively in the induction argument
on $j$ by providing estimates for $\widehat v_{i,j+2}$, $\widetilde v_{i,j}$, 
$\widetilde u_{i,j}^L$, $\widehat u_{i,j}^L$ in turn. In these estimates 
estimates, one relies on Lemma~\ref{lemma:scalar-bvp-constant-coefficients}
for the estimates for $\widetilde v_{i,j}^L$, $\widehat u_{i,j}^L$, 
on Lemma~\ref{lemma:two-anti-derivatives} for $\widehat v_{i,j+2}^L$, 
and on Lemma~\ref{lemma:derivatives-of-entire-fcts} for 
$\widetilde u_{i,j}^L$. 

\iftechreport
For details, see Appendix~\ref{appendix:case_4_thm11}.
\else
For details, see \cite[Appendix~\ref{appendix:case_4_thm11}]{mxo-1}
\fi
%
%{\bf the estimate for $\widehat v_{i,j+2}^L$ is somewhat %suboptimal}
%
\end{proof}
We conclude this section by showing that the boundary layer functions
are in fact entire: 
\begin{cor}
\label{lemma:bdy-layer-fct-entire}
The functions $\widetilde{\mathbf U}^L_{i,j}$ and $\widehat {\mathbf U}^L_{i,j}$
are entire functions, and there exist constants $C$, $\gamma_1$, $\gamma_2$, $\beta > 0$
independent of $i$, $j$, $n$, such that for all $x \ge 0$
\begin{eqnarray*}
|\widetilde u_{i,j}^{(n)}(x) | + 
|\widetilde v_{i,j}^{(n)}(x)| + 
|\widehat u_{i,j}^{(n)}(x)|+
|\widehat v_{i,j+2}^{(n)}(x)| 
\leq C e^{-\beta x}\gamma_1^{i+j} (i+j)^{i+j} \gamma_2^n \qquad \forall n \in \N_0. 
\end{eqnarray*}
\end{cor}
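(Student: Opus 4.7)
The entirety of the functions is already asserted as part of Theorem~\ref{thm11}, so the only task is the derivative bound. The plan is to convert the pointwise complex-plane bound of Theorem~\ref{thm11} into a derivative bound via Cauchy's integral formula, using a circle whose radius grows linearly with $n$; the radius must be chosen to balance the factor $n!$ produced by Cauchy against the factor $(i+j)!$ appearing in the denominator of the Theorem~\ref{thm11} estimate.

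Fix $x\geq 0$, $n\in\N$, and let $f$ denote any one of the scalar components $\widetilde u^L_{i,j}$, $\widetilde v^L_{i,j}$, $\widehat u^L_{i,j}$, $\widehat v^L_{i,j+2}$. I apply Cauchy's formula on the circle $|z-x|=r$ with the choice $r:=n+1$, so that
\[
|f^{(n)}(x)|\;\leq\;\frac{n!}{r^n}\max_{|z-x|=r}|f(z)|.
\]
On this circle $|z|\leq x+r$ and $\operatorname*{Re}(z)\in[x-r,x+r]$, and a case distinction on the sign of $\operatorname*{Re}(z)$ in the definition of $\decay{\cdot}$ yields the uniform bound $\decay{z}\leq e^{(\oa+\ua)r}e^{-\ua x}$. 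Inserting this together with the estimate from Theorem~\ref{thm11},
\[
|f(z)|\;\leq\; C K^{i+j}\,\frac{(C(i+j)+|z|)^{2(i+j)}}{(i+j)!}\,\decay{z},
\]
gives
\[
|f^{(n)}(x)|\;\leq\; C\,\frac{n!}{r^n}\,K^{i+j}\,\frac{(C(i+j)+x+r)^{2(i+j)}}{(i+j)!}\,e^{(\oa+\ua)r}\,e^{-\ua x}.
\]

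To extract the desired product form, I split the polynomial factor using $(a+b+c)^m\leq 4^m(a^m+b^m+c^m)$ and handle the three pieces separately. The piece $(C(i+j))^{2(i+j)}/(i+j)!$ is bounded by $C\gamma^{i+j}(i+j)^{i+j}$ by Stirling's formula. The piece $x^{2(i+j)}/(i+j)!$, after pairing with half the exponential $e^{-\ua x/2}$, is bounded by $C\gamma^{i+j}(i+j)^{i+j}$ by the sup-estimate (\ref{eq:lemma:estimate-remainder-case-II-2}) of Lemma~\ref{lemma:elementary-properties-of-factorial} followed once more by Stirling; the remaining $e^{-\ua x/2}$ becomes the final factor $e^{-\beta x}$ with $\beta:=\ua/2$. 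The piece $r^{2(i+j)}/(i+j)!$ with $r=n+1$ is bounded by $C\gamma^{i+j}(i+j)^{i+j}\gamma^n$ via the elementary inequality $n^{2k}/k!\leq C^{k+n}k^k$, which reduces by Stirling to the observation that $k\log(n/k)\leq n/e$ whenever $k\leq n$ (and is trivial for $k>n$). Finally the prefactor $(n!/r^n)\,e^{(\oa+\ua)r}$ with $r=n+1$ is geometric in $n$ by Stirling. The case $n=0$ follows directly from Theorem~\ref{thm11} together with Lemma~\ref{lemma:elementary-properties-of-factorial}.

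The main technical obstacle is the contribution $r^{2(i+j)}/(i+j)!$: the Cauchy radius must scale like $n$ in order to tame the $n!$ into a geometric factor, but this forces $n^{2(i+j)}$ into the polynomial estimate, and the balance against $(i+j)!$ in the denominator must yield the joint growth $(i+j)^{i+j}\gamma_2^n$ rather than the naive $n^{n}\cdot (i+j)^{i+j}$. The inequality $n^{2k}/k!\leq C^{k+n}k^k$ above is precisely what achieves this separation of $n$ and $i+j$ and produces the bound in the form stated in the corollary.
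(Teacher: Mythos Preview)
Your proof is correct and follows the same overall strategy as the paper: apply Cauchy's integral formula with contour $\partial B_{n+1}(x)$, control $\decay{z}$ on the contour, and then extract the product form $\gamma_1^{i+j}(i+j)^{i+j}\gamma_2^n$ from the polynomial factor.

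The one substantive difference is in how the polynomial $(C(i+j)+x+n+1)^{2(i+j)}/(i+j)!$ is handled. You split it into three pieces via $(a+b+c)^m\leq 4^m(a^m+b^m+c^m)$ and then work to separate the $n$- and $(i+j)$-dependence of the piece $r^{2(i+j)}/(i+j)!$ through the inequality $n^{2k}/k!\leq C^{k+n}k^k$, which you correctly reduce to $k\log(n/k)\leq n/e$. The paper proceeds more directly: after absorbing $x$ into the exponential (as you do), it writes
\[
\bigl(C(i+j)+n+1\bigr)^{2(i+j)}\;\leq\;(i+j)^{2(i+j)}\Bigl(1+\tfrac{n+1}{i+j}\Bigr)^{2(i+j)}\;\leq\;(i+j)^{2(i+j)}\,e^{2(n+1)},
\]
using only $(1+s/m)^{m}\leq e^{s}$. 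This one-line estimate replaces your entire ``main technical obstacle'' paragraph and yields the separation of $n$ and $i+j$ for free. Both routes are valid; the paper's is shorter and avoids the three-way split, while yours makes the balancing of $n!$ against $(i+j)!$ more explicit.
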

\begin{proof}
Theorem~\ref{thm11} already asserted that the boundary layer functions 
are entire. For the stated bound, let $n \in \N_0$, $x \in (0,\infty)$ 
and use Cauchy's integral theorem 
for derivatives with contour $\partial B_{n+1}(x)$. We illustrate the procedure
for $\widehat u_{i,j}$, the other cases being similar. Theorem~\ref{thm11} 
then yields  suitable constants $C$, $\gamma > 0$ such that
$$
|\widehat u_{i,j}^{(n)}(x)| 
\leq C \frac{n!}{(n+1)^n} \gamma^{i+j} \frac{1}{(i+j)!}(C_{\widehat u} (i+j) + x + n+1)^{2(i+j)} \decay{x} e^{\oa (n+1)}.
$$
With the aid of Lemma~\ref{lemma:elementary-properties-of-factorial}, we obtain by
suitably adjusting $C$ and $\gamma$, 
$$
|\widehat u_{i,j}^{(n)}(x)| 
\leq C \frac{n!}{(n+1)^n} \gamma^{i+j} \frac{1}{(i+j)!}(C_{\widehat u} (i+j)+ n+1)^{2(i+j)} 
e^{\oa (n+1)}e^{-\ua x}.
$$
Using the observation $((i+j) +n+1)^{2(i+j)} \leq (i+j)^{2(i+j)} (1 + (n+1)/(i+j))^{2(i+j)}
\leq (i+j)^{2(i+j)} e^{2(n+1)}$, allows us to conclude the proof. 
\end{proof}
Corollary~\ref{lemma:bdy-layer-fct-entire} shows that the terms defining 
the boundary layer contributions 
$\widetilde {\mathbf U}^M_{BL}$ and 
$\widehat {\mathbf U}^M_{BL}$ are indeed of boundary layer type. A summation
argument then shows that also 
$\widetilde {\mathbf U}^M_{BL}$ and 
$\widehat {\mathbf U}^M_{BL}$ have this property provided $M_1$ and $M_2$
are not ``too large'', viz., $M_1 = O(\mu^{-1})$ 
and $M_2 = O((\mu/\varepsilon)^{-1})$:

\begin{thm}
\label{thm:case-IV-bdy-layer-fct-entire}
There exist constants $C$, $\delta$, $\gamma$, $K> 0$, independent of $\varepsilon$ and $\mu$, such that 
under the assumptions
$\mu (M_1+1) K \leq 1$ and $\varepsilon/\mu (M_2+1) K\leq 1$, there holds 
for the boundary layer functions $\widetilde{\mathbf U}^M_{BL}$ and 
$\widehat{\mathbf U}^M_{BL}$ of (\ref{BL_Mc1}) and (\ref{BL_Mc2}), that, 
upon viewing $\widetilde{\mathbf U}^M_{BL}$ and 
$\widehat{\mathbf U}^M_{BL}$ as functions of $x$ (via the 
changes of variables $\widetilde x = x/\mu$, $\widehat x = x/\varepsilon$ 
etc.), we have 
$\widetilde {\mathbf U}^M_{BL} \in \BLinf(\delta \mu,C,\gamma)$ and 
$\widehat {\mathbf U}^M_{BL} \in \BLinf(\delta \varepsilon,C,\gamma)$. 
Furthermore, the second
component $\widehat v$ of $\widehat {\mathbf U}^M_{BL}$ satisfies the stronger assertion 
$\widehat v \in \BLinf(\delta \varepsilon,C (\varepsilon/\mu)^2,\gamma)$. 
\end{thm}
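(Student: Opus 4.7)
The proof amounts to a summation argument built on Corollary~\ref{lemma:bdy-layer-fct-entire}. The plan is: (i) apply the corollary to get a pointwise bound on the $n$-th derivative (in the fast variable) of each expansion coefficient $\widetilde u_{i,j}^L$, $\widetilde v_{i,j}^L$, $\widehat u_{i,j}^L$, $\widehat v_{i,j}^L$; (ii) pass the derivatives through the changes of variable $\widetilde x = x/\mu$ (resp.\ $\widehat x = x/\varepsilon$), each $x$-derivative contributing a factor $\mu^{-n}$ (resp.\ $\varepsilon^{-n}$); (iii) sum the resulting bounds over $0\leq i\leq M_1$, $0\leq j\leq M_2$, using the smallness hypotheses $\mu(M_1+1)K\leq 1$ and $(\varepsilon/\mu)(M_2+1)K\leq 1$.

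The main technical point is to decouple the double sum in step (iii), due to the coupling factor $(i+j)^{i+j}$ from the corollary. For this I would use the elementary estimate $(i+j)^{i+j} \leq (2e)^{i+j} i!\,j!$, which follows from $(i+j)^{i+j} \leq e^{i+j}(i+j)!$ (immediate from the Taylor series of $e^{i+j}$) combined with the binomial bound $(i+j)! \leq 2^{i+j}\,i!\,j!$. Setting $c := 2e\gamma_1$ (with $\gamma_1$ the constant from the corollary), the sum then factorizes:
\[
\sum_{i=0}^{M_1}\sum_{j=0}^{M_2} \mu^i (\varepsilon/\mu)^j \gamma_1^{i+j}(i+j)^{i+j}
\;\leq\; \Bigl(\sum_{i=0}^{M_1}(c\mu)^i i!\Bigr)\Bigl(\sum_{j=0}^{M_2} (c\varepsilon/\mu)^j j!\Bigr).
\]
Using $i!\leq i^i$ and $i\mu\leq M_1\mu\leq 1/K$ (and the analogous bound $(\varepsilon/\mu) j \leq 1/K$), each factor is dominated by a geometric series $\sum_{k\geq 0}(c/K)^k$, which converges uniformly in the parameters as soon as $K$ is chosen larger than $c$. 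Hence, for such $K$, the double sum is bounded by an absolute constant $C$ independent of $\mu$, $\varepsilon$, $M_1$, $M_2$.

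Combined with the decay factor $e^{-\beta\widetilde x}$ from the corollary, step (iii) yields
\[
|(\widetilde{\mathbf U}^M_{BL})^{(n)}(x)| \leq C\gamma_2^n\mu^{-n}e^{-\beta x/\mu}
\qquad \forall x\in I,\ \forall n\in\N_0,
\]
which matches the target $C\gamma^n(\delta\mu)^{-n}e^{-\dist(x,\partial I)/(\delta\mu)}$ upon setting $\delta:=1/\beta$, $\gamma:=\gamma_2/\beta$, and using $x\geq\dist(x,\partial I)$ for $x\in I$. The argument for $\widehat{\mathbf U}^M_{BL}$ is word-for-word the same with $\varepsilon$ replacing $\mu$ as the fast scale.

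For the sharper assertion on $\widehat v$, I would invoke the identities $\widehat v_{i,0}^L = \widehat v_{i,1}^L \equiv 0$ from Theorem~\ref{thm11}, which let us factor $(\varepsilon/\mu)^2$ out of the defining series, and then re-apply the same summation argument to the shifted indices $j\geq 2$; this produces the extra $(\varepsilon/\mu)^2$ in the constant. The only genuine obstacle is the combinatorial decoupling of $(i+j)^{i+j}$; everything else is bookkeeping and a geometric-series bound.
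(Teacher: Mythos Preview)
Your proposal is correct and follows precisely the approach the paper sketches: insert the bounds of Corollary~\ref{lemma:bdy-layer-fct-entire} into the defining sums and reduce to convergent geometric series via the smallness assumptions, with the sharper $\widehat v$ bound coming from $\widehat v_{i,0}^L=\widehat v_{i,1}^L=0$. Your explicit decoupling $(i+j)^{i+j}\le (2e)^{i+j}i!\,j!$ is exactly the kind of detail the paper suppresses by citing \cite[Thm.~3]{m} and \cite[Thms.~7.2.2,~7.3.3]{mB}.
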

\begin{proof}
We do not work out the details here since structurally 
similar arguments can be found, for example, 
in the proofs of \cite[Thm.~3]{m} or \cite[Thms.~{7.2.2}, {7.3.3}]{mB}. 
Essentially, by inserting the bounds of 
Corollary~\ref{lemma:bdy-layer-fct-entire}
in the sums defining $\widetilde{\mathbf U}^M_{BL}$, 
$\widehat{\mathbf U}^M_{BL}$ and using the conditions 
$\mu(M_1+1) K \leq 1$ and 
$\varepsilon/\mu (M_2+1) K \leq 1$ for $K$ sufficiently large, one obtains 
upper estimates in the form of (convergent) geometric series. We point 
out that the sharper estimates for the second component $\widehat v$ of 
$\widehat{\mathbf U}^M_{BL}$ stems from the fact that 
$\widehat v_{i,0} = \widehat v_{i,1} = 0$. 
\end{proof}
%--------------------------------------------
\subsection{Remainder estimates}
%--------------------------------------------
In this section, we analyze ${\mathbf R}_M$. This is done by estimating 
the residual $L_{\varepsilon,\mu} {\mathbf R}_M $ and then 
appealing to the stability estimate
(\ref{eq:a-priori}). We will estimate $L_{\varepsilon,\mu} {\mathbf W}_M - {\mathbf F}$, 
$L_{\varepsilon,\mu}\widetilde{\mathbf U}^M_{BL}$,  and $L_{\varepsilon,\mu}\widehat{\mathbf U}^M_{BL}$.  
%---------
\subsubsection{Remainder resulting from the outer expansion: control
of $L_{\varepsilon,\mu} {\mathbf W}_M - {\mathbf F}$}
%---------
\begin{thm}
\label{thm_case4_smooth_part}
Let $\mathbf{U}$ 
 be the solution to the problem \emph{(\ref{eq:model-problem})}. 
Then there exist $\gamma$, $C > 0$ depending only on $f$, $g$, and 
${\mathbf A}$ such that the following is true: 
If $M_1$, $M_2 \in \N$ are such that 
$\mu M_{1} \gamma <1$,  then with 
$\mathbf{W}_{M}$ given by \emph{(\ref{wMc})} we have 
\begin{equation*}
\left\Vert L_{\varepsilon,\mu }\left( \mathbf{U}-\mathbf{W}_{M}\right)
\right\Vert_{L^\infty(I)}\leq C \mu^2 \left[ \frac{1}{\mu-\varepsilon}\left( \mu M_{1} \gamma
\right) ^{M_{1}} + \left(\frac{\varepsilon}{\mu}\right)^{M_2+2}
\right]
\end{equation*}
\end{thm}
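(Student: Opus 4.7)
The plan is to compute $L_{\varepsilon,\mu}\mathbf{W}_M$ explicitly, use the outer recursion (\ref{eq:case-IV-recurrence-outer}) to cancel most interior contributions against the corresponding ``shifted'' differentiated terms (a two-parameter telescoping), and then control the surviving boundary-of-index residuals via the analyticity estimate of Lemma~\ref{lem10}.

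Applying $L_{\varepsilon,\mu}$ to (\ref{wMc}) and substituting $\mathbf{A}(x)\mathbf{U}_{ij} = \mathbf{F}_{ij} + (u_{i-2,j-2}'',v_{i-2,j}'')^T$ (recalling that only $\mathbf{F}_{00}=\mathbf{F}$ is nonzero), an index shift in the recursion terms together with the identities $\mu^2(\varepsilon/\mu)^2=\varepsilon^2$ and $\mu^2\cdot 1=\mu^2$ cancels the interior of the sums. What remains is
\begin{align*}
L_{\varepsilon,\mu}(\mathbf{U}-\mathbf{W}_M)_1 &= \varepsilon^2\Bigl[\sum_{i=M_1-1}^{M_1}\sum_{j=0}^{M_2} + \sum_{i=0}^{M_1-2}\sum_{j=M_2-1}^{M_2}\Bigr]\mu^i(\varepsilon/\mu)^j\, u_{ij}'',\\
L_{\varepsilon,\mu}(\mathbf{U}-\mathbf{W}_M)_2 &= \mu^2\sum_{i=M_1-1}^{M_1}\sum_{j=0}^{M_2}\mu^i(\varepsilon/\mu)^j\, v_{ij}''.
\end{align*}

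Next, I would combine Lemma~\ref{lem10} with Cauchy's integral formula on a small fixed contour interior to the complex analyticity set $G_\delta$ to obtain, uniformly in $j$, the bound $\|u_{ij}''\|_{L^\infty(I)}+\|v_{ij}''\|_{L^\infty(I)}\le C(K')^i i^i$, where $K'$ absorbs the chosen contour radius and the factor $\delta^{-i}$ from Lemma~\ref{lem10}. Substituting into the residuals, the $j$-sums are partial geometric series in $\varepsilon/\mu$ bounded by $1/(1-\varepsilon/\mu)=\mu/(\mu-\varepsilon)$, and the corner $i$-sums over $i\in\{M_1-1,M_1\}$ contribute at most $2(K'\mu M_1)^{M_1}$; combined with the prefactors $\varepsilon^2\le\mu^2$ and $\mu^2$ and using $\mu\le 1$, both yield the $C\mu^2/(\mu-\varepsilon)(\mu M_1\gamma)^{M_1}$ contribution on setting $\gamma\ge K'$. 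For the remaining piece with $j\in\{M_2-1,M_2\}$, the prefactor organizes as $\varepsilon^2(\varepsilon/\mu)^{M_2}=\mu^2(\varepsilon/\mu)^{M_2+2}$, and the inner sum $\sum_{i=M_2-1}^{M_1-2}(\mu K' i)^i$ is uniformly bounded because the hypothesis $\mu M_1\gamma<1$ forces every summand to lie below $1$ (and in fact supplies a convergent geometric bound).

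The main obstacle is the constants matching in the Cauchy step: Lemma~\ref{lem10} already loses an $i$-dependent factor $\delta^{-i}$, so the contour radius in Cauchy's formula must be chosen once and for all and absorbed consistently into the single $\gamma$ appearing in the hypothesis $\mu M_1\gamma<1$. A secondary subtlety is that the summand $(\mu K'i)^i$ is not monotone in $i$, but the uniform smallness $\mu K'i<1$ keeps every term below $1$ so the geometric-series bookkeeping goes through cleanly; the vanishings $u_{ij}=v_{ij}=0$ for $j>i$ or odd $i,j$ supplied by Lemma~\ref{lem10} shorten the nonzero ranges but are not essential to the argument.
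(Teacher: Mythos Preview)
Your approach is essentially identical to the paper's: the same telescoping of the outer recursion to isolate the boundary-of-index terms (your decomposition of the first component into $\sum_{i=M_1-1}^{M_1}\sum_{j=0}^{M_2}+\sum_{i=0}^{M_1-2}\sum_{j=M_2-1}^{M_2}$ is exactly the paper's index set $I_u$), the same appeal to Lemma~\ref{lem10} combined with Cauchy's integral formula at a fixed radius to bound $u_{ij}''$ and $v_{ij}''$, and the same geometric-series bookkeeping in $\varepsilon/\mu$ and in $(\mu K' i)^i$. One small slip: for the edge piece $j\in\{M_2-1,M_2\}$ the worst power is $(\varepsilon/\mu)^{M_2-1}$, so after multiplying by $\varepsilon^2$ you get $\mu^2(\varepsilon/\mu)^{M_2+1}$ rather than $\mu^2(\varepsilon/\mu)^{M_2+2}$ --- but this is precisely what the paper's own proof produces as well, so your argument matches it.
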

\begin{proof} We have
\begin{eqnarray*}
L_{\varepsilon,\mu }\left( \mathbf{U}-\mathbf{W}_{M}\right) &=&\left( 
\begin{array}{c}
f \\ 
g%
\end{array}%
\right) -\sum_{i=0}^{M_{1}}\sum_{j=0}^{M_{2}}\mu ^{i}\left( \frac{%
\varepsilon }{\mu }\right) ^{j}L_{\varepsilon,\mu }\left( 
\begin{array}{c}
u_{ij} \\ 
v_{ij}%
\end{array}%
\right) \\
&=&\left( 
\begin{array}{c}
f \\ 
g%
\end{array}%
\right) -\sum_{i=0}^{M_{1}}\sum_{j=0}^{M_{2}}\mu ^{i}\left( \frac{%
\varepsilon }{\mu }\right) ^{j}\left( 
\begin{array}{c}
-\varepsilon ^{2}u_{ij}^{\prime \prime }+a_{11}u_{ij}+a_{12}v_{ij} \\ 
-\mu ^{2}v_{ij}^{\prime \prime } + a_{21}u_{ij}+a_{22}v_{ij}%
\end{array}%
\right) .
\end{eqnarray*}
Defining the sets 
\begin{eqnarray}
\label{eq:Iu}
I_u &:= & \{(i,j) \colon i \leq M_1, \quad j \leq M_2, \quad 
i \ge M_1-1 \vee j \ge M_2 -1\}, \\
\label{eq:Iv}
I_v &:=&  \{(i,j) \colon i \leq M_1, \quad j \leq M_2, \quad 
M_1-1  \leq i \leq M_1\},
\end{eqnarray}
we see, after some calculations, that 
(\ref{eq:case-IV-recurrence-outer}) and Lemma \ref{lem10} imply 
\begin{eqnarray*}
L_{\varepsilon,\mu }\left( \mathbf{U}-\mathbf{W}_{M}\right) &=&\left( 
\begin{array}{c}
f \\ 
g%
\end{array}%
\right) -\sum_{i=0}^{M_{1}}\sum_{j=0}^{M_{2}}\mu ^{i}\left( \frac{%
\varepsilon }{\mu }\right) ^{j}\left\{ \left( 
\begin{array}{c}
f_{ij} \\ 
g_{ij}%
\end{array}%
\right) +\left( 
\begin{array}{c}
u_{i-2,j-2}^{\prime \prime }-\varepsilon ^{2}u_{ij}^{\prime \prime } \\ 
v_{i-2,j}^{\prime \prime }-\mu ^{2}v_{ij}^{\prime \prime }%
\end{array}%
\right) \right\} \\
&=&\left( 
\begin{array}{c}
\sum_{(i,j) \in I_u} \mu^{i+2} (\varepsilon/\mu)^{j+2} u_{i,j}^{\prime\prime}  \\
\sum_{(i,j) \in I_v} \mu^{i+2} (\varepsilon/\mu)^j v_{i,j}^{\prime\prime}  
\end{array}%
\right) .
\end{eqnarray*}
Hence, with the aid of Lemma~\ref{lem10} and Cauchy's Integral Theorem
for derivatives, we get for a fixed $\delta > 0$ in the statement of Lemma~\ref{lem10},
\begin{eqnarray*}
\|L_{\varepsilon,\mu} ({\mathbf U} - {\mathbf W}_M)\|_{L^\infty(I)} &\leq& 
C C_S \Bigl[
\mu^2 \sum_{i=0}^{M_1-2} (\mu i\delta^{-1} K)^{i} \sum_{j=M_2-1}^{M_2} (\varepsilon/\mu)^{j+2} + 
\mu^2 \sum_{i=M_1-1}^{M_1} (\mu i\delta^{-1} K)^{i} \sum_{j=0}^{M_2} (\varepsilon/\mu)^{j+2} \\
&&\mbox{} + 
\mu^2 \sum_{i=M_1-1}^{M_1} (\mu i\delta^{-1} K)^{i} \sum_{j=0}^{M_2} (\varepsilon/\mu)^j 
\Bigr].
\end{eqnarray*}
Hence, by selecting $\gamma = \delta^{-1} K /2$ we get 
\begin{eqnarray*}
\|L_{\varepsilon,\mu} ({\mathbf U} - {\mathbf W}_M)\|_{L^\infty(I)} 
&\leq& 
C C_S \Bigl[\mu^2 \left(\frac{\varepsilon}{\mu}\right)^{M_2+1} + 
(\mu M_1 \gamma)^{M_1-1}  \frac{1}{1-\varepsilon/\mu}
\Bigr] \\
&\leq& 
C C_S \mu^2 \Bigl[ \left(\frac{\varepsilon}{\mu}\right)^{M_2+1} + 
(\mu M_1 \gamma)^{M_1}  \frac{1}{\mu-\varepsilon}
\Bigr]. 
\end{eqnarray*}
\end{proof}
%---------
\subsubsection{Remainder resulting from the inner expansion 
on the $\varepsilon$-scale: $L_{\varepsilon,\mu} \widehat{\mathbf U}^M_{BL}$ }
%---------

We next consider the inner expansions. 
%From (\ref{28}) we have that the
%left inner (boundary layer) expansions associated with $\varepsilon $ are 
%%
%\begin{equation*}
%\widehat{u}_{BL}^{L}=\sum_{i}\sum_{j}\mu ^{i}(\varepsilon /\mu )^{j}\widehat{%
%u}_{i,j}^{L},\qquad \widehat{v}_{BL}^{L}=\sum_{i}\sum_{j}\mu
%^{i}(\varepsilon /\mu )^{j}\widehat{v}_{i,j}^{L},
%\end{equation*}
%%
%or, after dropping the superscript $L$, 
%%
%\begin{equation*}
%\widehat{{\mathbf{U}}}_{BL}=\sum_{i}\sum_{j}\mu ^{i}(\varepsilon /\mu
%)^{j}\left( 
%\begin{array}{c}
%\widehat{u}_{ij} \\ 
%\widehat{v}_{ij}%
%\end{array}%
%\right) .
%\end{equation*}
%
We will only consider the contribution $\widehat {\mathbf U}^M_{BL}$ from the left endpoint
as the contribution $\widehat {\mathbf V}^M_{BL}$ from the right endpoint is treated 
completely analogously. To simplify the notation, we drop the superscript $L$ in 
$\widehat u_{i,j}^L$, $\widehat v_{i,j}^L$. 

%The truncated approximation is given as (cf. (\ref{BL_Mc1})) 
%
%\begin{equation}
%\widehat{{\mathbf{U}}}_{BL}^{M}=\sum_{i=0}^{M_{1}}\sum_{j=0}^{M_{2}}\mu
%^{i}(\varepsilon /\mu )^{j}\left( 
%\begin{array}{c}
%\widehat{u}_{ij} \\ 
%\widehat{v}_{ij}%
%\end{array}%
%\right) .  \label{eq:ansatz3}
%\end{equation}
%
%We also expand the matrix ${\mathbf{A}}(x)$ as (recall $\widehat{x}=x/\varepsilon$) 
%${\mathbf{A}}(x)=\sum_{k=0}^{\infty }\varepsilon ^{k} \widehat{x}^{k}{\mathbf{A}}_{k}$, 
%with ${\mathbf{A}}_{k}$ given by (\ref{A_k}). \ 
In order to simplify the ensuing calculations, 
we employ the convention that 
\begin{equation}
\label{eq:convention3}
\widehat{u}_{ij}=\widehat{v}_{ij}=0\mbox{  for $i > M_1$ or $j > M_2$ and  }{%
\mathbf{A}}_{k}=0\quad \forall k<0, 
\end{equation}
and let the summation on $i$ and $j$ in the definition 
of $\widehat{\mathbf U}^M_{BL}$ run from $0$ to $\infty $. 
We recall that the differential operator $L_{\varepsilon,\mu}$ takes
the form (\ref{eq:L-on-hat-scale}) when applied to functions depending solely on $\widehat x$,
and compute $L_{\varepsilon,\mu} \widehat {\mathbf U}^M_{BL}$ 
(cf. (\ref{BL_Mc1}) for the definition of $\widehat{\mathbf U}^M_{BL}$): 
\begin{eqnarray*}
L_{\varepsilon,\mu }\widehat{{\mathbf{U}}}_{BL}^{M} &=&\sum_{i=0}^{\infty
}\sum_{j=0}^{\infty }\mu ^{i}\left(\frac{\varepsilon}{\mu} \right)^{j}\left( 
\begin{array}{c}
-\widehat{u}_{ij}^{\prime \prime } \\ 
-\frac{\mu ^{2}}{\varepsilon ^{2}}\widehat{v}_{ij}^{\prime \prime }%
\end{array}%
\right) +\sum_{i\geq 0}\sum_{j\geq 0}\sum_{k\geq 0}\mu ^{i}
\left(\frac{\varepsilon}{\mu}\right)^{j}
\left(\frac{\varepsilon}{\mu}\right )^{k}\mu ^{k}\widehat{x}^{k}{\mathbf A}_{k}\left( 
\begin{array}{c}
\widehat{u}_{ij} \\ 
\widehat{v}_{ij}%
\end{array}%
\right)  \\
&=&\sum_{i\geq 0}\sum_{j\geq 0}\mu ^{i}
\left(\frac{\varepsilon}{\mu}\right)^{j}\left\{ \left( 
\begin{array}{c}
-\widehat{u}_{ij}^{\prime \prime } \\ 
-\widehat{v}_{i,j+2}^{\prime \prime }%
\end{array}%
\right) +\sum_{k\geq 0}\widehat{x}^{k}{\mathbf A}_{k}\left( 
\begin{array}{c}
\widehat{u}_{i-k,j-k} \\ 
\widehat{v}_{i-k,j-k}%
\end{array}%
\right) \right\} ,
\end{eqnarray*}
where the fact that $\widehat{v}_{k,0}=\widehat{v}_{k,1}=0$ was used. We see
that \textquotedblleft equating like powers of $\mu $ and $\varepsilon /\mu $
\textquotedblright\ yields equation (\ref{eq:case-IV-recurrence-hat}), hence 
there will be no
contribution to the sums for when both $i=0,\ldots ,M_{1}$ and $j=0,\ldots ,M_{2}-2$.
Moreover, the convention (\ref{eq:convention3}) implies the following
restrictions on the sums: 
\begin{align}
\label{eq:LhatU^M-conditions-on-i-j-k}
k \leq \min \{i,j\}, 
\qquad 
i-k \leq M_{1}, 
\qquad 
j-k \leq M_{2}.
\end{align}
Thus, we have 
\begin{eqnarray}
\label{eq:LhatU^M}
\lefteqn{
L_{\varepsilon,\mu }\widehat{{\mathbf{U}}}_{BL}^{M}=}\\
\nonumber 
&& \sum_{\substack{ i,j:i\geq
M_{1}+1 \\ \text{or }j\geq M_{2}-1}}\mu ^{i}\left(\frac{\varepsilon}{\mu}\right)^j
\!\!\sum_{k=\max \{i-M_{1},j-M_{2}\}}^{\min \{i,j\}}\widehat{x}%
^{k}{\mathbf A}_{k}\left( 
\begin{array}{c}
\widehat{u}_{i-k,j-k} \\ 
\widehat{v}_{i-k,j-k}%
\end{array}%
\right) 
 - \sum_{i = 0}^{M_1} \sum_{j=M_2-1}^{M_2} \mu^i \left(\frac{\varepsilon}{\mu}\right)^j
          \left(\begin{array}{c} \widehat u_{i,j}^{\prime\prime} \\
 0 \end{array}\right).
\end{eqnarray}
Using the estimates (\ref{59}), (\ref{60}) for $\widehat u_{i,j}$, $\widehat v_{i,j}$ 
of Theorem~\ref{thm11}, one can estimate this triple sum to obtain 
the following result for the remainder on the positive real line: 
\begin{thm}
\label{thm_BL_3scales}
There exist $C$, $\gamma $, $\widetilde{K}>0$ depending only on ${\mathbf A}$, $f$, $g$ such
that under the assumptions 
\begin{equation*}
0 < \gamma \widehat{x}\varepsilon \leq 1 
\qquad \mbox{ and } \qquad 
\mu (M_1 +1) \gamma  \leq 1
\qquad \mbox{ and } \qquad 
\frac{\varepsilon}{\mu}  (M_2 +1) \gamma  \leq 1,
\end{equation*}
we have 
\begin{equation*}
\left|L_{\varepsilon,\mu }\widehat{{\mathbf{U}}}_{BL}^{M}(\widehat{x}%
)\right| \leq Ce^{-\ua \widehat{x}/2}\left( (\widetilde{K}%
(M_{1}+1)\mu )^{M_{1}}+(\widetilde{K}(M_{2}+1)\varepsilon /\mu
)^{M_{2}-1}\right) .
\end{equation*}
For $\gamma \widehat{x}\varepsilon >1$ we have, under the same conditions 
on $(M_{1}+1)\mu $ and $(M_{2}+1)\varepsilon /\mu $, that 
\begin{equation*}
\left| L_{\varepsilon,\mu }\widehat{{\mathbf{U}}}_{BL}^{M}(\widehat{x}%
)\right| \leq Ce^{-\ua \widehat{x}/2}.
\end{equation*}
\end{thm}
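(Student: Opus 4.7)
The starting point is the identity (\ref{eq:LhatU^M}), into which I substitute the pointwise bounds (\ref{59}), (\ref{60}) from Theorem~\ref{thm11} together with $\|{\mathbf A}_k\|\leq C_a\gamma_a^k$ from (\ref{3}), reducing everything to estimating two explicit sums. The key algebraic manipulation is the reindexing $i':=i-k$, $j':=j-k$ in the triple sum, which rewrites the small parameters as $\mu^i(\varepsilon/\mu)^j\widehat x^k = \mu^{i'}(\varepsilon/\mu)^{j'}(\varepsilon\widehat x)^k$; after this change the index set is $0\leq i'\leq M_1$, $0\leq j'\leq M_2$, $k\geq 0$, subject to the exclusion condition ``$i'+k\geq M_1+1$ or $j'+k\geq M_2-1$'', and the three small parameters driving the estimate are $\mu$, $\varepsilon/\mu$, and $\gamma_a\varepsilon\widehat x$.

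To decouple the dependence on $(i',j')$ from the dependence on $\widehat x$, I invoke Lemma~\ref{lemma:elementary-properties-of-factorial} to split
\[
(C_{\widehat u}(i'+j')+\widehat x)^{2(i'+j')}\leq \gamma^{i'+j'}\bigl[(i'+j')^{2(i'+j')}+\widehat x^{2(i'+j')}\bigr].
\]
For the first summand, Stirling's estimate controls $(i'+j')^{2(i'+j')}/(i'+j')!$ by $(e(i'+j'))^{i'+j'}$. For the second, one half of the exponential $\decay{\widehat x}$ is spent via Lemma~\ref{lemma:elementary-properties-of-factorial} to absorb $\widehat x^{2(i'+j')}$ into a factor of shape $C^{i'+j'}(i'+j')^{i'+j'}$, leaving $e^{-\ua\widehat x/2}$ for the final bound. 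Both summands thus produce an envelope of the form $\widetilde K^{i'+j'}(i'+j')^{i'+j'}e^{-\ua\widehat x/2}$, and the inequality $(i'+j')^{i'+j'}\leq (2(i'+1))^{i'}(2(j'+1))^{j'}$ makes the double sum in $(i',j')$ factorize into one-dimensional geometric series.

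Under the hypotheses $\mu(M_1+1)\gamma\leq 1$, $(\varepsilon/\mu)(M_2+1)\gamma\leq 1$, and (in the first regime) $\gamma_a\varepsilon\widehat x\leq 1$, each of the three resulting sums over $i'$, $j'$, $k$ is dominated by a geometric series with ratio bounded below $1$ after adjusting $\widetilde K$. The exclusion condition forces every surviving term to carry either a factor $\mu^{M_1+1}$ or a factor $(\varepsilon/\mu)^{M_2-1}$, producing the two leading contributions $(\widetilde K(M_1+1)\mu)^{M_1}$ and $(\widetilde K(M_2+1)\varepsilon/\mu)^{M_2-1}$ in the stated bound. The two correction terms involving $\widehat u_{i,j}''$ for $j\in\{M_2-1,M_2\}$ are handled by bounding the second derivative via Lemma~\ref{lemma:derivatives-of-entire-fcts} and then reusing the same geometric summation; since these terms already carry a factor $(\varepsilon/\mu)^{M_2-1}$, they are subsumed. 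The regime $\gamma\widehat x\varepsilon>1$ is much easier: the factor $e^{-\ua\widehat x/2}$ is smaller than any negative power of $\varepsilon$ when $\widehat x\gtrsim 1/(\gamma\varepsilon)$, so crude estimates of the triple sum suffice, without invoking the exclusion condition. The principal obstacle is distributing $(i'+j')^{i'+j'}$ across the sums in $i'$ and $j'$ so that geometric decay is preserved against the cutoffs $M_1$, $M_2$, which is what forces the slight inflation of constants encoded in $\widetilde K$.
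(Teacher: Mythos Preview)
Your reindexing $i'=i-k$, $j'=j-k$ is a genuinely nice simplification, and the factorization $(i'+j')^{i'+j'}\le 2^{i'+j'}(i'+1)^{i'}(j'+1)^{j'}$ is the right way to separate the $(i',j')$ sum. But the argument breaks at the sentence ``the exclusion condition forces every surviving term to carry either a factor $\mu^{M_1+1}$ or a factor $(\varepsilon/\mu)^{M_2-1}$.'' In the factorized variables the summand is $q_1^{i'}q_2^{j'}q_3^{k}$ with $q_3=\gamma_a\varepsilon\widehat x$, and the exclusion reads $i'+k\ge M_1+1$ or $j'+k\ge M_2-1$. The term $i'=j'=0$, $k=M_1+1$ satisfies the exclusion yet contributes $q_3^{M_1+1}=(\gamma_a\varepsilon\widehat x)^{M_1+1}$, which under your hypothesis is only $\le 1$ and need not be $\le(\widetilde K(M_1+1)\mu)^{M_1}$. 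Since you have already spent a full half of $e^{-\ua\widehat x}$ on $\widehat x^{2(i'+j')}$ and reserved the other half for the final bound, there is no exponential budget left to convert the residual $\widehat x^{k}$ into something comparable to $q_1$ or $q_2$. The paper's proof is built around exactly this difficulty: it spends only $e^{-\ua\widehat x/4}$ initially, reduces the $k$--range to its endpoints via the convexity Lemma~\ref{lemma:convexity} and Lemma~\ref{lemma:elementary-properties-of-convex-functions} (so that the remaining $\widehat x$--power is bounded by $M_1$ or $M_2$), and only then spends a further $e^{-\ua\widehat x/4}$ via Lemma~\ref{lemma:elementary-properties-of-factorial} to turn $(\varepsilon\widehat x)^{N}$ into $(C\varepsilon N)^{N}\le(C\mu N)^{N}$. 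Your approach can be repaired the same way: after summing the geometric tail in $k$ you are left with $\sum_{i'}q_1^{i'}q_3^{M_1+1-i'}$; multiply by $e^{-\ua\widehat x/4}$, bound $\widehat x^{M_1+1-i'}e^{-\ua\widehat x/4}\le(C(M_1+1))^{M_1+1-i'}$ termwise, and use $\varepsilon\le\mu$ to dominate the result by $(M_1+1)q_1^{M_1+1}$. But this requires reallocating your exponential bookkeeping.

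The second regime also has a gap. The triple sum (\ref{eq:LhatU^M}) comes from Taylor--expanding ${\mathbf A}$, and the only available estimate $\|{\mathbf A}_k\|\le C_a\gamma_a^{k}$ produces a series in $(\gamma_a\varepsilon\widehat x)^{k}$ which is not summable once $\gamma\varepsilon\widehat x>1$; ``crude estimates of the triple sum'' cannot work here. The paper instead abandons (\ref{eq:LhatU^M}) in this regime and applies $L_{\varepsilon,\mu}$ in its unexpanded form (\ref{eq:L-on-hat-scale}) directly to the exponentially decaying $\widehat{\mathbf U}^{M}_{BL}$ (cf.\ Theorem~\ref{thm:case-IV-bdy-layer-fct-entire}), absorbing the resulting $\varepsilon^{-2}$ into $e^{-\ua\widehat x/4}$ via $\varepsilon^{-1}\le\gamma\widehat x$.
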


\begin{proof}
The proof relies on using the estimates (\ref{59}), (\ref{60}) that are 
available for $\widehat u_{ij}$ and $\widehat v_{ij}$. The triple 
sum in (\ref{eq:LhatU^M})
is estimated by using convexity of the function 
$k \mapsto \gamma ^{k}(i-k)^{i-k}(j-k)^{j-k}$ and 
$k \mapsto k^{k}\gamma ^{k}(i-k)^{i-k}(j-k)^{j-k}$
and by considering the two following two cases separately: 
\begin{eqnarray*}
(i &\geq &M_{1}+1\ \vee \ j\geq M_{2}+1)\ \wedge \ (i-M_{1}\leq j-M_{2})\
\wedge \ (j-M_{2}\leq k\leq i)\ \wedge \ (i\leq j),  \label{eq:case1} \\
(i &\geq &M_{1}+1\ \vee \ j\geq M_{2}+1)\ \wedge \ (i-M_{1}\leq j-M_{2})\
\wedge \ (j-M_{2}\leq k\leq j)\ \wedge \ (j\leq i).  \label{eq:case2}
\end{eqnarray*}
\iftechreport
For details, see Appendix~\ref{appendix:thm_BL_3scales}
\else
For details, see \cite[Appendix~\ref{appendix:thm_BL_3scales}]{mxo-1}
\fi
\end{proof}

%---------
\subsubsection{Remainder resulting from the inner expansion 
on the $\mu$-scale: $L_{\varepsilon,\mu} \widetilde{\mathbf U}^M_{BL}$ }
%---------
In a similar fashion we may treat the left inner (boundary layer) expansion
associated with $\widetilde{x}=x/\mu $ (cf. (\ref{BL_Mc1})),
\begin{equation}
\widetilde{\mathbf{U}}_{BL}^{M}(\widetilde{x})=\sum_{i=0}^{M_{1}}%
\sum_{j=0}^{M_{2}}\mu ^{i}\left( \frac{\varepsilon }{\mu }\right) ^{j}\left( 
\begin{array}{c}
\widetilde{u}_{ij}(\widetilde{x}) \\ 
\widetilde{v}_{ij}(\widetilde{x})%
\end{array}%
\right) ,  \label{eq:ansatz4}
\end{equation}
where we have dropped the superscript $L$ for notational convenience. We recall
from (\ref{eq:L-on-tilde-scale}) that for functions $\widetilde {\mathbf U}$
depending only on the variable $\widetilde x$, 
the differential operator $L_{\varepsilon,\mu}$
takes the form $L_{\varepsilon,\mu} \widetilde {\mathbf U} = 
- \mu^{-2} {\mathbf E}^{\varepsilon,\mu} \widetilde {\mathbf U}^{\prime\prime}  
+ \sum_{k=0}^\infty \mu^k \widetilde x^k {\mathbf A}_k \widetilde {\mathbf U}$. 
In order to simplify the ensuing calculations, we employ the convention that 
\begin{equation}
\widetilde{u}_{ij}=\widetilde{v}_{ij}=0\mbox{ for $i > M_1$ or $j > M_2$ and
}{\mathbf{A}}_{k}=0\quad \forall k<0,  \label{eq:convention4}
\end{equation}
and let the summation in (\ref{eq:ansatz4}) run from $0$ to $\infty$ for both 
$i$ and $j$. We calculate 
\begin{eqnarray*}
L_{\varepsilon, \mu }\widetilde{\mathbf{U}}_{BL}^{M} &=&\sum_{i=0}^{\infty
}\sum_{j=0}^{\infty }\mu ^{i}(\varepsilon /\mu )^{j}\left( 
\begin{array}{c}
-(\varepsilon/\mu) ^{2}\widetilde{u}_{ij}^{\prime \prime } \\ 
-\widetilde{v}_{ij}^{\prime \prime }%
\end{array}%
\right) +\sum_{i\geq 0}\sum_{j\geq 0}\sum_{k\geq 0}\mu ^{i}(\varepsilon /\mu
)^{j}\mu ^{k}\widetilde{x}^{k}{\mathbf A}_{k}\left( 
\begin{array}{c}
\widetilde{u}_{ij} \\ 
\widetilde{v}_{ij}%
\end{array}%
\right)  \\
&=&\sum_{i\geq 0}\sum_{j\geq 0}\mu ^{i}(\varepsilon /\mu )^{j}\left\{ \left( 
\begin{array}{c}
-\widetilde{u}_{i,j-2}^{\prime \prime } \\ 
-\widetilde{v}_{ij}^{\prime \prime }%
\end{array}%
\right) +\sum_{k\geq 0}\widetilde{x}^{k}{\mathbf A}_{k}\left( 
\begin{array}{c}
\widetilde{u}_{i-k,j} \\ 
\widetilde{v}_{i-k,j}%
\end{array}%
\right) \right\} ,
\end{eqnarray*}
where the convention $\widetilde{u}_{k,-2}=\widetilde{u}_{k,-1}=0$ was used.
As expected from the derivation of 
(\ref{eq:case-IV-recurrence-tilde}), 
\textquotedblleft equating like powers of $\mu $ and 
$\varepsilon /\mu $\textquotedblright\ yields eqn.(\ref{eq:case-IV-recurrence-tilde}), 
hence there will be no
contribution to the sums for when both $i=0,\ldots ,M_{1}$ and $j=0,\ldots ,M_{2}$. 
Moreover, the convention (\ref{eq:convention4}) implies the following
restrictions on the sums: 
\begin{eqnarray}
\label{eq:restrictions-on-i-j-case-tilde}
i \leq M_1 && 
\quad \mbox{ for the terms involving 
$\widetilde u_{i,j-2}^{\prime\prime}$, $\widetilde v_{i,j}^{\prime\prime}$},\\
0 \leq i-k\leq M_{1} && \quad \mbox{ for the sum on $k$},\\
j \leq M_2 && 
\quad \mbox{ for the terms involving 
$\widetilde u_{i,j}$, $\widetilde v_{i,j}$, $\widetilde v_{i,j}^{\prime\prime}$, $\widetilde u_{i-k,j}$, $\widetilde v_{i-k,j}$},\\
j \leq M_2 -2 && 
\quad \mbox{ for the terms involving 
$\widetilde u_{i,j-2}^{\prime\prime}$}.
\end{eqnarray}
Hence, we arrive at
\begin{eqnarray}
\label{eq:LwidetildeU}
\lefteqn{
L_{\varepsilon,\mu }\widetilde{\mathbf{U}}_{BL}^{M}=}\\
&& \sum_{i=M_1+1}^\infty
\sum_{j=0}^{M_2} 
\mu ^{i}\left(\frac{\varepsilon}{\mu}\right)^j
\sum_{k=i-M_{1}}^{i}\widetilde{x}^{k}{\mathbf A}_{k}\left( 
\begin{array}{c}
\widetilde{u}_{i-k,j} \\ 
\nonumber 
\widetilde{v}_{i-k,j}%
\end{array}%
\right) 
+ \sum_{i = 0}^{M_1} \sum_{j=M_2+1}^{M_2+2} \mu^i 
\left(\frac{\varepsilon}{\mu}\right)^j 
\left(\begin{array}{c} - \widetilde u_{i,j-2}^{\prime\prime} \\ 0 \end{array}\right).\qquad 
\end{eqnarray}
Using the bounds of Theorem~\ref{thm2}, these sums will be estimated, 
when $\widetilde x > 0$, in the following:  

\begin{thm}
\label{thm_BL_3scales(b)}There exists $C$, $\gamma $, $\widetilde{K}>0$ such
that under the assumption 
\begin{equation*}
0 < \gamma \widetilde x \mu \leq 1, \qquad \mbox{ and } \qquad 
\gamma \mu (M_1+1)\leq 1 \qquad \mbox{ and } \qquad \frac{\varepsilon}{\mu} \gamma (M_2+1) \leq 1,
\end{equation*}
we have 
\begin{equation*}
\left|  L_{\varepsilon,\mu }\widetilde{{\mathbf{U}}}_{BL}^{M}(\widetilde{x}%
)\right|  \leq Ce^{-\ua \widetilde{x}/2}\left( (\widetilde{K}%
(M_{1}+1)\mu )^{M_{1}+1}+(\widetilde{K}(M_{2}+1)\varepsilon /\mu
)^{M_{2}+1}\right) .
\end{equation*}
For $\widetilde \mu \widetilde x\gamma > 1$ and the same assumptions on $\mu (M_1+1)$ and 
$\varepsilon/\mu (M_2+1)$, we have 
$$
\left|  L_{\varepsilon,\mu }\widetilde{{\mathbf{U}}}_{BL}^{M}(\widetilde{x}) \right|
\leq C e^{-\ua \widetilde x/2}. 
$$
\end{thm}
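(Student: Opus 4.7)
The plan is to start from the explicit expression (\ref{eq:LwidetildeU}), which splits $L_{\varepsilon,\mu}\widetilde{\mathbf U}_{BL}^M$ into two pieces: a triple sum indexed by $i \ge M_1+1$ (the leftover from truncating the $i$-sum) and a second piece indexed by $j \in \{M_2+1,M_2+2\}$ (the leftover from the $u$-component, which couples $\widetilde u_{i,j-2}''$ to the range $j \le M_2$). I would insert the bounds (\ref{57}), (\ref{58}) of Theorem~\ref{thm11} into the $\widetilde u_{i-k,j}$, $\widetilde v_{i-k,j}$ appearing in the first piece, and use Lemma~\ref{lemma:derivatives-of-entire-fcts} applied to (\ref{57}) to bound $(\widetilde u_{i,j-2}^L)^{\prime\prime}$ in the second piece. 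The coefficient matrices $\mathbf A_k$ are controlled by analyticity as $\|\mathbf A_k\| \le C_a \gamma_a^k$.

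The main bookkeeping step is the change of variables $\ell=i-k$ in the inner $k$-sum of the first piece, so that the product $\mu^i \widetilde x^k \gamma_a^k$ factors as $\mu^\ell (\mu\widetilde x\gamma_a)^k$. Under the hypothesis $\gamma\mu\widetilde x \le 1$ with $\gamma$ chosen so that $\mu\widetilde x\gamma_a \le 1/2$, the tail sum $\sum_{k\ge M_1+1-\ell}(\mu\widetilde x \gamma_a)^k$ is a geometric series contributing the boundary factor $(\mu\widetilde x\gamma_a)^{M_1+1-\ell}$, which combines with the prefactor $\mu^\ell$ to produce the overall $\mu^{M_1+1}$ scaling. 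The polynomial factor $(C(\ell+j)+\widetilde x)^{2(\ell+j)}/(\ell+j)!$ supplied by Theorem~\ref{thm11} is handled by the split (\ref{eq:lemma:estimate-remainder-case-II-1}), combined with Stirling in the form $n^{2n}/n! \le (en)^n$; the part carrying $\widetilde x^{2(\ell+j)}$ is absorbed into half of $\decay{\widetilde x}$ using (\ref{eq:lemma:estimate-remainder-case-II-2}), which is precisely the mechanism that produces the $e^{-\ua\widetilde x/2}$ (rather than $e^{-\ua\widetilde x}$) in the statement. The remaining double sum in $\ell$ and $j$ is then a geometric-type series in the small parameters $\mu(\ell+j)\gamma$ and $(\varepsilon/\mu)(\ell+j)\gamma$, which converges under the hypotheses $\mu(M_1+1)\gamma \le 1$ and $(\varepsilon/\mu)(M_2+1)\gamma \le 1$ and is bounded by a constant, leaving the boundary factor $(\widetilde K(M_1+1)\mu)^{M_1+1}$. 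The second piece is treated analogously: the restriction $j \in \{M_2+1,M_2+2\}$ forces out the factor $(\widetilde K(M_2+1)\varepsilon/\mu)^{M_2+1}$, and Lemma~\ref{lemma:derivatives-of-entire-fcts} costs only an extra constant because the shifted argument $|z|+1$ is absorbed using (\ref{eq:lemma:estimate-remainder-case-II-1}) again.

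For the second regime $\gamma\mu\widetilde x > 1$, the asymptotic reasoning above becomes useless because the geometric series in $k$ no longer gives a small factor. Instead, I would bound every individual $\widetilde u_{\ell,j}$, $\widetilde v_{\ell,j}$, and $\widetilde u_{i,j-2}''$ directly by a multiple of $e^{-\ua\widetilde x/2}$ (again using (\ref{eq:lemma:estimate-remainder-case-II-1})--(\ref{eq:lemma:estimate-remainder-case-II-2}) to absorb all polynomial-in-$\widetilde x$ factors at the cost of half the exponential) and sum crudely in $i$, $j$, $k$: under the hypotheses $\mu(M_1+1)\gamma\le 1$ and $(\varepsilon/\mu)(M_2+1)\gamma\le 1$, the $\ell$- and $j$-sums are geometric and the $k$-sum is also geometric because $\mu\widetilde x\gamma_a \le \gamma_a/(\mu\gamma) \cdot (\mu\widetilde x)^{-1}\cdot (\mu\widetilde x)^2$ is handled by choosing $\gamma$ large; alternatively, in this regime $e^{-\ua\widetilde x/2}$ is already exponentially small compared to any polynomial in $\widetilde x \mu$, so the extra powers of $\mu\widetilde x$ from the $k$-sum are harmless.

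The main obstacle is purely the bookkeeping: tracking how $\mu^\ell$, $(\varepsilon/\mu)^j$, the Stirling contributions $(\ell+j)^{\ell+j}$, the analyticity factor $\gamma_a^k$, and the $\widetilde x$-dependent factors from $(C(\ell+j)+\widetilde x)^{2(\ell+j)}$ combine to give a geometric series whose boundary yields $(\widetilde K(M_1+1)\mu)^{M_1+1}$ from the $i$-truncation and $(\widetilde K(M_2+1)\varepsilon/\mu)^{M_2+1}$ from the $j$-truncation. The convexity trick $k \mapsto \gamma^k(i-k)^{i-k}(j-k)^{j-k}$ used in the proof sketch of Theorem~\ref{thm_BL_3scales} is not directly needed here because the inner sum in (\ref{eq:LwidetildeU}) decouples the $k$-dependence from the indices of $\widetilde u, \widetilde v$; instead the corresponding identification of worst-case $(i,\ell,j,k)$ configurations is what has to be done case-by-case.
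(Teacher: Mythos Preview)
Your treatment of the first regime ($\gamma\mu\widetilde x \le 1$) is essentially sound and close to the paper's argument. Your observation that the convexity trick is unnecessary here is correct and worth noting: because in (\ref{eq:LwidetildeU}) the inner sum carries $\widetilde{\mathbf U}_{i-k,j}$ rather than $\widetilde{\mathbf U}_{i-k,j-k}$, the change of variables $\ell=i-k$ cleanly separates the geometric tail $\sum_{k\ge M_1+1-\ell}(\mu\widetilde x\gamma_a)^k$ from the $\ell,j$-dependent boundary-layer bound. The paper instead keeps the $k$-sum and invokes convexity of $k\mapsto \gamma^k(i+j-k)^{i+j-k}$ to reduce to the endpoints $k=i-M_1$ and $k=i$; both routes lead to the same $(\widetilde K\mu(M_1+1))^{M_1+1}$ factor after the geometric summation in $i$ and the absorption of $\widetilde x$-powers via Lemma~\ref{lemma:elementary-properties-of-factorial}. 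Your handling of the double sum via Lemma~\ref{lemma:derivatives-of-entire-fcts} matches what the paper does with Cauchy's integral formula.

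Your second regime, however, has a genuine gap. Once $\mu\widetilde x\gamma_a$ exceeds $1$, the infinite sum $\sum_{k\ge M_1+1-\ell}(\mu\widetilde x\gamma_a)^k$ that your change of variables produces \emph{diverges}; no exponential prefactor $e^{-\ua\widetilde x/2}$ can rescue a divergent series, and your algebraic manipulation ``$\mu\widetilde x\gamma_a \le \gamma_a/(\mu\gamma)\cdot(\mu\widetilde x)^{-1}\cdot(\mu\widetilde x)^2$'' does not establish smallness. The underlying issue is that the representation (\ref{eq:LwidetildeU}) comes from replacing $\mathbf A(x)$ by its Taylor series about $0$, and the termwise bound $\|\mathbf A_k\|\le C_a\gamma_a^k$ ceases to yield a convergent majorant once $x=\mu\widetilde x$ leaves the disk of radius $1/\gamma_a$. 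The paper's remedy is to \emph{abandon} (\ref{eq:LwidetildeU}) in this regime and work directly from $L_{\varepsilon,\mu}\widetilde{\mathbf U}^M_{BL} = -\mu^{-2}\mathbf E^{\varepsilon,\mu}(\widetilde{\mathbf U}^M_{BL})'' + \mathbf A(x)\widetilde{\mathbf U}^M_{BL}$: since $\mu^{-2}\mathbf E^{\varepsilon,\mu}=\operatorname{diag}((\varepsilon/\mu)^2,1)$ has norm at most $1$ and $\mathbf A$ is bounded on $I$, it suffices to know that $\widetilde{\mathbf U}^M_{BL}$ and its second $\widetilde x$-derivative are $O(e^{-3\ua\widetilde x/4})$, which follows from Corollary~\ref{lemma:bdy-layer-fct-entire} (or Theorem~\ref{thm:case-IV-bdy-layer-fct-entire}) together with Cauchy's formula for derivatives, exactly as in Step~3 of the proof of Theorem~\ref{thm_BL_3scales}. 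This is the missing idea in your second-regime argument.
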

\begin{proof}
The proof is split into two cases: for $\widetilde x \mu \gamma > 1$, 
one exploits the fact that $\widetilde{\mathbf U}^M_{BL}$ (and its 
derivatives) is exponentially small. For the converse case 
$\widetilde x \mu \gamma \leq 1$, one bounds the sums
(\ref{eq:LwidetildeU}). 
\iftechreport
For details, see Appendix~\ref{appendix:case_4_thm_BL_3scales(b)}
\else
For details, see \cite[\ref{appendix:case_4_thm_BL_3scales(b)}]{mxo-1}
\fi

\end{proof}
%----------------
\subsection{Boundary mismatch of the expansion}
%----------------
\begin{thm}
\label{thm:3scale-bdy-mismatch}
There exist constants $C$, $b$, $\gamma > 0$ such that under the assumptions
\begin{equation*}
\mu (M_1+1)\gamma \leq 1 \qquad \mbox{ and } \qquad \frac{\varepsilon}{\mu} (M_2+1)\gamma \leq 1,
\end{equation*}
one has 
$$
\|{\mathbf W}^M + \widetilde {\mathbf U}^M_{BL} + \widehat{\mathbf U}^M_{BL}
 + \widetilde {\mathbf V}^M_{BL} + \widehat{\mathbf V}^M_{BL}\|_{L^\infty(\partial I)} 
\leq  C \left[ e^{-b/\varepsilon} + e^{-b/\mu}\right]. 
$$
\end{thm}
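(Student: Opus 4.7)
The plan is to exploit the boundary conditions built into the recursion (\ref{eq:case-IV-recurrence-bc}): the left-endpoint triple $({\mathbf U}_{ij}, \widetilde{\mathbf U}^L_{ij}, \widehat{\mathbf U}^L_{ij})$ is constructed precisely so that the boundary values cancel at $x=0$, and analogously for the right-endpoint triple at $x=1$. Hence the mismatch on $\partial I$ comes solely from the ``wrong-side'' boundary layer contributions, which are exponentially small because a boundary layer function associated with one endpoint, when evaluated at the opposite endpoint, sees the full stretched length $1/\mu$ or $1/\varepsilon$.

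First I would evaluate the sum at $x=0$. Since, by (\ref{eq:case-IV-recurrence-bc}), each term satisfies ${\mathbf U}_{ij}(0)+\widetilde{\mathbf U}^L_{ij}(0)+\widehat{\mathbf U}^L_{ij}(0)=0$, summing $\mu^i(\varepsilon/\mu)^j$ times this identity over $0\le i\le M_1$, $0\le j\le M_2$ gives
\[
{\mathbf W}^M(0)+\widetilde{\mathbf U}^M_{BL}(0)+\widehat{\mathbf U}^M_{BL}(0)=0.
\]
The only surviving terms at $x=0$ are therefore $\widetilde{\mathbf V}^M_{BL}(0)$ and $\widehat{\mathbf V}^M_{BL}(0)$, which, in their native stretched variables, correspond to $\widetilde x^R = 1/\mu$ and $\widehat x^R = 1/\varepsilon$ respectively.

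Next I would bound each of these two sums pointwise. Using Corollary~\ref{lemma:bdy-layer-fct-entire} (with $n=0$) applied to the right-endpoint analogues of $\widetilde u^L_{ij},\widetilde v^L_{ij},\widehat u^L_{ij},\widehat v^L_{ij+2}$, each component is controlled by $C e^{-\beta\, \widetilde x^R}\gamma_1^{i+j}(i+j)^{i+j}$ and $C e^{-\beta\, \widehat x^R}\gamma_1^{i+j}(i+j)^{i+j}$ respectively. Evaluated at $\widetilde x^R=1/\mu$ and $\widehat x^R=1/\varepsilon$, this gives
\[
|\widetilde{\mathbf V}^M_{BL}(0)| \le C e^{-\beta/\mu}\sum_{i=0}^{M_1}\sum_{j=0}^{M_2}\mu^i\left(\tfrac{\varepsilon}{\mu}\right)^j \gamma_1^{i+j}(i+j)^{i+j},
\]
and the analogous estimate with $e^{-\beta/\varepsilon}$ for $\widehat{\mathbf V}^M_{BL}(0)$. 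Under the hypotheses $\mu(M_1+1)\gamma\le 1$ and $(\varepsilon/\mu)(M_2+1)\gamma\le 1$, the crude bound $(i+j)^{i+j}\le (M_1+M_2)^{i+j}\cdot\text{(absorbable factor)}$ combined with $\mu(M_1+1)$, $(\varepsilon/\mu)(M_2+1)$ being bounded by $1/\gamma$ lets me sum the resulting geometric series (after choosing $\gamma$ suitably large relative to $\gamma_1$). This yields
\[
|\widetilde{\mathbf V}^M_{BL}(0)|+|\widehat{\mathbf V}^M_{BL}(0)|\le C\bigl[e^{-b/\mu}+e^{-b/\varepsilon}\bigr]
\]
for some $b>0$.

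Finally, a fully symmetric argument at $x=1$ uses the right-endpoint analogue of (\ref{eq:case-IV-recurrence-bc}) to kill ${\mathbf W}^M(1)+\widetilde{\mathbf V}^M_{BL}(1)+\widehat{\mathbf V}^M_{BL}(1)$, leaving only $\widetilde{\mathbf U}^M_{BL}(1)$ and $\widehat{\mathbf U}^M_{BL}(1)$, which are evaluated at $\widetilde x=1/\mu$ and $\widehat x=1/\varepsilon$ and hence bounded in the same way. Combining the two endpoints gives the stated $L^\infty(\partial I)$ estimate. The only mildly delicate point is the geometric-series bookkeeping in the summation, which essentially repeats the argument of Theorem~\ref{thm:case-IV-bdy-layer-fct-entire}; no new conceptual difficulty arises beyond verifying that the smallness assumptions on $\mu(M_1+1)$ and $(\varepsilon/\mu)(M_2+1)$ exactly match what is needed to dominate the factor $(i+j)^{i+j}\gamma_1^{i+j}$.
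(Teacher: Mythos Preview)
Your proposal is correct and follows essentially the same approach as the paper: use the boundary condition (\ref{eq:case-IV-recurrence-bc}) to see that ${\mathbf W}^M(0)+\widetilde{\mathbf U}^M_{BL}(0)+\widehat{\mathbf U}^M_{BL}(0)=0$, then bound the surviving ``wrong-side'' contributions $\widetilde{\mathbf V}^M_{BL}(1/\mu)$ and $\widehat{\mathbf V}^M_{BL}(1/\varepsilon)$ via the exponential decay estimates of Theorem~\ref{thm11} (or equivalently its consequence Corollary~\ref{lemma:bdy-layer-fct-entire}). The paper's proof is terse and simply refers to Theorem~\ref{thm11}; you supply more detail on the geometric-series summation, which is indeed handled (as you note) exactly as in Theorem~\ref{thm:case-IV-bdy-layer-fct-entire}, for instance via $(i+j)^{i+j}\le (2e)^{i+j}i^ij^j$ so that the double sum factors.
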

\begin{proof}
For ${\mathbf R}_M(0)$ at the left endpoint, we note that ${\mathbf U}(0) = 0$ gives
$$
{\mathbf R}_M(0) = {\mathbf U}(0) - \left[ 
{\mathbf W}^M(0) + \widetilde{\mathbf U}^M_{BL}(0) + \widehat{\mathbf U}^M_{BL}(0) 
+  \widetilde{\mathbf V}^M_{BL}(1/\mu) + \widehat{\mathbf V}^M_{BL}(1/\varepsilon) 
\right].
$$
The condition (\ref{eq:case-IV-recurrence-bc}) for the boundary conditions of the 
left endpoint for the terms $\widehat {\mathbf U}_{i,j}$ and $\widetilde {\mathbf U}_{i,j}$,  
produces 
${\mathbf W}^M(0) + \widetilde{\mathbf U}^M_{BL}(0) + \widehat{\mathbf U}^M_{BL}(0)  = 0$. 
Hence, it remains to estimate 
$|\widetilde{\mathbf V}^M_{BL}(1/\mu)| + |\widehat{\mathbf V}^M_{BL}(1/\varepsilon) |$ 
which can be done based on Theorem~\ref{thm11}.  
\end{proof}
%----------------
\subsection{Proof of Theorem~\ref{thm:case-IV}} 
\label{sec:proof-of-thm:case-IV}
%----------------
%\begin{numberedproof}{Theorem~\ref{thm:case-IV}}
From the estimates for the residual $L_{\varepsilon,\mu} {\mathbf R}_M$ of 
Theorems~\ref{thm_case4_smooth_part}, \ref{thm_BL_3scales}, \ref{thm_BL_3scales(b)}
we infer the existence of $q > 0$ such that under the assumption 
$$
\mu \leq q \qquad \mbox{ and } \qquad \frac{\varepsilon}{\mu} \leq q,
$$
the choice $M_1 \sim 1/\mu$ and $M_2 \sim \mu/\varepsilon$ yields 
$$
\|L_{\varepsilon,\mu} {\mathbf R}_M\|_{L^\infty(I)} \leq C 
\left[ e^{-b/\mu} + e^{-b\mu/\varepsilon}\right],
$$
where $C$, $b > 0$ are independent of $\mu$ and $\varepsilon$. Hence, by stability  
and Theorem~\ref{thm:3scale-bdy-mismatch}, we get 
that $\|{\mathbf R}_M\|_{E,I} \leq C \left[ e^{-b/\mu} + e^{-b \mu/\varepsilon}\right]$. 
The sharper result for the second component $\widehat v$ of $\widehat{\mathbf U}$ follows
from the fact that $\widehat v_{i,0} = \widehat v_{i,1} = 0$.

It remains to formulate a decomposition of ${\mathbf U}$ for the case 
that $\mu \ge q$ or $\varepsilon/\mu \ge q$. 
In this case, we have $e^{-b/\mu} + e^{-b\mu/\varepsilon}$ is $O(1)$. Given
that $\|{\mathbf U}\|_{E,I} = O(1)$, the trivial decomposition 
${\mathbf U} = 0 + 0 + 0 + {\mathbf R}_M$ provides the desired splitting. 
%\end{numberedproof}

%%%%%%%%%%%%%%%%%%%%%%%%%%%%%%%%%%%%%%%%%%%%%%%%%%%%%%%%%%%%%%%%%%%%%%%%%%%%%%%%%%
\section{The first two scale case: Case~\ref{case:II}}
\label{case_2}
In this case it is assumed that $\mu /1$ is $\emph{not}$ deemed small 
\emph{but} $\varepsilon /\mu $ is deemed small. 
The main result of this section is the following regularity assertion: 
\begin{thm}
\label{thm:case-II} 
There exist constants $C_W$, $\gamma_W$, $C_{BL}$, $\gamma_{BL}$, $\delta$, $b$, $q > 0$ 
independent of $\varepsilon$ and $\mu$ such that for $\varepsilon/\mu \leq q$ 
the solution ${\mathbf U}$ of (\ref{eq:model-problem}) can be 
written as ${\mathbf U} = {\mathbf W} + \widehat{\mathbf U}_{BL}  + {\mathbf R}$, where 
${\mathbf W} \in {\mathcal A}(\mu, C_W,\gamma_W)$, 
$\widehat {\mathbf U}_{BL} \in \BLinf(\delta \varepsilon, C_{BL},\gamma_{BL})$. 
Furthermore, ${\mathbf R}$ satisfies 
\begin{eqnarray*}
\|{\mathbf R}\|_{L^\infty(\partial I)} + 
\|L_{\varepsilon,\mu} {\mathbf R}\|_{L^\infty(I)} &\leq& C 
e^{-b/\varepsilon}. 
\end{eqnarray*}
In particular, $\|{\mathbf R}\|_{E,I} \leq C e^{-b/\varepsilon}$. 

Additionally, the second component $\widehat v$ of $\widehat {\mathbf U}_{BL}$  satisfies the sharper
regularity assertion 
$$
\widehat v \in \BLinf(\delta \varepsilon, C_{BL}(\varepsilon/\mu)^2, 
\gamma_{BL}).
$$
\end{thm}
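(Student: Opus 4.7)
The plan is to carry out a two-scale asymptotic expansion tailored to the regime where only $\varepsilon/\mu$ is deemed small. I make the ansatz
\begin{equation*}
\mathbf{U}(x) \sim \sum_{j=0}^{\infty} \left(\frac{\varepsilon}{\mu}\right)^{j}
\left[\mathbf{U}_{j}(x) + \widehat{\mathbf{U}}_{j}^{L}(\widehat{x}) + \widehat{\mathbf{U}}_{j}^{R}(\widehat{x}^{R})\right],
\end{equation*}
with no $\widetilde{x}$-scale contribution. Separating scales and equating like powers of $\varepsilon/\mu$ in $L_{\varepsilon,\mu}\mathbf{U}=\mathbf{F}$ produces the outer recursion
\begin{align*}
a_{11}u_{j}+a_{12}v_{j} &= f\delta_{j,0}+\mu^{2}u_{j-2}'', \\
-\mu^{2}v_{j}''+a_{21}u_{j}+a_{22}v_{j} &= g\delta_{j,0},
\end{align*}
to which I attach the boundary conditions $v_{j}(0)=v_{j}(1)=0$; eliminating $u_{j}$ algebraically from the first equation then turns the second into a scalar singularly perturbed reaction-diffusion problem for $v_{j}$ with parameter $\mu$ and reaction coefficient $\det\mathbf{A}/a_{11}\ge\alpha^{2}$ by (\ref{eq:det-positive}). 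The boundary layer corrections $\widehat{\mathbf{U}}_{j}^{L,R}$ satisfy precisely the equations (\ref{43})--(\ref{56}) of the Case~IV analysis specialised to $i=0$, with boundary data $\widehat{u}_{j}^{L}(0)=-u_{j}(0)$ and $\widehat{v}_{j}^{L}(0)=0$, and analogously at the right endpoint.

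For the regularity of the outer terms I would invoke the analytic regularity theory for a single scalar reaction-diffusion equation with parameter $\mu$ from \cite{m,melenk-schwab99b}: each $v_{j}$ decomposes into an analytic piece (derivatives $\lesssim n!$) plus a $\mu$-scale boundary layer (derivatives $\lesssim \mu^{-n}$), both of which are captured by $\mathcal{A}(\mu,\cdot,\cdot)$ since $\max\{n,\mu^{-1}\}^{n}$ dominates both envelopes. An induction on $j$, exploiting that the BVP solution operator is bounded in these spaces and that the factor $\mu^{2}$ in the inhomogeneity cancels the $\mu^{-2}$ lost in the two derivatives of $u_{j-2}''$, yields $\mathbf{U}_{j}\in \mathcal{A}(\mu, C\gamma^{j}, \gamma)$---i.e.\ only geometric growth in $j$. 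Summing the truncated outer expansion under $(M+1)\gamma\varepsilon/\mu\le 1$ then delivers $\mathbf{W}_{M}\in\mathcal{A}(\mu,C_{W},\gamma_{W})$. For the boundary layer, Theorem~\ref{thm11} specialised to $i=0$ provides the pointwise estimates, and the summation argument of Theorem~\ref{thm:case-IV-bdy-layer-fct-entire} with $M_{1}=0$ yields $\widehat{\mathbf{U}}_{BL}^{M}\in\BLinf(\delta\varepsilon,C_{BL},\gamma_{BL})$, together with the sharper $(\varepsilon/\mu)^{2}$-prefactor for the second component inherited from $\widehat{v}^{L}_{j}\equiv 0$ for $j\in\{0,1\}$.

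The main obstacle is the remainder estimate. The one-index analogues of Theorems~\ref{thm_case4_smooth_part} and~\ref{thm_BL_3scales} produce interior residuals of the form $(K(M+1)\varepsilon/\mu)^{M}$ times an exponential $e^{-\underline{a}\widehat{x}/2}$. Because the outer expansion constants grow only geometrically in $j$ (unlike the super-geometric $(i+j)^{i+j}$ growth seen in Case~IV), the truncation index $M$ can be pushed up to $M\sim 1/\varepsilon$, the standing Case~II assumption that $\mu$ is bounded below ensuring that $(M+1)\gamma\varepsilon/\mu\le 1$ remains valid; this delivers the interior bound $e^{-b/\varepsilon}$. The boundary mismatch estimate follows the template of Theorem~\ref{thm:3scale-bdy-mismatch}: at the left endpoint one has $\mathbf{W}_{M}(0)+\widehat{\mathbf{U}}_{BL}^{M}(0)=0$ by construction, leaving only the exponentially small tail $|\widehat{\mathbf{V}}_{BL}^{M}(1/\varepsilon)|\lesssim e^{-\underline{a}/(2\varepsilon)}$ from the right-endpoint layer evaluated at the left. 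The a priori estimate (\ref{eq:a-priori}) then yields $\|\mathbf{R}\|_{E,I}\le Ce^{-b/\varepsilon}$. The complementary regime $\varepsilon/\mu>q$ is handled trivially by $\mathbf{W}=\widehat{\mathbf{U}}_{BL}=0$, $\mathbf{R}=\mathbf{U}$, in which case $e^{-b/\varepsilon}$ is absorbed into the $O(1)$ a priori bound on $\mathbf{U}$.
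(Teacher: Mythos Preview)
Your ansatz and scale separation match the paper's, but the quantitative core of your argument has a genuine gap. The claim that $\mathbf{U}_j\in\mathcal{A}(\mu,C\gamma^j,\gamma)$ with merely geometric growth in $j$ is false: the ``$\mu^2$ cancels $\mu^{-2}$'' heuristic only works for derivative orders $n\lesssim 1/\mu$; for $n\mu>1$ one has $\mu^2\max\{n+2,\mu^{-1}\}^{n+2}\sim(n\mu)^2\max\{n,\mu^{-1}\}^n$, so each step of the recursion contributes an extra factor $((j+n)\mu+1)^2$. The correct estimate (the paper's Theorem~\ref{thm2}) is
\[
\|u_j^{(n)}\|_{L^\infty(I)}\leq C K^j\bigl((j+n)\mu+1\bigr)^j\frac{j^j}{j!}\,\gamma^n\max\{n,\mu^{-1}\}^n,
\]
and the boundary layer functions carry an analogous factor $(\mu+1/(j+1))^j$. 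Relatedly, the inner equations are \emph{not} those of Case~\ref{case:IV} specialised to $i=0$: with $i=0$ the sums in (\ref{54}), (\ref{56}) are empty, whereas the Case~\ref{case:II} recursions (\ref{16})--(\ref{17}) retain the full $\sum_{k=1}^{j}\mu^k\widehat{x}^k(\cdots)$ coupling, and it is precisely these $\mu^k$ factors that produce the $(\mu+1/(j+1))^j$ growth.

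The practical consequence is that your truncation condition is wrong, and so is the device you use to rescue it. With the correct growth, the combination that must be small is $(\varepsilon/\mu)^j(j\mu+1)^j=(\varepsilon j+\varepsilon/\mu)^j$, leading to the condition $\bigl(\varepsilon(M+1)+\varepsilon/\mu\bigr)\gamma\leq 1$ of Theorems~\ref{thm_case2_smooth_part}--\ref{thm:estimate-remainder-case-II}, not your $(M+1)\gamma\,\varepsilon/\mu\leq 1$. This distinction matters because Theorem~\ref{thm:case-II} does \emph{not} assume $\mu$ bounded below---only $\varepsilon/\mu\leq q$, which allows $\mu$ arbitrarily small. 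Under the correct condition, $M\sim 1/\varepsilon$ gives $\varepsilon M+\varepsilon/\mu\leq c+q$, uniformly in $\mu$; under your stated condition, $M\sim 1/\varepsilon$ would force $1/\mu\lesssim 1$, an assumption you are not entitled to. (Minor additional slips: the outer boundary conditions are $v_j(0)=-\widehat{v}_j^L(0)$, generally nonzero for $j\ge 2$, and $\widehat{v}_j^L(0)$ is fixed by the double integral (\ref{17}), not set to zero.)
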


We employ again the notation of the outset of Section~\ref{case_4} concerning
the stretched variables $\widehat x = x/\varepsilon$ and $\widehat x^R = (1-x)/\varepsilon$
and make the formal ansatz
\begin{equation}
\label{eq:ansatz-case-II} 
{\mathbf U}(x) \sim \sum_{i=0}^\infty \left(\frac{\varepsilon}{\mu}\right)^i 
\left[ {\mathbf U}_i(x) + \widehat {\mathbf U}^L_i(\widehat x) + 
\widehat {\mathbf U}^R_i(\widehat x^R)\right].
\end{equation}
We proceed as in Section~\ref{case_4} by inserting the ansatz (\ref{eq:ansatz-case-II})
in the differential equation (\ref{1}), separating the slow ($x$) and the 
fast ($\widehat x$ and $\widehat x^L$) variables and equating like powers of $\varepsilon/\mu$. 
We also recall that the differential operator $L_{\varepsilon,\mu}$ takes 
the form (\ref{eq:L-on-hat-scale}) on the $\widehat x$-scale. 
The separation of the slow and the fast variables leads to the following 
equations: 
\begin{subequations}
\label{eq:case_2-separation-of-scales}
\begin{eqnarray}
\sum_{i=0}^\infty \left(\frac{\varepsilon}{\mu}\right)^i 
\left( - {\mathbf E}^{\varepsilon,\mu} {\mathbf U}_i^{\prime\prime} + {\mathbf A}(x) {\mathbf U}_i
\right) &=& {\mathbf F} = \left(\begin{array}{c} f \\ g \end{array}\right), 
\label{eq:case_2-separation-of-scales-slow-variable}
\\
\sum_{i=0}^\infty \left(\frac{\varepsilon}{\mu}\right)^i 
\left( - \varepsilon^{-2} {\mathbf E}^{\varepsilon,\mu} 
(\widehat {\mathbf U}^L_i)^{\prime\prime} + 
\sum_{k=0}^\infty \mu^k \left( \frac{\varepsilon}{\mu}\right)^k \widehat x^k{\mathbf A}_k \widehat {\mathbf U}_i^L
\right) &=& 0,
\label{eq:case_2-separation-of-scales-fast-variable}
\end{eqnarray}
\end{subequations}
and an analogous equation for $\widehat {\mathbf U}^R_i$. Writing again 
$$
\widehat {\mathbf U}_i = \left(\begin{array}{c} u_i \\ v_i \end{array}\right), 
\qquad 
\widehat {\mathbf U}^L_i = \left(\begin{array}{c} \widehat u^L_i \\ \widehat v^L_i \end{array}\right), 
$$
we obtain from (\ref{eq:case_2-separation-of-scales}) by equating like powers 
of $\varepsilon/\mu$: 
\begin{subequations}
\label{eq:case_2-recursions}
\begin{eqnarray}
\label{eq:case_2-recursions-i}
- \mu^2 u_{i-2}^{\prime\prime} + a_{11} u_{i} + a_{12} v_{i} &=& f_i, \\
\label{eq:case_2-recursions-ii}
- \mu^2 v_{i}^{\prime\prime} + a_{21} u_{i} + a_{22} v_{i} &=& g_i, \\
\label{eq:case_2-recursions-iii}
- (\widehat u^L_i)^{\prime\prime} + 
\sum_{k=0}^i \mu^k \widehat x^k 
\left( \frac{a_{11}^{(k)}(0)}{k!} \widehat u^L_{i-k}  + \frac{a_{12}^{(k)}(0)}{k!} \widehat v^L_{i-k}\right) 
&=& 0, \\
\label{eq:case_2-recursions-iv}
- (\widehat v^L_{i+2})^{\prime\prime} + 
\sum_{k=0}^i \mu^k \widehat x^k 
\left( \frac{a_{21}^{(k)}(0)}{k!} \widehat u^L_{i-k}  + \frac{a_{22}^{(k)}(0)}{k!} \widehat v^L_{i-k}\right) 
&=& 0, 
\end{eqnarray}
\end{subequations}
where we employed the definition of ${\mathbf A}_k$, the notation $f_0 = f$, $g_0 = g$ as well as
$f_i = g_i = 0$ for $i > 0$, and the convention that function with negative subscripts are zero. 
Corresponding equations are satisfied the functions $\widehat u_i^R$ and $\widehat v^R_i$. 
The boundary condition (\ref{2}) is accounted for by stipulating 
${\mathbf U}_i(0) + \widehat {\mathbf U}_i^L(0) = 0$ and 
${\mathbf U}_i(1) + \widehat {\mathbf U}_i^R(0) = 0$ for all $i \ge 0$ and suitable decay conditions
for $\widehat {\mathbf U}_i^L$ 
as $\widehat x \rightarrow \infty $ and, correspondingly,  
for $\widehat {\mathbf U}_i^R$ 
as $\widehat x^R \rightarrow \infty $. Rearranging the above equations and incorporating these
boundary conditions, we get a a recursion of systems of DAEs in which the algebraic
constraints can be accounted for explicitly. We obtain for $i=0$, $1$, $2,\ldots$ :
\begin{equation}
\left\{ 
\begin{array}{c}
-\mu ^{2}v_{i}^{\prime \prime }+\frac{(a_{22}a_{11}-a_{12}a_{21})}{a_{11}}%
v_{i}=g_{i}-\frac{a_{21}}{a_{11}}\left( f_{i}+\mu^2 u_{i-2}^{\prime \prime }\right)
\\ 
v_{i}(0)=-\widehat{v}_{i}^{L}(0)\;,\;v_{i}(1)=-\widehat{v}_{i}^{R}(0)%
\end{array}%
\right. ,  \label{14}
\end{equation}
\begin{equation}
u_{i}=\frac{1}{a_{11}}\left( f_{i}+\mu^2 u_{i-2}^{\prime \prime
}-a_{12}v_{i}\right) ,  \label{15}
\end{equation}
\begin{equation}
\left\{ 
\begin{array}{c}
-\left( \widehat{u}_{i}^{L}\right) ^{\prime \prime }+a_{11}(0)\widehat{u}%
_{i}^{L}
=-\underset{k=1}{\overset{i}{\sum }}%
\left( \frac{a_{11}^{(k)}(0)}{k!}\mu^k \widehat{x}^{k}\widehat{u}_{i-k}^{L}+\frac{%
a_{12}^{(k)}(0)}{k!}\widehat{x}^{k}\mu^k \widehat{v}_{i-k}^{L}\right) 
-a_{12}(0)\widehat{v}_{i}^{L}
\\ 
\widehat{u}_{i}^{L}(0)=-u_{i}(0)\;,\;\widehat{u}_{i}^{L}\rightarrow 0\text{
as }\widehat{x}\rightarrow \infty%
\end{array}%
\right. ,  \label{16}
\end{equation}
\begin{equation}
- \left( \widehat{v}_{i+2}^{L}\right) ^{\prime \prime }=\underset{k=0}%
{\overset{i}{\sum }}\left( \frac{a_{21}^{(k)}(0)}{k!}\mu^k \widehat{x}^{k}\widehat{%
u}_{i-k}^{L}+\frac{a_{22}^{(k)}(0)}{k!}\mu^k \widehat{x}^{k}\widehat{v}%
_{i-k}^{L}\right) ,  \label{17}
\end{equation}
with
\begin{equation*}
\widehat{v}_{0}^R = \widehat{v}_{1}^{R} = 
\widehat{v}_{0}^{L}=\widehat{v}_{1}^{L}=0\;,\;u_{-i}=0\;\forall \;i>0,
\end{equation*}
\begin{equation*}
f_{i}=\left\{ 
\begin{array}{c}
f\text{ \ if }i=0, \\ 
0\text{ otherwise}%
\end{array}%
\right. \;,\;g_{i}=\left\{ 
\begin{array}{c}
g\text{ \ if }i=0, \\ 
0\text{ otherwise}%
\end{array}%
\right. .
\end{equation*}
(The functions $\widehat{u}_{i}^{R},\widehat{v}_{i}^{R}$ satisfy similar
problems as (\ref{16}) and (\ref{17}), respectively.) 
%----------------------
\subsection{Analysis of the functions ${\mathbf U}_i$, $\widehat {\mathbf U}^L_i$, 
$\widehat {\mathbf U}_i^R$}
%----------------------
%----------------------
\subsubsection{Properties of some solution operators}
%----------------------

\begin{lem}
\label{lem4}
Assume that the function $c$ is analytic on $\overline{I}$ and $c\geq 
\underline{c}>0$ $\forall \;z\in \overline{I}$. Let $\mu \in (0,1]$. 
Then there exists $\gamma_0 > 0$ (independent of $\mu)$ such that 
for all $\gamma \ge \gamma_0$ and all $C_g > 0$ the following 
is true: If $g$ satisfies 
\begin{equation}
\label{eq:lem4-1}
\|g^{(n)}\|_{L^\infty(I)} \leq C_g \gamma^n \max\{n,\mu^{-1}\}^n 
\qquad \forall n \in \N_0,
\end{equation}
then the solution $u$ of the boundary value problem 
\begin{equation*}
\left\{ 
\begin{array}{c}
-\mu ^{2}u^{\prime \prime }+cu=g\text{ in }I \\ 
u(0)=g_{-}\in \mathbb{R}\;,\;u(1)=g_{+}\in \mathbb{R}%
\end{array}%
\right.
\end{equation*}
satisfies
\begin{equation*}
\|u^{(n)}\|_{L^\infty(I)} 
\leq \widetilde C \gamma^n \max\{n,\mu^{-1}\}^n 
\left[ C_g + |g_{+}| + |g_{-}|\right] 
\qquad \forall n \in \N_0,
\end{equation*}
for a constant $\widetilde C$ that depends solely on the function $c$. 
\end{lem}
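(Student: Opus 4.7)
I would establish the result by induction on $n$, targeting the bound
\begin{equation*}
\|u^{(n)}\|_{L^\infty(I)} \leq \widetilde C \gamma^n \max\{n,\mu^{-1}\}^n D, \qquad D := C_g + |g_-| + |g_+|,
\end{equation*}
with $\gamma_0$ chosen large enough in terms of $\gamma_c$ (the analyticity rate of $c$) and $\widetilde C$ depending only on $\underline c$, $\|c\|_{L^\infty}$, $C_c$, and $\gamma_c$. For the base case $n=0$ I would use the maximum principle with a constant barrier $w \equiv A D$ for $A$ depending only on $\underline c$: since $c \geq \underline c > 0$, one has $-\mu^2 w'' + cw = cw \geq \underline c A D \geq |g|$ pointwise in $I$, and $w \geq |g_\pm|$ on $\partial I$ provided $A\underline c \geq 1$. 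A comparison argument then yields $\|u\|_{L^\infty(I)} \leq \widetilde C_0 D$. Equivalently, the coercivity of the bilinear form $(v,w)\mapsto \mu^2\langle v',w'\rangle_I + \langle cv,w\rangle_I$ on $H_0^1(I)$, applied after subtracting an affine lifting of the boundary data, gives an $H^1$ bound that upgrades to $L^\infty$ via Sobolev embedding.

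For the inductive step, I would differentiate the ODE $n$ times and solve pointwise for the top derivative:
\begin{equation*}
u^{(n+2)} = \mu^{-2}\Bigl[ c\, u^{(n)} - g^{(n)} + \sum_{k=1}^n \binom{n}{k} c^{(k)} u^{(n-k)}\Bigr].
\end{equation*}
Plugging in the induction hypothesis for $u^{(m)}$ with $m \leq n$, the analyticity bound $\|c^{(k)}\|_{L^\infty} \leq C_c \gamma_c^k k!$, and the hypothesis (\ref{eq:lem4-1}) on $g$, then using $\binom{n}{k}k! = n!/(n-k)! \leq n^k$ together with the monotonicity $\max\{n-k,\mu^{-1}\} \leq \max\{n,\mu^{-1}\}$, the convolution sum telescopes into a geometric series in the ratio $n\gamma_c/(\gamma \max\{n,\mu^{-1}\})$. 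Since $n \leq \max\{n,\mu^{-1}\}$, this ratio is bounded by $\gamma_c/\gamma$, which is $\leq 1/2$ once $\gamma_0 \geq 2\gamma_c$, so the series sums to $O(1)$.

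The main obstacle is the final bookkeeping that converts the resulting estimate
\begin{equation*}
\mu^2 \|u^{(n+2)}\|_{L^\infty(I)} \leq C\, \widetilde C\, D\, \gamma^n \max\{n,\mu^{-1}\}^n
\end{equation*}
(with $C$ depending only on $c$) into the target $\widetilde C \gamma^{n+2}\max\{n+2,\mu^{-1}\}^{n+2} D$. This reduces to the elementary regime-dependent inequality $\mu^{-2}\max\{n,\mu^{-1}\}^n \leq \max\{n+2,\mu^{-1}\}^{n+2}$, which I would verify by splitting into three cases: when $n+2 \leq \mu^{-1}$ both maxima equal $\mu^{-1}$ and the inequality is sharp; when $n \geq \mu^{-1}$ one uses $(n+2)^{n+2}/n^n \geq (n+2)^2 \geq \mu^{-2}$ since $n\mu \geq 1$; and in the intermediate range $n < \mu^{-1} \leq n+2$ one exploits $\max\{n+2,\mu^{-1}\} \geq \mu^{-1}$ to extract the required $\mu^{-2}$ factor. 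Enlarging $\gamma_0$ if necessary so that $\gamma_0^2$ absorbs the constant $C$, the induction closes with the same $\widetilde C$ across all $n$.
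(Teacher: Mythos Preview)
Your induction step produces $u^{(n+2)}$ from $u^{(0)},\ldots,u^{(n)}$, so it advances the index by two. To reach all $n\in\N_0$ you therefore need \emph{two} base cases, $n=0$ and $n=1$, but you only establish $n=0$. The bound $\|u'\|_{L^\infty(I)}\le \widetilde C\,\mu^{-1}D$ does not follow from the maximum principle or from your $H^1$ energy estimate (the latter only controls $\|u'\|_{L^2}$, and Sobolev embedding in one dimension gives $L^\infty$ control of $u$, not of $u'$). This is the one nontrivial ingredient the paper supplies beyond what you wrote: it first reads off $\|u''\|_{L^\infty}\le C\mu^{-2}D$ directly from the equation $\mu^2 u''=cu-g$ together with the $n=0$ bound, and then invokes an interpolation inequality in H\"older spaces (namely $\|u'\|_{L^\infty}\le C\|u\|_{L^\infty}^{1/2}\|u''\|_{L^\infty}^{1/2}$, as in \cite[Thm.~3.2.1]{krylov96}) to obtain $\|u'\|_{L^\infty}\le C\mu^{-1}D$. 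With this missing base case in hand, the rest of your argument --- the Leibniz expansion, the geometric-series control of the convolution via $n\le\max\{n,\mu^{-1}\}$, and the regime-split verification of $\mu^{-2}\max\{n,\mu^{-1}\}^n\le\max\{n+2,\mu^{-1}\}^{n+2}$ --- matches the paper's proof essentially line for line.
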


\begin{proof} 
This follows by inspection of the proof of 
\cite[Thm.~1]{m} if one replaces the energy type arguments
by an appeal to the comparison principle to get $L^\infty$-estimates. 

\iftechreport
For details, see Appendix~\ref{appendix:case_2_lem4}.
\else
For details, see \cite[Appendix~\ref{appendix:case_2_lem4}]{mxo-1}
\fi
\end{proof}
\begin{lem}
\label{lemma:leibniz-rule}
Let $m \in \N_0$ and $g$ be a function analytic on $\overline{I}$.  Then there exist
$C^\prime$, $\gamma_0 > 0$ depending only on $g$ such that the following 
is true: If $v$ satisfies 
$$
\|v^{(n)}\|_{L^\infty(I)} \leq C_v \gamma_v^n \max\{n,\mu^{-1}\}^n 
\qquad \forall n \in \N_0,
$$
for some $C_v$, $\mu > 0$ and $\gamma_v \ge \gamma_0$, then 
the function $V:= g v^{(m)}$ satisfies 
$$
\|V^{(n)}\|_{L^\infty(I)} \leq C^\prime C_v \gamma_v^{n +m}
\max\{n+m,\mu^{-1}\}^{n+m} \qquad 
\forall n \in \N_0. 
$$
\end{lem}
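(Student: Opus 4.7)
The plan is to apply Leibniz's rule to $V = g\, v^{(m)}$ and then collapse the resulting sum into the claimed single-term estimate. First, I would write
\begin{equation*}
V^{(n)} = \sum_{k=0}^n \binom{n}{k} g^{(k)}\, v^{(m+n-k)},
\end{equation*}
and invoke two ingredients: analyticity of $g$ on the compact set $\overline{I}$ supplies constants $C_g, \gamma_g > 0$ depending only on $g$ such that $\|g^{(k)}\|_{L^\infty(I)} \le C_g \gamma_g^k k!$ for all $k \in \N_0$; and the hypothesis on $v$ bounds $\|v^{(m+n-k)}\|_{L^\infty(I)}$ by $C_v \gamma_v^{m+n-k} \max\{m+n-k,\mu^{-1}\}^{m+n-k}$.

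Next, I would simplify the combinatorial factor using $\binom{n}{k} k! = n(n-1)\cdots(n-k+1) \le n^k$, and exploit the monotonicity $\max\{m+n-k,\mu^{-1}\}^{m+n-k} \le \max\{m+n,\mu^{-1}\}^{m+n-k}$, valid because the exponent $m+n-k$ is nonnegative in the range of summation and the base is dominated by $\max\{m+n,\mu^{-1}\}$. Setting $M := \max\{m+n,\mu^{-1}\}$ and using $n \le M$, the estimate rearranges into
\begin{equation*}
\|V^{(n)}\|_{L^\infty(I)} \le C_g C_v\, \gamma_v^{m+n} M^{m+n} \sum_{k=0}^{n} \left(\frac{n \gamma_g}{\gamma_v M}\right)^k \le C_g C_v\, \gamma_v^{m+n} M^{m+n} \sum_{k=0}^\infty \left(\frac{\gamma_g}{\gamma_v}\right)^k.
\end{equation*}

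Finally, I would choose $\gamma_0 := 2\gamma_g$, so that the geometric series is bounded by $2$ whenever $\gamma_v \ge \gamma_0$, yielding the claim with $C' := 2 C_g$. I do not anticipate a real obstacle here: the only point requiring a moment of care is verifying that the monotonicity step for $\max\{m+n-k,\mu^{-1}\}^{m+n-k}$ is applied with a nonnegative exponent, which holds automatically for $0 \le k \le n$.
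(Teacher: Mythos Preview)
Your proof is correct and follows essentially the same approach as the paper's: both apply Leibniz's rule, invoke the analyticity bound $\|g^{(k)}\|_{L^\infty(I)} \le C_g \gamma_g^k k!$, use $\binom{n}{k}k! \le n^k$ together with the monotonicity $\max\{m+n-k,\mu^{-1}\}^{m+n-k} \le \max\{m+n,\mu^{-1}\}^{m+n-k}$, and conclude via a geometric series in $\gamma_g/\gamma_v$. Your version is in fact slightly cleaner, avoiding an extraneous factor of $e$ that appears in the paper's argument and yielding the tighter choice $\gamma_0 = 2\gamma_g$.
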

\begin{proof}
Since $g$ is analytic, there exist $C_g$, $\gamma_g > 0$ such that 
$$
\|g^{(n)}\|_{L^\infty(I)} \leq C_g \gamma_g^n n! \qquad \forall n \in \N. 
$$
By Leibniz' rule, we get for $\gamma_v \ge \gamma_0 > \gamma_g e$ 
in view of $\nu^\nu \leq \nu! e^\nu$: 
\begin{eqnarray*}
\|V^{(n)}\|_{L^\infty(I)} &\leq& C_v C_g \sum_{\nu=0}^n 
\binom{n}{\nu} \gamma_g^\nu \nu^\nu \gamma_v^{n+m-\nu} 
\max\{n+m-\nu,\mu^{-1}\}^{n+m-\nu}\\
&\leq& C_v C_g \gamma_v^n \sum_{\nu=0}^n \frac{n!}{(n-\nu)!} 
\left(\frac{ e \gamma_g}{\gamma_v}\right)^{\nu} \max\{n+m,\mu^{-1}\}^{n+m-\nu}\\
&\leq&  \frac{C_g}{1-e \gamma_g/\gamma_v} C_v \gamma_v^n 
\max\{n+m,\mu^{-1}\}^{n+m}. 
\end{eqnarray*}
\end{proof}
\begin{lem}
\label{lem5} 
For $0 < \widehat \delta \leq 1/(2e)$ and $i \in \N$ there holds 
\begin{equation*}
\overset{i}{\underset{k=0}{\sum }}\widehat{\delta }^{k}\left( \frac{i+2}{%
i+1-k}\right) ^{i-k} \leq 2e. 
\end{equation*}
\end{lem}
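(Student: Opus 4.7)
\textbf{Proof plan for Lemma~\ref{lem5}.}
The idea is to reindex the sum, recognize that the terms grow geometrically (at least doubling), and then bound the sum by twice the last term.

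First, I would substitute $c = i-k$ so that the sum takes the form
\begin{equation*}
S_i := \sum_{k=0}^i \widehat{\delta}^k \left(\frac{i+2}{i+1-k}\right)^{i-k}
= \sum_{c=0}^i T_c, \qquad
T_c := \widehat{\delta}^{i-c}\left(\frac{i+2}{c+1}\right)^c.
\end{equation*}
In this form the $c=i$ term is $T_i = \left(\frac{i+2}{i+1}\right)^i = \left(1+\frac{1}{i+1}\right)^i \leq e$, and the terms with small $c$ carry increasing powers of the small parameter $\widehat{\delta}$. The plan is to show that $T_{c+1} \geq 2 T_c$ for all admissible $c$, which forces
\begin{equation*}
S_i \leq T_i \sum_{c=0}^i 2^{-(i-c)} \leq 2 T_i \leq 2e.
\end{equation*}

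The substantive step is estimating the ratio
\begin{equation*}
\frac{T_{c+1}}{T_c} = \frac{1}{\widehat{\delta}} \cdot \frac{i+2}{c+2} \cdot \left(\frac{c+1}{c+2}\right)^c.
\end{equation*}
Here the factor $\left(\tfrac{c+1}{c+2}\right)^c = \left(1 - \tfrac{1}{c+2}\right)^c$ is the main obstacle to obtaining a clean lower bound, but it is controlled by the classical inequality $(1 - 1/n)^{n-1} \geq 1/e$ for $n \geq 2$: indeed, with $n = c+2$,
\begin{equation*}
\left(\frac{c+1}{c+2}\right)^c = \left(1 - \frac{1}{c+2}\right)^{c+1} \cdot \frac{c+2}{c+1} \geq \frac{1}{e} \cdot \frac{c+2}{c+1} \geq \frac{1}{e}.
\end{equation*}
Combining this with $\widehat{\delta} \leq 1/(2e)$ and $c+2 \leq i+2$ (valid since $c \leq i-1$ when estimating $T_{c+1}/T_c$), I obtain
\begin{equation*}
\frac{T_{c+1}}{T_c} \geq \frac{1}{\widehat{\delta}\, e} \cdot \frac{i+2}{c+2} \geq 2 \cdot \frac{i+2}{c+2} \geq 2,
\end{equation*}
as required.

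Finally, the geometric summation yields $S_i \leq 2 T_i = 2\left(1 + \frac{1}{i+1}\right)^i \leq 2e$, completing the proof. The base case $i = 0$ is trivial ($S_0 = 1 \leq 2e$). I expect the only delicate point to be citing the correct form of the Euler inequality $(1 - 1/n)^{n-1} \geq 1/e$ and verifying that the factor $(c+2)/(c+1) \geq 1$ is enough to absorb its imperfection; everything else is bookkeeping.
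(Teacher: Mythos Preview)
Your proof is correct, and the overall strategy matches the paper's: both arguments show that the terms decay geometrically with ratio at most $1/2$ from the $k=0$ term $\bigl(\tfrac{i+2}{i+1}\bigr)^i \leq e$, then sum the resulting geometric series to get $2e$. The only difference is in how the geometric decay is established. The paper writes $\widehat{\delta}^k = 2^{-k}(2\widehat{\delta})^k$ and shows, via a derivative computation, that $F(k) := (2\widehat{\delta})^k\bigl(\tfrac{i+2}{i+1-k}\bigr)^{i-k}$ is monotone decreasing in $k$ (since $(\ln F)' \leq 1 + \ln(2\widehat{\delta}) \leq 0$), so each term is at most $2^{-k}F(0) \leq 2^{-k}e$. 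You instead reindex and bound the discrete ratio $T_{c+1}/T_c$ directly using the classical Euler inequality $(1-1/n)^{n-1} \geq 1/e$. The paper's calculus argument is slightly more self-contained, while yours makes the geometric ratio explicit; structurally they are the same proof.
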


\begin{proof}
%
%Note that the given sum may be written as
%%
%\begin{equation*}
%\overset{i-1}{\underset{k=0}{\sum }}\delta^k 2^{-k}\left( \frac{i+1}{i+1-k}%
%\right) ^{i-1-k}\;,\;\delta =2\widehat{\delta }.
%\end{equation*}
%
\iftechreport
See Appendix~\ref{appendix:proof_of_lem5}.
\else
See \cite[Appendix~\ref{appendix:proof_of_lem5}]{mxo-1}.
\fi
\end{proof}
%----------------------
\subsubsection{Regularity of the functions ${\mathbf U}_i$, $\widehat {\mathbf U}^L_i$, 
$\widehat {\mathbf U}_i^R$}
%----------------------
\begin{thm}
\label{thm2} Let $v_{i},u_{i},\widehat{u}_{i}^{L},\widehat{v}_{i}^{L}$, 
$\widehat{u}_{i}^{R}$, $\widehat{v}_{i}^{R}$ 
be
the solutions of \emph{(\ref{14})--(\ref{17})}, respectively. Let $G\subset 
\mathbb{C}$ be a complex neighborhood of $\overline{I}$ and set $\beta =%
\sqrt{a_{11}(0)}\in \mathbb{R}.$ Then the functions 
$\widehat u_i^L$ and $\widehat v_i^L$, 
$\widehat u_i^R$ and $\widehat v_i^R$ 
are entire, and the 
functions $u_i$, $v_i$ are analytic in a (fixed) neighborhood of $I$. 
Furthermore,  there exist positive constants 
$\gamma$, $C_{u},C_{v},C_{\ell },K_{\ell }$, $\ell =1,...,4$ independent of 
$\varepsilon $ and $\mu $, such that
\begin{equation}
\widehat{v}_{0}^{L}=\widehat{v}_{1}^{L}=0,
%\;,\;\left\vert \widehat{u}_{0}^{L}(z)\right\vert \leq
%Ce^{-\beta \operatorname*{Re}(z)} 
\end{equation}
\begin{equation}
\left\vert \widehat{v}_{i+2}^{L}(z)\right\vert \leq C_{1}K_{1}^{i}
\left( \mu+\frac{1}{i+1}\right) ^{i}\frac{1}{i!}\left(
iC_{v}+\left\vert z\right\vert \right) ^{2i}e^{-\beta \operatorname*{Re}(z)},\quad
\label{18}
\end{equation}
\begin{equation}
\left\vert \widehat{u}_{i}^{L}(z)\right\vert \leq C_{2}K_{2}^{i}\left( \mu+%
\frac{1}{i+1}\right) ^{i}\frac{1}{i!}\left( iC_{u}+\left\vert z\right\vert
\right) ^{2i}e^{-\beta \operatorname*{Re}(z)}, 
\label{19}
\end{equation}
\begin{equation}
\|u_{i}^{(n)}\|_{L^\infty(I)} \leq C_{3}K_{3}^{i}
\left(  (i+n)\mu+%
1\right) ^{i} \frac{i^i}{i!}\gamma^n \max\{n,\mu^{-1}\}^n \qquad \forall n \in \N_0, 
\quad \forall i \in \N_0, 
\label{20}
\end{equation}
\begin{equation}
\|v_{i}^{(n)}\|_{L^\infty(I)} \leq C_{4}K_{4}^{i}\left( (i+n)\mu+%
1\right) ^{i} \frac{i^i}{i!}\gamma^n \max\{n,\mu^{-1}\}^n \qquad \forall n \in \N_0, 
\quad \forall i \in \N_0. 
\label{21}
\end{equation}
Furthermore, analogous estimates hold for $\widehat u_i^R$ and 
$\widehat v_i^R$ with $\beta$ being now $\sqrt{a_{11}(1)}$. 
\end{thm}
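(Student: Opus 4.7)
The plan is to establish (\ref{18})--(\ref{21}) by strong induction on $i$, following the natural order in which the recursion (\ref{14})--(\ref{17}) resolves. At stage $i$ I would first bound $\widehat v_i^L$ (from (\ref{17}) shifted down by two, which involves only indices $\leq i-2$), then $v_i$ from (\ref{14}), then $u_i$ from (\ref{15}), and finally $\widehat u_i^L$ from (\ref{16}). The analogous bounds for $\widehat u_i^R$, $\widehat v_i^R$ come from the mirror argument at $x=1$ with $\beta$ replaced by $\sqrt{a_{11}(1)}$. The outer estimates (\ref{20})--(\ref{21}) rely on Lemma~\ref{lem4} together with Lemma~\ref{lemma:leibniz-rule}, while the inner estimates (\ref{18})--(\ref{19}) rely on Lemma~\ref{lemma:scalar-bvp-constant-coefficients} and Lemma~\ref{lemma:two-anti-derivatives}. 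Entire-ness of $\widehat u_i^L$, $\widehat v_i^L$ is inherited from the integral formulas in these lemmas; analyticity of $u_i$, $v_i$ in a fixed complex neighborhood of $\overline{I}$ is immediate from Lemma~\ref{lem4} and the algebraic identity (\ref{15}).

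For the base cases $i \in \{0,1\}$, $\widehat v_i^L \equiv 0$ is prescribed and $u_{-1}=u_{-2}=0$, so (\ref{14}) collapses to $-\mu^2 v_i'' + \frac{\det \mathbf A}{a_{11}} v_i = g_i - \frac{a_{21}}{a_{11}} f_i$ with $\frac{\det \mathbf A}{a_{11}} \geq \alpha^2$ by (\ref{eq:det-positive}); Lemma~\ref{lem4} yields (\ref{21}) and then Lemma~\ref{lemma:leibniz-rule} applied to (\ref{15}) yields (\ref{20}). Equation (\ref{16}) reduces to $-(\widehat u_i^L)'' + a_{11}(0)\widehat u_i^L = 0$ with $\widehat u_i^L(0) = -u_i(0)$, whose explicit solution $-u_i(0)e^{-\beta \widehat x}$ trivially satisfies (\ref{19}).

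For the inductive step, assume (\ref{18})--(\ref{21}) hold for all indices strictly less than $i$. \textbf{Step (a):} apply Lemma~\ref{lemma:two-anti-derivatives} term-by-term to the right-hand side of (\ref{17}) written at index $i-2$; the inductive forms of (\ref{18})--(\ref{19}) already have the polynomial-times-exponential shape required, and the prefactor $\mu^k \widehat x^k$ is absorbed via the elementary bound $|\widehat x|^k \leq (C(i-2-k) + |\widehat x|)^k$. \textbf{Step (b):} rewrite the right-hand side of (\ref{14}) as $g_i - \frac{a_{21}}{a_{11}}(f_i + \mu^2 u_{i-2}'')$, combine (\ref{20}) applied to $u_{i-2}^{(n+2)}$ with the identity $\mu^2 \max\{n+2,\mu^{-1}\}^{n+2} \leq ((n+2)\mu+1)^2 \max\{n,\mu^{-1}\}^n$, insert the boundary data from step (a), and apply Lemma~\ref{lem4} to obtain (\ref{21}). \textbf{Step (c):} Lemma~\ref{lemma:leibniz-rule} applied directly to (\ref{15}) gives (\ref{20}). \textbf{Step (d):} Lemma~\ref{lemma:scalar-bvp-constant-coefficients}, with $a=\ua=\oa=\beta=\sqrt{a_{11}(0)}$, delivers (\ref{19}) from (\ref{16}) once its right-hand side is bounded using the step-(a) estimate for $\widehat v_i^L$ and the inductive estimates for $\widehat u_{i-k}^L$, $\widehat v_{i-k}^L$ with $k \geq 1$.

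The main obstacle is the bookkeeping that makes the recursion close: the sums over $k$ produced in steps (a) and (d), together with the passage from $(i-k)$-indexed polynomial/factorial factors to the target $i^i/i!$ and $\bigl(\mu + \tfrac{1}{i+1}\bigr)^i$, must be dominated by a geometric series whose ratio is strictly less than $1$ uniformly in $i$. This is exactly what Lemma~\ref{lem5} is designed for, controlling $\sum_{k=0}^i \widehat\delta^k \bigl(\tfrac{i+2}{i+1-k}\bigr)^{i-k}$ by $2e$ whenever $\widehat\delta \leq 1/(2e)$. The analyticity constants of $\mathbf A$ fix the effective $\widehat\delta$ to be a constant multiple of $\mu/K_\ell$, so the constants $K_\ell$ must be taken large enough that $\widehat\delta$ lies in the admissible range uniformly in $i$; this choice absorbs all the cross-terms and lets the induction close with the stated forms of (\ref{18})--(\ref{21}).
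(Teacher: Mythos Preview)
Your proposal is correct and matches the paper's approach: induction on $i$, cycling through $v_i$, $u_i$, $\widehat u_i^L$, $\widehat v_{i+2}^L$ using Lemmas~\ref{lem4}, \ref{lemma:leibniz-rule}, \ref{lemma:scalar-bvp-constant-coefficients}, \ref{lemma:two-anti-derivatives}, with Lemma~\ref{lem5} closing the $k$-sums. One small correction to your last paragraph: the parameter $\widehat\delta$ fed into Lemma~\ref{lem5} is $\gamma_a/K_\ell$, not a multiple of $\mu/K_\ell$; the factor $\mu^k$ is absorbed separately via the elementary inequality $\mu^k\bigl(\mu + \tfrac{1}{i+2-k}\bigr)^{i+1-k} \leq \bigl(\mu + \tfrac{1}{i+2}\bigr)^{i+1}\bigl(\tfrac{i+2}{i+2-k}\bigr)^{i+1-k}$, and it is the resulting factor $\bigl(\tfrac{i+2}{i+2-k}\bigr)^{i+1-k}$ that Lemma~\ref{lem5} then sums.
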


\begin{proof} 
The proof is by induction on $i$. The general structure is 
to estimate $v_i$, $u_i$, $\widehat u_i$, and $\widehat v_{i+2}$ 
in turn using Lemmas~\ref{lemma:scalar-bvp-constant-coefficients}, 
\ref{lemma:two-anti-derivatives}, \ref{lem4}, \ref{lemma:leibniz-rule}. 
\iftechreport
For details, see Appendix~\ref{appendix:case_2_thm2}.
\else
For details, see \cite[Appendix~\ref{appendix:case_2_thm2}]{mxo-1}. 
\fi
\end{proof}
We conclude this section by showing that the boundary layer functions
are in fact entire:
\begin{lem}
\label{lemma:bdy-layer-fct-entire-case-II}
Let $\beta > 0$ be as in Theorem~\ref{thm2} (i.e., $\beta = \sqrt{a_{11}(0)}$).
The functions $\widehat{\mathbf U}^L_{i}$ 
are entire functions and there exist constants $C$, $\gamma_1$, $\gamma_2 >0$
independent of $i$, $j$, $n$ such that for all $x \ge 0$
\begin{eqnarray*}
|\widehat u_{i}^{(n)}(\widehat x)|+
|\widehat v_{i+2}^{(n)}(\widehat x)| 
\leq C e^{-\beta/2 \widehat x}\gamma_1^{i} 
(i\mu + 1)^i \gamma_2^n \qquad \forall n \in \N_0. 
\end{eqnarray*}
\end{lem}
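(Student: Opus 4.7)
The plan is to mirror the argument used for Corollary~\ref{lemma:bdy-layer-fct-entire} in the three-scale case. Theorem~\ref{thm2} already asserts that $\widehat u_i^L$ and $\widehat v_{i+2}^L$ are entire, so the only remaining task is to convert the complex-plane pointwise bounds (\ref{18})--(\ref{19}) into pointwise bounds for real derivatives of all orders $n \ge 0$. To that end, I would apply Cauchy's integral formula for derivatives on the contour $\partial B_{n+1}(\widehat x)$, which yields
\begin{equation*}
|\widehat u_i^{L,(n)}(\widehat x)| \le \frac{n!}{(n+1)^n} \max_{|z - \widehat x| = n+1} |\widehat u_i^L(z)|,
\end{equation*}
and analogously for $\widehat v_{i+2}^L$. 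On this contour $\operatorname*{Re}(z) \ge \widehat x - (n+1)$ and $|z| \le \widehat x + n+1$, so Theorem~\ref{thm2} gives
\begin{equation*}
|\widehat u_i^{L,(n)}(\widehat x)| \le \frac{n!}{(n+1)^n}\, C_2 K_2^i \left(\mu + \frac{1}{i+1}\right)^{\!i} \frac{1}{i!}\, (iC_u + \widehat x + n+1)^{2i}\, e^{\beta(n+1)}\, e^{-\beta \widehat x},
\end{equation*}
with the parallel estimate for $\widehat v_{i+2}^L$.

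The remainder of the argument is pure algebraic cleanup. The plan is to separate the $\widehat x$-dependence from the $(i,n)$-dependence via
\begin{equation*}
(iC_u + n+1 + \widehat x)^{2i} \le 2^{2i}\bigl[(iC_u + n+1)^{2i} + \widehat x^{2i}\bigr].
\end{equation*}
For the first summand I would use $(1 + x)^k \le e^{xk}$ to write $(iC_u + n+1)^{2i} \le (iC_u)^{2i} e^{2(n+1)/C_u}$, with the exponential in $n$ absorbed into $\gamma_2^n$. For the second, Lemma~\ref{lemma:elementary-properties-of-factorial}(\ref{eq:lemma:estimate-remainder-case-II-2}) applied with $n = 2i$ and $\beta$ replaced by $2\beta$ yields $\widehat x^{2i} e^{-\beta \widehat x} \le (4i/(e\beta))^{2i} e^{-\beta/2 \widehat x}$, which preserves the target exponential tail $e^{-\beta/2 \widehat x}$.

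To finish, I would invoke Stirling's lower bound $i! \ge (i/e)^i$ so that $i^{2i}/i! \le e^i i^i$, then combine with the elementary estimate $(\mu + 1/(i+1))^{i} i^i \le 2^{i} (i\mu+1)^i$ (which follows from $\mu(i+1)+1 \le 2(\mu i + 1)$) to reach the target form $\gamma_1^i (i\mu+1)^i$; all leftover $i$-independent numerical factors ($2^i$, $K_2^i$, $C_u^{2i}$, the constants from Lemma~\ref{lemma:elementary-properties-of-factorial}, etc.) get absorbed into $\gamma_1$. The prefactor $n!/(n+1)^n \le 1$ together with $e^{\beta(n+1)}$ contributes at most $\gamma_2^n$ for $\gamma_2 \ge e^\beta$. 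I do not anticipate any genuine obstacle; the only delicate point is the balance between the contour radius and the polynomial factor $(iC_u + \widehat x + n+1)^{2i}$, and choosing the radius $n+1$ is precisely what makes $n!/(n+1)^n$ absorb the growth in $n$ introduced by enlarging the contour. The same argument applied verbatim to $\widehat v_{i+2}^L$ (using (\ref{18}) instead of (\ref{19})) completes the proof.
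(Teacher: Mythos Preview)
Your proposal is correct and follows essentially the same approach as the paper, which simply says ``Similar to the proof of Corollary~\ref{lemma:bdy-layer-fct-entire}''; you have faithfully reproduced that argument with the Cauchy integral on $\partial B_{n+1}(\widehat x)$, the separation of the polynomial factor via Lemma~\ref{lemma:elementary-properties-of-factorial}, and the bound $(iC_u+n+1)^{2i}\le (iC_u)^{2i}e^{2(n+1)/C_u}$. The algebraic cleanup you sketch is exactly what is needed, and no step presents any difficulty.
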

\begin{proof}
Similar to the proof of Corollary~\ref{lemma:bdy-layer-fct-entire}. 
\end{proof}
By a simple summation argument we get from Lemma~\ref{lemma:bdy-layer-fct-entire-case-II}
\begin{thm}
\label{thm:bdy-layer-fct-entire-case-II}
Let $\beta$ be as in Theorem~\ref{thm2} (i.e., $\beta = \sqrt{a_{11}(0)}$).
There exist constants $C$, $\gamma$, $K >0$ such that for 
$\gamma\{ (M+1)\varepsilon + \frac{\varepsilon}{\mu}\} \leq 1$ the 
function 
$$
\widehat{\mathbf U}^M_{BL}(\widehat x):= \sum_{i=0}^M 
\left(\frac{\varepsilon}{\mu}\right)^i \widehat {\mathbf U}_i(\widehat x),
$$
satisfies for $\widehat x > 0$ 
$$
|\left(
\widehat{\mathbf U}^M_{BL}\right)^{(n)} (\widehat x)
| \leq C e^{-\beta/2 \widehat x} K^n 
\qquad \forall n \in \N_0. 
$$
An analogous result holds for $\widehat{\mathbf V}^M_{BL}:= \sum_{i=0}^M
(\varepsilon/\mu)^i \widehat {\mathbf U}_i^{R}$ with $\beta$ given by
$\beta = \sqrt{a_{11}(1)}$.
\end{thm}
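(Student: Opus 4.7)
The plan is to insert the pointwise bounds of Lemma~\ref{lemma:bdy-layer-fct-entire-case-II} directly into the finite sum defining $\widehat{\mathbf U}^M_{BL}(\widehat x)$, differentiate termwise with respect to $\widehat x$, and collapse the resulting double sum to a geometric series by exploiting the smallness hypothesis on $(M+1)\varepsilon + \varepsilon/\mu$. Since the derivatives are taken in $\widehat x$, no chain-rule correction in $\varepsilon^{-1}$ is needed; the target bound $|(\widehat{\mathbf U}^M_{BL})^{(n)}(\widehat x)|\le C e^{-\beta\widehat x/2} K^n$ will follow immediately once the $i$-sum is shown to be uniformly bounded.

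For the first component, the Lemma gives $|\widehat u_i^{(n)}(\widehat x)|\le C e^{-\beta\widehat x/2}\gamma_1^i (i\mu+1)^i \gamma_2^n$, hence
\[
\sum_{i=0}^M (\varepsilon/\mu)^i |\widehat u_i^{(n)}(\widehat x)|
\le C e^{-\beta\widehat x/2}\gamma_2^n \sum_{i=0}^M \bigl[(\varepsilon/\mu)\gamma_1(i\mu+1)\bigr]^i.
\]
Using $(\varepsilon/\mu)\gamma_1(i\mu+1) = \gamma_1(\varepsilon i + \varepsilon/\mu) \le \gamma_1\bigl((M+1)\varepsilon+\varepsilon/\mu\bigr)$ for all $0\le i\le M$, the hypothesis with $\gamma := 2\gamma_1$ forces every such ratio to be at most $1/2$, so each summand is at most $2^{-i}$ and the sum is bounded by $2$. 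Taking $K := \gamma_2$ yields the claim for the $\widehat u$-component.

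For the second component, I use $\widehat v_0 = \widehat v_1 \equiv 0$ from Theorem~\ref{thm2} to shift the summation index $i \mapsto j = i-2$ and apply the Lemma's bound to $\widehat v_{j+2}$:
\[
\sum_{i=0}^M (\varepsilon/\mu)^i |\widehat v_i^{(n)}(\widehat x)|
\le (\varepsilon/\mu)^2 \sum_{j=0}^{M-2}(\varepsilon/\mu)^j |\widehat v_{j+2}^{(n)}(\widehat x)|
\le C(\varepsilon/\mu)^2 e^{-\beta\widehat x/2}\gamma_2^n \sum_{j=0}^{M-2}\bigl[(\varepsilon/\mu)\gamma_1(j\mu+1)\bigr]^j,
\]
and exactly the same geometric estimate applies. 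The explicit factor $(\varepsilon/\mu)^2$ in fact encodes a sharper decay analogous to the one recorded in Theorem~\ref{thm:case-IV-bdy-layer-fct-entire}, but only the plain boundary-layer bound is required here. The result for $\widehat{\mathbf V}^M_{BL}$ follows by an entirely analogous argument expanded around $x=1$, with $\beta = \sqrt{a_{11}(1)}$. No real obstacle arises: the only technical care needed is choosing $\gamma$ large enough relative to the constant $\gamma_1$ from the Lemma so that the geometric series converges with a bound independent of $M$, $\varepsilon$, and $\mu$.
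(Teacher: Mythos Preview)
Your proof is correct and follows essentially the same route as the paper: insert the bounds from Lemma~\ref{lemma:bdy-layer-fct-entire-case-II}, rewrite $(\varepsilon/\mu)\gamma_1(i\mu+1)=\gamma_1(\varepsilon i+\varepsilon/\mu)\le \gamma_1((M+1)\varepsilon+\varepsilon/\mu)$, and use the smallness hypothesis to bound the resulting sum by a convergent geometric series. Your separate treatment of the $\widehat v$-component, recording the extra factor $(\varepsilon/\mu)^2$, is a nice refinement but not needed for the stated result.
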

\begin{proof}
\iftechreport 
See Appendix~\ref{appendix:proof_of_thm:bdy-layer-fct-entire-case-II} 
for details. 
\else 
See \cite[Appendix~\ref{appendix:proof_of_thm:bdy-layer-fct-entire-case-II}]{mxo-1} for details. 
\fi
\end{proof}
%----------------------------------
\subsection{Remainder estimates}
%----------------------------------
We now turn to estimating the remainder obtained by truncating 
the formal expansion (\ref{eq:ansatz-case-II}). We write 
\begin{equation}
\mathbf{U}(x)=\left( 
\begin{array}{c}
u(x) \\ 
v(x)%
\end{array}%
\right) =\mathbf{W}_{M}(x)+\widehat{\mathbf{U}}_{BL}^{M}(\widehat{x})
+\widehat{\mathbf{V}}%
_{BL}^{M}(\widehat{x})+\mathbf{R}_{M}(x),  \label{U_oir}
\end{equation}
where
\begin{equation}
\mathbf{W}_{M}(x)=\sum_{i=0}^{M}\left(\frac{\varepsilon}{\mu}\right) ^{i}\left( 
\begin{array}{c}
u_{i}(x) \\ 
v_{i}(x)%
\end{array}%
\right) ,  \label{wM}
\end{equation}
denotes the outer (smooth) expansion,
\begin{equation}
\widehat{\mathbf{U}}_{BL}^{M}(\widehat{x})=
\sum_{i=0}^{M}\left(\frac{\varepsilon}{\mu}\right)^{i}\left( 
\begin{array}{c}
\widehat{u}_{i}^{L}(\widehat{x}) \\ 
\widehat{v}_{i}^{L}(\widehat{x})%
\end{array}%
\right) \;,\;\widehat{\mathbf{V}}_{BL}^{M}(\widehat{x})=\sum_{i=0}^{M}
\left(\frac{\varepsilon}{\mu}\right)^{i}\left( 
\begin{array}{c}
\widehat{u}_{i}^{R}(\widehat{x}^R) \\ 
\widehat{v}_{i}^{R}(\widehat{x}^R)%
\end{array}%
\right) ,  \label{BL_M}
\end{equation}
denote the left and right inner (boundary layer) expansions, respectively,
and
\begin{equation}
\mathbf{R}_{M}(x):=\left( 
\begin{array}{c}
r_{u}(x) \\ 
r_{v}(x)%
\end{array}%
\right) =\mathbf{U}(x)-\left( \mathbf{W}_{M}(x)
+\widehat{\mathbf{U}}_{BL}^{M}(\widehat{%
x})+\widehat{\mathbf{V}}_{BL}^{M}(\widehat{x}^R)\right)   \label{rM}
\end{equation}
denotes the remainder. Our goal is to show that the remainder 
$\mathbf{R}_{M}(x)$ 
is exponentially (in $\varepsilon$) small. First, we need to
obtain results on the other terms in (\ref{U_oir}). Note that by the
linearity of the operator $L_{\varepsilon,\mu }$ we have
\begin{equation*}
L_{\varepsilon,\mu }\mathbf{R}_{M}=
L_{\varepsilon,\mu }\left( \mathbf{U}-\mathbf{W}%
_{M}\right) -L_{\varepsilon,\mu }\widehat{\mathbf{U}}_{BL}^{M}
-L_{\varepsilon,\mu }\widehat{\mathbf{V}}%
_{BL}^{M}.
\end{equation*}
The terms on the right-hand side are treated separately. 
%
%--------------------
\subsubsection{Remainder resulting from the outer expansion: 
${\mathbf F} - L_{\varepsilon,\mu} {\mathbf W}_M$}
%--------------------
We have the following theorem.

\begin{thm}
\label{thm_case2_smooth_part}Let $\mathbf{U}$ 
be the solution to the problem \emph{(\ref{eq:model-problem})}. 
Then there exist $C$, $\gamma > 0$ independent of $\mu$, $\varepsilon$, 
and $M$ such that for $\mathbf{W}_{M}$,
given by \emph{(\ref{wM})} we have 
\begin{equation*}
\left\Vert L_{\varepsilon,\mu }\left( \mathbf{U}-\mathbf{W}_{M}\right)
\right\Vert_{L^\infty(I)} \leq C \left[
\left(\gamma \frac{\varepsilon}{\mu}\right)^{M+1} + 
\left(\gamma (M +1)\varepsilon\right)^{M+1}
\right].
\end{equation*}
\end{thm}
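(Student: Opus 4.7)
The plan is to compute $L_{\varepsilon,\mu}(\mathbf{U}-\mathbf{W}_M)$ explicitly from the recursions \eqref{eq:case_2-recursions-i}--\eqref{eq:case_2-recursions-ii} defining the $u_i, v_i$, and then to estimate the resulting residual via the analytic bounds \eqref{20}. Since $L_{\varepsilon,\mu}\mathbf{U}=\mathbf{F}$,
\begin{equation*}
L_{\varepsilon,\mu}\mathbf{W}_M = \sum_{i=0}^{M}\left(\tfrac{\varepsilon}{\mu}\right)^i\begin{pmatrix}-\varepsilon^2 u_i''+a_{11}u_i+a_{12}v_i\\ -\mu^2 v_i''+a_{21}u_i+a_{22}v_i\end{pmatrix};
\end{equation*}
substituting the recursions, the first-component summand becomes $-\varepsilon^2 u_i''+f_i+\mu^2 u_{i-2}''$ and the second simply $g_i$. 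The second component then telescopes trivially to $g$, so its residual vanishes. In the first component, after cancelling $f=\sum(\varepsilon/\mu)^i f_i$ and invoking the index-shift identity $(\varepsilon/\mu)^i\mu^2 u_{i-2}''=\varepsilon^2(\varepsilon/\mu)^{i-2}u_{i-2}''$ to cancel against $-\varepsilon^2(\varepsilon/\mu)^i u_i''$ for $i\le M-2$, only the two boundary indices survive:
\begin{equation*}
L_{\varepsilon,\mu}(\mathbf{U}-\mathbf{W}_M)=-\varepsilon^2\sum_{i\in\{M-1,M\}}\left(\tfrac{\varepsilon}{\mu}\right)^i\begin{pmatrix}u_i''\\ 0\end{pmatrix}.
\end{equation*}

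Next I would invoke \eqref{20} with $n=2$, together with $i^i/i!\le e^i$ and the elementary bound $\varepsilon^2\max\{2,\mu^{-1}\}^2\le 4(\varepsilon/\mu)^2$ (valid because $\mu\le 1$), and expand $((i+2)\mu+1)^i\le 2^i\bigl(((i+2)\mu)^i+1\bigr)$. With $(\varepsilon/\mu)^i\mu^i=\varepsilon^i$, this yields, for each $i\in\{M-1,M\}$, a bound on $\varepsilon^2(\varepsilon/\mu)^i\|u_i''\|_{L^\infty(I)}$ by the sum of two canonical pieces: (i) $C(\varepsilon/\mu)^{i+2}(2K_3 e)^i$, and (ii) $C(\varepsilon/\mu)^2\bigl(2K_3 e(i+2)\varepsilon\bigr)^i$.

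It remains to bound each piece by $C\bigl[(\gamma\varepsilon/\mu)^{M+1}+(\gamma(M+1)\varepsilon)^{M+1}\bigr]$ with $\gamma:=4K_3 e$. Piece (i) is immediate: $\varepsilon/\mu\le 1$ and $i\le M$ give $(\varepsilon/\mu)^{i+2}(2K_3 e)^i\le(\varepsilon/\mu)^{M+1}\gamma^M\le(\gamma\varepsilon/\mu)^{M+1}$. For piece (ii), setting $A:=\varepsilon/\mu$ and $B:=\gamma(M+1)\varepsilon$ and using $2K_3 e(i+2)\varepsilon\le B$ (via $M+2\le 2(M+1)$), the piece is at most $A^2 B^i$ with $i+2\ge M+1$. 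If $B\le 1$, then $A^2 B^i\le\max(A,B)^{i+2}\le\max(A,B)^{M+1}\le A^{M+1}+B^{M+1}$, using $A,B\le 1$. If $B>1$, then $A^2 B^i\le B^i\le B^{M+1}$. Either way, piece (ii) is bounded by $(\gamma\varepsilon/\mu)^{M+1}+(\gamma(M+1)\varepsilon)^{M+1}$. The main obstacle is precisely this last bookkeeping step: the $(\varepsilon/\mu)^2$ prefactor in piece (ii) is not uniformly absorbed by either target term alone across all regimes of $(\varepsilon,\mu,M)$ (notably in the intermediate regime $\mu\ll\sqrt{\varepsilon}$), so the case split on the size of $B$ is essential for obtaining a bound independent of the parameters.
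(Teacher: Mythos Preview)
Your proof is correct and follows essentially the same approach as the paper: both compute the residual explicitly from the recursions \eqref{eq:case_2-recursions-i}--\eqref{eq:case_2-recursions-ii}, telescope to find that only the two indices $i\in\{M-1,M\}$ survive in the first component, and then invoke the bound \eqref{20} on $u_i''$. (There is a harmless sign error in your residual formula; the correct expression is $+\varepsilon^2\sum_{i\in\{M-1,M\}}(\varepsilon/\mu)^i(u_i'',0)^T$, matching the paper's $\sum_{i=M-1}^M(\varepsilon/\mu)^{i+2}\mu^2(u_i'',0)^T$.) The only difference is in the final bookkeeping: the paper simply records the identity $\tfrac{\varepsilon}{\mu}(M\mu+1)=M\varepsilon+\tfrac{\varepsilon}{\mu}$ and leaves the rest implicit, whereas you spell out the binomial split $((i+2)\mu+1)^i\le 2^i(((i+2)\mu)^i+1)$ and the case distinction on the size of $B=\gamma(M+1)\varepsilon$ to absorb the leftover factor $(\varepsilon/\mu)^2$.
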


\begin{proof} 
Using 
(\ref{eq:case_2-separation-of-scales-slow-variable})
(\ref{14}), (\ref{15}) we obtain, after some calculations, 
\begin{eqnarray*}
\left\Vert L_{\varepsilon,\mu }\left( \mathbf{U}-\mathbf{W}_{M}\right)
\right\Vert_{L^\infty(I)} &=&
\sum_{i=M-1}^M \left(\frac{\varepsilon}{\mu}\right)^{i+2} \mu^2 
\left(\begin{array}{c} u_{i}^{\prime\prime} \\ 0 \end{array}\right)
\end{eqnarray*}
From Theorem~\ref{thm2} we therefore get, with the observation 
$
\frac{\varepsilon}{\mu} ( M \mu + 1)  = M\varepsilon + \frac{\varepsilon}{\mu},
$
the desired result. 
\end{proof}
%--------------------
\subsubsection{Remainder resulting from the inner expansion: 
$L_{\varepsilon,\mu} \widehat{\mathbf U}^M_{BL}$}
%--------------------
We now turn our attention to estimating 
$L_{\varepsilon,\mu} \widehat {\mathbf U}^M_{BL}$. Since 
$L_{\varepsilon,\mu} \widehat {\mathbf V}^M_{BL}$ is treated with
similar arguments, we will not work out the details. We have: 
\begin{lem} 
The functions $\widehat{\mathbf U}^M_{BL}$ satisfy 
\begin{equation}
\label{eq:remainder-case-II}
L_{\varepsilon,\mu} \widehat {\mathbf U}^M_{BL} = 
\sum_{i \ge M+1} 
\left(\frac{\varepsilon}{\mu}\right)^i 
\sum_{k = i-M}^{i} 
\mu^k \widehat x^k {\mathbf A}_k \widehat {\mathbf U}_{i-k}^L
+ 
\sum_{i=M-1}^M 
\left(\frac{\varepsilon}{\mu}\right)^i 
\left( 
\begin{array}{c} 
0 \\ (\widehat v^L_{i+2})^{\prime\prime}
\end{array}\right)  . 
\end{equation}
\end{lem}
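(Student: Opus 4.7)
The identity is purely algebraic, expressing the fact that applying $L_{\varepsilon,\mu}$ to the truncated sum $\widehat{\mathbf U}^M_{BL}$ and re-collecting powers of $\varepsilon/\mu$ leaves only the contributions that fall outside the range in which the defining recursions (\ref{eq:case_2-recursions-iii})--(\ref{eq:case_2-recursions-iv}) produce cancellation. The plan is therefore to mirror the derivation of (\ref{eq:LhatU^M}) in Case~IV, simplified to a single expansion parameter.

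First, I would use the representation (\ref{eq:L-on-hat-scale}) of $L_{\varepsilon,\mu}$ on the $\widehat x$-scale, together with ${\mathbf E}^{\varepsilon,\mu} = \diag(\varepsilon^2,\mu^2)$, to apply the operator termwise. Adopting the conventions $\widehat{\mathbf U}^L_i = 0$ for $i > M$ and ${\mathbf A}_k = 0$ for $k<0$, I extend the $i$-sum to infinity; a typical summand is then $(\varepsilon/\mu)^i[-\diag(1,\mu^2/\varepsilon^2)(\widehat{\mathbf U}^L_i)'' + \sum_{k\ge 0}\mu^k(\varepsilon/\mu)^k\widehat x^k {\mathbf A}_k \widehat{\mathbf U}^L_i]$. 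Re-indexing by $j = i + k$ collects everything according to the power $(\varepsilon/\mu)^j$.

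For the first component the coefficient of $(\varepsilon/\mu)^j$ is exactly the left-hand side of (\ref{eq:case_2-recursions-iii}) with $i$ replaced by $j$; it vanishes for $j \le M$ and, for $j \ge M+1$, the conventions restrict $k$ to $[j-M,j]$, yielding the row-$1$ part of the first displayed sum. For the second component the prefactor $\mu^2/\varepsilon^2$ shifts the effective index by two, since $(\varepsilon/\mu)^i\cdot\mu^2/\varepsilon^2 = (\varepsilon/\mu)^{i-2}$; the base-case identities $\widehat v^L_0 = \widehat v^L_1 = 0$ are exactly what is needed to absorb the would-be negative indices, so the re-indexing is clean. The coefficient of $(\varepsilon/\mu)^j$ then reproduces the LHS of (\ref{eq:case_2-recursions-iv}) and vanishes for $j \le M-2$; for $j \in \{M-1,M\}$ the $-(\widehat v^L_{j+2})''$ piece has been cut off by the truncation $i \le M$, whereas the $k$-sum is still complete and equals $(\widehat v^L_{j+2})''$ by (\ref{eq:case_2-recursions-iv}), producing the second displayed sum in the lemma; for $j \ge M+1$ the $k$-sum collapses to $[j-M,j]$ and contributes the row-$2$ part of the first displayed sum.

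The only non-routine point is the index bookkeeping for the second component, namely pairing the shift by two from the $\mu^2/\varepsilon^2$ factor with the vanishing of $\widehat v^L_0$ and $\widehat v^L_1$; no analytic estimates are required.
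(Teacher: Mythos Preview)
Your proposal is correct and follows essentially the same route as the paper: adopt the convention $\widehat{\mathbf U}^L_i = 0$ for $i>M$, apply $L_{\varepsilon,\mu}$ on the $\widehat x$-scale, re-index so that the coefficient of $(\varepsilon/\mu)^j$ reproduces the recursions (\ref{eq:case_2-recursions-iii})--(\ref{eq:case_2-recursions-iv}), and observe that only the boundary cases $j\in\{M-1,M\}$ (second component) and $j\ge M+1$ survive. Your handling of the index shift in the second component via $\widehat v^L_0=\widehat v^L_1=0$ and the identification of the surviving $k$-sum with $(\widehat v^L_{j+2})''$ is exactly the mechanism the paper uses.
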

\begin{proof}

In order 
to simplify the calculation of 
$L_{\varepsilon,\mu} \widehat{\mathbf U}^M_{BL}$, 
we employ temporarily the convention that 
\begin{equation}
\label{eq:convention-case-II}
\widehat u_i^L = \widehat v_i^L = 0 \qquad \forall i \ge M+1. 
\end{equation} 
With this convention, we calculate (cf.
(\ref{eq:case_2-recursions-iii}), 
(\ref{eq:case_2-recursions-iv}))
\begin{eqnarray*}
L_{\varepsilon,\mu} \widehat {\mathbf U}^M_{BL}
&=& \sum_{i=0}^\infty \left(\frac{\varepsilon}{\mu}\right)^i 
\left(\begin{array}{c}
- (\widehat u^L_i)^{\prime\prime} + 
\sum_{k=0}^i \mu^k \widehat x^k 
\left( \frac{a_{11}^{(k)}(0)}{k!} \widehat u^L_{i-k}  + \frac{a_{12}^{(k})(0)}{k!} \widehat v^L_{i-k}\right) 
\\
- (\widehat v^L_{i+2})^{\prime\prime} + 
\sum_{k=0}^i \mu^k \widehat x^k 
\left( \frac{a_{21}^{(k)}(0)}{k!} \widehat u^L_{i-k}  + \frac{a_{22}^{(k)}(0)}{k!} \widehat v^L_{i-k}\right) 
\end{array}
\right)\\
&=& 
\sum_{i \ge M+1} 
\left(\frac{\varepsilon}{\mu}\right)^i 
\sum_{k = 0}^{\infty} 
\mu^k \widehat x^k {\mathbf A}_k \widehat {\mathbf U}_{i-k}^L
- 
\sum_{i=M-1}^M 
\left(\frac{\varepsilon}{\mu}\right)^i 
\sum_{k=0}^i\mu^k \widehat x^k 
\left( 
\begin{array}{c} 
0 \\
\frac{a_{21}^{(k)}(0)}{k!} \widehat u^L_{i-k}  + \frac{a_{22}^{(k)}(0)}{k!} \widehat v^L_{i-k}
\end{array}\right)  ; 
\end{eqnarray*}
here, we employed additionally 
(\ref{eq:case_2-recursions-iii}), 
(\ref{eq:case_2-recursions-iv}) to see that the terms corresponding 
to $i \in \{0,\ldots,M-2\}$ in the first sum are zero. Finally, 
our convention (\ref{eq:convention-case-II}) and the fact that 
$\widehat u^L_j = \widehat v^L_j = 0$ for $j < 0$ implies 
the restrictions 
$$
0 \leq i - k \leq M
$$
so that we obtain 
$$
\sum_{i \ge M+1} 
\left(\frac{\varepsilon}{\mu}\right)^i 
\sum_{k = 0}^{\infty} 
\mu^k \widehat x^k {\mathbf A}_k \widehat {\mathbf U}_{i-k}^L
= \sum_{i \ge M+1} 
\left(\frac{\varepsilon}{\mu}\right)^i 
\sum_{k = i-M}^{i} 
\mu^k \widehat x^k {\mathbf A}_k \widehat {\mathbf U}_{i-k}^L, 
$$
which produces the double sum in (\ref{eq:remainder-case-II}). 
Lifting now the convention (\ref{eq:convention-case-II})
we can use (\ref{eq:case_2-recursions-iii}) 
to replace 
$\sum_{k=0}^i \mu^k \widehat x^k 
(\frac{a_{21}^{(k)}(0)}{k!} \widehat u^L_{i-k}  
+ \frac{a_{22}^{(k)}(0)}{k!} \widehat v^L_{i-k})$ with 
$-\widehat v^L_{i+2}(\widehat x)$. 
\end{proof}
It remains to bound 
$L_{\varepsilon,\mu }\widehat {\mathbf{U}}_{BL}^{M}$. For that 
purpose, we need the following lemma: 

\begin{lem}
\label{lemma:estimate-remainder-case-II} Assume $0\leq \widehat{x}%
\varepsilon \gamma _{A}\leq X<1$ with $\gamma _{A}$, $X$, $C_1$, $\beta > 0$  
known constants. Let $0 <\varepsilon \leq \mu \leq 1$. 
Then
\begin{align*}
&
\sum_{i\geq M+1}
\left(\frac{\varepsilon}{\mu}\right)^{i}
\sum_{k =i-M}^{i}\mu^{k} \widehat{x}^{k }\gamma
_{A}^{k }\left( \mu +\frac{1}{i-k+1 }\right) ^{i-k }\frac{1}{(i-k)!}
(C_{1}(i-k) +\widehat{x})^{2(i-k)}e^{-\beta \widehat{x}}\\
& \qquad \qquad 
\leq \frac{%
C\mu}{1-X}(\gamma \varepsilon (M+1+1/\mu ))^{M+1}e^{-\beta \widehat{x}/2},
\end{align*}
where $C$, $\gamma >0 $ are positive constants independent 
of $\varepsilon$ and $\mu$. 
\end{lem}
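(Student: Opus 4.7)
The plan is to perform a change of summation variable and then to decouple the $i$-sum from the $\ell$-sum. Substituting $\ell = i-k$ (so that $k = i-\ell$ and $\ell$ ranges over $\{0,\dots,M\}$), the left-hand side becomes
\[
e^{-\beta\widehat x}\sum_{\ell=0}^M \frac{(\mu+1/(\ell+1))^\ell (C_1\ell+\widehat x)^{2\ell}}{\ell!\,(\mu\widehat x\gamma_A)^{\ell}}\;\sum_{i\geq M+1}(\varepsilon\widehat x\gamma_A)^{i}.
\]
By the hypothesis $\varepsilon\widehat x\gamma_A\le X<1$, the geometric tail in $i$ is bounded by $(\varepsilon\widehat x\gamma_A)^{M+1}/(1-X)$. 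Pulling this out and collecting the $\widehat x$-powers yields, up to the prefactor $\varepsilon^{M+1}\gamma_A^{M+1}e^{-\beta\widehat x}/(1-X)$, a finite sum of terms of the form $\widehat x^{M+1-\ell}(C_1\ell+\widehat x)^{2\ell}$ multiplied by purely $\ell$-dependent coefficients containing $\mu^{-\ell}(\mu+1/(\ell+1))^\ell/\ell!$.

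Second, I would invoke Lemma~\ref{lemma:elementary-properties-of-factorial}, estimate (\ref{eq:lemma:estimate-remainder-case-II-1}), to split $(C_1\ell+\widehat x)^{2\ell}\le\gamma^\ell(\ell^{2\ell}+\widehat x^{2\ell})$, giving two subsums $S_1$ (with $\ell^{2\ell}$) and $S_2$ (with $\widehat x^{2\ell}$). In each subsum I would write $e^{-\beta\widehat x}=e^{-\beta\widehat x/2}\cdot e^{-\beta\widehat x/2}$ and use estimate (\ref{eq:lemma:estimate-remainder-case-II-2}) to absorb the remaining powers of $\widehat x$: the factor $\widehat x^{M+1-\ell}$ appearing in $S_1$ produces $(2(M+1-\ell)/(e\beta))^{M+1-\ell}$, whereas $\widehat x^{M+1+\ell}$ in $S_2$ produces $(2(M+1+\ell)/(e\beta))^{M+1+\ell}$. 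After this step, the remaining summands depend only on $\ell$ and on $\mu$ (and on fixed data).

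Third, I would bound the residual $\ell$-dependent factors using the Stirling-type inequality $\ell^{2\ell}/\ell!\le(e\ell)^\ell$ together with the algebraic identity
\[
\ell^{\ell}\bigl(\mu+\tfrac{1}{\ell+1}\bigr)^{\ell}=\bigl(\ell\mu+\tfrac{\ell}{\ell+1}\bigr)^{\ell}\le(\ell\mu+1)^{\ell}=\mu^{\ell}(\ell+1/\mu)^{\ell},
\]
so that the factor $\mu^{-\ell}$ is exactly cancelled. Since $\ell+1/\mu\le M+1+1/\mu$ and $M+1-\ell\le M+1$, each summand can be bounded by a term of the shape $C^\ell(M+1+1/\mu)^{M+1}$ multiplied by a geometric factor in $\ell$. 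Summing the resulting geometric series over $0\le\ell\le M$ yields an estimate of the form $C(\gamma\varepsilon(M+1+1/\mu))^{M+1}/(1-X)$ times $e^{-\beta\widehat x/2}$, and the additional prefactor $\mu$ is recovered from the comparison $\mu(M+1+1/\mu)^{M+1}=\mu^{-M}(\mu(M+1)+1)^{M+1}$, which matches exactly the power of $\mu$ that the combined estimates of $S_1$ and $S_2$ produce.

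The main obstacle will be the third step, namely extracting the sharper $\mu$ prefactor rather than the crude $1$. This requires the identity $\ell^\ell(\mu+1/(\ell+1))^\ell\le\mu^\ell(\ell+1/\mu)^\ell$ so that one full power of $\mu$ survives the cancellation with $\mu^{-\ell}$, and then a careful matching of the $(M+1-\ell)^{M+1-\ell}$ (from the exponential decay) with the $(\ell+1/\mu)^\ell$ (from the $\mu$-algebra) to reconstitute $(M+1+1/\mu)^{M+1}$. All other steps are essentially bookkeeping.
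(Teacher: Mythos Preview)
Your first three steps track the paper's proof closely: the substitution $\ell=i-k$, the geometric-series bound on the $i$-sum, and the use of Lemma~\ref{lemma:elementary-properties-of-factorial} to split $(C_1\ell+\widehat x)^{2\ell}$ are all there. One minor streamlining you miss: the paper collapses your $S_1$ and $S_2$ into a single bound $\frac{1}{\ell!}(C_1\ell+\widehat x)^{2\ell}e^{-\beta\widehat x/4}\le C\tilde\gamma^\ell\ell^\ell$ right away (using $\widehat x^{2\ell}e^{-\beta\widehat x/4}\le C^{2\ell}\ell^{2\ell}$ followed by Stirling), which avoids the awkward $(M+1+\ell)^{M+1+\ell}$ factor you would have to handle in $S_2$.

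The genuine gap is in your final paragraph. After the reductions you arrive at a sum of the shape
\[
\sum_{\ell=0}^{M} c_1^{\,\ell}\,c_2^{\,M+1-\ell}\,(M+1-\ell)^{M+1-\ell}\,(\ell+1/\mu)^{\ell}.
\]
Your crude termwise estimate $(M+1-\ell)^{M+1-\ell}(\ell+1/\mu)^\ell\le(M+1+1/\mu)^{M+1}$ is correct but loses exactly the factor $\mu$ you are after: at $\ell=M$ the actual term is $(M+1/\mu)^M$, which is bounded by $\mu(M+1+1/\mu)^{M+1}$, yet your bound only records the coarser $(M+1+1/\mu)^{M+1}$. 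Your proposed remedy --- that ``one full power of $\mu$ survives'' from the identity $\ell^\ell(\mu+1/(\ell+1))^\ell\le\mu^\ell(\ell+1/\mu)^\ell$ --- does not work: the $\mu^\ell$ on the right cancels the $\mu^{-\ell}$ already present, leaving \emph{zero} net powers of $\mu$. The observation that both sides have leading $\mu$-behaviour $\mu^{-M}$ as $\mu\to0$ is a heuristic, not a bound valid uniformly over all $\ell$. Moreover, the remaining $\ell$-sum is not a geometric series at all: the factor $(M+1-\ell)^{M+1-\ell}(\ell+1/\mu)^\ell$ is not of the form $r^\ell$.

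The paper's device for this last step is to note that $\ell\mapsto a^{M+1-\ell}(M+1-\ell)^{M+1-\ell}(\ell+1/\mu)^\ell$ is \emph{convex} on $[0,M]$ (this is the content of Lemma~\ref{lemma:convexity}). Hence the sum is at most $(M+1)$ times the maximum of the two endpoint values $\ell=0$ and $\ell=M$, and each of these is individually dominated by $C\gamma_2^{M+1}\mu(M+1+1/\mu)^{M+1}$: for $\ell=M$ via $(M+1/\mu)^M\le(M+1+1/\mu)^M=\mu(M+1+1/\mu)^{M+1}/(\mu(M+1)+1)$, and for $\ell=0$ via $\mu(M+1+1/\mu)^{M+1}\ge(M+1)^M$ together with $(M+1)\le C\gamma_2^{M+1}$. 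This convexity argument is the missing idea in your outline.
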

\begin{proof}
The key ingredients of the proof are the estimates given 
in Lemma~\ref{lemma:elementary-properties-of-factorial}. 
%\begin{eqnarray}
%\label{eq:lemma:estimate-remainder-case-II-1}
%(C_{1}\ell +\widehat{x})^{2\ell}& \leq & 
%2^{\ell}(C_{1}\ell )^{2\ell}
%+2^{\ell}\widehat{x}^{2 \ell }
%\leq \gamma ^{\ell }\left( \ell^{2\ell }+\widehat{x}%
%^{2\ell }\right) \\
%\label{eq:lemma:estimate-remainder-case-II-2}
%\sup_{x > 0} x^n e^{-\beta/4 x} &\leq & \left(\frac{n}{ e \beta }\right)^n 
%\end{eqnarray}
\iftechreport
For details, see Appendix~\ref{appendix:case_2_estimate-remainder-case-II}.
\else
For details, see 
\cite[Appendix~\ref{appendix:case_2_estimate-remainder-case-II}]{mxo-1}.
\fi
\end{proof}

\begin{thm}
\label{thm:estimate-remainder-case-II} 
Let $\beta > 0$ be given by Theorem~\ref{thm2}, i.e., 
$\beta = \sqrt{a_{11}(0)}$
There exist constants $C$, $\gamma > 0$ such that, 
under the assumptions
$$
\left(\varepsilon (M+1) + \frac{\varepsilon}{\mu} \right)\gamma 
\leq 1 \qquad \widehat x \varepsilon \gamma \leq 1,
$$
$\widehat{\mathbf U}^M$ given by \emph{(\ref{BL_M})}
satisfies
\begin{equation*}
\left|  L_{\varepsilon,\mu }\widehat {\mathbf{U}}_{BL}^{M}(\widehat x)\right| 
\leq C (\varepsilon
(M+1+1/\mu )\gamma )^{M-1}e^{-\beta \widehat x}.
\end{equation*}
For the case $\widehat x \varepsilon \gamma > 1$ (but still assuming
$\left(\varepsilon (M+1) + \frac{\varepsilon}{\mu} \right)\gamma \leq 1$), we 
have 
\begin{equation*}
\left|  L_{\varepsilon,\mu }\widehat {\mathbf{U}}_{BL}^{M}(\widehat x)\right| 
\leq C e^{-\beta/2 \widehat x}. 
\end{equation*}
\end{thm}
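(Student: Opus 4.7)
The plan is to estimate the two terms on the right-hand side of the identity (\ref{eq:remainder-case-II}) separately, and to split the analysis into the two regimes $\widehat x \varepsilon \gamma \leq 1$ and $\widehat x \varepsilon \gamma > 1$ as in the corresponding Theorem~\ref{thm_BL_3scales} of Case~\ref{case:IV}.

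First I would treat the case $\widehat x \varepsilon \gamma \leq 1$. For the double sum
$\sum_{i \ge M+1}(\varepsilon/\mu)^i \sum_{k=i-M}^i \mu^k \widehat x^k \mathbf{A}_k \widehat{\mathbf U}^L_{i-k}$, I would insert the pointwise bounds (\ref{18}), (\ref{19}) from Theorem~\ref{thm2}, using that $\|\mathbf{A}_k\|$ is bounded geometrically by analyticity of $\mathbf{A}$. This yields, for each component, exactly a sum of the type handled by Lemma~\ref{lemma:estimate-remainder-case-II} (the range $k \in [i-M,i]$ in (\ref{eq:remainder-case-II}) matches the hypothesis of that lemma verbatim), and the lemma delivers a bound of the form $C(\varepsilon(M+1+1/\mu)\gamma)^{M+1} e^{-\beta \widehat x/2}$, which is stronger than required. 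For the boundary terms
$\sum_{i=M-1}^M (\varepsilon/\mu)^i (0,(\widehat v^L_{i+2})'')^T$, I would apply Lemma~\ref{lemma:derivatives-of-entire-fcts} to the bound (\ref{18}) for $\widehat v^L_{i+2}$ with $i \in \{M-1,M\}$; after multiplying by $(\varepsilon/\mu)^{M-1}$ and $(\varepsilon/\mu)^{M}$ respectively, the elementary estimates of Lemma~\ref{lemma:elementary-properties-of-factorial} reduce the polynomial factor $((M-1)C_v + \widehat x)^{2(M-1)}$, via $\widehat x \leq 1/(\varepsilon\gamma)$, to terms of the form $(\varepsilon(M+1+1/\mu)\gamma)^{M-1}$ modulo constants.

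For the case $\widehat x \varepsilon \gamma > 1$, I would appeal instead to Theorem~\ref{thm:bdy-layer-fct-entire-case-II}, which under the assumption $\gamma\{(M+1)\varepsilon + \varepsilon/\mu\} \leq 1$ gives pointwise bounds $|(\widehat{\mathbf U}^M_{BL})^{(n)}(\widehat x)| \leq C e^{-\beta \widehat x/2} K^n$ on $\widehat x > 0$. Writing $L_{\varepsilon,\mu}$ on the $\widehat x$-scale as in (\ref{eq:L-on-hat-scale}), the first component $-(\widehat u^M)'' + (a_{11} \widehat u^M + a_{12} \widehat v^M)$ is directly bounded by $C e^{-\beta \widehat x/2}$; for the second component, the potentially singular term $-(\mu/\varepsilon)^2 (\widehat v^M)''$ is controlled by invoking the sharper $(\varepsilon/\mu)^2$ gain on the second component of $\widehat{\mathbf U}^M_{BL}$ inherited from $\widehat v^L_0 = \widehat v^L_1 = 0$, yielding again a bound of the form $C e^{-\beta \widehat x/2}$.

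The main obstacle I anticipate is bookkeeping in the first regime: reconciling the factor $(\varepsilon/\mu)^i$ outside the sum with the factor $\mu^k$ inside so that they combine into the quantity $\varepsilon(M+1+1/\mu)$ featured in Lemma~\ref{lemma:estimate-remainder-case-II}, and verifying that the boundary terms involving $(\widehat v^L_{i+2})''$ do not dominate the prefactor $(\varepsilon(M+1+1/\mu)\gamma)^{M-1}$. The exponent $M-1$ (rather than $M+1$) in the final bound originates precisely from these boundary terms, so care is needed to ensure that the constant $\gamma$ is chosen large enough relative to the implicit constants in Theorem~\ref{thm2} and Lemma~\ref{lemma:derivatives-of-entire-fcts}.
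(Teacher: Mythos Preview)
Your proposal is correct and tracks the paper's argument closely. Both split into the two regimes $\widehat x \varepsilon \gamma \leq 1$ and $\widehat x \varepsilon \gamma > 1$; for the first regime both apply Lemma~\ref{lemma:estimate-remainder-case-II} to the double sum in (\ref{eq:remainder-case-II}) and handle the two boundary terms $(\widehat v^L_{i+2})''$, $i \in \{M-1,M\}$, via Cauchy's formula (the paper states it directly, you cite the packaged version Lemma~\ref{lemma:derivatives-of-entire-fcts}) together with Lemma~\ref{lemma:elementary-properties-of-factorial}; for the second regime both invoke Theorem~\ref{thm:bdy-layer-fct-entire-case-II}.

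The one substantive difference is how you control the factor $(\mu/\varepsilon)^2$ multiplying $(\widehat v^M)''$ in the second regime. You cancel it against the structural $(\varepsilon/\mu)^2$ gain coming from $\widehat v^L_0 = \widehat v^L_1 = 0$. The paper instead uses $\varepsilon^{-1} \leq \gamma \widehat x$ to bound the bad prefactor by a power of $\widehat x$ and then absorbs that polynomial into the exponential decay. Your route avoids sacrificing any of the decay rate; however, Theorem~\ref{thm:bdy-layer-fct-entire-case-II} as stated does not record the componentwise $(\varepsilon/\mu)^2$ gain on $\widehat v^M$, so you would need to extract it directly from Lemma~\ref{lemma:bdy-layer-fct-entire-case-II} by summing $\sum_{i \ge 2}(\varepsilon/\mu)^i \widehat v^L_i$, which is straightforward.
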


\begin{proof}
For the case of $\widehat x\varepsilon$ being sufficiently small, 
the starting point is (\ref{eq:remainder-case-II}), which is
a sum of two contributions, namely, the double sum and the single
sum (consisting of merely two terms). For the double sum, we 
recall that $\widehat x = x/\varepsilon$, so that 
Lemma~\ref{lemma:estimate-remainder-case-II} produces the desired
estimate. For the single sum, we use the estimates of 
Theorem~\ref{thm2} for $\widehat v_{i+2}^L$. From Cauchy's Integral 
Formula for derivatives with contour being taken as $\partial B_1(x)$, 
we get 
$$
|(\widehat v_{i+2}^L)^{\prime\prime}(x)| \leq C \frac{1}{(i+2)!}
(C (i+ 3) + |\widehat x|)^{2(i+2)} e^{-\beta \widehat x}. 
$$
Using 
(\ref{eq:lemma:estimate-remainder-case-II-1}) and 
(\ref{eq:lemma:estimate-remainder-case-II-2}), 
we see that this contribution can be bounded by 
$C (\gamma (M+1)\varepsilon/\mu)^{M-1}$ for suitable $C$, $\gamma > 0$. 

For the case $\widehat x \varepsilon \gamma > 1$, we use the exponential
decay of $\widehat {\mathbf U}^M_{BL}$ expressed in 
Theorem~\ref{thm:bdy-layer-fct-entire-case-II}. 
The factors $\varepsilon^{-2}$, which arise when computing 
$L_{\varepsilon,\mu} \widehat{\mathbf U}^M_{BL}$ can be absorbed by the 
expontially decaying term since $\varepsilon^{-1} \leq \gamma \widehat x$
(see Lemma~\ref{lemma:elementary-properties-of-factorial}). 
\end{proof}

\subsection{Boundary mismatch of the expansion}
%----------------
\begin{thm}
\label{thm:bdy-mismatch-case-II}
There exist constants $C$, $b$, $\gamma > 0$ such that under the assumptions
\begin{equation*}
\left( \varepsilon (M+1) + \frac{\varepsilon}{\mu} \right) \gamma \leq 1, 
\end{equation*}
one has 
$$
\|{\mathbf W}_M + \widehat{\mathbf U}^M_{BL}
+ \widehat{\mathbf V}^M_{BL}\|_{L^\infty(\partial I)} 
\leq  C e^{-b/\varepsilon}. 
$$
\end{thm}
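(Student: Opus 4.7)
The plan is to mimic the argument used for Theorem~\ref{thm:3scale-bdy-mismatch} in the three-scale case. The essential observation is that the recursive construction of the boundary data was designed precisely so that at each endpoint the outer part and the nearby layer cancel exactly, leaving only the ``far'' layer contribution, which is exponentially small due to its decay on the $\varepsilon$-scale.

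Concretely, at $x=0$, since ${\mathbf U}(0)=0$, one writes
\[
{\mathbf R}_M(0) = -\left[{\mathbf W}_M(0) + \widehat{\mathbf U}^M_{BL}(0) + \widehat{\mathbf V}^M_{BL}(1/\varepsilon)\right].
\]
By the boundary condition $\widehat u^L_i(0) = -u_i(0)$ in (\ref{16}) and the identity $\widehat v^L_i(0) = -v_i(0)$ built into (\ref{14}), one has ${\mathbf U}_i(0) + \widehat{\mathbf U}^L_i(0) = 0$ for every $i$, so that after summing against $(\varepsilon/\mu)^i$, ${\mathbf W}_M(0) + \widehat{\mathbf U}^M_{BL}(0) = 0$. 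Hence only the right-layer contribution $\widehat{\mathbf V}^M_{BL}(1/\varepsilon)$ survives, and similarly at $x=1$ only $\widehat{\mathbf U}^M_{BL}(1/\varepsilon)$ survives.

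Next, I would estimate $|\widehat{\mathbf V}^M_{BL}(1/\varepsilon)|$ (the other term being symmetric). Using the pointwise bounds (\ref{18})--(\ref{19}) from Theorem~\ref{thm2} applied to $\widehat u^R_i$, $\widehat v^R_i$ with $\beta = \sqrt{a_{11}(1)}$, each summand is controlled by
\[
C_\ell K_\ell^i \left(\mu + \tfrac{1}{i+1}\right)^i \frac{1}{i!}\left(i C + \tfrac{1}{\varepsilon}\right)^{2i} e^{-\beta/\varepsilon}.
\]
Using Lemma~\ref{lemma:elementary-properties-of-factorial} with $n=2i$ to absorb the polynomial factor $(iC + 1/\varepsilon)^{2i}$ against part of the exponential $e^{-\beta/(2\varepsilon)}$, and using Stirling's bound $i^{2i}/i! \leq C (e^2 i)^i$ for the combinatorial factor, one reduces the $i$-th summand to a geometric-series term of the form $C \bigl[\gamma (\varepsilon(M+1) + \varepsilon/\mu)\bigr]^i\, e^{-\beta/(2\varepsilon)}$ for a suitable $\gamma>0$. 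The hypothesis $\gamma(\varepsilon(M+1) + \varepsilon/\mu) \leq 1$ (after possibly enlarging $\gamma$) then makes the series convergent with sum bounded by $C e^{-\beta/(2\varepsilon)}$, yielding the claim with $b = \beta/2$.

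The main obstacle is the bookkeeping in the second step: the bounds in Theorem~\ref{thm2} carry the factor $(iC+|z|)^{2i}$ which, at $z=1/\varepsilon$, grows rapidly in $i$, and one must carefully trade a portion of the exponential decay against this polynomial while still retaining enough of $e^{-\beta/\varepsilon}$ to produce the final $e^{-b/\varepsilon}$. Splitting $e^{-\beta/\varepsilon} = e^{-\beta/(2\varepsilon)} \cdot e^{-\beta/(2\varepsilon)}$ and applying (\ref{eq:lemma:estimate-remainder-case-II-2}) with $x=1/\varepsilon$ is the natural device, and it is precisely the smallness hypothesis $\gamma(\varepsilon(M+1)+\varepsilon/\mu) \leq 1$ that keeps the resulting geometric sum bounded.
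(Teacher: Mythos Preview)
Your proposal is correct and follows essentially the same route as the paper: use the exact cancellation ${\mathbf W}_M(0)+\widehat{\mathbf U}^M_{BL}(0)=0$ built into the boundary conditions (\ref{14}), (\ref{16}) to reduce to estimating $\widehat{\mathbf V}^M_{BL}(1/\varepsilon)$, and then appeal to the bounds of Theorem~\ref{thm2}. The paper's proof is terser (it simply says ``the result now follows from Theorem~\ref{thm2}''), and you have correctly filled in the bookkeeping that makes this work---splitting $e^{-\beta/\varepsilon}$, using Lemma~\ref{lemma:elementary-properties-of-factorial} to absorb $(iC+1/\varepsilon)^{2i}$, and recognizing the geometric series controlled by the smallness hypothesis.
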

\begin{proof}
We consider the left endpoint of $I$ (the right endpoint is similar).
By construction, 
\begin{eqnarray}
\left\Vert \mathbf{R}_{M}(0)\right\Vert &=&\left\Vert \mathbf{U}(0)-\left( 
\mathbf{W}_{M}(0)+\widehat{\mathbf{U}}_{BL}^{M}(0)+\widehat{\mathbf{V}}_{BL}^{M}(1/\varepsilon)\right)
\right\Vert 
 = \left\Vert \widehat {\mathbf{V}}^M(1/\varepsilon) \right\Vert.
\end{eqnarray}
The result now follows from Theorem~\ref{thm2}.
\end{proof}
%----------------
\subsection{Proof of Theorem~\ref{thm:case-II}} 
%----------------
%\begin{numberedproof}{Theorem~\ref{thm:case-II}}
The result of Theorem~\ref{thm:case-II} now follows from combining Theorems~\ref{thm_case2_smooth_part}, 
\ref{thm:estimate-remainder-case-II}, %\ref{thm:regularity-of-uM-case-II}
\ref{thm:bdy-mismatch-case-II}. 
%\end{numberedproof}

%%%%%%%%%%%%%%%%%%%%%%%%%%%%%%%%%%%%%%%%%%%%%%%%%%%%%%%%%%%%%%%%%%%%%%%%%%%%%%%%%%

\section{The second two scale case\label{case_3}: Case~\ref{case:III}}

Recall that this occurs when $\mu /1$ is small but $\varepsilon /\mu $ is 
\emph{not} small. The main result is: 
\begin{thm}
\label{thm:case-III}
The solution ${\mathbf U}$ of (\ref{eq:model-problem}) can be
written as ${\mathbf U} = {\mathbf W} + \widetilde{\mathbf U}_{BL} 
+ {\mathbf R}$, where ${\mathbf W} \in {\mathcal A}(1, C_W,\gamma_W)$
and 
$\widetilde {\mathbf U}_{BL} \in \BLtwo(\delta \mu, C_{BL},\gamma_{BL})$,
for suitable constants $C_W$, $C_{BL}$, $\gamma_W$, $\gamma_{BL}$, $\delta > 0$
independent of $\mu$ and $\varepsilon$. Furthermore, ${\mathbf R}$ satisfies
\begin{eqnarray*}
\|{\mathbf R}\|_{L^\infty(\partial I)} + 
\|L_{\varepsilon,\mu} {\mathbf R}\|_{L^2(I)} 
& \leq & C \left(\mu/\varepsilon\right)^{-2} e^{-b/\mu}, 
\end{eqnarray*}
for some constants $C$, $b > 0$ independent of $\mu$ and $\varepsilon$. In particular,
$\|{\mathbf R}\|_{E,I} \leq (\mu/\varepsilon)^2 e^{-b/\mu}$. 
\end{thm}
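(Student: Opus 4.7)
The plan is to mirror Section~\ref{case_4}, but to expand the solution in powers of the single parameter $\mu$ alone since $\varepsilon/\mu$ is not viewed as small. With $\widetilde x = x/\mu$ and $\widetilde x^R = (1-x)/\mu$ I would make the formal ansatz
\[
{\mathbf U}(x) \sim \sum_{i=0}^\infty \mu^i \bigl[{\mathbf U}_i(x) + \widetilde{\mathbf U}_i^L(\widetilde x) + \widetilde{\mathbf U}_i^R(\widetilde x^R)\bigr].
\]
Inserting this into (\ref{1}), Taylor-expanding ${\mathbf A}$ around the endpoints, and equating like powers of $\mu$ will produce two families of recursions: an outer algebraic relation
\[
{\mathbf A}(x) {\mathbf U}_i = {\mathbf F}_i + \bigl((\varepsilon/\mu)^2 u_{i-2}^{\prime\prime},\; v_{i-2}^{\prime\prime}\bigr)^T,
\]
and an inner coupled $2\times 2$ ODE system whose leading-order operator is $-\operatorname{diag}((\varepsilon/\mu)^2, 1)\partial_{\widetilde x}^2 + {\mathbf A}(0)$, together with matching conditions ${\mathbf U}_i(0) + \widetilde{\mathbf U}_i^L(0) + \widetilde{\mathbf U}_i^R(1/\mu) = 0$, the analogue at $x = 1$, and decay as $\widetilde x \to \infty$.

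For the outer terms, invertibility of ${\mathbf A}$ combined with $(\varepsilon/\mu)^2 \le 1$ puts the recursion exactly in the framework of Lemma~\ref{lem10}, so ${\mathbf U}_i$ will be analytic on a fixed complex neighborhood of $\overline I$ with $|{\mathbf U}_i(z)| \le C K^i i^i$; summing over $i \le M = \lfloor c/\mu \rfloor$ for small enough $c > 0$ places ${\mathbf W}_M$ in ${\mathcal A}(1, C_W,\gamma_W)$. The main obstacle, and the step that will require the most care, is the inner system, which---unlike Case~\ref{case:IV}---does not admit the explicit scalar elimination used in (\ref{40})--(\ref{eq:41-42}) because $\varepsilon/\mu$ is not small. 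My plan is to diagonalize $\operatorname{diag}((\mu/\varepsilon)^2, 1) {\mathbf A}(0)$; its two eigenvalues $\lambda_1^2,\lambda_2^2$ are bounded from below by a fixed positive constant while $\lambda_2 = O(\mu/\varepsilon)$, so the homogeneous part of the leading-order inner operator admits a \emph{slow} exponential mode at rate $O(1)$ and a \emph{fast} mode at rate $O(\mu/\varepsilon)$ in $\widetilde x$. Using the resulting Green's function (an analogue of Lemma~\ref{lemma:scalar-bvp-constant-coefficients} for the coupled system) and inducting on $i$ as in Theorem~\ref{thm11}, I expect to obtain, for suitable $C, \gamma, c > 0$ independent of $\varepsilon$ and $\mu$,
\[
\bigl|(\widetilde{\mathbf U}_i^L)^{(n)}(\widetilde x)\bigr| \le C \gamma^i i^i \bigl(\tfrac{\mu}{\varepsilon}\bigr)^n e^{-c \widetilde x} \qquad \forall n \in \N_0,\; \widetilde x \ge 0.
\]

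The key observation for the $\BLtwo$-type conclusion is that although pointwise derivatives of the fast mode grow like $(\mu/\varepsilon)^n$, that mode itself decays at rate $O(\mu/\varepsilon)$ in $\widetilde x$ (hence at scale $\varepsilon$ in $x$) and contributes only $O(\sqrt{\varepsilon/\mu})$ to its $L^2$-mass on $I$; the resulting cancellation is precisely what lets $\BLinf$ be replaced by $\BLtwo$ and restores constants independent of $\varepsilon/\mu$. A change of variables back to $x$ and summation over $i$ (using $\mu M \gamma \le 1$) will then place $\widetilde{\mathbf U}_{BL} := \sum_{i=0}^M \mu^i \widetilde{\mathbf U}_i^L(\widetilde x) + (\text{right-endpoint analogue})$ in $\BLtwo(\delta\mu, C_{BL}, \gamma_{BL})$. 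Finally, truncating at $M = O(1/\mu)$ and estimating the residual $L_{\varepsilon,\mu}{\mathbf R}$ along the lines of Theorems~\ref{thm_case4_smooth_part} and \ref{thm_BL_3scales(b)}---while tracking the $\varepsilon^2$-factor multiplying the first component and integrating in $L^2$ rather than $L^\infty$---should yield $\|L_{\varepsilon,\mu}{\mathbf R}\|_{L^2(I)} \le C(\mu/\varepsilon)^{-2} e^{-b/\mu}$; the boundary mismatch from $\widetilde{\mathbf U}_i^R(1/\mu)$ is handled as in Theorem~\ref{thm:3scale-bdy-mismatch}, and (\ref{eq:a-priori}) then closes the argument. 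The delicate point throughout will be keeping the competition between the slow and fast modes under uniform control so that no uncanceled powers of $\mu/\varepsilon$ survive in the final constants.
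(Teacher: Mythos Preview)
Your ansatz, the outer recursion ${\mathbf A}{\mathbf U}_i = {\mathbf F}_i + ((\varepsilon/\mu)^2 u_{i-2}'',\,v_{i-2}'')^T$, and the overall truncation strategy with $M = O(1/\mu)$ all match the paper's Section~\ref{case_3}. The substantive divergence is in how you handle the inner system (\ref{eq:16b-17b}). The paper does \emph{not} diagonalize; instead it works directly with the coupled $2\times 2$ system in exponentially weighted $L^2$ spaces on the half-line, proving an inf-sup condition (Lemma~\ref{lemma:system-inf-sup}) and an a~priori bound (Lemma~\ref{lemma:system-a-priori}) for $-{\mathbf E}^{\varepsilon/\mu,1}{\mathbf U}'' + {\mathbf A}(0){\mathbf U} = {\mathbf f}$ that are uniform in $\nu=\varepsilon/\mu$. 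These drive an induction on $i$ (Theorem~\ref{thm2a}) giving $\|\widetilde{\mathbf U}_i^L\|_{1,\beta}\le C K^i(\beta_0-\beta)^{-(2i+1)}i^i$, and a further induction on the derivative order $m$ (Theorem~\ref{thm:higher-derivatives-in-exponentially-weighted-spaces}) producing the $\nu^{-m}$ growth directly in the weighted $L^2$ norm. Because everything is $L^2$-based from the start, the $\BLtwo$ conclusion falls out without any separate ``cancellation'' step between fast and slow modes.

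Your diagonalization route has a concrete gap. Assumption~(\ref{eq:A-positive-definite}) only says that the \emph{symmetric part} of ${\mathbf A}$ is positive definite; ${\mathbf A}(0)$ itself need not be symmetric, so $\operatorname{diag}((\mu/\varepsilon)^2,1){\mathbf A}(0)$ can have genuinely complex eigenvalues (take for instance ${\mathbf A}(0)=\bigl(\begin{smallmatrix}2&3\\-1&2\end{smallmatrix}\bigr)$, whose symmetric part is positive definite but whose eigenvalues are $2\pm i\sqrt{3}$). Your statement that ``its two eigenvalues $\lambda_1^2,\lambda_2^2$ are bounded from below by a fixed positive constant'' therefore need not hold, and Lemma~\ref{lemma:scalar-bvp-constant-coefficients}---formulated for a real scalar $a>0$---does not extend uniformly. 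Even when the eigenvalues are real, the conditioning of the diagonalizing similarity would have to be controlled uniformly over all $\varepsilon/\mu\in(0,1]$, and the right-hand side of (\ref{eq:recurrence-case-III-inner}) re-couples the modes at every step of the induction, so you do not obtain two independent scalar recursions anyway. The paper flags exactly this obstruction at the start of Section~\ref{case_3}: ``In contrast to the previous arguments, which were based on estimates for scalar problems \ldots\ we employ more general energy type arguments here to deal with the case of systems.'' If you want a Green's-function approach, you would need a \emph{system} analogue of Lemma~\ref{lemma:scalar-bvp-constant-coefficients} with constants uniform in $\nu$; the inf-sup argument of Lemma~\ref{lemma:system-inf-sup} is precisely that, phrased variationally.
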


In this case we recall the stretched variables 
$\widetilde{x}=x/\mu $ and $\widetilde{x}^R = (1-x)/\mu$ 
and make the formal ansatz
\begin{equation}
\label{9b}
{\mathbf U} \sim  \sum_{i=0}^\infty \mu^i 
\left[ {\mathbf U}_i(x) + \widetilde {\mathbf U}^L_i(\widetilde x) 
+ \widetilde {\mathbf U}^R_i(\widetilde x^R)\right],
\end{equation}
%
%\begin{equation}
%\left\{ 
%\begin{array}{c}
%u=\underset{i=0}{\overset{\infty }{\sum }}\mu ^{i}\left[ u_{i}(x)+\widetilde{%
%u}_{i}^{L}(\widetilde{x})+\widetilde{u}_{i}^{R}(\widetilde{x})\right]  \\ 
%v=\underset{i=0}{\overset{\infty }{\sum }}\mu ^{i}\left[ v_{i}(x)+\widetilde{%
%v}_{i}^{L}(\widetilde{x})+\widetilde{v}_{i}^{R}(\widetilde{x})\right], 
%\end{array}%
%\right. ,  \label{9b}
%\end{equation}
%
where again
$$
{\mathbf U}_i(x) = \left(\begin{array}{c} u_i \\ v_i \end{array}\right), 
\qquad 
{\mathbf U}^L_i(\widetilde x) = 
\left(\begin{array}{c} \widetilde u_i(\widetilde x) \\ 
                       \widetilde v_i(\widetilde x) 
      \end{array}\right), 
$$
and an analogous definition for $\widetilde {\mathbf U}^R$. 
We also recall that the differential operator $L_{\varepsilon,\mu}$ takes 
the form (\ref{eq:L-on-tilde-scale}) on the $\widetilde x$-scale. 
The separation of the slow ($x$) and the fast variables ($\widetilde x$
and $\widetilde x^R$) leads to the following equations: 
\begin{subequations}
\label{eq:case_3-separation-of-scales}
\begin{eqnarray}
\sum_{i=0}^\infty \mu^i 
\left( - {\mathbf E}^{\varepsilon,\mu} {\mathbf U}_i^{\prime\prime} + {\mathbf A}(x) {\mathbf U}_i
\right) &=& {\mathbf F} = \left(\begin{array}{c} f \\ g \end{array}\right), 
\label{eq:case_3-separation-of-scales-slow-variable}
\\
\sum_{i=0}^\infty \mu^i 
\left( - \mu^{-2} {\mathbf E}^{\varepsilon,\mu} 
(\widetilde {\mathbf U}^L_i)^{\prime\prime} + 
\sum_{k=0}^\infty \mu^k \widetilde x^k{\mathbf A}_k \widetilde {\mathbf U}_i^L
\right) &=& 0,
\label{eq:case_3-separation-of-scales-fast-variable}
\end{eqnarray}
\end{subequations}
and an analogous system for $\widetilde {\mathbf U}^R$. 
Next, for $\varepsilon$ appearing in
(\ref{eq:case_3-separation-of-scales}) we write 
$\varepsilon = \mu \frac{\varepsilon}{\mu}$ and equate like powers of 
$\mu$ to get with the matrix 
$$
{\mathbf E}^{\varepsilon/\mu,1} = 
\left(\begin{array}{cc} 
\left(\frac{\varepsilon}{\mu}\right)^2 & 0 \\ 0 & 1 
\end{array}
\right),
$$
the following two recurrence relations: 
\begin{subequations}
\label{eq:recurrence-case-III}
\begin{eqnarray}
\label{eq:recurrence-case-III-outer}
   - {\mathbf E}^{\varepsilon/\mu,1} {\mathbf U}_{i-2}^{\prime\prime} 
+ {\mathbf A}(x) {\mathbf U}_i&=&
{\mathbf F}_i, \\
\label{eq:recurrence-case-III-inner}
   - {\mathbf E}^{\varepsilon/\mu,1} (\widetilde {\mathbf U}_{i}^L)^{\prime\prime} 
+ \sum_{k=0}^i \widetilde x^{i-k} {\mathbf A}_{i-k} \widetilde {\mathbf U}^L_k &=& 0, 
\end{eqnarray}
\end{subequations}
where, as usual functions with negative index are assumed to be 
zero and ${\mathbf F}_i$ are defined by ${\mathbf F}_0 = (f,g)^T$ 
and ${\mathbf F}_i = 0$ for $i > 0$. The terms of the 
outer expansion, ${\mathbf U}_i$, are obtained immediately from 
(\ref{eq:recurrence-case-III-outer}): 
\begin{eqnarray}
\label{16a}
{\mathbf A} {\mathbf U}_i &=& \left(\begin{array}{c} f_i \\ g_i\end{array}\right) + 
\left( \begin{array}{c} (\varepsilon/\mu)^2 u_{i-2}^{\prime\prime} \\ v_{i-2}^{\prime\prime}
       \end{array}
\right). 
\end{eqnarray}
The functions $\widetilde {\mathbf U}_i^L$ of the inner expansion 
are defined as the solutions of the following boundary value problems: 
\begin{subequations}
\label{eq:16b-17b}
\begin{eqnarray}
\label{16b}
- 
{\mathbf E}^{\varepsilon/\mu,1}(\widetilde {\mathbf U}^L_i)^{\prime\prime} + 
{\mathbf A}(0) {\mathbf U}^L_i = 
- \sum_{n=0}^{i-1} \widetilde x^{i-n} \widetilde A^L_{i-n} \widetilde{\mathbf U}^L_n, \\
\label{17b}
\widetilde {\mathbf U}^L_i(0)  = - {\mathbf U}_i(0), 
\qquad 
\widetilde {\mathbf U}^L_i(\widetilde x)  \rightarrow 0 \quad \mbox{ as $\widetilde x \rightarrow \infty$},
\end{eqnarray}
\end{subequations}
and an analogous system for ${\mathbf U}^R_i$. 
\subsection{Properties of some solution operators}
%--------------------------
The functions ${\mathbf U}_i^L$ of the inner expansion are solutions
to elliptic {\em systems}. In contrast to the previous arguments, 
which were based on estimates for {\em scalar} problems 
(for which strong tools such as maximum principles are readily available),
we employ more general energy type arguments here to deal with the case 
of systems.  We start by introducing exponentially weighted spaces 
on the half-line $(0,\infty)$, by defining the norm 
\begin{equation}
\|u\|^2_{0,\beta}:= \int_{x=0}^\infty e^{2\beta x} |u(x)|^2\,dx,
\end{equation}
with the obvious interpretation in case $u$ is vector valued. 
The following lemma shows that elliptic systems of the relevant 
type (\ref{eq:16b-17b}) can be solved in a setting of exponentially
weighted spaces: 
\begin{lem}
\label{lemma:system-inf-sup}
Let $\nu \in (0,1]$ and set ${\mathbf E}:= \left(\begin{array}{cc} \nu^2 & 0 \\ 0 & 1\end{array}\right)$.
Let ${\mathbf B} \in \R^2$ be positive definite,
i.e., $x^\top {\mathbf B} x \ge \beta_0^2 |x|^2$ for all $x \in \R^2$.
Then the bilinear form
$$
a({\mathbf U},{\mathbf V}) = \int_{x=0}^\infty {\mathbf U}^\prime \cdot {\mathbf E} {\mathbf V}^\prime + 
                                             {\mathbf U} \cdot {\mathbf B} {\mathbf V}\,dx,
$$
satisfies for a constant $C > 0$ that depends solely on $\beta_0$, 
the following inf-sup condition for all $0 < \beta < \beta_0$:
For any ${\mathbf U} \in H^1_\beta (0, \infty) $, where
\begin{equation}
H^1_{\beta}(0,\infty) = \{ u: || u ||_{1,\beta} < \infty \},
\label{H1_b}
\end{equation}
there exists ${\mathbf V} \in H^1_{-\beta}$
with ${\mathbf V} \ne 0$ such that 
$$
a({\mathbf U},{\mathbf V}) \ge C \frac{1}{\beta_0 - \beta} \|{\mathbf U}\|_{1,\beta} \|{\mathbf V}\|_{1,-\beta}. 
$$

Here, we define for $\alpha \in \R$
\begin{equation}
\|{\mathbf U}\|_{1,\alpha}^2:= \int_{x=0}^\infty e^{2\alpha x} 
\left[ {\mathbf U}^\prime\cdot {\mathbf E} {\mathbf U}^\prime + {\mathbf U}\cdot {\mathbf B} {\mathbf U}\right]\,dx.
\end{equation}
\end{lem}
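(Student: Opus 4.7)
The plan is to reduce the weighted inf--sup question to an unweighted coercivity question via the similarity transform $\tilde{\mathbf U}(x) := e^{\beta x}{\mathbf U}(x)$, $\tilde{\mathbf V}(x) := e^{-\beta x}{\mathbf V}(x)$. These maps are isometries (up to harmless equivalent norms) $H^1_\beta(0,\infty) \to H^1(0,\infty)$ and $H^1_{-\beta}(0,\infty) \to H^1(0,\infty)$, where the target space is equipped with the natural unweighted norm $\|\tilde{\mathbf W}\|_{\mathrm{nat}}^2 := \int_0^\infty |\tilde{\mathbf W}^\prime|^2_{\mathbf E} + |\tilde{\mathbf W}|^2_{\mathbf B}\,dx$. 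Crucially, $e^{\beta x}\cdot e^{-\beta x} = 1$ inside the integrand, so the weights cancel, and one is left with an unweighted bilinear form $\tilde a$ that absorbs $\beta$ as a lower-order coefficient.

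Writing out the derivatives ${\mathbf U}^\prime = e^{-\beta x}(\tilde{\mathbf U}^\prime - \beta \tilde{\mathbf U})$ and ${\mathbf V}^\prime = e^{\beta x}(\tilde{\mathbf V}^\prime + \beta\tilde{\mathbf V})$ and substituting into $a({\mathbf U},{\mathbf V})$ gives
\begin{equation*}
a({\mathbf U},{\mathbf V}) = \int_0^\infty \bigl[\tilde{\mathbf U}^\prime\cdot {\mathbf E}\tilde{\mathbf V}^\prime + \tilde{\mathbf U}\cdot({\mathbf B}-\beta^2{\mathbf E})\tilde{\mathbf V} + \beta\bigl(\tilde{\mathbf U}^\prime\cdot{\mathbf E}\tilde{\mathbf V} - \tilde{\mathbf U}\cdot{\mathbf E}\tilde{\mathbf V}^\prime\bigr)\bigr]\,dx.
\end{equation*}
The antisymmetric $\beta$-term vanishes identically when $\tilde{\mathbf V} = \tilde{\mathbf U}$ because ${\mathbf E}$ is symmetric, which strongly suggests the test function $\tilde{\mathbf V} := \tilde{\mathbf U}$, i.e., ${\mathbf V} := e^{2\beta x}{\mathbf U}$. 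With this choice the relevant quantity reduces to $\int |\tilde{\mathbf U}^\prime|^2_{\mathbf E} + \tilde{\mathbf U}\cdot({\mathbf B}-\beta^2{\mathbf E})\tilde{\mathbf U}\,dx$. Because $\nu \leq 1$ forces ${\mathbf E}\preceq I$ and the hypothesis gives ${\mathbf B} \succeq \beta_0^2 I$, we obtain ${\mathbf B}-\beta^2{\mathbf E} \succeq (\beta_0^2-\beta^2)I = (\beta_0+\beta)(\beta_0-\beta) I$, and the coercivity constant scales linearly in $\beta_0-\beta$.

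The remaining steps are bookkeeping: one verifies, using $(a+b)^2 \leq 2a^2+2b^2$ and its reverse, that $\|{\mathbf U}\|_{1,\beta}$ and $\|{\mathbf V}\|_{1,-\beta}$ are both comparable to $\|\tilde{\mathbf U}\|_{\mathrm{nat}}$ uniformly in $0 < \beta < \beta_0$, and then the coercivity estimate for $\tilde a(\tilde{\mathbf U},\tilde{\mathbf U})$ immediately yields the asserted lower bound for $a({\mathbf U},{\mathbf V})$.

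The main technical obstacle I foresee is the handling of boundary contributions at $x=0$: since $H^1_\beta(0,\infty)$ does not impose a vanishing trace, any integration by parts in the cross term $\int e^{2\beta x}{\mathbf U}^\prime\cdot{\mathbf E}{\mathbf U}$ produces a boundary contribution $-\beta\,{\mathbf U}(0)\cdot{\mathbf E}{\mathbf U}(0)$ of the wrong sign. The transform $\tilde{\mathbf U} = e^{\beta x}{\mathbf U}$ reorganises this into $\tilde{\mathbf U}(0)={\mathbf U}(0)$ and moves the delicate term into the volume integral $(\beta_0^2-\beta^2)|\tilde{\mathbf U}|^2$, which is the real source of the sharpness in $\beta_0-\beta$. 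Verifying that the two-sided norm equivalence between $\|\cdot\|_{1,\pm\beta}$ and $\|\cdot\|_{\mathrm{nat}}$ is uniform in $\beta$ (in particular that $\beta^2 {\mathbf E}$ can be absorbed into ${\mathbf B}$ using $\beta < \beta_0$) is where most care is required.
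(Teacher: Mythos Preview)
Your proposal is correct and uses essentially the same approach as the paper: both choose the test function ${\mathbf V}=e^{2\beta x}{\mathbf U}$ and rely on ${\mathbf E}\preceq I\preceq\beta_0^{-2}{\mathbf B}$ to produce the factor $\beta_0-\beta$. The only cosmetic difference is that the paper works directly with ${\mathbf U}$, obtaining $a({\mathbf U},{\mathbf V})=\|{\mathbf U}\|_{1,\beta}^2+2\beta\int e^{2\beta x}{\mathbf U}'\cdot{\mathbf E}{\mathbf U}\,dx$ and bounding the cross term by Cauchy--Schwarz, whereas your substitution $\tilde{\mathbf U}=e^{\beta x}{\mathbf U}$ makes the cross term vanish pointwise by symmetry of ${\mathbf E}$; your worry about boundary contributions is therefore moot, since neither route needs integration by parts.
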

\begin{proof}
Given ${\mathbf U} \in H^1_\beta (0, \infty)$ we set ${\mathbf V}:= e^{2\beta x} {\mathbf U}$. Then
${\mathbf V}^\prime(x) = 2\beta e^{2\beta x}{\mathbf U}(x) + e^{2\beta x} {\mathbf U}^\prime(x)$ and thus
\begin{eqnarray*}
\|{\mathbf V}\|_{1,-\beta}^2  &\leq & 4 (1 + \frac{|\beta|}{\beta_0})^2 \|{\mathbf U}\|^2_{1,\beta}\\
a({\mathbf U},{\mathbf V}) &=& \|{\mathbf U}\|^2_{1,\beta} + 
2 \beta \int_{x=0}^\infty e^{2\beta x}{\mathbf U}^\prime \cdot {\mathbf E}{\mathbf U}\,dx
\ge \left(1 - \frac{|\beta|}{\beta_0}\right)\|{\mathbf U}\|^2_{1,\beta}.
\end{eqnarray*}
The result then follows.
\end{proof}
\begin{lem}
\label{lemma:system-a-priori} 
Let ${\mathbf f}$ satisfy 
$\|{\mathbf f}\|_{0,\beta} < \infty$ for some 
$\beta \in [0,\beta_0)$, and let ${\mathbf g} \in \R^2$. 
Then the solution ${\mathbf U}$ of 
$$
-{\mathbf E} {\mathbf U}^{\prime\prime} + {\mathbf B} {\mathbf U} = {\mathbf f},
\qquad {\mathbf U}(0) = {\mathbf g},
$$
satisfies for a $C > 0$ independent of $\beta$, the estimate 
$$
\|{\mathbf U}\|_{1,\beta} \leq 
C (\beta_0 - \beta)^{-1}\left[ \|{\mathbf f}\|_{0,\beta}  
+ |{\mathbf g}|\right].
$$
\end{lem}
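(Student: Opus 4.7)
The plan is to reduce the inhomogeneous boundary value problem to a homogeneous one by a lifting, and then apply to the homogenized unknown the same exponentially weighted test-function trick that underlies the preceding inf-sup lemma, namely testing with $\mathbf{V} = e^{2\beta x}\mathbf{W}$ where $\mathbf{W} := \mathbf{U} - \mathbf{U}_{\mathbf g}$ is the homogenized unknown.

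For the lifting, I would fix once and for all some $\gamma_0 > \beta_0$ (which exists independently of $\beta$) and set $\mathbf{U}_{\mathbf g}(x) := \mathbf{g}\, e^{-\gamma_0 x}$. Since $\gamma_0 - \beta \ge \gamma_0 - \beta_0 > 0$ uniformly in $\beta \in [0,\beta_0)$, straightforward weighted $L^2$ computations yield both $\|\mathbf{U}_{\mathbf g}\|_{1,\beta} \le C|\mathbf{g}|$ and $\|-\mathbf{E}\mathbf{U}_{\mathbf g}'' + \mathbf{B}\mathbf{U}_{\mathbf g}\|_{0,\beta} \le C|\mathbf{g}|$ with a constant $C$ independent of $\beta$. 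Writing $\mathbf{W} = \mathbf{U} - \mathbf{U}_{\mathbf g}$, then $\mathbf{W}(0) = 0$ and $\mathbf{W}$ satisfies $-\mathbf{E}\mathbf{W}'' + \mathbf{B}\mathbf{W} = \tilde{\mathbf f}$, where $\tilde{\mathbf f} := \mathbf{f} - (-\mathbf{E}\mathbf{U}_{\mathbf g}'' + \mathbf{B}\mathbf{U}_{\mathbf g})$ satisfies $\|\tilde{\mathbf f}\|_{0,\beta} \le \|\mathbf{f}\|_{0,\beta} + C|\mathbf{g}|$.

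Next, I would test the equation for $\mathbf{W}$ with $\mathbf{V} := e^{2\beta x}\mathbf{W}$. Since $\mathbf{W}(0) = 0$, the boundary term at the origin vanishes upon integration by parts, and at $\infty$ the boundary term vanishes by the assumed exponential decay. Repeating verbatim the computation in the proof of Lemma~\ref{lemma:system-inf-sup}, one obtains on the left $a(\mathbf{W},\mathbf{V}) \ge (1 - \beta/\beta_0)\|\mathbf{W}\|_{1,\beta}^2$; while on the right, by Cauchy--Schwarz in the $L^2_\beta \times L^2_{-\beta}$ duality, $\bigl|\int \tilde{\mathbf f}\cdot\mathbf{V}\,dx\bigr| \le \|\tilde{\mathbf f}\|_{0,\beta}\|\mathbf{V}\|_{0,-\beta} = \|\tilde{\mathbf f}\|_{0,\beta}\|\mathbf{W}\|_{0,\beta} \le \|\tilde{\mathbf f}\|_{0,\beta}\|\mathbf{W}\|_{1,\beta}$. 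Dividing through by $\|\mathbf{W}\|_{1,\beta}$ and rearranging gives $\|\mathbf{W}\|_{1,\beta} \le \beta_0(\beta_0-\beta)^{-1}\|\tilde{\mathbf f}\|_{0,\beta}$, and the triangle inequality combined with $\|\mathbf{U}_{\mathbf g}\|_{1,\beta} \le C|\mathbf{g}|$ then produces the claimed estimate.

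The main obstacle, in my view, is not the a priori estimate itself---once one trusts the testing is legitimate, everything above is bookkeeping---but the preliminary justification that a solution $\mathbf{U} \in H^1_\beta(0,\infty)$ exists, so that $\mathbf{V} = e^{2\beta x}\mathbf{W}$ lies in $H^1_{-\beta}$ and integration by parts is admissible. I would handle this by a standard truncation argument: solve the problem on $(0,R)$ with homogeneous Dirichlet data at $x = R$ (where existence is immediate from coercivity of $a$ in the unweighted $H^1(0,R)$-setting for the constant-coefficient operator), derive the above estimate on $(0,R)$ with a constant independent of $R$, and pass to the limit $R \to \infty$ using weak compactness.
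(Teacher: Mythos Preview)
Your proposal is correct and takes essentially the same approach as the paper: lift the boundary data via $\mathbf{g}\,e^{-\gamma_0 x}$ with $\gamma_0>\beta_0$, reduce to homogeneous boundary conditions, and then use the weighted test function $\mathbf{V}=e^{2\beta x}\mathbf{W}$ (which is precisely what drives the paper's inf--sup Lemma~\ref{lemma:system-inf-sup}). The paper simply invokes Lemma~\ref{lemma:system-inf-sup} rather than repeating the testing computation, and omits the existence discussion you supply via truncation, but the substance is the same.
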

\begin{proof}
Let $\beta_1 > \beta_0$ and set ${\mathbf U}_0 = {\mathbf g} e^{-\beta_1 x}$. 
Then 
${\mathbf U}_0$ satisfies the desired estimates. The remainder ${\mathbf U} - {\mathbf U}_0$
satisfies an inhomogeneous  differential equation with homogeneous boundary conditions
at $x = 0$. Hence, Lemma~\ref{lemma:system-inf-sup} is applicable and yields the desired result.
\end{proof}

\begin{lem}
\label{lemma:foo}
There exist $\delta_0 > 0$ and $C_0 > 0$ such that for every $\delta \in (0,\delta_0]$ 
and every $m \in \N_0$, there holds 
$$
\sum_{n=0}^{i-1} \delta^{i-1-n} \frac{(n+m)^n}{(i+m)^i} \frac{(i+n+1+m)^{i+n+1+m}}{(2n+1+m)^{2n+1+m}}
\leq C_0.
$$
\end{lem}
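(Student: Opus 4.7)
The plan is to show that the product of the two algebraic factors in the summand is at most $(2e)^{i-n}$, so that the full sum becomes a geometric series in $2e\delta$ that converges once $2e\delta_0 < 1$.

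First I would attack the second fraction $\frac{(i+n+1+m)^{i+n+1+m}}{(2n+1+m)^{2n+1+m}}$. Writing $k=i-n \ge 1$, one has $i+n+1+m = (2n+1+m)+k$, hence
\begin{equation*}
\frac{(i+n+1+m)^{i+n+1+m}}{(2n+1+m)^{2n+1+m}} = (i+n+1+m)^{k}\left(1+\frac{k}{2n+1+m}\right)^{2n+1+m} \leq (i+n+1+m)^{i-n}e^{i-n},
\end{equation*}
where the standard inequality $(1+k/x)^{x}\leq e^{k}$ was used.

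Next, for the first fraction I would use the trivial bound $(n+m)/(i+m)\leq 1$ (valid since $n\leq i-1$):
\begin{equation*}
\frac{(n+m)^{n}}{(i+m)^{i}} = \left(\frac{n+m}{i+m}\right)^{n}\frac{1}{(i+m)^{i-n}} \leq \frac{1}{(i+m)^{i-n}}.
\end{equation*}
Multiplying these two bounds collapses the powers of $i+n+1+m$ and $i+m$ into a ratio:
\begin{equation*}
\frac{(n+m)^{n}}{(i+m)^{i}}\cdot\frac{(i+n+1+m)^{i+n+1+m}}{(2n+1+m)^{2n+1+m}} \leq e^{i-n}\left(\frac{i+n+1+m}{i+m}\right)^{i-n} = e^{i-n}\left(1+\frac{n+1}{i+m}\right)^{i-n}.
\end{equation*}
Since $n+1\leq i\leq i+m$, the inner ratio is at most $2$, yielding the clean estimate $(2e)^{i-n}$ on the summand's algebraic part.

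Finally, inserting this into the sum and reindexing with $k=i-1-n$ gives
\begin{equation*}
\sum_{n=0}^{i-1}\delta^{i-1-n}\frac{(n+m)^{n}}{(i+m)^{i}}\frac{(i+n+1+m)^{i+n+1+m}}{(2n+1+m)^{2n+1+m}} \leq 2e\sum_{k=0}^{i-1}(2e\delta)^{k} \leq \frac{2e}{1-2e\delta_{0}},
\end{equation*}
provided $\delta\leq\delta_{0}<1/(2e)$. One can therefore take, for instance, $\delta_{0}:=1/(4e)$ and $C_{0}:=4e$. No step is an obstacle here; the only mild subtlety is ensuring that $n+1\leq i+m$, which is automatic because $n\leq i-1$ and $m\ge 0$, so the bound is uniform in $m\in\N_{0}$ as claimed.
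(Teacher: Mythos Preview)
Your proof is correct and takes a genuinely different route from the paper's. The paper argues indirectly: it treats the summand $F(n)$ as a function of $n$, computes the derivative of $\ln F$ to show $F(n+1)/F(n)\ge e^{c}$ for some $c>0$ (once $\delta$ is small), concludes that $\sum_{n} F(n)\le \frac{1}{1-e^{-c}}F(i-1)$, and then checks separately that $F(i-1)$ and $F(0)$ are bounded uniformly in $i$ and $m$. Your approach instead bounds each term directly by $(2e)^{i-n}$ via the two elementary inequalities $(1+k/x)^{x}\le e^{k}$ and $(n+m)/(i+m)\le 1$, then sums a geometric series in $2e\delta$. Your argument is shorter, avoids calculus, and yields explicit constants ($\delta_{0}=1/(4e)$, $C_{0}=4e$); the paper's monotonicity argument, while less direct here, reveals the structural fact that the summands are geometrically \emph{increasing} in $n$, which is sometimes useful information in its own right but is not needed for the bare estimate.
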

\begin{proof}
\iftechreport
See Appendix~\ref{appendix:proof_of_lemma:foo}
\else
See \cite[Appendix~\ref{appendix:proof_of_lemma:foo}]{mxo-1}
\fi
\end{proof}

%--------------------------
\subsection{Regularity of the functions 
${\mathbf U}_i$, $\widetilde {\mathbf U}^L_i$, $\widetilde {\mathbf U}_i^R$}
%--------------------------
\begin{lem}
\label{lemma:case-III-outer-expansion}
The function ${\mathbf U}_i$ defined by (\ref{16a}), are holomorphic
in a neighborhood $G$ of $\overline{I}$ and satisfy for some $C$, $\widetilde K >0$, 
$$
|{\mathbf U}_i(z)| \leq \widehat C \delta^{-i} \widehat K^i i^i 
\qquad \forall z \in G_\delta:= \{z \in G\,|\, \operatorname*{dist}(z,\partial G) > \delta\}, \qquad \forall i \ge 0. 
$$
Additionally, ${\mathbf U}_{2i+1} = 0$ for all $i \in \N_0$. 
\end{lem}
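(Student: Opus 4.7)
The plan is to mimic the proof of Lemma~\ref{lem10}: induction on $i$, driven by Cauchy's integral formula for second derivatives with an optimally chosen radius $\rho = \delta/i$.

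To set things up, I first shrink a complex neighborhood $G$ of $\overline I$ so that $f$, $g$ and the entries $a_{ij}$ extend as bounded holomorphic functions and so that $\det \mathbf{A}(z)$ stays bounded away from zero on $G$; this is possible because of the estimate $\det\mathbf{A}(x)\ge \alpha^{4}$ from (\ref{eq:det-positive}) together with continuity. Then $\mathbf{A}^{-1}$ is holomorphic on $G$ with $C_A := \|\mathbf{A}^{-1}\|_{L^\infty(G)}<\infty$, and the recursion (\ref{16a}) reads $\mathbf{U}_i = \mathbf{A}^{-1}\bigl(\mathbf{F}_i + ((\varepsilon/\mu)^2 u_{i-2}'',\,v_{i-2}'')^T\bigr)$. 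The parity claim $\mathbf{U}_{2i+1}\equiv 0$ is settled first, by a separate trivial induction: for $i=0$, $\mathbf{F}_1 = 0$ and $\mathbf{U}_{-1}=0$ give $\mathbf{U}_1=0$; for the step, $\mathbf{U}_{2i-1}\equiv 0$ makes the right-hand side of (\ref{16a}) vanish.

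For the quantitative bound, I would show by induction on $i$ that, for suitable $\widehat C$, $\widehat K$ independent of $i$ and $\delta$,
\begin{equation*}
|\mathbf{U}_i(z)| \;\le\; \widehat C\,\widehat K^{\,i}\,i^{\,i}\,\delta^{-i}
\qquad \forall z\in G_\delta,\quad 0<\delta\le \delta_0,
\end{equation*}
with the convention $0^0 = 1$. The case $i=0$ is immediate from the boundedness of $\mathbf{A}^{-1}(f,g)^T$ on $G$, and $i=1$ follows from $\mathbf{U}_1\equiv 0$. For the step, given $z\in G_\delta$, apply Cauchy's formula on $\partial B_\rho(z)$ with $\rho=\delta/i$; since $B_\rho(z)\subset G_{\delta(i-1)/i}$, the inductive hypothesis yields
\begin{equation*}
|u_{i-2}''(z)|+|v_{i-2}''(z)|
\;\le\; \frac{2}{\rho^{2}}\,\widehat C\,\widehat K^{\,i-2}(i-2)^{\,i-2}\Bigl(\tfrac{\delta(i-1)}{i}\Bigr)^{-(i-2)}
\;\le\; 2e\,\widehat C\,\widehat K^{\,i-2}\,i^{\,i}\,\delta^{-i},
\end{equation*}
where I used $(i/(i-1))^{i-2}\le e$ and the elementary inequality $i^{2}(i-2)^{i-2}\le i^{\,i}$ (valid for $i\ge 2$ since $(1-2/i)^{i-2}\le 1$). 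Multiplying by $C_A$ and absorbing the harmless factor $(\varepsilon/\mu)^2\le 1$, the induction closes as soon as $\widehat K^{2}\ge 4e\,C_A$.

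The main (and really only) obstacle is the usual balancing of the Cauchy loss against the recursion growth: the choice $\rho=\delta/i$ is what forces the factor $i^{\,i}$ into the bound, and one has to verify that the combinatorial factor $i^{2}(i-2)^{i-2}$ indeed collapses into $i^{\,i}$ up to an $O(1)$ constant that can be absorbed into $\widehat K$. This is precisely the mechanism already used in Lemma~\ref{lem10} (and in \cite[Lem.~2]{m}), so no new idea is required beyond adapting the book-keeping to the two-component system governed by (\ref{16a}).
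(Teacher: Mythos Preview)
Your proposal is correct and follows essentially the same route as the paper. The paper's own proof simply refers to the argument of Lemma~\ref{lem10} (and \cite[Lemma~2]{m}): induction on $i$, Cauchy's integral formula for the second derivatives with a radius $\sim\delta/i$, and the observation $\varepsilon/\mu\le 1$ to absorb the first component; your choice $\rho=\delta/i$ is a minor variant of the paper's $\kappa=2/(i+1)$ in Appendix~\ref{appendix:case_4_lem10}, and the parity argument is identical.
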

\begin{proof}
We note that $\varepsilon/\mu \leq 1$. 
The arguments are then analogous to those of \cite[Lemma~2]{m}. The arguments are also
structurally similar to the more complicated case studied in Lemma~\ref{lem10}.
\end{proof}
We now turn to estimates for the inner expansion functions
$\widetilde{\mathbf U}_i^L$. 
\begin{thm}
\label{thm2a}
The functions $\widetilde {\mathbf U}^L_i$ defined by (\ref{16b}), (\ref{17b}) are entire
functions and satisfy for all $\beta \in (0,\beta_0)$ 
(with $\beta_0 = \alpha$ and $\alpha$ given by (\ref{eq:A-positive-definite})), 
\begin{equation}
\label{eq:thm2a-1}
\|\widetilde {\mathbf U}_i^L\|_{1,\beta} \leq \widetilde C K^i (\beta_0 - \beta)^{-(2i+1)} i^i \qquad \forall i \in \N_0.
\end{equation}
\end{thm}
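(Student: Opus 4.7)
The plan is to proceed by strong induction on $i$, using Lemma~\ref{lemma:system-a-priori} as the a priori estimate at each step, a weight-shifting lemma to convert polynomial weights into weight shifts, and Lemma~\ref{lemma:foo} to absorb the resulting summation. Entireness of each $\widetilde{\mathbf{U}}_i^L$ comes for free from the structure of (\ref{16b}): the leading coefficient matrix $\mathbf{E}^{\varepsilon/\mu,1}$ and $\mathbf{A}(0)$ are constant, and by induction the right-hand side is entire and of exponentially bounded growth on vertical strips, so the unique solution selected by the decay condition at $+\infty$ is a linear combination of exponentials $e^{\lambda \widetilde{x}}$ (with $\operatorname{Re}\lambda < 0$) times polynomials, which is entire.

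For the base case $i = 0$, the equation (\ref{16b}) is homogeneous with boundary datum $-\mathbf{U}_0(0)$, which is bounded by Lemma~\ref{lemma:case-III-outer-expansion}, so Lemma~\ref{lemma:system-a-priori} gives $\|\widetilde{\mathbf{U}}_0^L\|_{1,\beta} \leq C(\beta_0 - \beta)^{-1}$, establishing (\ref{eq:thm2a-1}) with $\widetilde{C}$ large enough and using the convention $0^0 = 1$. For the inductive step, apply Lemma~\ref{lemma:system-a-priori} to (\ref{eq:16b-17b}) to obtain
$$
\|\widetilde{\mathbf{U}}_i^L\|_{1,\beta} \leq C(\beta_0 - \beta)^{-1}\Bigl[\sum_{n=0}^{i-1}\|\widetilde{x}^{i-n}\mathbf{A}_{i-n}\widetilde{\mathbf{U}}_n^L\|_{0,\beta} + |\mathbf{U}_i(0)|\Bigr].
$$
The boundary contribution is bounded by $\widehat{C}\widehat{K}^i i^i$ via Lemma~\ref{lemma:case-III-outer-expansion}, which already fits the target estimate. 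For each term in the sum, introduce an auxiliary weight $\beta_n \in (\beta, \beta_0)$ and use the elementary estimate
$$
\|\widetilde{x}^{i-n}\widetilde{\mathbf{U}}_n^L\|_{0,\beta} \leq \Bigl(\sup_{x \geq 0} x^{i-n} e^{-(\beta_n - \beta)x}\Bigr)\|\widetilde{\mathbf{U}}_n^L\|_{0,\beta_n} \leq \Bigl(\frac{i-n}{e(\beta_n - \beta)}\Bigr)^{i-n}\|\widetilde{\mathbf{U}}_n^L\|_{1,\beta_n},
$$
together with $\|\mathbf{A}_{i-n}\|_{L^\infty} \leq C_a \gamma_a^{i-n}$ from (\ref{3}) and the inductive hypothesis $\|\widetilde{\mathbf{U}}_n^L\|_{1,\beta_n} \leq \widetilde{C}K^n(\beta_0 - \beta_n)^{-(2n+1)} n^n$.

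The main obstacle is the choice of the nested weights $\beta_n$ so that the resulting double-exponent sum collapses into something uniformly bounded. Writing $m := (\beta_0 - \beta)^{-1}$ and choosing $\beta_n$ so that $(\beta_0 - \beta_n)^{-1}$ matches the $(2n+1+m)$ factor and $(\beta_n - \beta)^{-1}$ matches the $(i+n+1+m)/(i-n)$ factor appearing in Lemma~\ref{lemma:foo}, the product of the weight-shift penalty $\bigl((i-n)/(e(\beta_n - \beta))\bigr)^{i-n}$ with $(\beta_0 - \beta_n)^{-(2n+1)}$ reproduces exactly the ratio $(i+n+1+m)^{i+n+1+m}/(2n+1+m)^{2n+1+m}$ appearing in the lemma (up to fixed constants). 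Factoring out the target bound $\widetilde{C}K^i(\beta_0 - \beta)^{-(2i+1)} i^i$ reduces the entire sum to $\sum_{n=0}^{i-1} \delta^{i-1-n}\,\times$ exactly the expression estimated in Lemma~\ref{lemma:foo}, where $\delta = \gamma_a/K$ can be made smaller than the threshold $\delta_0$ by choosing $K$ large (independent of $i$). This closes the induction.

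Two technical caveats I expect to handle carefully: first, verifying that the chosen $\beta_n$ indeed lies in $(\beta, \beta_0)$ for all $n < i$, which requires $m$ to be bounded below by a fixed constant, a condition we may enforce by shrinking $\beta_0$ slightly if needed (or by noting that $m = (\beta_0 - \beta)^{-1} \geq 1/\beta_0$ always); second, absorbing the factor $(\beta_0 - \beta)^{-1}$ from Lemma~\ref{lemma:system-a-priori} together with the $K^n$ vs. $K^i$ gap to ensure the geometric decay $\delta^{i-1-n}$ is compatible. Both are handled by a single uniform choice of $K$ at the end.
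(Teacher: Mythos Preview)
Your proposal is essentially correct and follows the same strategy as the paper: induction on $i$, Lemma~\ref{lemma:system-a-priori} for the a priori bound, a weight-shifting estimate to trade the polynomial factor $\widetilde{x}^{i-n}$ for a loss in the exponential weight, optimization of the intermediate weight, and Lemma~\ref{lemma:foo} to absorb the sum.

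One simplification: your parameterization via $m := (\beta_0 - \beta)^{-1}$ is unnecessary and, as written, slightly awkward (Lemma~\ref{lemma:foo} takes $m \in \N_0$, and your proposed matching of $(\beta_0 - \beta_n)^{-1}$ and $(\beta_n - \beta)^{-1}$ does not literally reproduce the exponents $i+n+1+m$ and $2n+1+m$). The paper simply takes $m = 0$ in Lemma~\ref{lemma:foo} and the explicit choice $\tilde\beta = \beta + \kappa(\beta_0 - \beta)$ with $\kappa = (i-n)/(i+n+1)$; the residual factor $(\beta_0 - \beta)^{i-n-1}$ left over after extracting $(\beta_0 - \beta)^{-2i}$ is then folded into the $\delta$ of Lemma~\ref{lemma:foo} (which remains $\leq \delta_0$ uniformly since $\beta_0 - \beta \leq \beta_0$ and $K$ is chosen large). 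This avoids both of your stated caveats.
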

\begin{proof}
We note that Lemma~\ref{lemma:system-a-priori} is to be applied 
with ${\mathbf B} = {\mathbf A}(0)$ and ${\mathbf E} = {\mathbf E}^{\varepsilon/\mu,1}$. 
The case $i = 0$ follows from Lemma~\ref{lemma:system-a-priori} 
and Lemma~\ref{lemma:case-III-outer-expansion}. 
For $i \ge 1$, we proceed by
induction. In order to be able to apply Lemma~\ref{lemma:system-a-priori}, 
we define
$$
\widetilde {\mathbf F}(\widetilde x) = 
\sum_{n=0}^{i-1} \widetilde {\mathbf A}_{i-n} 
\widetilde x^{i-n} \widetilde {\mathbf U}^L_n(\widetilde x). 
$$
Next, we estimate, for an arbitrary $\beta \in (0,\beta_0)$ 
and $\tilde \beta \in (\beta,\beta_0)$, with the aid 
of Lemma~\ref{lemma:elementary-properties-of-factorial},
\begin{eqnarray*}
\int_0^\infty e^{2\beta \widetilde x}\widetilde x^{2(i-n)} 
|\widetilde {\mathbf U}^L_n(\widetilde x)|^2\,d\widetilde x 
& \leq &
\int_0^\infty e^{- 2(\tilde \beta -\beta)\widetilde x}\widetilde x^{2(i-n)} 
e^{2\tilde \beta \widetilde x} |\widetilde {\mathbf U}^L_n(\widetilde x)|^2\,d\widetilde x \\
&\leq& \sup_{x > 0} e^{-2(\tilde \beta -\beta) x} x^{2(n-i)} \|\widetilde {\mathbf U}^L_n\|^2_{0,\tilde \beta}\\
&\leq& e^{-2(i-n)} \left(\frac{i-n}{\tilde \beta-\beta}\right)^{2(i-n)}
\|\widetilde{\mathbf U}^L_n\|^2_{0,\tilde\beta}\\
& \leq & \widetilde C^2 K^{2n} n^{2n} (\beta_0 - \tilde \beta )^{-2 (2 n+1)} 
e^{-2(i-n)} \left(\frac{i-n}{\tilde \beta-\beta}\right)^{2(i-n)}. 
\end{eqnarray*}
Selecting $\tilde \beta = (\beta_0 - \beta)\kappa + \beta $ for some
$\kappa \in (0,1)$ to be chosen shortly, we get
\[
\int_0^\infty e^{2\beta \widetilde x}\widetilde x^{2(i-n)} 
|\widetilde {\mathbf U}^L_n(\widetilde x)|^2\,d\widetilde x \leq 
\]
\[
\leq \widetilde C^2 K^{2n} n^{2n} (\beta_0 - \beta )^{-2 (i+n+1)} 
e^{-2(i-n)} (i-n)^{2(i-n)}\frac{1}{\kappa^{2(i-n)} (1-\kappa)^{2(2n+1)}}.
\]
The choice $\kappa = \frac{i-n}{i+n+1}$ yields
\begin{eqnarray}
\label{eq:thm2a-10}
\int_0^\infty e^{2\beta \widetilde x}\widetilde x^{2(i-n)} 
|\widetilde {\mathbf U}^L_n(x)|^2\,dx 
& \leq &
\widetilde C^2 K^{2n} (\beta_0 - \beta )^{-2 (i+n+1)} 
e^{-2(i-n)} \frac{(n+i+1)^{2(i+n+1)}}{(2n+1)^{2(2n+1)}} n^{2n}.
\end{eqnarray}
Hence,
\begin{eqnarray}
\nonumber 
\|\widetilde {\mathbf F}\|_{0,\beta} &\leq& 
\widetilde C C_A (\beta_0 - \beta)^{-2i} K^{i-1} i^i
\sum_{n=0}^{i-1} \gamma_A^{i-n} K^{n-i+1} e^{-(i-n)} 
\frac{n^n}{i^i} \frac{(i+n+1)^{i+n+1}}{(2n+1)^{2n+1}} (\beta_0 -\beta)^{i-n-1}\\
& \leq &
\label{eq:thm2a-20}
\widetilde C C_A C_0 (\beta_0 - \beta)^{-2i} K^{i-1} i^i , 
\end{eqnarray}
where we appealed to Lemma~\ref{lemma:foo} and used implicitly 
that $K$ is sufficiently large. Using Lemma~\ref{lemma:case-III-outer-expansion}
for a fixed $\delta$, we get from  Lemma~\ref{lemma:system-a-priori} 
\begin{eqnarray*}
\|\widetilde {\mathbf U}_i\|_{1,\beta} &\leq& (\beta_0 - \beta)^{-1} 
\left[\|\widetilde {\mathbf F}\|_{0,\beta} 
+ \widehat C (\widehat K/\delta)^i i^i\right] \\
&\leq& \widetilde C (\beta_0 - \beta)^{-2i-1} K^i i^i 
\left[ K^{-1} C_A C_0 + \frac{\widehat C}{\widetilde C}(\beta_0- \beta)
\left(\frac{(\beta_0 - \beta)^2 \widetilde K}{K}\right)^i\right]. 
\end{eqnarray*}
The expression in square brackets can be bounded by $1$ uniformly in $i$ and 
$\beta \in (0,\beta_0)$ if we assume that $\widetilde C$ and $K$ are sufficiently large. 
\end{proof}
We next refine the argument to include bounds on all derivatives 
of ${\widetilde {\mathbf U}}_i$: 
\begin{thm}
\label{thm:higher-derivatives-in-exponentially-weighted-spaces}
There exist $C_U$, $K_1$, $K_2 > 0$, independent of $\beta$, 
$\nu = \varepsilon/\mu$, $m$ and $i$, such that 
$$
\|(\widetilde {\mathbf U}_i^L)^{(m)}\|_{0,\beta}
\leq C_U (\beta_0 - \beta)^{-(2i+1+m)} (i+m)^{i} K_1^{i} K_2^{m} \nu^{-m}.
$$
\end{thm}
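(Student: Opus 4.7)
The plan is a nested induction: the outer loop is on $i$ (so that the forcing in \eqref{16b} involves only $\widetilde{\mathbf U}_n^L$ with $n < i$, for which bounds at all derivative orders are already known), and the inner loop is on $m$, advancing in steps of two. For the inner base cases, $m = 0$ follows from Theorem~\ref{thm2a} together with the trivial estimate $\|\cdot\|_{0,\beta} \leq C\|\cdot\|_{1,\beta}$ (which is valid since ${\mathbf A}(0)$ is positive definite), and $m = 1$ follows from the fact that the $\|\cdot\|_{1,\beta}$-norm of Lemma~\ref{lemma:system-inf-sup} already incorporates the weighted first-derivative contribution $\nu^2|u_1'|^2 + |u_2'|^2$, so that $\|(\widetilde{\mathbf U}_i^L)'\|_{0,\beta} \leq \nu^{-1}\|\widetilde{\mathbf U}_i^L\|_{1,\beta}$, which already exhibits the correct $\nu^{-1}$ factor.

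For the inductive step $m \to m+2$, I would differentiate the ODE in \eqref{16b} a total of $m$ times and isolate the top-order derivative,
$$
(\widetilde{\mathbf U}_i^L)^{(m+2)} = {\mathbf E}^{-1}\bigl[{\mathbf A}(0)(\widetilde{\mathbf U}_i^L)^{(m)} + \widetilde{\mathbf F}^{(m)}\bigr],
\qquad
\widetilde{\mathbf F}(\widetilde x) := \sum_{n=0}^{i-1} \widetilde{\mathbf A}_{i-n}\, \widetilde x^{\,i-n}\, \widetilde{\mathbf U}_n^L(\widetilde x).
$$
Since ${\mathbf E}^{-1} = \diag(\nu^{-2},1)$ and $\nu \leq 1$, each application of this identity costs a factor $\nu^{-2}$, which is precisely the source of the overall $\nu^{-m}$ dependence in the bound. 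The first term on the right is controlled by the inner inductive hypothesis at order $m$, while $\widetilde{\mathbf F}^{(m)}$ is expanded via Leibniz's rule into a double sum of pieces of the form $\widetilde x^{\,i-n-k}(\widetilde{\mathbf U}_n^L)^{(m-k)}$, each of which is estimated using the outer inductive hypothesis for the index $n < i$.

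The main obstacle will be controlling the $\|\cdot\|_{0,\beta}$-norm of these Leibniz terms and then packaging the resulting double sum into the claimed compact form. I plan to adapt the splitting trick used in the proof of Theorem~\ref{thm2a}: write $e^{2\beta \widetilde x} = e^{-2(\tilde\beta - \beta)\widetilde x} e^{2\tilde\beta \widetilde x}$ for some $\tilde\beta \in (\beta,\beta_0)$, apply $\sup_{\widetilde x > 0}\widetilde x^{\,i-n-k} e^{-(\tilde\beta-\beta)\widetilde x}$ to absorb the polynomial factor, and use the outer inductive hypothesis to control the remaining $\|(\widetilde{\mathbf U}_n^L)^{(m-k)}\|_{0,\tilde\beta}$. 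A near-optimal choice such as $\tilde\beta = \beta + (\beta_0 - \beta)(i-n-k)/(i+n+1+m-k)$, in analogy with the proof of Theorem~\ref{thm2a}, should produce the right balance and reduce the remaining combinatorial estimate to a generalization of Lemma~\ref{lemma:foo} that tracks the additional $m$ derivatives. The inductive constants $C_U$, $K_1$, $K_2$ would then be chosen sufficiently large to dominate the constants arising from the resulting geometric series, closing the induction and yielding the precise form $(i+m)^i K_1^i K_2^m (\beta_0 - \beta)^{-(2i+1+m)} \nu^{-m}$.
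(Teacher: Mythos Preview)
Your proposal is correct and follows essentially the same route as the paper's proof: nested induction (outer on $i$, inner on $m$ in steps of two), base cases $m=0,1$ from Theorem~\ref{thm2a} via the structure of the $\|\cdot\|_{1,\beta}$-norm, differentiation of \eqref{16b} with $\|{\mathbf E}^{-1}\|\leq\nu^{-2}$, Leibniz expansion of $\widetilde{\mathbf F}^{(m)}$, and the same $\tilde\beta$-splitting with the same choice $\kappa=(i-n-k)/(i+n+1+m-k)$. One small simplification: you do not need a \emph{generalization} of Lemma~\ref{lemma:foo}, since that lemma is already stated with the extra parameter $m$ and applies directly after the Leibniz sum over $k$ is collapsed via the binomial identity $\sum_k \binom{i-n}{k}(\beta_0-\beta)^k\nu^k K_2^{-k}=(1+(\beta_0-\beta)\nu K_2^{-1})^{i-n}$.
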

\begin{proof}
The cases $m = 0$ and $m=1$ are covered by the above lemma
(note: $\|{\mathbf E}^{-1}\| \leq \nu^{-2}$). The remaining 
cases are obtained as usual by differentiating the equation
satisfied by $\widetilde {\mathbf U}^L_i$ and then proceed
by induction on $m$. 
\iftechreport
For details, see 
Appendix~\ref{appendix:case_3_thm:higher-derivatives-in-exponentially-weighted-spaces}
\else
\cite[Appendix~\ref{appendix:case_3_thm:higher-derivatives-in-exponentially-weighted-spaces}]{mxo-1}.
\fi
\end{proof}
We conclude this section by showing that the boundary layer 
functions 
$$
\widetilde{\mathbf U}^M_{BL}:= \sum_{i=0}^M \mu^i \widetilde {\mathbf U}^L_i,
$$ 
are entire functions.
\begin{thm}
\label{thm:boundary-layer-fct-entire-case-III}
Fix $\beta \in (0,\beta_0)$. 
There exist constants $C$, $\gamma$, $K > 0$ such that under the assumption
$\mu(M+1) \gamma \leq 1$ there holds 
$$
\left\|\frac{d^m}{d\widetilde x^m} \widetilde{\mathbf U}_{BL}^M
\right\|_{0,\beta} \leq C K^m \nu^{-m} 
\qquad \forall m \in \N_0. 
$$
\end{thm}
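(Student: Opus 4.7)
The plan is to deduce the result directly by applying the triangle inequality to the finite sum defining $\widetilde{\mathbf U}^M_{BL}$, inserting the single-term bounds from Theorem~\ref{thm:higher-derivatives-in-exponentially-weighted-spaces}, and then performing a careful geometric summation. Since $\beta \in (0,\beta_0)$ is fixed throughout, I would first absorb the $(\beta_0-\beta)^{-1}$ factors into new constants $\widetilde K_1 := K_1(\beta_0-\beta)^{-2}$ and $\widetilde K_2 := K_2(\beta_0-\beta)^{-1}$; this reduces the bound from Theorem~\ref{thm:higher-derivatives-in-exponentially-weighted-spaces} to
$$
\|(\widetilde{\mathbf U}_i^L)^{(m)}\|_{0,\beta} \leq C_U (\beta_0-\beta)^{-1} (i+m)^i \widetilde K_1^i \widetilde K_2^m \nu^{-m}.
$$

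The key elementary step is the separation of the $m$- and $i$-dependence in the factor $(i+m)^i$. For $i \ge 1$ I would write $(i+m)^i = i^i(1 + m/i)^i$ and apply the classical inequality $(1+x)^i \leq e^{ix}$ (valid for $x \ge 0$) with $x = m/i$ to conclude $(i+m)^i \leq i^i e^m$; for $i=0$ the factor equals $1$ by the convention $0^0 = 1$. Substituting into the triangle-inequality bound
$$
\left\|\frac{d^m}{d\widetilde x^m}\widetilde{\mathbf U}^M_{BL}\right\|_{0,\beta} \leq \sum_{i=0}^M \mu^i \|(\widetilde{\mathbf U}_i^L)^{(m)}\|_{0,\beta}
$$
pulls out the factor $e^m$, which I would merge into $K := e\widetilde K_2$, leaving the residual sum $\sum_{i=0}^M (\mu \widetilde K_1 i)^i$.

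Finally, I would select $\gamma \ge 2\widetilde K_1$. Under the standing hypothesis $\mu(M+1)\gamma \leq 1$, this choice forces $\mu \widetilde K_1 i \leq \tfrac{1}{2}$ for every $i \in \{1,\ldots,M\}$, so the residual sum is bounded by $1 + \sum_{i\ge 1} 2^{-i} = 2$, uniformly in $M$, $\mu$, and $\varepsilon$. Collecting the factors yields a bound of the form $C K^m \nu^{-m}$ with constants independent of $M$, $\mu$, $\varepsilon$, and $m$, which is exactly the claim.

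Since Theorem~\ref{thm:higher-derivatives-in-exponentially-weighted-spaces} has already absorbed the genuine analytic difficulty, no step here should pose a real obstacle; the one place demanding care is the $m$-$i$ separation, where a naive bound such as $(i+m)^i \leq (i+m)^{i+m}$ would couple $m$ and $i$ so badly that the subsequent geometric summation would fail to yield the desired $K^m \nu^{-m}$ behaviour---the inequality $(1+m/i)^i \leq e^m$ is precisely what makes the argument go through.
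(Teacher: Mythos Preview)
Your proof is correct and follows the same overall template as the paper's proof---triangle inequality on the finite sum, insertion of the bounds from Theorem~\ref{thm:higher-derivatives-in-exponentially-weighted-spaces}, and a geometric summation in $i$---but the crucial decoupling of $m$ and $i$ in the factor $(i+m)^i$ is handled differently. The paper splits $(i+m)^i \leq (2i)^i + (2m)^i$ and is then left with a sum $\sum_{i=0}^M (2K_1\mu m)^i$ whose ratio may exceed $1$ when $m>M$; it disposes of this via the extra observation that $\sum_{i=0}^M (m/M)^i \leq (M+1)(m/M)^M$ can be absorbed into a factor $C^m$ by the boundedness of $r\mapsto r^{1/r}$. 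Your inequality $(i+m)^i = i^i(1+m/i)^i \leq i^i e^m$ decouples the two variables in a single step, after which the residual sum $\sum_{i=0}^M (\mu\widetilde K_1 i)^i$ is genuinely geometric with ratio bounded by $1/2$ under the hypothesis. This is a cleaner route: it avoids the paper's somewhat indirect ``key observation'' and makes the $m$-dependence manifestly of the form $K^m$ from the outset.
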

\begin{proof}
From Theorem~\ref{thm:higher-derivatives-in-exponentially-weighted-spaces}, 
we see that
for all $m \in \N_0$,
\begin{eqnarray*}
\|(\widetilde{\mathbf U}_{BL}^M)^{(m)}\|_{0,\beta} & \leq &
C (\beta_0 - \beta)^{-1-2m} 
\sum_{i=0}^M \mu ^i (i + m)^i (\beta_0 - \beta)^{-2i} K_1^i K_2^m \nu^{-m}\\
& \leq &
C (\beta_0 - \beta)^{-1-2m} K_2^m \nu^{-m} \sum_{i=0}^M (2 (\beta_0 - \beta)^{-2}K_1 \mu i)^i + (2 K_1 \mu m)^i \\
& \leq &
C (\beta_0 - \beta)^{-1-2m} K_2^m \nu^{-m} \sum_{i=0}^M (2 K_1 \mu M)^i \left( (\beta_0 - \beta)^{-2i} + \left(\frac{m}{M}\right)^i\right) 
\leq C \widetilde K_2^m \nu^{-m} 
\end{eqnarray*}
for an appropriate $\widetilde K_2 $ (depending on $\beta_0 - \beta$!),
if we assume that $\mu M $ is sufficiently
small. The key observation for this fact is to note that for $m > M$ we have 
\begin{eqnarray*}
\sum_{i=0}^M (m/M)^i &\leq & (M+1) (m/M)^M =  m \frac{M+1}{M} (m/M)^{M-1} 
\leq m \frac{M+1}{M} \left(\frac{m}{M-1}\right)^{(M-1)/m) m},
\end{eqnarray*}
and $n^{1/n} \rightarrow 1$ for $n \rightarrow \infty$. 
\end{proof}
%-------------------
\subsection{Remainder estimates}
%-------------------
%-------------------
\subsubsection{Remainder estimates for the outer expansion: ${\mathbf F} - L_{\varepsilon,\mu} {\mathbf W}_M$}
%-------------------
As before, the formal expansion (\ref{9b}) is truncated after $M$
terms to yield the decomposition 
$$
{\mathbf U} = \sum_{i=0}^M \mu^i {\mathbf U}_i(x)
+ \sum_{i=0}^M \mu^i \widetilde{\mathbf U}^L_i(\widetilde x) + 
 \sum_{i=0}^M \mu^i \widetilde{\mathbf U}^R_i(\widetilde x^R) 
+ {\mathbf R}^M .
$$
A calculation shows 
$$
{\mathbf F} - L_{\varepsilon,\mu} \sum_{i=0}^M \mu^i {\mathbf U}_i = 
\mu^{M+2} {\mathbf E}^{\varepsilon/\mu,1} {\mathbf U}_{M}^{\prime\prime}.
$$
We therefore get
\begin{thm}
\label{thm:case-III-remainder-outer-expansion}
There exists $\gamma > 0$ independent of $\varepsilon$, $\mu$ such that 
for $\mu (M+1) \gamma \leq 1$, there holds 
$$
\|{\mathbf F} - L_{\varepsilon,\mu} \sum_{i=0}^M \mu^i {\mathbf U}_i \|_{L^\infty(I)}
\leq C (\mu(M+1)\gamma)^{M+2}.
$$
\end{thm}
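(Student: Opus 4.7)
My plan is to leverage the identity stated immediately before the theorem,
\begin{equation*}
{\mathbf F} - L_{\varepsilon,\mu} \sum_{i=0}^M \mu^i {\mathbf U}_i = \mu^{M+2} {\mathbf E}^{\varepsilon/\mu,1} {\mathbf U}_M^{\prime\prime},
\end{equation*}
and to reduce the claim to a pointwise estimate for $\|{\mathbf U}_M^{\prime\prime}\|_{L^\infty(I)}$. Since ${\mathbf E}^{\varepsilon/\mu,1} = \diag((\varepsilon/\mu)^2, 1)$ and $\varepsilon \leq \mu$, every entry is bounded by $1$, so $\|{\mathbf E}^{\varepsilon/\mu,1}{\mathbf U}_M^{\prime\prime}\|_{L^\infty(I)} \leq \|{\mathbf U}_M^{\prime\prime}\|_{L^\infty(I)}$. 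Thus it suffices to bound $\mu^{M+2}\|{\mathbf U}_M^{\prime\prime}\|_{L^\infty(I)}$ by the right-hand side of the theorem.

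The heart of the argument is to derive the second-derivative estimate from the analyticity information furnished by Lemma~\ref{lemma:case-III-outer-expansion}. I would fix once and for all a radius $r_0 > 0$ small enough that $\overline{B_{2r_0}(x)} \subset G$ for every $x \in \overline{I}$, where $G$ is the complex neighborhood of $\overline{I}$ from that lemma. For $x \in \overline{I}$, the disc $\overline{B_{r_0}(x)}$ lies in $G_{r_0}$, so applying Lemma~\ref{lemma:case-III-outer-expansion} with $\delta = r_0$ gives the uniform bound $|{\mathbf U}_M(z)| \leq \widehat C \, r_0^{-M} \widehat K^M M^M$ on $\partial B_{r_0}(x)$. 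Cauchy's integral formula for the second derivative on the contour $\partial B_{r_0}(x)$ then yields
\begin{equation*}
\|{\mathbf U}_M^{\prime\prime}\|_{L^\infty(I)} \leq \frac{2}{r_0^2} \sup_{x \in \overline{I}} \|{\mathbf U}_M\|_{L^\infty(\partial B_{r_0}(x))} \leq C \widetilde K^M M^M,
\end{equation*}
with $\widetilde K := \widehat K / r_0$ and a constant $C$ depending only on $\widehat C$ and $r_0$; in particular, $C$ and $\widetilde K$ are independent of $M$, $\varepsilon$, and $\mu$.

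The final step is elementary algebra: set $\gamma := \max(1,\widetilde K)$. Using the obvious inequalities $M^M \leq (M+1)^M \leq (M+1)^{M+2}$ and $\widetilde K^M \leq \gamma^M \leq \gamma^{M+2}$, I obtain
\begin{equation*}
\mu^{M+2} \|{\mathbf U}_M^{\prime\prime}\|_{L^\infty(I)} \leq C \mu^{M+2} \gamma^{M+2}(M+1)^{M+2} = C (\mu(M+1)\gamma)^{M+2},
\end{equation*}
which is the desired bound. Note that the smallness hypothesis $\mu(M+1)\gamma \leq 1$ is not actually used in the derivation of the estimate; it merely ensures that the right-hand side is small and so yields meaningful control of the remainder. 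No genuine obstacle arises in this proof, since the key analyticity input is already established in Lemma~\ref{lemma:case-III-outer-expansion}; the sole subtlety is to absorb the factorial-like $M^M$ factor from that bound into $(M+1)^{M+2}$ by a choice of $\gamma$ independent of $M$, $\mu$, and $\varepsilon$.
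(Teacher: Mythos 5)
Your proof is correct and is exactly the argument the paper intends: it defers to the analogous result in the cited reference, and the standard route there is precisely your combination of the displayed identity, the bound $\|{\mathbf E}^{\varepsilon/\mu,1}\|\le 1$, the analyticity estimate of Lemma~\ref{lemma:case-III-outer-expansion} on a fixed complex neighborhood, and Cauchy's integral formula for the second derivative (the same pattern as the proof of Theorem~\ref{thm_case4_smooth_part}). The only caveat is inherited from the paper's displayed identity rather than from your argument: that identity as written tacitly uses ${\mathbf U}_{2i+1}=0$, i.e.\ it holds in that form for even $M$, but this does not affect the validity of your reasoning.
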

\begin{proof}
The proof is analogous to that of \cite[Thm.~6]{m}. 
\end{proof}
%-------------------
\subsubsection{Remainder estimates for the inner expansion: 
$L_{\varepsilon,\mu} \widetilde {\mathbf U}^M_{BL}$}
%-------------------
We consider only the contribution from the left endpoint and 
define $\widetilde {\mathbf U}_{BL}^M:= 
\sum_{i=0}^M \mu^i \widetilde {\mathbf U}_i^{L}$. A calculation shows
\begin{equation}
\label{eq:remainder-bl-case-IV}
L_{\varepsilon,\mu} \widetilde{\mathbf U}_{BL}^M = 
\sum_{i \ge M+1} \mu^i \sum_{k=0}^{M} 
\widetilde x^{i-k} {\mathbf A}_{i-k} \widetilde {\mathbf U}^L_k. 
\end{equation}
The following lemma provides an estimate (in an exponentially weighted
space) for $L_{\varepsilon,\mu} \widetilde{\mathbf U}^M_{BL}$ 
near the left endpoint: 
\begin{lem}
\label{lemma:remainder-exponentially-weighted-spaces}
There exist $C$, $\delta$, $\beta > 0$, $K > 0$ such that 
$$
\int_{\widetilde x=0}^{\delta/\mu} e^{2\beta \widetilde x} 
\left|\sum_{i \ge M+1} \mu^i \sum_{k=0}^{M} 
\widetilde x^{i-k} {\mathbf A}_{i-k} \widetilde {\mathbf U}^L_k(\widetilde x)
\right|^2\,d\widetilde x
\leq C (K \mu (M+1))^{2(M+1)}.
$$
\end{lem}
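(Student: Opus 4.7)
The natural move is to reindex the double sum in (\ref{eq:remainder-bl-case-IV}) by $j=i-k$, pulling out a factor of $\mu^k$ so the $\mu^i\widetilde x^{i-k}$ combine into $\mu^k(\mu\widetilde x)^j$:
$$
L_{\varepsilon,\mu}\widetilde{\mathbf U}_{BL}^M(\widetilde x)=\sum_{k=0}^M\mu^k\Bigl[\sum_{j\ge M+1-k}(\mu\widetilde x)^j\mathbf A_j\Bigr]\widetilde{\mathbf U}_k^L(\widetilde x).
$$
The inner bracket is the tail of the Taylor series of $\mathbf A$ about $0$ evaluated at $x=\mu\widetilde x$. Since $\|\mathbf A_j\|\le C_A\gamma_A^j$ by the analyticity assumption (\ref{3}), and since $\gamma_A\mu\widetilde x\le\gamma_A\delta<1$ on $(0,\delta/\mu)$ provided $\delta$ is chosen small enough, the geometric series bound
$\|\sum_{j\ge M+1-k}(\mu\widetilde x)^j\mathbf A_j\|\le C_A(\gamma_A\mu\widetilde x)^{M+1-k}/(1-\gamma_A\delta)$
holds pointwise on $(0,\delta/\mu)$. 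This is the first key simplification, and it replaces the unbounded sum over $i$ by a clean power $(\gamma_A\mu\widetilde x)^{M+1-k}$.

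Applying triangle inequality in the weighted $L^2$ norm then gives
$$
\|L_{\varepsilon,\mu}\widetilde{\mathbf U}_{BL}^M\|_{L^2_\beta((0,\delta/\mu))}
\le \frac{C_A}{1-\gamma_A\delta}\sum_{k=0}^M\mu^k(\gamma_A\mu)^{M+1-k}\|\widetilde x^{M+1-k}\widetilde{\mathbf U}_k^L\|_{0,\beta}.
$$
For each $k$ I would estimate $\|\widetilde x^{M+1-k}\widetilde{\mathbf U}_k^L\|_{0,\beta}$ not by the crude pointwise bound $\widetilde x^{M+1-k}\le(\delta/\mu)^{M+1-k}$ but by the sharp estimate (\ref{eq:thm2a-10}) from the proof of Theorem~\ref{thm2a}, taken with $n=k$ and $i=M+1$. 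The elementary inequality $(n+m)^{n+m}/m^m\le e^n(n+m)^n$ (applied with $n=M+1-k$, $m=2k+1$) makes the $e^{\pm(M+1-k)}$ factors cancel, leaving
$$
\|\widetilde x^{M+1-k}\widetilde{\mathbf U}_k^L\|_{0,\beta}\le\widetilde C K^k(\beta_0-\beta)^{-(M+k+2)}(M+k+2)^{M+1-k}k^k.
$$
Using $M+k+2\le 2(M+1)$ to extract a uniform factor $(2(M+1))^{M+1-k}$, the whole sum collapses to
$$
\|L_{\varepsilon,\mu}\widetilde{\mathbf U}_{BL}^M\|_{L^2_\beta((0,\delta/\mu))}\le C\,\bigl(2\gamma_A\mu(M+1)/(\beta_0-\beta)\bigr)^{M+1}\sum_{k=0}^M\Bigl(\tfrac{Kk}{2\gamma_A(M+1)(\beta_0-\beta)}\Bigr)^k,
$$
and after fixing $\beta\in(0,\beta_0)$ small enough that $K/(2\gamma_A(\beta_0-\beta))\le 1/2$, the $k$-sum is bounded uniformly in $M$. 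Squaring and absorbing constants into a renamed $K$ yields the claim.

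\textbf{Main obstacle.} The crucial subtlety is that the simpler $L^\infty$ bound $\widetilde x^{M+1-k}\le(\delta/\mu)^{M+1-k}$ on $(0,\delta/\mu)$ yields only a factor $(\gamma_A\delta)^{M+1}$ -- a $\mu$- and $M$-independent geometric decay that misses the required $\mu(M+1)$ scaling. One must instead exploit the exponential decay of $\widetilde{\mathbf U}_k^L$ through the sharp weighted-$L^2$ bound of Theorem~\ref{thm2a}; the delicate balancing of the $(M+k+2)^{M+1-k}$ factor (which tries to grow factorially) against the $\mu^{M+1}$ prefactor and the geometric sum in $k$ is what produces exactly $(K\mu(M+1))^{2(M+1)}$ and not a weaker pure-geometric bound.
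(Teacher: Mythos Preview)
Your approach is essentially the same as the paper's: both bound the geometric Taylor tail of $\mathbf A$ pointwise on $(0,\delta/\mu)$ to extract $\mu^{M+1}(\gamma_A\widetilde x)^{M+1-k}$, then invoke the weighted-$L^2$ estimate (\ref{eq:thm2a-10}) for $\|\widetilde x^{M+1-k}\widetilde{\mathbf U}_k^L\|_{0,\beta}$; the paper closes the $k$-summation by appealing to Lemma~\ref{lemma:foo} (via the reasoning leading to (\ref{eq:thm2a-20})), while you do the same summation by hand using $(1+n/m)^m\le e^n$.

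One minor slip: you cannot in general make $K/(2\gamma_A(\beta_0-\beta))\le 1/2$ by choosing $\beta$ small, since $\beta_0-\beta<\beta_0$ and the constant $K$ from Theorem~\ref{thm2a} may well exceed $\gamma_A\beta_0$. This is harmless, however: for any fixed $c>0$ the sum $\sum_{k=0}^M(ck/(M+1))^k$ is bounded by $C(M+1)\max\{1,c\}^M$, and the factor $\max\{1,c\}^M$ is simply absorbed into the renamed $K$ appearing in the lemma's conclusion.
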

\begin{proof} 
For fixed $\widetilde x > 0$ we estimate 
\begin{eqnarray}
\label{eq:lemma:remainder-exponentially-weighted-spaces-10}
\left|\sum_{i \ge M+1} \mu^i \sum_{k=0}^{M} 
\widetilde x^{i-k} {\mathbf A}_{i-k} \widetilde {\mathbf U}^L_k(\widetilde x)
\right|
\leq C_A 
\sum_{k=0}^M |\widetilde{\mathbf U}^L_k(\widetilde x)| 
\sum_{i=M+1}^\infty \gamma_A^{i-k} \widetilde x^{i-k} \mu^i.
\end{eqnarray}
For $\mu \widetilde x \gamma_A \leq 1/2$, we estimate further 
\begin{eqnarray*}
\sum_{i=M+1}^\infty \gamma_A^{i-k} \widetilde x^{i-k} \mu^i
\leq 2 (\mu \widetilde x \gamma_A)^{M+1} (\widetilde x \gamma_A)^{-k} 
\leq 2 \mu^{M+1}  (\widetilde x \gamma_A)^{M+1-k}.
\end{eqnarray*}
Inserting this in (\ref{eq:lemma:remainder-exponentially-weighted-spaces-10}), we see
that employing the estimate (\ref{eq:thm2a-10}) we can reason in exactly the same
way as we have to reach (\ref{eq:thm2a-20}), to get 
$$
\int_{\widetilde x=0}^{\delta/\mu} e^{2\beta \widetilde x} 
\left|\sum_{i \ge M+1} \mu^i \sum_{k=0}^{M} 
\widetilde x^{i-k} {\mathbf A}_{i-k} \widetilde {\mathbf U}^L_k(\widetilde x)
\right|^2\,d\widetilde x
\leq C (K \mu (M+1))^{2(M+1)}, 
$$
which is the desired estimate.
\end{proof} 
Lemma~\ref{lemma:remainder-exponentially-weighted-spaces}
provides an estimate for $L_{\varepsilon,\mu}\widetilde{\mathbf U}^M_{BL}$
near the left endpoint; the following result provides an estimate 
on the whole interval $I$:
\begin{thm}
\label{thm:case-III-remainder-inner-expansion}
There exist $C$, $\gamma$, $K$, $b > 0$ such that for $\mu (M+1) \gamma \leq 1$, there holds 
$$
\|L_{\varepsilon,\mu} \widetilde{\mathbf U}_{BL}^M\|_{L^2(I)} 
\leq C \sqrt{\mu} \left[ (\mu(M+1)K)^{M+1} + \left(\frac{\mu}{\varepsilon}\right)^{2} e^{-b/\mu}\right]. 
$$
\end{thm}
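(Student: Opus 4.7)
The plan is to split the interval $I = (0,1)$ into a near-field part $I_{\text{near}} = (0,\delta)$ and a far-field part $I_{\text{far}} = (\delta,1)$ (for the $\delta$ appearing in Lemma~\ref{lemma:remainder-exponentially-weighted-spaces}), estimate $L_{\varepsilon,\mu}\widetilde{\mathbf U}_{BL}^M$ separately on each piece, and reassemble. In the $\widetilde x$-scale, $I_{\text{near}}$ corresponds to $(0,\delta/\mu)$ and $I_{\text{far}}$ to $(\delta/\mu,1/\mu)$; the change of variables $x = \mu\widetilde x$ introduces an overall $\sqrt{\mu}$ factor when converting $L^2(I_{\text{near}})$ and $L^2(I_{\text{far}})$ norms in $x$ to $L^2$ norms in $\widetilde x$.

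On $I_{\text{near}}$, the integrand $L_{\varepsilon,\mu}\widetilde{\mathbf U}_{BL}^M$ is exactly the double sum in (\ref{eq:remainder-bl-case-IV}), so Lemma~\ref{lemma:remainder-exponentially-weighted-spaces} applies directly. Dropping the weight $e^{2\beta\widetilde x}\ge 1$ and changing variables gives
$$
\|L_{\varepsilon,\mu}\widetilde{\mathbf U}_{BL}^M\|_{L^2(I_{\text{near}})}^2 = \mu \int_0^{\delta/\mu} |L_{\varepsilon,\mu}\widetilde{\mathbf U}_{BL}^M|^2\,d\widetilde x \leq C \mu (K\mu(M+1))^{2(M+1)},
$$
which provides the first term in the claimed bound.

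On $I_{\text{far}}$, I would use the exponential decay estimates from Theorem~\ref{thm:boundary-layer-fct-entire-case-III}. Writing the operator in the $\widetilde x$-scale,
$$
L_{\varepsilon,\mu}\widetilde{\mathbf U}_{BL}^M = \begin{pmatrix} -(\varepsilon/\mu)^2\, \partial_{\widetilde x}^2 \widetilde u_{BL}^M \\ -\partial_{\widetilde x}^2 \widetilde v_{BL}^M \end{pmatrix} + {\mathbf A}(x)\widetilde{\mathbf U}_{BL}^M,
$$
so that applying Theorem~\ref{thm:boundary-layer-fct-entire-case-III} with $m=0$ and $m=2$ yields $\|\widetilde{\mathbf U}_{BL}^M\|_{0,\beta} \leq C$ and $\|\partial_{\widetilde x}^2 \widetilde{\mathbf U}_{BL}^M\|_{0,\beta} \leq C K^2 (\mu/\varepsilon)^2$. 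The factor $(\varepsilon/\mu)^2$ in the first component absorbs one such factor, while the second component produces the $(\mu/\varepsilon)^2$ appearing in the final statement, and the ${\mathbf A}(x)\widetilde{\mathbf U}_{BL}^M$ contribution is $O(1)$ and hence dominated. Thus $\|L_{\varepsilon,\mu}\widetilde{\mathbf U}_{BL}^M\|_{0,\beta} \leq C (\mu/\varepsilon)^2$. Releasing the weight on $\widetilde x \ge \delta/\mu$ to gain an exponential factor
$$
\int_{\delta/\mu}^{1/\mu} |L_{\varepsilon,\mu}\widetilde{\mathbf U}_{BL}^M|^2\,d\widetilde x \leq e^{-2\beta\delta/\mu}\,\|L_{\varepsilon,\mu}\widetilde{\mathbf U}_{BL}^M\|_{0,\beta}^2,
$$
and changing variables back to $x$ gives $\|L_{\varepsilon,\mu}\widetilde{\mathbf U}_{BL}^M\|_{L^2(I_{\text{far}})} \leq C\sqrt{\mu}(\mu/\varepsilon)^2 e^{-b/\mu}$ with $b = \beta\delta$, producing the second term.

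The main obstacle is keeping track of the $(\varepsilon/\mu)$-scalings correctly: the recursion (\ref{eq:recurrence-case-III-inner}) hides the small parameter $\nu = \varepsilon/\mu$ inside ${\mathbf E}^{\varepsilon/\mu,1}$, and Theorem~\ref{thm:higher-derivatives-in-exponentially-weighted-spaces} delivers $\nu^{-m}$ in the $m$-th derivative estimate. Verifying that precisely two derivatives are lost on the $v$-component (giving $(\mu/\varepsilon)^2$) but that the $\varepsilon^2$ prefactor in the $u$-component completely balances the two derivatives there requires careful bookkeeping; this is what determines the exponent $2$ in the final bound.
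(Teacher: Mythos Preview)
Your proposal is correct and follows essentially the same approach as the paper: split $I$ at $x=\delta$, use Lemma~\ref{lemma:remainder-exponentially-weighted-spaces} on the near-field (with the change of variables $x=\mu\widetilde x$ producing the $\sqrt{\mu}$), and on the far-field write $L_{\varepsilon,\mu}\widetilde{\mathbf U}_{BL}^M = -{\mathbf E}^{\varepsilon/\mu,1}(\widetilde{\mathbf U}_{BL}^M)'' + {\mathbf A}\widetilde{\mathbf U}_{BL}^M$, extract the factor $e^{-2\beta\delta/\mu}$ from the weighted norm, and apply Theorem~\ref{thm:boundary-layer-fct-entire-case-III} with $m=0,2$. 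The paper is slightly cruder on the far-field---it simply bounds $\|{\mathbf E}^{\varepsilon/\mu,1}\|\le 1$ and picks up the full $\nu^{-4}$ from $\|(\widetilde{\mathbf U}_{BL}^M)''\|_{0,\beta}^2$, whereas you track components and observe that the $(\varepsilon/\mu)^2$ prefactor cancels the $\nu^{-2}$ in the first component---but both routes yield the same $(\mu/\varepsilon)^2$ in the end.
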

\begin{proof}
We merely consider the left boundary layer. 
Lemma~\ref{lemma:remainder-exponentially-weighted-spaces} allows us 
to estimate for $L_{\varepsilon,\mu} \widetilde{\mathbf U}^M_{BL}$ 
on the interval $(0,\delta)$ with the change of variables
 $x = \mu \widetilde x$: 
$$
\int_{x=0}^{\delta} |L_{\varepsilon,\mu} \widetilde {\mathbf U}^{M}_{BL}|^2\,dx = 
\mu \int_{\widetilde x=0}^{\delta/\mu} 
|L_{\varepsilon,\mu} \widetilde {\mathbf U}^{M}_{BL}(\widetilde x)|^2
\,d\widetilde x  
\leq C \mu ((M+1)\mu K)^{2(M+1)}. 
$$
For the interval $(\delta,1)$, we note that 
$$
L_{\varepsilon,\mu} \widetilde{\mathbf U}^M_{BL} = 
- {\mathbf E}^{\nu,1} 
\left( \widetilde{\mathbf U}^M_{BL}\right)^{\prime\prime} (\widetilde x)
+ {\mathbf A}(x) \widetilde{\mathbf U}^M_{BL}(\widetilde x). 
$$
Hence, we can estimate 
$$
\int_{x=\delta}^1 
| L_{\varepsilon,\mu} \widetilde {\mathbf U}^M_{BL}|^2\,dx 
\leq C \mu \int_{\widetilde x=\delta/\mu}^\infty 
|\left(\widetilde{\mathbf U}_{BL}^M\right)^{\prime\prime}(\widetilde x)|^2 + 
|\widetilde{\mathbf U}_{BL}^M(\widetilde x)|^2 \,d\widetilde x
\leq C e^{-2\delta \beta /\mu} 
\left[ 
\| \left(\widetilde{\mathbf U}_{BL}^M\right)^{\prime\prime}\|^2_{0,\beta} 
+ 
\| \widetilde{\mathbf U}_{BL}^M\|^2_{0,\beta} 
\right]. 
$$
With 
Theorem~\ref{thm:boundary-layer-fct-entire-case-III}, we 
therefore arrive at 
\begin{eqnarray*}
\|L_{\varepsilon,\mu}\widetilde{\mathbf U}_{BL}^M\|^2_{L^2(I)} &\leq& 
C \mu \left[ (\mu (M+1)K)^{2(M+1)} + \nu^{-4} e^{-2\delta \beta/\mu}\right].
\end{eqnarray*}
\end{proof}
\begin{remark}
The factor $(\mu/\varepsilon)^{-2}$ in the second term on the right-hand 
side of 
Theorem~\ref{thm:case-III-remainder-inner-expansion} is likely suboptimal. 
\end{remark}
%----------------
\subsection{Boundary mismatch of the expansion}
%----------------
\begin{thm}
\label{thm:bdy-mismatch-case-III}
There exist constants $C$, $b$, $\gamma > 0$ such that under the assumptions
\begin{equation*}
 \mu (M+1) \gamma \leq 1,
\end{equation*}
one has 
$$
\|{\mathbf W}_M + \widetilde{\mathbf U}^M_{BL}
+ \widetilde {\mathbf V}^M_{BL}\|_{L^\infty(\partial I)} 
\leq  C \left( \frac{\mu}{\varepsilon}\right)^{1/2}  e^{-b/\mu} . 
$$
\end{thm}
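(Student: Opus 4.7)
The plan is to first use the boundary conditions built into the recursion to cancel all contributions at $\partial I$ except the tails of the opposite boundary layer, and then estimate those tails by combining a weighted Sobolev inequality on the half-line with the bounds of Theorem~\ref{thm:boundary-layer-fct-entire-case-III}.

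Specifically, the boundary conditions (\ref{17b}) imposed on each inner-expansion term read $\widetilde {\mathbf U}^L_i(0) = -{\mathbf U}_i(0)$ with the analogous identity at the right endpoint. Multiplying by $\mu^i$ and summing over $i = 0,\ldots,M$ yields ${\mathbf W}_M(0) + \widetilde {\mathbf U}^M_{BL}(0) = 0$ and ${\mathbf W}_M(1) + \widetilde {\mathbf V}^M_{BL}(0) = 0$. Hence the mismatch at $x = 0$ reduces to $\widetilde {\mathbf V}^M_{BL}(1/\mu)$ and the mismatch at $x = 1$ reduces to $\widetilde {\mathbf U}^M_{BL}(1/\mu)$, so it suffices to prove that each of these quantities is bounded by $C(\mu/\varepsilon)^{1/2} e^{-b/\mu}$.

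To turn the exponentially weighted $L^2$ bounds of Theorem~\ref{thm:boundary-layer-fct-entire-case-III} into such a pointwise bound, fix $\beta \in (0,\beta_0)$ and set $g(\widetilde x) := e^{\beta \widetilde x} \widetilde {\mathbf U}^M_{BL}(\widetilde x)$. Since each $\widetilde {\mathbf U}^L_i$ decays exponentially at a rate arbitrarily close to $\beta_0$ (by applying Theorem~\ref{thm2a} with weight parameters slightly above $\beta$), the function $g$ lies in $H^1(\R_+)$ and vanishes at infinity. The Gagliardo--Nirenberg type identity
$$
g(\widetilde x_0)^2 = -2\int_{\widetilde x_0}^\infty g(t) g'(t)\,dt \leq 2 \|g\|_{L^2(\R_+)}\|g'\|_{L^2(\R_+)},
$$
applied at $\widetilde x_0 = 1/\mu$, gives
$$
e^{2\beta/\mu}|\widetilde {\mathbf U}^M_{BL}(1/\mu)|^2 \leq 2 \|\widetilde {\mathbf U}^M_{BL}\|_{0,\beta}\bigl(\beta \|\widetilde {\mathbf U}^M_{BL}\|_{0,\beta} + \|(\widetilde {\mathbf U}^M_{BL})'\|_{0,\beta}\bigr).
$$
Theorem~\ref{thm:boundary-layer-fct-entire-case-III} with $m = 0$ and $m = 1$, valid under the standing assumption $\mu(M+1)\gamma \leq 1$, then furnishes $\|\widetilde {\mathbf U}^M_{BL}\|_{0,\beta} \leq C$ and $\|(\widetilde {\mathbf U}^M_{BL})'\|_{0,\beta} \leq C K \nu^{-1} = CK\mu/\varepsilon$ (with $\nu = \varepsilon/\mu$), so the claim follows with $b = \beta$. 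The right endpoint is handled by the same argument after the reflection $x \mapsto 1-x$.

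The one point that needs care is the sharp exponent $1/2$ rather than $1$ on the factor $\mu/\varepsilon$: the crude Sobolev embedding $\|g\|_{L^\infty} \lesssim \|g\|_{H^1}$ would only deliver the power $1$, so it is essential to use the multiplicative form $\|g\|_{L^\infty(\R_+)}^2 \leq 2 \|g\|_{L^2}\|g'\|_{L^2}$ above. The improvement then arises because only the first derivative carries the unfavorable factor $\nu^{-1}$, whereas the $L^2$ norm itself is $O(1)$. Apart from this sharpening, the rest of the argument is simple bookkeeping with the exponentially weighted estimates already established.
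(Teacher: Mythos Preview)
Your proof is correct and follows essentially the same route as the paper's: first reduce to the tail $\widetilde{\mathbf V}^M_{BL}(1/\mu)$ (resp.\ $\widetilde{\mathbf U}^M_{BL}(1/\mu)$) via the built-in boundary conditions, then convert the weighted $L^2$ bounds of Theorem~\ref{thm:boundary-layer-fct-entire-case-III} into a pointwise estimate by a multiplicative Sobolev inequality, so that only one of the two factors carries the $\nu^{-1}$. The only cosmetic difference is that the paper applies the Gagliardo--Nirenberg inequality $\|v\|_{L^\infty(\tilde I)}^2 \leq C\|v\|_{L^2(\tilde I)}\|v\|_{H^1(\tilde I)}$ on the unit interval $\tilde I = [1/\mu-1,1/\mu]$ and then pulls out $e^{-2\beta/\mu}$ from the $L^2$ norms, whereas you work directly on the half-line with the weighted function $g = e^{\beta\widetilde x}\widetilde{\mathbf U}^M_{BL}$; both devices yield the same exponent $1/2$.
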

\begin{proof}
Moreover, at the endpoints of the interval $I$, the remainder is small. \ To
see this, consider the left endpoint of $I$ (the right endpoint is similar).
By construction, 
\begin{eqnarray}
\left\Vert \mathbf{R}_{M}(0)\right\Vert &=&\left\Vert \mathbf{U}(0)-\left( 
\mathbf{W}_{M}(0)+\widetilde{\mathbf{U}}_{BL}^{M}(0)+
\widetilde{\mathbf{V}}_{BL}^{M}(1/\mu)\right)
\right\Vert 
 = \left\Vert \widetilde  {\mathbf{V}}^M(1/\mu) \right\Vert.
\end{eqnarray}
Theorem~\ref{thm:boundary-layer-fct-entire-case-III} informs us
that $\widetilde{\mathbf V}_M$ is an entire function. In fact, 
from 
Theorem~\ref{thm:boundary-layer-fct-entire-case-III} 
and the Sobolev embedding theorem in the form 
$\|v\|^2_{L^\infty(\tilde I)} \leq 
C \|v\|_{L^2(\tilde I)} \|v\|_{H^1(\tilde I)}$ 
applied to the interval $\tilde I = \tilde [1/\mu-1,1/\mu]$ 
of unit length, allows us to infer 
\begin{eqnarray*}
\|\widetilde{\mathbf V}^M_{BL}(1/\mu)\|^2_{L^\infty(\tilde I)}
& \leq & C 
\|\widetilde{\mathbf V}^M_{BL}\|_{L^2(\tilde I)} 
\left[ 
\|\widetilde{\mathbf V}^M_{BL}\|_{L^2(\tilde I)}  + 
\|\left(\widetilde{\mathbf V}^M_{BL}\right)^\prime\|_{L^2(\tilde I)}
\right] \\
& \leq & C e^{-2 \beta/\mu} 
\|\widetilde{\mathbf V}^M_{BL}\|_{0,\beta}
\left[ 
\|\widetilde{\mathbf V}^M_{BL}\|_{0,\beta}  + 
\|\left( \widetilde{\mathbf V}^M_{BL}\right)^\prime\|_{0,\beta} 
\right]
\leq C e^{-2 \beta/\mu} \nu^{-1},
\end{eqnarray*}
where the constant $C > 0$ depends only on the choice of $\beta$
made in 
Theorem~\ref{thm:boundary-layer-fct-entire-case-III}.  Recalling
the definition of $\nu$ concludes the argument. 
\end{proof}
%----------------
\subsection{Proof of Theorem~\ref{thm:case-III}} 
%----------------
%\begin{numberedproof}{Theorem~\ref{thm:case-III}}
The proof of Theorem~\ref{thm:case-III} now follows by combining 
Theorems~\ref{thm:case-III-remainder-outer-expansion}, 
\ref{thm:case-III-remainder-inner-expansion},
\ref{thm:bdy-mismatch-case-III} with $M = O(1/\mu)$ 
and using the stability result (\ref{eq:a-priori}). 
%\end{numberedproof}
%%%%%%%%%%%%%%%%%%%%%%%%%%%%%%%%%%%%%%%%%%%%%%%%%%%%%%%%%%%%%%%%%%%%%%%%%%%%%%%%%%

%\section{Closing remarks}
%
%\label{concl}The regularity results obtained in the present article will be
%used in \cite{mxo} to establish the exponential convergence of the $hp$
%version FEM for the approximation of the solution to (\ref{eq:model-problem}). \ The
%numerical results in \cite{xo} demonstrated this and in \cite{xo2} the case 
%$\varepsilon =\mu $ was established mathematically. \ With these newly
%obtained bounds, the robust exponential convergence of the $hp$ version will
%then be established for all the cases $0\leq \varepsilon \leq \mu \leq 1$.

{\bf Acknowledgements:} The first author cordially thanks his colleagues 
W.~ Auzinger (Vienna) and P.~Szmolyan (Vienna) for fruitful discussions 
on the topics the paper. 
%%%%%%%%%%%%%%%%%%%%%%%%%%%%%%%%%%%%%%%%%%%%%%%%%%%%%%%%%%%%%%%%%%%%%%%%%%%%%%%%%%
\appendix
%-----------------------
\section{Some miscellaneous results}
\label{appendix-misc}
%-----------------------

\begin{lem}
%\marginpar{should probably be moved to appendix}
Let $\gamma >0$ and $\eta \in (0,1)$. Then the function
\begin{equation*}
x\mapsto f(x)=\gamma ^{x}(\eta x)^{x},
\end{equation*}
is convex on $(0,\infty )$ and monotone decreasing on $(0,1/(\eta \gamma e))$.
\end{lem}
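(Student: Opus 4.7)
The plan is to rewrite the function in a cleaner form and then use straightforward logarithmic differentiation. First I would observe that $f(x)=\gamma^x(\eta x)^x = (c x)^x$ with $c:=\gamma\eta>0$; this absorbs the two parameters into a single multiplicative constant and eliminates a lot of bookkeeping.

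Next, I would take logarithms, writing $\log f(x) = x\log(cx)$ for $x>0$, and compute $f'(x)/f(x) = \log(cx)+1 = \log(cex)$, so that
\begin{equation*}
f'(x) = f(x)\log(c e x).
\end{equation*}
Since $f(x)>0$, the sign of $f'$ is the sign of $\log(cex)$, which is strictly negative precisely when $cex<1$, i.e.\ on $(0,1/(ce)) = (0,1/(\eta\gamma e))$. This immediately gives the monotonicity claim.

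For convexity I would differentiate once more, obtaining
\begin{equation*}
f''(x) = f'(x)\log(cex) + f(x)\cdot\tfrac{1}{x} = f(x)\bigl[\log(cex)\bigr]^2 + \frac{f(x)}{x}.
\end{equation*}
Both terms are nonnegative (strictly positive, in fact) on $(0,\infty)$, since $f(x)>0$ and $x>0$, so $f''>0$ on $(0,\infty)$ and $f$ is convex.

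There is no real obstacle here: once one collapses $\gamma^x(\eta x)^x$ into $(cx)^x$, everything reduces to a one-line logarithmic derivative computation. The only subtlety worth flagging is the need to verify $f>0$ on $(0,\infty)$ (which is immediate since $cx>0$ makes $(cx)^x = \exp(x\log(cx))$ well-defined and positive), so that one may divide by $f(x)$ when taking logarithmic derivatives without worry.
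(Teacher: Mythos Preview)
Your proof is correct and uses essentially the same idea as the paper: logarithmic differentiation to read off the sign of $f'$. The paper only checks monotonicity (computing $\frac{d}{dx}\ln f(x)=\ln\gamma+1+\ln(\eta x)$ and noting this is negative on $(0,1/(\eta\gamma e))$) and leaves convexity unproved; you go slightly further by also supplying the convexity argument and by collapsing $\gamma\eta$ into a single constant $c$, which is a tidy simplification but not a different method.
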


\begin{proof} We only check the monotonicity assertion. To that end, we compute 
\begin{equation*}
\frac{d}{dx}\ln f(x)=\ln \gamma +1+\ln (\eta x)
\end{equation*}
and see that $\frac{d}{dx}\ln f(x)<0$ for $x\in (0,1/(\eta \gamma e))$. 
This proves the claim.
\end{proof}

The following lemma provides a proof for (\ref{eq:det-positive}) and (\ref{eq:diagonal-positive}).
\begin{lem}
\label{lemma:elementary-matrix-properties}
Let $\alpha > 0$ and let ${\mathbf B} \in \R^2$ be such that 
$\vec \xi \cdot {\mathbf B} \vec \xi \ge \alpha^2 \|\vec\xi\|^2$
for all $\vec \xi \in \R^2$. Then 
\begin{eqnarray}
\label{eq:lemma:elementary-matrix-properties-ii}
{\mathbf B}_{kk}& \ge &\alpha^2 ,\quad k=1,2,\\
\label{eq:lemma:elementary-matrix-properties-i}
\operatorname*{det} {\mathbf B}  &\ge & \alpha^2 \max\{{\mathbf B}_{11},{\mathbf B}_{22}\}. 
\end{eqnarray}
\end{lem}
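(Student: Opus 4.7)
The plan is to handle the two assertions in turn. For (\ref{eq:lemma:elementary-matrix-properties-ii}), I would simply test the coercivity hypothesis with the standard basis vectors: plugging $\vec\xi = e_k$ into $\vec\xi \cdot {\mathbf B} \vec\xi \ge \alpha^2 \|\vec\xi\|^2$ immediately yields ${\mathbf B}_{kk} \ge \alpha^2$.

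For (\ref{eq:lemma:elementary-matrix-properties-i}), the complication is that ${\mathbf B}$ is not assumed symmetric, so eigenvalues are not directly available. My strategy is to pass to the symmetric part ${\mathbf B}_s := \frac{1}{2}({\mathbf B} + {\mathbf B}^T)$, which satisfies the same coercivity bound $\vec\xi \cdot {\mathbf B}_s \vec\xi = \vec\xi \cdot {\mathbf B} \vec\xi \ge \alpha^2 \|\vec\xi\|^2$ since the antisymmetric part contributes zero to the quadratic form. Thus ${\mathbf B}_s$ is a symmetric positive definite $2\times 2$ matrix with smallest eigenvalue $\lambda_{\min} \ge \alpha^2$.

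Next I would invoke two standard facts about symmetric matrices: first, $\det {\mathbf B}_s = \lambda_{\min} \lambda_{\max} \ge \alpha^2 \lambda_{\max}$; second, diagonalizing ${\mathbf B}_s = Q D Q^T$ shows $({\mathbf B}_s)_{ii} = \sum_k \lambda_k (Q^T e_i)_k^2 \le \lambda_{\max}$, so in particular $\lambda_{\max} \ge \max\{({\mathbf B}_s)_{11},({\mathbf B}_s)_{22}\} = \max\{{\mathbf B}_{11},{\mathbf B}_{22}\}$ (the diagonal entries of ${\mathbf B}$ and ${\mathbf B}_s$ coincide). Combining gives $\det {\mathbf B}_s \ge \alpha^2 \max\{{\mathbf B}_{11},{\mathbf B}_{22}\}$.

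Finally I would relate $\det {\mathbf B}$ back to $\det {\mathbf B}_s$ via the elementary algebraic identity
\[
\det {\mathbf B} - \det {\mathbf B}_s = \tfrac{1}{4}({\mathbf B}_{12}+{\mathbf B}_{21})^2 - {\mathbf B}_{12}{\mathbf B}_{21} = \tfrac{1}{4}({\mathbf B}_{12}-{\mathbf B}_{21})^2 \ge 0,
\]
so $\det {\mathbf B} \ge \det {\mathbf B}_s \ge \alpha^2 \max\{{\mathbf B}_{11},{\mathbf B}_{22}\}$. No step here presents a real obstacle; the only mildly non-obvious ingredient is the passage from the unsymmetric ${\mathbf B}$ to its symmetric part together with the nonnegativity of the resulting $\tfrac{1}{4}({\mathbf B}_{12}-{\mathbf B}_{21})^2$ correction, and this is exactly what makes the argument work without any symmetry hypothesis on ${\mathbf B}$.
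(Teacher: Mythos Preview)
Your proof is correct, but it follows a genuinely different route from the paper's for the determinant bound (\ref{eq:lemma:elementary-matrix-properties-i}). (For (\ref{eq:lemma:elementary-matrix-properties-ii}) you and the paper do exactly the same thing.)

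The paper argues via the inverse: it first checks that ${\mathbf B}^{-1}$ is again positive definite, then uses the explicit $2\times 2$ cofactor formula ${\mathbf B}^{-1} = (\det{\mathbf B})^{-1}{\mathbf C}$ with ${\mathbf C} = \begin{pmatrix}{\mathbf B}_{22} & -{\mathbf B}_{12}\\ -{\mathbf B}_{21} & {\mathbf B}_{11}\end{pmatrix}$, together with the operator-norm bound $\|{\mathbf B}^{-1}\|_2 \le \alpha^{-2}$ (which follows from coercivity via Cauchy--Schwarz). This yields $|\det{\mathbf B}|^{-1} \le (\alpha^2\|{\mathbf C}\|_2)^{-1}$, and since $\|{\mathbf C}\|_2 \ge \max\{{\mathbf B}_{11},{\mathbf B}_{22}\}$ the claim follows. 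Your approach instead symmetrizes: you pass to ${\mathbf B}_s = \tfrac12({\mathbf B}+{\mathbf B}^T)$, use the eigenvalue bounds $\lambda_{\min}\ge\alpha^2$ and $\lambda_{\max}\ge\max_k ({\mathbf B}_s)_{kk}$ to control $\det{\mathbf B}_s$, and then close with the neat $2\times 2$ identity $\det{\mathbf B}-\det{\mathbf B}_s=\tfrac14({\mathbf B}_{12}-{\mathbf B}_{21})^2\ge 0$. Your argument is arguably more elementary and self-contained, staying purely within $2\times 2$ linear algebra; the paper's argument has a more functional-analytic flavor through the norm bound on ${\mathbf B}^{-1}$, though in the end both proofs exploit structure that is specific to the $2\times 2$ case.
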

\begin{proof}
Property (\ref{eq:lemma:elementary-matrix-properties-ii}) follows immediately from the 
choice $\vec\xi = \vec e_k$, where $\vec e_k$ is the $k$-th unit vector. 

To see property (\ref{eq:lemma:elementary-matrix-properties-i}), we start by noting that 
${\mathbf B}^{-1}$ is also positive definite: 
$$
x^T {\mathbf B}^{-1} x  \overset{x = B y}{=} y^T {\mathbf B}^T {\mathbf B}^{-1}{\mathbf B} y
= y^T {\mathbf B}^T y  
= (y^T {\mathbf B}^T y)^T  
= y^T {\mathbf B} y > 0. 
$$
Property (\ref{eq:lemma:elementary-matrix-properties-i}) follows from 
properties of the representation of ${\mathbf B}^{-1}$ in terms of 
the cofactor matrix as we now show. From 
$$
{\mathbf B}^{-1} = \frac{1}{\operatorname*{det} {\mathbf B}} 
{\mathbf C}, 
\qquad {\mathbf C}:= 
\left(\begin{array}{cc}
      {\mathbf B}_{22} & -{\mathbf B}_{12} \\
      -{\mathbf B}_{21} & {\mathbf B}_{11} \\
      \end{array}
\right)
$$
and (\ref{eq:lemma:elementary-matrix-properties-ii}), we see that $\operatorname*{det}{\mathbf B}$
is positive. The well-known fact that 
$$
\|{\mathbf B}^{-1}\|_2 \leq \alpha^{-2}
$$
allows us to conclude 
$$
\left| \frac{1}{\operatorname*{det}{\mathbf B}}\right|  \|{\mathbf C}\|_2 
= \|{\mathbf B}^{-1}\|_2 \leq \alpha^{-2},
$$
which implies 
$$
\left| \frac{1}{\operatorname*{det}({\mathbf B})}\right| \leq \frac{1}{\alpha^2 \|{\mathbf C}\|_2}.
$$
Estimating $\|{\mathbf C}\|_2 \ge \max\{|{\mathbf C}_{11}|,|{\mathbf C}_{22}|\}$ and 
recalling $\operatorname{det}{\mathbf B} > 0$ concludes the argument.
\end{proof}

%-----------------------
\section{Proofs for Section~\ref{case_2}}
\label{appendix:case_2}
%-----------------------
\begin{lem}
\label{lem3}If $u$ satisfies
\begin{equation*}
\left\Vert u^{(n)}\right\Vert _{\infty ,I}\leq CK^{n}\max \{n,\mu
^{-1}\}^{n}\quad \forall \;n\in \mathbb{N}_{0},
\end{equation*}
for some positive constants $C,K$ independent of $\mu $, then its complex
extension (denoted by $u(z)$) satisfies
\begin{equation*}
\left\vert u(z)\right\vert \leq Ce^{\beta \operatorname*{dist}(z,I)/\mu },
\end{equation*}
provided $\operatorname*{dist}(z,I)$ is sufficiently small.
\end{lem}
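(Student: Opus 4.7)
The plan is to represent $u(z)$ by its Taylor series centered at the point $x_{0} \in \overline{I}$ closest to $z$, and to split the resulting series according to whether $n\leq \mu^{-1}$ or $n>\mu^{-1}$, which is precisely the threshold at which the factor $\max\{n,\mu^{-1}\}^{n}$ switches between its two branches.

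Fix $z\in\C$ with $\operatorname*{dist}(z,I)$ sufficiently small (to be quantified below), choose $x_{0}\in\overline{I}$ with $|z-x_{0}|=\operatorname*{dist}(z,I)$, and write
\begin{equation*}
|u(z)| \;\leq\; \sum_{n=0}^{\infty} \frac{|u^{(n)}(x_{0})|}{n!}|z-x_{0}|^{n}
\;\leq\; C\sum_{n=0}^{\infty} \frac{K^{n}\max\{n,\mu^{-1}\}^{n}}{n!}|z-x_{0}|^{n}.
\end{equation*}
I would split this at $n_{0}:=\lceil \mu^{-1}\rceil$. For $n\leq n_{0}$ the factor $\max\{n,\mu^{-1}\}^{n}$ equals $\mu^{-n}$, and the tail of the exponential series gives
\begin{equation*}
\sum_{n\leq n_{0}} \frac{K^{n}\mu^{-n}}{n!}|z-x_{0}|^{n}
\;\leq\; \exp\!\bigl(K|z-x_{0}|/\mu\bigr)
\;=\; \exp\!\bigl(K\operatorname*{dist}(z,I)/\mu\bigr).
\end{equation*}
For $n>n_{0}$ the factor is $n^{n}$, and Stirling's inequality $n^{n}\leq e^{n}n!$ yields
\begin{equation*}
\sum_{n>n_{0}} \frac{K^{n}n^{n}}{n!}|z-x_{0}|^{n} \;\leq\; \sum_{n>n_{0}} (eK|z-x_{0}|)^{n}.
\end{equation*}
Choosing $\operatorname*{dist}(z,I)\leq \delta$ with $\delta<1/(2eK)$ makes this a convergent geometric series bounded by an absolute constant; in particular it is bounded by $\exp(K\operatorname*{dist}(z,I)/\mu)$ since the exponent is nonnegative.

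Adding the two pieces and absorbing the geometric series into the multiplicative constant produces $|u(z)|\leq C'\exp(\beta\operatorname*{dist}(z,I)/\mu)$ with $\beta:=K$ and a new constant $C'$ depending only on $C$, $K$, and the chosen $\delta$. No step is subtle; the only mild point to get right is that the smallness condition on $\operatorname*{dist}(z,I)$ is \emph{independent of} $\mu$ (it is governed by $1/(eK)$, not by $\mu$), which is what allows the tail sum to be bounded uniformly and the resulting exponential rate $\beta$ to be independent of $\mu$.
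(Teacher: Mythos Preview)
Your proposal is correct and follows essentially the same approach as the paper: Taylor expansion about the nearest point of $\overline{I}$, splitting the series at the threshold $n\approx\mu^{-1}$, bounding the low-order part by the exponential series and the high-order tail by a geometric series via $n^{n}\leq e^{n}n!$, with the same smallness condition $\operatorname*{dist}(z,I)<1/(2eK)$. The only cosmetic slip is that with $n_{0}=\lceil\mu^{-1}\rceil$ the equality $\max\{n,\mu^{-1}\}^{n}=\mu^{-n}$ can fail at $n=n_{0}$; the paper avoids this by splitting at $\lfloor\mu^{-1}\rfloor$, but this single term is harmless either way.
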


\begin{proof} Fix $x\in I$ and let $B_{r}(x)$ be the ball of radius $r$
centered at $x$. \ Then, by Taylor's theorem, we have for $z\in B_{r}(x)$
\begin{eqnarray*}
\left\vert u(z)\right\vert &\leq &\overset{\infty }{\underset{k=0}{\sum }}%
\left\vert \frac{u^{(k)}(x)}{k!}(z-x)^{k}\right\vert \leq \overset{\lfloor
1/\mu \rfloor }{\underset{k=0}{\sum }}C\frac{\mu ^{-k}}{k!}K^{n}r^{k}+%
\overset{\infty }{\underset{k=\lfloor 1/\mu \rfloor }{\sum }}C\frac{k^{k}}{k!%
}K^{k}r^{k} \\
&\leq &C\overset{\lfloor 1/\mu \rfloor }{\underset{k=0}{\sum }}\left( \frac{%
rK}{\mu }\right) ^{k}\frac{1}{k!}+C\overset{\infty }{\underset{k=\lfloor
1/\mu \rfloor }{\sum }}e^{k}K^{k}r^{k}.
\end{eqnarray*}
If $r<1/(2eK)$ then the second sum above is bounded and we get%
\begin{equation*}
\left\vert u(z)\right\vert \leq Ce^{rK/\mu }+C\leq \widehat{C}e^{rK/\mu }.
\end{equation*}
\end{proof}
%-----------------------
\subsection{Proof of Lemma~\ref{lem4}}
\label{appendix:case_2_lem4}
%-----------------------
%\begin{numberedproof}{Lemma~\ref{lem4}}
We work out the details here: 
We reduce to the case of homogeneous boundary conditions
by introducing the linear function $u_0$ with $u_0(0) = g_{-}$
and $u_0(1) = g_{+}$. Then the difference $\tilde u:= u  -u_0$ 
solves
$$
-\mu^2 \tilde u^{\prime\prime} + c \tilde u = 
g - c u_0 =:\tilde g
\qquad \tilde u(0) = \tilde u(1) = 0
$$
with $\|\tilde g\|_{L^\infty(I)} \leq C_g + |g_-| + |g_+|$. 
By the comparison principle (see, e.g., \cite[Lemma~{2.1}]{kopteva07}
for the present form), 
we have 
\begin{equation}
\label{eq:lem4-10}
\|\tilde u\|_{L^\infty(I)} \leq \frac{1}{\underline{c}} \|\tilde g\|_{L^\infty(I)}. 
\end{equation}
From the differential equation, we furthermore get 
\begin{equation}
\label{eq:lem4-20}
\|\tilde u^{\prime\prime}\|_{L^\infty(I)} 
\leq \mu^{-2} \|\tilde g\|_{L^\infty(I)} + \frac{\|c\|_{L^\infty(I)} }{\underline{c}}
\|\tilde g\|_{L^\infty(I)}
\leq C \mu^{-2} \|\tilde g\|_{L^\infty(I)},
\end{equation}
where $C$ is a constant that depends solely on $c$. 
By an interpolation inequality in H\"older spaces 
(see \cite[Thm.~{3.2.1}]{krylov96}) we get 
from (\ref{eq:lem4-10}), (\ref{eq:lem4-20}), the estimate 
\begin{equation}
\label{eq:lem4-30}
\|\tilde u^{(1)}\|_{L^\infty(I)} \leq C \mu^{-1} \|\tilde g\|_{L^\infty(I)},
\end{equation}
for a constant $C > 0$ that depends only on $c$. Combining the above 
estimates together with $u = \tilde u + u_0$ and recalling that $u_0$ is linear,
we have 
\begin{equation}
\label{eq:lem4-40}
\|u^{(n)}\|_{L^\infty(I)} \leq C_u\max\{n,\mu^{-1}\}^n, 
\qquad n \in \{0,1,2\},
\end{equation}
where $C_u = C C_{\tilde g}$ for a constant $C$ that depends solely on the 
function $c$. Higher order estimates for $u$ are obtained from the 
differential equation in the standard way. 
Differentiating the differential equation satisfied by $u$ yields 
\begin{equation}
\label{eq:lem4-50}
-\mu^{-2} u^{(n+2)} = g^{(n)} - \sum_{\nu=0}^n 
\binom{n}{\nu} c^{(n-\nu)} u^{(\nu)}.
\end{equation}
The analyticity of $c$ implies the existence of $C_c$, $\gamma_c > 0$ 
such that 
\begin{equation}
\label{eq:lem4-55}
\|c^{(n)}\|_{L^\infty(I)} \leq C_c \gamma_c^n n! \qquad \forall n \in \N_0. 
\end{equation}
We claim 
\begin{equation}
\label{eq:lem4-60}
\|u^{(n)}\|_{L^\infty(I)} \leq C_u \gamma^n \max\{n,\mu^{-1}\}^n 
\qquad \forall n \in \N_0. 
\end{equation}
This estimate is valid for $n \in \{0,1,2\}$ by (\ref{eq:lem4-40}). 
To see that it is valid in general, one proceeds by induction on $n$. 
From (\ref{eq:lem4-50}) we get in view (\ref{eq:lem4-55}) 
\begin{eqnarray*}
\mu^2 \|u^{(n+2)}\|_{L^\infty(I)} 
&\leq& 
C_{\tilde g} \gamma^n \max\{n,\mu^{-1}\}^n + 
\sum_{\nu=0}^n \binom{n}{\nu} C_c \gamma_c^{\nu} (\nu)! C_c \gamma^{n-\nu}
\max\{\nu,\mu^{-1}\}^{n-\nu} \\
&\leq& 
C_{\tilde g} \gamma^n \max\{n,\mu^{-1}\}^n + 
C_c C_u \sum_{\nu=0}^n n^{\nu} \gamma_c^{\nu} \gamma^{n-\nu}
\max\{\nu,\mu^{-1}\}^\nu \\
&\leq& 
C_{\tilde g} \gamma^n \max\{n,\mu^{-1}\}^n + 
C_c C_u \max\{n,\mu^{-1}\}^n \gamma^n 
\sum_{\nu=0}^n \left(\frac{\gamma_c}{\gamma}\right)^{\nu}\\
&\leq& 
C_{\tilde g} \gamma^n \max\{n,\mu^{-1}\}^n + 
C_c C_u \max\{n,\mu^{-1}\}^n \gamma^n \frac{1}{1-\gamma_c/\gamma} \\
&\leq& 
C_u \gamma^{2+n} \max\{n,\mu^{-1}\}^n 
\left[\gamma^{-2} \left\{\frac{C_{\tilde g}}{C_u}  + \frac{C_c}{1-\gamma_c/\gamma}
\right\}\right].
\end{eqnarray*}
Since the expression in square brackets is bounded by $1$ for 
$\gamma \ge \gamma_0$, if we make $\gamma_0$ sufficiently large, we have
the desired estimate (\ref{eq:lem4-60})
for $u$. 
%\end{numberedproof}
%-----------------------
\subsection{Proof of Lemma~\ref{lem5}}
\label{appendix:proof_of_lem5}
%-----------------------
Set $\delta:= 2 \widehat \delta \leq 1/e$. 
Define $F:k\mapsto \delta ^{k}\left( \frac{i+2}{i+1-k}\right) ^{i-k}$ and
note that
\begin{equation*}
\ln (F)=k\ln \delta +(i-k)\left( \ln (i+2)-\ln (i+1-k)\right) ,
\end{equation*}
\begin{eqnarray*}
\left( \ln (F)\right) ^{\prime } &=&\ln \delta -\ln (i+2)+\ln (i+1-k)+\frac{%
i-k}{i+1-k}  \\
&\leq&\ln \delta +\ln \left( \frac{i+1-k}{i+2}\right) + 1 
\leq 1+\ln \delta .
\end{eqnarray*}
Since $\delta \leq 1/e$ we have $\left( \ln (F)\right) ^{\prime }\leq0$, 
and $F$ is monotone decreasing which gives
\begin{equation*}
\overset{i}{\underset{k=0}{\sum }}\widehat{\delta }^k \left( \frac{i+2}{%
i+1-k}\right) ^{i-k}\leq 
\overset{i}{\underset{k=0}{\sum }}2^{-k} e 
\leq 2e. 
\end{equation*}
%
%-----------------------
\subsection{Proof of Theorem~\ref{thm2}}
\label{appendix:case_2_thm2}
%-----------------------
%\begin{numberedproof}{Theorem~\ref{thm2}}
We will only consider the inner expansions 
$\widehat u_i^L$, $\widehat v_i^L$ 
at the left endpoint
of $I$, since for the expansions at the right endpoint the arguments are
almost identical. \ The proof is by induction on $i$. For $i=0$ we have
from (\ref{14})--(\ref{17}),
\begin{equation}
\left\{ 
\begin{array}{c}
-\mu ^{2}v_{0}^{\prime \prime }+\frac{\left(
a_{22}a_{11}-a_{12}a_{21}\right) }{a_{11}}v_{0}=g-\frac{a_{21}}{a_{11}}f \\ 
v_{0}(0)=v_{0}\left( 1\right) =0%
\end{array}%
\right. ,  \label{22}
\end{equation}
\begin{equation}
u_{0}=\frac{1}{a_{11}}\left( f-a_{12}v_{0}\right) ,  \label{23}
\end{equation}
\begin{equation}
\left\{ 
\begin{array}{c}
-\left( \widehat{u}_{0}^{L}\right) ^{\prime \prime }+a_{11}(0)\widehat{u}%
_{0}^{L}=0 \\ 
\widehat{u}_{0}^{L}(0)=-u_{0}(0)%
\end{array}%
\right. ,  \label{24}
\end{equation}
\begin{equation}
\widehat{v}_{2}^{L}=
- \int_z^\infty\int_t^\infty a_{21}(0) \widehat u_0^L(\tau)\,d\tau\,dt.
\label{25}
\end{equation}
Combining Lemmas~\ref{lemma:leibniz-rule} and \ref{lem4}, we can find 
a constant $C_{4}>0$ such that $v_{0}$
satisfies (\ref{21}), which in turn shows that $u_{0}$ satisfies (\ref{20})
by Lemma~\ref{lemma:leibniz-rule}. 
The solution formula for $\widehat{u}_0^L$ then shows
that (\ref{19}) is valid. Finally, Lemma~\ref{lemma:two-anti-derivatives}
establishes the desired bound (\ref{18}) for $\widehat v_2$. 

So, assume (\ref{18})--(\ref{21}) hold for up to $i\ge 0$ 
and establish them for $i+1$. We will choose the constants 
$C_i$ such that the ratios $C_2/C_1$, $C_3/C_2$, and 
$C_4/C_3$ are sufficiently small. Furthermore, the constants $K_i$
and $C_u$, $C_v$ are sufficiently large and are selected such that 
\begin{subequations}
\label{eq:simplifying-assumptions-section-B}
\begin{eqnarray}
K_1 = K_2 && \qquad K_3 = K_4, 
\qquad  C_u = C_v \\
C_u^2 K_1 = K_3 && C_u = C_v > 2/\min\{1/a_{11}(0), 1/a_{11}(1)\}.  
\end{eqnarray}
\end{subequations}
We start by bounding $v_{i+1}$ and $u_{i+1}$. 
We first consider $v_{i+1}$, which satisfies (see eq. (\ref{14})): 
\begin{equation*}
\left\{ 
\begin{array}{c}
v_{i+1}^{\prime \prime }+\frac{(a_{22}a_{11}-a_{12}a_{21})}{a_{11}}%
v_{i+1}=-\mu^2 \frac{a_{21}}{a_{11}}u_{i-1}^{\prime \prime }\smallskip \\ 
v_{i+1}(0)=-\widehat{v}_{i+1}^{L}(0)\;,\;v_{i+1}(1)=-\widehat{v}_{i+1}^{R}(0)%
\end{array}%
\right. .
\end{equation*}
To that end, we assume, as we may, that $\gamma$ is sufficiently large
for Lemma~\ref{lemma:leibniz-rule} to be applicable for the function
$u_{i-1}$. Together with the induction hypothesis, it then yields $\forall n \in \N_0$ (with the constant $C^\prime$ of 
Lemma~\ref{lemma:leibniz-rule} for the function $g = a_{21}/a_{11}$)
\begin{eqnarray*}
\|\mu^2 \left(\frac{a_{21}}{a_{11}} u_{i-1}^{\prime\prime}\right)^{(n)}\|_{L^\infty(I)} 
&\leq& \mu^2 C^\prime 
C_3 K_3^{i-1} ( (i+1+n)\mu + 1)^{i-1} \frac{(i-1)^{i-1}}{(i-1)!}\gamma^{n+2} 
\max\{n+2,\mu^{-1}\}^{n+2}.
\end{eqnarray*}
%
%From (the case $n =0$ needs to be considered separately) 
%
From
\begin{eqnarray*}
\mu^2 \max\{n+2,\mu^{-1}\}^{n+2} 
&\leq& \max\{n,\mu^{-1}\}^n \left(\frac{n+2}{n}\right)^{n+2} 
\max\{(n+2)\mu,1\}^2 \\
&\leq & C^{\prime\prime} \max \{n,\mu^{-1}\}^n ((i+n+1)\mu+1)^2,
\end{eqnarray*}
for some $C^{\prime\prime} > 0$ independent of $n$, 
we get $\forall n \in \N_0$
\begin{eqnarray*}
\|\mu^2 \left(\frac{a_{12}}{a_{11}} u_{i-1}^{\prime\prime}\right)^{(n)}\|_{L^\infty(I)} 
&\leq&
C_3 \gamma^2 C^\prime C^{\prime\prime} K_3^{-2} 
K_3^{i+1} ((i+1+n)\mu + 1)^{i+1} \frac{(i-1)^{i-1}}{(i-1)!}\gamma^{n} 
\max\{n,\mu^{-1}\}^{n}.
\end{eqnarray*}
The induction hypothesis for $\widehat v_{i+1}^L$ and $\widehat v_{i+1}^R$
and Lemma~\ref{lem4} therefore produce (with $\widetilde C$ of 
Lemma~\ref{lem4}), for all $n \in \N_0$
\begin{eqnarray*}
\|v_{i+1}^{(n)}\|_{L^\infty(I)} &\leq &
\widetilde C \gamma^n \max\{n,\mu^{-1}\}^n \frac{(i-1)^{i-1}}{(i-1)!}\\
&&\mbox{}\times 
\left[ 
C_3 K_3^{i-1} ( (i+1+n)\mu + 1)^{i+1} \gamma^{2} 
+ 2 C_1 K_1^{i-1} \left(\mu + \frac{1}{i-1}\right)^{i-1} 
(i-1)^{(i-1)}
C_v^{2(i-1)}
\right]\\
&\leq& C_4 K_4^{i+1}
\gamma^n \max\{n,\mu^{-1}\}^n (\mu(i+n)+1)^{i+1} \frac{(i+1)^{(i+1)}}{(i+1)!}\\
&&\mbox{}\times 
\left[ 
C^{(IV)} \widetilde C \frac{C_3}{C_4} \gamma^2 K_3^{-2} (K_3/K_4)^{i+1}
+ 
2 C^{(IV)}\widetilde C  \left(\frac{1}{C_v^2 K_1}\right)^2 
 \frac{C_1}{C_4} (C_v^2 K_1/K_4)^{i+1} 
\right] \\
&\leq& C_4 K_4^{i+1}
\gamma^n \max\{n,\mu^{-1}\}^n (\mu(i+n)+1)^{i+1} \frac{(i+1)^{(i+1)}}{(i+1)!}\\
&&\mbox{}\times 
\left[ 
C^{(IV)} \widetilde C \frac{C_3}{C_4} \gamma^2 K_3^{-2} 
+ 
2 C^{(IV)}\widetilde C  \left(\frac{1}{C_v^2 K_1}\right)^2 
\right],
\end{eqnarray*}
where we made use of 
$\frac{(i-1)^{i-1}}{(i-1)!} \leq C^{(IV)} \frac{(i+1)^{i+1}}{(i+1)!}$
and the simplifying assumptions on the relations between the 
constants $K_i$ in (\ref{eq:simplifying-assumptions-section-B}). 
For the induction argument to work, we have to require that 
the expression in square brackets is bounded by $1$: 
\begin{equation}
\label{eq:condition-1-section-B}
\left[ 
C^{(IV)} \widetilde C \frac{C_3}{C_4} \gamma^2 K_3^{-2} 
+ 
2 C^{(IV)}\widetilde C  \left(\frac{1}{C_v^2 K_1}\right)^2 
\right] \stackrel{!}{\leq} 1.
\end{equation} 
We will collect further conditions on the constants $C_i$ and $K_i$ 
and see at the end of the proof that these conditions can be met. 

Next, we turn to to $u_{i+1}$, which is given by (\ref{15}):  
\begin{equation*}
u_{i+1}(z) = - \frac{1}{a_{11}(z)} 
\left[ \mu^2  u_{i-1}^{\prime \prime }(z)  +
a_{12}(z)  v_{i+1}(z) \right]. 
\end{equation*}
Lemma~\ref{lemma:leibniz-rule}, the induction hypothesis and the just
proved bounds for $v_{i+1}$ then give  for a constant $C$ that depends
solely on the coefficients $a_{ij}$ of the differential operator,
\begin{eqnarray*}
\|u_{i+1}^{(n)}\|_{L^\infty(I)} &\leq & C_3 K_3^{i+1} 
(\mu(i+1+n)+1)^{i+1} \max\{n,\mu^{-1}\}^n \frac{(i+1)^{i+1}}{(i+1)!}
\left[C
\gamma^2 K_3^{-2}  + C \frac{C_4}{C_3} \left(\frac{K_4}{K_3}\right)^{i+1}
\right].
\end{eqnarray*}
In view of $K_3 = K_4$ by (\ref{eq:simplifying-assumptions-section-B}), 
we recognize a second condition for the induction argument, namely, 
\begin{equation}
\label{eq:condition-2-section-B}
\left[C
\gamma^2 K_3^{-2}  + C \frac{C_4}{C_3} \right]
\stackrel{!}{\leq} 1. 
\end{equation}
Next, we consider $\widehat{u}_{i+1}^{L}$ which satisfies (see (\ref{16})):
\begin{equation}
\left\{ 
\begin{array}{c}
-\left( \widehat{u}_{i+1}^{L}\right) ^{\prime \prime }+a_{11}(0)\widehat{u}%
_{i+1}^{L}=-a_{12}(0)\widehat{v}_{i+1}^{L}-\underset{k=1}{\overset{i+1}{\sum 
}}\left( \frac{a_{11}^{(k)}(0)}{k!}
\mu^k \widehat{x}^{k}\widehat{u}_{i+1-k}^{L}+%
\frac{a_{12}^{(k)}(0)}{k!}\mu^k \widehat{x}^{k}\widehat{v}_{i+1-k}^{L}\right)\\ 
\widehat{u}_{i+1}^{L}(0)=-u_{i+1}(0)\;,\;\widehat{u}_{i+1}^{L}\rightarrow 0%
\text{ as }\widehat{x}\rightarrow \infty%
\end{array}%
\right. .  \label{26}
\end{equation}
We will estimate the right-hand side of (\ref{26}). To that end, since $1 \leq k \leq i+1$,
we start with the observation
\begin{eqnarray*}
\mu^k \left (\mu + \frac{1}{i+2-k}\right)^{i+1-k} 
&\leq& 
\mu^k \left (\mu + \frac{1}{i+2}\right)^{i+1-k} 
\left(\frac{i+2}{i+2-k}\right)^{i+1-k} \\
&\leq& 
\left (\mu + \frac{1}{i+2}\right)^{i+1} 
\left(\frac{i+2}{i+2-k}\right)^{i+1-k},
\end{eqnarray*}
so that we  can estimate 
\begin{eqnarray*}
\sum_{k=1}^{i+1} \frac{|a_{11}^{(k)}(0)|}{k!} 
|z|^k\mu^k |\widehat u_{i+1-k}^L| &\leq& 
C_a C_2 K_2^{i+1} 
\left(\mu + \frac{1}{i+2}\right)^{i+1} 
\frac{1}{i!}
(C_v (i+1) + |z|)^{2i+1}
e^{-\beta\operatorname*{Re}(z)} \\
&&\mbox{} \times 
\sum_{k=1}^{i+1} 
\left(\frac{\gamma_a}{K_2}\right)^k \left(\frac{i+2}{i+2-k}\right)^{i+1-k}.
\end{eqnarray*}
If $K_2$ is sufficiently large, then 
the sum is bounded by $2\frac{\gamma_a}{K_2}e $ 
by Lemma~\ref{lem5}.  
Analogously, we get (note that the case $i = 0$ leads to empty sums)
\begin{eqnarray*}
&&
\sum_{k=0}^{i+1} \frac{|a_{12}^{(k)}(0)|}{k!} 
|z|^k\mu^k |\widehat v_{i+1-k}^L| = 
\sum_{k=0}^{i-1} \frac{|a_{12}^{(k)}(0)|}{k!} 
|z|^k\mu^k |\widehat v_{i+1-k}^L| \\
&& \leq 
C_a C_1 K_1^{i-1} 
\left(\mu + \frac{1}{i}\right)^{i-1} 
\frac{1}{i!}
(C_v (i-1) + |z|)^{2i-1}
e^{-\beta\operatorname*{Re}(z)} 
\sum_{k=0}^{i-1} 
\left(\frac{\gamma_a}{K_1}\right)^k \left(\frac{i}{i-k}\right)^{i-1-k}\\
&& \leq 
K_1^{-2} C^{\prime\prime\prime}  C_a C_1 K_1^{i+1} 
\left(\mu + \frac{1}{i}\right)^{i+1} 
\frac{1}{i!}
(C_v (i+1) + |z|)^{2i+1}
e^{-\beta\operatorname*{Re}(z)}, 
\end{eqnarray*}
where we appealed again to Lemma~\ref{lem5} and used that $K_1 = K_2$ is 
sufficiently large to bound the sum by $2e$, and noticed additionally that
$$
\left(\mu + \frac{1}{i}\right)^{i-1} 
\leq 
\left(\mu + \frac{1}{i+2}\right)^{i-1} 
\left(\frac{i+2}{i}\right)^{i-1}
\leq 
\left(\mu + \frac{1}{i+2}\right)^{i+1}  (i+1)^2 
\underbrace{
\sup_{i \ge 1} \left(\frac{i+2}{i+1}\right)^2 
\left(\frac{i+2}{i}\right)^{i-1}
}_{=:C^{\prime\prime\prime}/(2e)},
$$
where the last supremum is finite. 

These estimates allow us to bound $\widehat u_{i+1}^L$ with the 
aid of Lemma~\ref{lemma:scalar-bvp-constant-coefficients} to arrive 
at 
\begin{eqnarray*}
|\widehat u_{i+1}^L(z)| &\leq& 
C_2 K_2^{i+1} \left(\mu + \frac{1}{i+2}\right)^{i+1} e^{-\beta \operatorname*{Re}(z)} 
\frac{1}{(i+1)!} (C_v (i+1) + |z|)^{2(i+1)} \\
&& \mbox{}\times 
\left[
C \frac{\gamma_a}{K_2} + K_1^{-2} \frac{C_1}{C_2} \left(\frac{K_1}{K_2}\right)^{i+1} + 
\frac{C_3}{C_2} \left(\frac{K_3}{C_v^2 K_1}\right)^{i+1} 
\right].
\end{eqnarray*}
Together with the simplifying 
assumptions (\ref{eq:simplifying-assumptions-section-B}) we see 
that for the induction argument to work, we need to require 
\begin{equation}
\label{eq:condition-3-section-B}
\left[
C \frac{\gamma_a}{K_2} + K_1^{-2} \frac{C_1}{C_2}  + 
\frac{C_3}{C_2} \right] \stackrel{!}{\leq} 1. 
\end{equation}

Finally, for $\widehat v_{i+3}^L$ we have from (\ref{17})
\begin{eqnarray*}
\left\vert \widehat{v}_{i+3}^{L}(z)\right\vert  &=&\left\vert 
\underset{z}{\overset{\infty }{\int }}\underset{t}{\overset{\infty }{\int }}%
\underset{k=0}{\overset{i+1}{\sum }}\left( \frac{a_{21}^{(k)}(0)}{k!}
\mu^k \widehat{%
\tau }^{k} \widehat{u}_{i+1-k}^{L}
+\frac{a_{22}^{(k)}(0)}{k!}\mu^k \widehat{\tau }^{k}%
\widehat{v}_{i+1-k}^{L}\right) d\widehat{\tau }dt\right\vert  \\
&\leq &\underset{k=0}{\overset{i}{\sum }}\frac{1}{k!}\max \left\{
\left\vert a_{21}^{(k)}(0)\right\vert ,\left\vert a_{22}^{(k)}(0)\right\vert
\right\} 
\left\{
\left\vert \underset{z}{\overset{\infty }{\int }}\underset{t}{%
\overset{\infty }{\int }} \widehat{\tau }^{k}\widehat{u}_{i-k}^{L}
d\widehat{\tau}dt\right\vert + 
\left\vert \underset{z}{\overset{\infty }{\int }}\underset{t}{%
\overset{\infty }{\int }}
\widehat{\tau }^{k}\widehat{v}_{i-k}^{L} d\widehat{\tau }%
dt\right\vert\right\} .
\end{eqnarray*}
Proceeding analogously as before, we find that this last sum can be bounded as
\begin{eqnarray*}
|rhs(z)| &\leq & C_1 K_1^{i+1} \left(\mu + \frac{1}{i+2}\right)^{i+1} 
\frac{1}{(i+1)!} (C_v (i+1) + |z|)^{2(i+1)} e^{-\beta\operatorname*{Re}(z)}
\left[C 
\frac{C_2}{C_1} \left(\frac{K_2}{K_1}\right)^{i+1}  + C K_1^{-2}
\right], 
\end{eqnarray*}
where the constant $C > 0$ is suitably chosen. 
Lemma~\ref{lemma:two-anti-derivatives} then gives 
\begin{eqnarray*}
\left\vert \widehat{v}_{i+3}^{L}(z)\right\vert  &\leq&
 K_1^{i+1} \left(\mu + \frac{1}{i+2}\right)^{i+1} 
\frac{1}{(i+1)!} (C_v (i+1) + |z|)^{2(i+1)} e^{-\beta\operatorname*{Re}(z)}\\
&& \mbox{}\times 
\left[C
\frac{C_2}{C_1} \left(\frac{K_2}{K_1}\right)^{i+1}  + C K_1^{-2}
\right] 
\frac{1}{\beta^2} \left(\frac{1}{1-(2(i+1))/(\beta C_v(i+1))}\right)^2. 
\end{eqnarray*}
Hence, by our assumption (\ref{eq:simplifying-assumptions-section-B}), 
we see that we can find $C^\prime$ (depending solely on $\beta$) such that 
our induction argument will work if we can satisfy 
\begin{equation}
\label{eq:condition-4-section-B}
C^\prime\left[C
\frac{C_2}{C_1}   + C K_1^{-2}
\right] 
\stackrel{!}{\leq} 1. 
\end{equation}
In total, we have completed the induction argument if we can select
the constants $C_i$ and $K_i$ such that 
(\ref{eq:condition-1-section-B}), 
(\ref{eq:condition-2-section-B}),
(\ref{eq:condition-3-section-B}), and 
(\ref{eq:condition-4-section-B}) are satisfied. 
Inspection shows that this is the case by taking the $K_i$ sufficiently
large and appopriately controlling the ratios $C_2/C_1$, $C_3/C_2$,
and $C_4/C_3$. 
%
%\end{numberedproof}
%-----------------------
\subsection{Proof of Theorem~\ref{thm:bdy-layer-fct-entire-case-II}}
\label{appendix:proof_of_thm:bdy-layer-fct-entire-case-II}

From Lemma~\ref{lemma:bdy-layer-fct-entire-case-II} and the estimates 
(\ref{eq:lemma:estimate-remainder-case-II-1}),  
(\ref{eq:lemma:estimate-remainder-case-II-2}), we get 
\begin{eqnarray*}
\left|\left(\widehat{\mathbf U}_{BL}^{M}\right)^{(n)}(\widehat{x})\right| 
&\leq &C e^{\beta \widehat x} \gamma_2^n 
\sum_{i=0}^{M}\left(\frac{\varepsilon}{\mu}\right)^{i} 
\left( i \mu+1\right)^{i}\gamma_1^i 
\leq C e^{\beta \widehat x} \gamma_2^n 
\sum_{i=0}^{M}
\left( M \varepsilon +\frac{\varepsilon}{\mu}\right)^{i}\gamma_1^i  
\leq C \gamma_2^n,
\end{eqnarray*}
where, in the last step we have used 
the assumption that $\varepsilon(M+1+\varepsilon/\mu)$ is sufficiently small 
so that the sum can be estimated by a convergent geometric series.
%-----------------------
\subsection{Proof of Lemma~\ref{lemma:estimate-remainder-case-II}}
\label{appendix:case_2_estimate-remainder-case-II}
%-----------------------
%\begin{numberedproof}{Lemma~\ref{lemma:estimate-remainder-case-II}} 
We start by noting that the change of summation index $k$ to 
$\ell = i-k$ leads us to having to estimate
$$
S:= \sum_{i\geq M+1}
\left(\frac{\varepsilon}{\mu}\right)^{i}
\sum_{\ell =0}^{M}\mu^{i-\ell} \widehat{x}^{i-\ell }\gamma
_{A}^{i-\ell }\left( \mu +\frac{1}{\ell+1 }\right) ^{\ell }\frac{1}{\ell!}
(C_{1}\ell +\widehat{x})^{2\ell}e^{-\beta \widehat{x}}. 
$$
We start with the elementary observation 
(cf. Lemma~\ref{lemma:elementary-properties-of-factorial})
\begin{equation*}
(C_{1}\ell +\widehat{x})^{2\ell}\leq 
2^{\ell}(C_{1}\ell )^{2\ell}
+2^{\ell}\widehat{x}^{2 \ell }
\leq \gamma ^{\ell }\left( \ell^{2\ell }+\widehat{x}%
^{2\ell }\right), 
\end{equation*}
for suitable $\gamma >0$. Next, we estimate 
\begin{equation*}
\frac{1}{\ell!}(C_{1}\ell+\widehat{x})^{2\ell}e^{-\beta \widehat{x}%
}\leq C\gamma ^{\ell }[\ell ^{\ell }e^{\ell}
+\frac{1}{\ell!}\widehat{x}%
^{2\ell }e^{-\widehat{x}\beta /4}]e^{-3\beta \widetilde{x}/4}\leq C\tilde{%
\gamma}^{\ell }\ell^{\ell}e^{-3\beta \widetilde{x}/4},
\end{equation*}
where we suitably chose $\tilde{\gamma}$ independent of $\ell $ and 
$\widetilde{x}$. We therefore conclude
\begin{eqnarray*}
&&\sum_{i\geq M+1}\left(\frac{\varepsilon}{\mu}\right)^{i}
\sum_{\ell =0}^{M}\mu^{i-\ell} \widehat{x}^{i-\ell
}\gamma _{A}^{i-\ell }\left( \mu+\frac{1}{\ell + 1}\right) ^{\ell }
\frac{1}{\ell!}(C_{1}\ell +\widehat{x})^{2\ell}
e^{-\beta \widehat{x}} \\
&\leq &Ce^{-3\beta \widehat{x}/4}
\sum_{i\geq M+1}(\varepsilon \widehat x \gamma_A)^i
\sum_{\ell=0}^{M}\widehat{x}^{- \ell }\gamma _{A}^{-\ell }
\left( 1+\frac{1}{\mu(\ell+1)}%
\right) ^{\ell }\ell^{\ell} \tilde{\gamma}^{\ell } \\
&\leq &Ce^{-3\beta \widehat{x}/4} 
\frac{1}{1-X} (\varepsilon \widehat x \gamma_A)^{M+1} 
\sum_{\ell=0}^{M}\widehat{x}^{- \ell }\gamma _{A}^{-\ell }
\left( \ell +\frac{1}{\mu}%
\right) ^{\ell } \tilde{\gamma}^{\ell } \\
&\leq &\frac{C}{1-X}e^{-\beta \widehat{x}/2}\varepsilon ^{M+1}\sum_{\ell
=0}^{M}(\gamma _{A}\widehat{x})^{M+1-\ell }e^{-\beta \widehat{x}/4}\tilde{%
\gamma}^{\ell }(\ell +1/\mu )^{\ell }.
\end{eqnarray*}
Again, elementary considerations 
(cf. Lemma~\ref{lemma:elementary-properties-of-factorial}) 
show $\widehat{x}^{M+1-\ell }e^{-\beta 
\widehat{x}/4}\leq \gamma _{1}^{M+1-\ell }(M+1-\ell )^{M+1-\ell }$ 
for suitable $\gamma_1 > 0$, so that we arrive at 
\begin{eqnarray*}
S &\leq &
\frac{C}{1-X}e^{-\beta \widehat{x}/2}\varepsilon ^{M+1}\sum_{\ell
=0}^{M}(\gamma_A \gamma _{1})^{M+1-\ell }(M+1-\ell)^{M+1-\ell }
\tilde{\gamma}^{\ell }(\ell
+1/\mu )^{\ell } \\
&\leq &\frac{C\mu}{1-X}e^{-\beta \widehat{x}/2}\varepsilon ^{M+1}\gamma
_{2}^{M+1}(M+1+1/\mu )^{M+1},
\end{eqnarray*}
where, in the last step we have used the convexity of the function 
$$
\ell \mapsto a^{M+1-\ell} (M+1-\ell)^{M+1-\ell} (\ell +1/\mu)^\ell
$$
on the intervall $[0,M]$ -- see also the related Lemma~\ref{lemma:convexity}. 
%
%\end{numberedproof}
%-----------------------
\section{Proofs for Section~\ref{case_3}}
\label{appendix:case_3}
%-----------------------
%-----------------------
\subsection{Proof of Lemma~\ref{lemma:foo}}
\label{appendix:proof_of_lemma:foo}

The proof consists of showing that the terms of the sum can be bounded by the 
terms of a convergent geometric series if $\delta$ is sufficiently small. To simplify
some notation, we restrict our attention to the case $i \ge 2$ (the cases $i=0$ and $i=1$
being easily seen). We denote
the terms of the sum by $F(n)$ and 
note that it suffices to study the sum $\sum_{n=1}^{i-1} F(n)$ since the case $n = 0$ 
produces the term 
$$
F(0) =\delta^{i-1} \frac{(i+1+m)^{i+1+m}}{(i+m)^{i}(m+1)^{m+1}} = 
\delta^{i-1} \left(\frac{i+1+m}{i+m}\right)^{i} \left(\frac{i+1+m}{m+1}\right)^{m+1}
\leq \delta^{i-1} e e^i, 
$$
which is bounded uniformly in $i$ if $\delta \leq \delta_0 \leq e^{-1}$. 

We define, for $n \in [1,i-1]$,
\begin{eqnarray*}
f(n) &:=& \ln F(n) \\
&=& (i-1-n)\ln \delta + n \ln (n+m) + (i+n+1+m) \ln (i+n+1+m)- \\
&\mbox{}& - i \ln (i+m)  - (2n+1+m) \ln (2n+1+m)
\end{eqnarray*}
and compute 
\begin{eqnarray*}
f^\prime(n) &=& \ln \frac{(n+m)(n+i+1+m)}{(2n+1+m)^2} - \ln \delta  - 1 + \frac{n}{n+m}. 
\end{eqnarray*}
It is easy to see that $f^\prime(n) \ge -1 - \ln 4 - \ln \delta$ for $n \ge 1$ so that 
we can find a constant $c > 0$ with $f^\prime(n) \ge c > 0$ for all $n\ge 1$ by taking
$\delta$ sufficiently small.  By the mean value theorem, 
we therefore get for each $n$, that 
$
\ln \frac{F(n+1)}{F(n)} = f(n+1) - f(n) \ge c. 
$
Hence, 
$
F(n) \leq e^{-c} F(n+1). 
$
Iterating this estimate, we get 
$$
F(n) \leq e^{-(i-1-n)c} F(i-1). 
$$
Hence, 
$$
\sum_{n=1}^{i-1} F(n) \leq \sum_{n=1}^{i-1} e^{-(i-1-n)c}F(i-1) \leq \frac{1}{1-e^{-c}} F(i-1). 
$$
The argument is concluded by noting that $F(i-1)$ is bounded uniformly in $i$ and $m$
as the following rearrangement shows: 
$$
F(i-1) = \frac{(i-1+m)^{i-1}}{(i+m)^i} \frac{(2i+m)^{2i+m}}{(2i+m-1)^{2i+m-1}}
= 
\frac{2i+m}{i+m} \left(\frac{i-1+m}{i+m}\right)^{i-1} \left(\frac{2i+m}{2i+m-1}\right)^{2i+m-1}. 
$$
%-----------------------
\subsection{Proof of Theorem~\ref{thm:higher-derivatives-in-exponentially-weighted-spaces}}
\label{appendix:case_3_thm:higher-derivatives-in-exponentially-weighted-spaces}
%-----------------------

%\begin{numberedproof}{Theorem~\ref{thm:higher-derivatives-in-exponentially-weighted-spaces}}
Specifically, 
differentiating $m$ times yields 
(we write $\widetilde {\mathbf U}_i$ instead of $\widetilde {\mathbf U}^L_i$)
with ${\mathbf B} = {\mathbf A}(0)$
\begin{eqnarray}
\nonumber 
- {\mathbf E} \widetilde{\mathbf U}_i^{(m+2)}
&=& {\mathbf B} \widetilde {\mathbf U}_i^{(m)}
+ \sum_{n=0}^{i-1} \left(\widetilde x^{i-n} {\mathbf A}_{i-n} 
\widetilde{\mathbf U}_n\right)^{(m)}\\
\label{eq:thm:higher-derivatives-in-exponentially-weighted-spaces-10}
&=& {\mathbf B} \widetilde {\mathbf U}_i^{(m)}
+ \sum_{n=0}^{i-1} {\mathbf A}_{i-n}
\sum_{j=0}^{\min\{m,i-n\}} \binom{m}{j}\binom{i-n}{j} j! \widetilde x^{i-n-j} 
\widetilde{\mathbf U}_n^{(m-j)}.
\end{eqnarray}
We abbreviate 
$$
{\mathbf F}_n:= \sum_{j=0}^{\min\{m,i-n\}} 
\binom{m}{j}\binom{i-n}{j} j! \widetilde x^{i-n-j} 
\widetilde{\mathbf U}_n^{(m-j)},
$$
and we proceed as in the proof of 
Theorem~\ref{thm2a}. 
The induction hypothesis yields, for 
any $\tilde \beta \in (\beta,\beta_0)$,
\begin{eqnarray*}
\|\widetilde x^{i-n-j} \widetilde {\mathbf U}_n^{(m-j)}\|_{0,\beta}
& \leq & 
\sup_{\widetilde x > 0} \widetilde x^{i-n-j} e^{-(\tilde \beta - \beta)\widetilde x } 
\|\widetilde {\mathbf U}_n^{(m-j)}\|_{0,\tilde \beta} \\
& \leq & 
C_U \left(\frac{i-n-j}{\tilde\beta - \beta}\right)^{i-n-j} 
(\beta_0 - \tilde \beta)^{-(2n+1+m)}(n+m-j)^{n} K_1^n K_2^{m-j} \nu^{-(m-j)}.
\end{eqnarray*}
For $j < i-n$, we select
$\tilde \beta = \beta + \kappa (\beta_0 - \beta)$ with 
$\kappa = \frac{i-n-j}{i+n-j+m+1}$ and get 
\begin{eqnarray}
\label{eq:foo-10}
\lefteqn{
\|\widetilde x^{i-n-j} \widetilde {\mathbf U}_n^{(m-j)}\|_{0,\beta}
 \leq  } \\
&& 
\nonumber 
C_U (\beta_0 - \beta)^{-(i+n-j+1+m)} \frac{(i+n-j+1+m)^{i+n-j+1+m}}{(2n+1+m)^{2n+1+m}} 
(n+m-j)^{n} K_1^n K_2^{m-j} \nu^{-(m-j)}; 
\end{eqnarray}
the induction hypothesis shows that the estimate (\ref{eq:foo-10}) is also true
for $j = i-n$. 
Therefore, by estimating $n+m-j\leq n+m$, we get 
\begin{eqnarray*}
\|{\mathbf F}_n\|_{0,\beta} &\leq& 
C_U (\beta_0 - \beta)^{-(i+n+1+m)} K_1^n K_2^{m} \nu^{-m} \frac{(n+m)^n}{(2n+1+m)^{2n+1+m}}
\\
&& \qquad \mbox{}\times 
\sum_{j=0}^{\min\{i-n,m\}} \binom{m}{j}\binom{i-n}{j} j! 
(\beta_0 - \beta)^j K_2^{-j} \nu^j
{(i+n-j+1+m)^{i+n-j+1+m}}.
\end{eqnarray*}
With the estimates $\binom{m}{j} j! \leq m^j$ we bound 
\begin{eqnarray*}
\lefteqn{
\sum_{j=0}^{\min\{i-n,m\}} \binom{m}{j}\binom{i-n}{j} j! 
(\beta_0 - \beta)^j K_2^{-j} \nu^j
{(i+n-j+1+m)^{i+n-j+1+m}} }\\
&& \leq 
\sum_{j=0}^{i-n} \binom{i-n}{j} 
(\beta_0 - \beta)^j K_2^{-j} \nu^j (i+n+1+m)^{i+n+1+m}\\
 && = \left( 1 + (\beta_0 - \beta) \nu K_2^{-1}\right)^{i-n} (i+n+1+m)^{i+n+1+m}, 
\end{eqnarray*}
where we have recognized a binomial sum in the last equality. 
Hence, 
\begin{eqnarray*}
\sum_{n=0}^{i-1} \|{\mathbf A}_{i-n}\|_2 \|{\mathbf F}_n\|_{0,\beta} 
&\leq& C_U C_A \gamma_A (\beta_0 - \beta)^{-(2i+1+m)} K_1^{i-1} K_2^{m} \nu^{-m} \\
&& \mbox{} \times 
\sum_{n=0}^{i-1} \gamma_A^{i-1-n} K_1^{n-i+1}\left(1 + (\beta_0 - \beta)\nu K_2^{-1}\right)^{i-n}
\frac{(n+m)^{n}(i+n+1+m)^{i+n+1+m}}{(2n+1+m)^{2n+1+m}}\\
&& \leq C_U C_0 C_A \gamma_A (\beta_0 - \beta)^{-(2i+1+m)} K_1^{i-1} K_2^m \nu^{-m} (i+m)^i, 
\end{eqnarray*}
where we appealed to Lemma~\ref{lemma:foo} and implicitly used
that $K_1$ and $K_2$ are such that $\gamma_A K_1^{-1} (1 + (\beta_0-\beta)\nu K_2^{-1})$ 
is sufficiently small. 
Using $\|{\mathbf E}^{-1} \|_2\leq \nu^{-2}$, we therefore get from 
(\ref{eq:thm:higher-derivatives-in-exponentially-weighted-spaces-10})
\begin{eqnarray*}
\|{\mathbf U}_i^{(m+2)} \|_{0,\beta} 
& \leq & \nu^{-2}  C_U (\beta_0 - \beta)^{-(2i+1+m)} (i+m)^i K_1^i K_2^m \nu^{-m} 
\left[ \|{\mathbf B}\|_2 + C_0 C_A \gamma_A K_1^{-1} 
\right]\\
& \leq & C_U (\beta_0 - \beta)^{-(2i+1+m)} (i+m)^i K_1^i K_2^{m+2} \nu^{-(m+2)} 
\left[ K_2^{-2} \|{\mathbf B}\|_2 + K_2^{-1} C_0 C_A \gamma_A K_1^{-1} 
\right]; 
\end{eqnarray*}
the expression in square brackets can be bounded by $1$ if $K_1$ and $K_2$ are sufficiently large. 
%\end{numberedproof}

%-----------------------
\section{Proofs for Section~\ref{case_4}}
\label{appendix:case_4}
%-----------------------
%-----------------------
\subsection{Proof of Lemma~\ref{lem10}}
\label{appendix:case_4_lem10}
%-----------------------

%
%\begin{numberedproof}{Lemma~\ref{lem10}}
The proof is by induction on $i$. \ For $i=0$, equation (\ref{39}) gives all the
assertions (trivially). So, assume (\ref{39b})--(\ref{39d}) hold and
establish them for $i+1$.

We first consider (\ref{39b}) and assuming
\begin{equation*}
\overrightarrow{0}=\left[ 
\begin{array}{c}
u_{ij} \\ 
v_{ij}%
\end{array}%
\right] ={\mathbf A}^{-1}\left[ 
\begin{array}{c}
u_{i-2,j-2}^{\prime \prime } \\ 
v_{i-2,j}^{\prime \prime }%
\end{array}%
\right] \;\forall \;j>i,
\end{equation*}
we want to show that 
\begin{equation*}
\overrightarrow{0}=\left[ 
\begin{array}{c}
u_{i+1,j} \\ 
v_{i+1,j}%
\end{array}%
\right] ={\mathbf A}^{-1}\left[ 
\begin{array}{c}
u_{i-1,j-2}^{\prime \prime } \\ 
v_{i-1,j}^{\prime \prime }%
\end{array}%
\right] \;\forall \;j>i+1.
\end{equation*}
By the induction hypothesis and $j > i+1 \ge i-1$, we have
$v_{i-1,j} = 0$. Also, from $j> i+1$ we get $j-2 =j-1-1> i-1$, so that 
the induction hypothesis implies $u_{i-1,j-2} = 0$. Therefore, the right-hand
side of (\ref{39}) vanishes and thus the induction step for (\ref{39b}) 
is accomplished.  

%For (\ref{39c}) we assume $u_{ij}=v_{ij}=0\;\forall \;j$ odd and we want to
%show $u_{i+1,j}=v_{i+1,j}=0\;\forall \;j$ odd. \ Since $j$ is odd, 
%$v_{i-1,j}=0$ hence $v_{i-1,j}^{\prime \prime }=0$. \ Also, $j-2$ is odd as
%well and $u_{i-1,j-2}=0$ which implies $u_{i-1,j-2}^{\prime \prime }=0$. Equation
%(\ref{39}) then gives the desired result.

For (\ref{39c}) let ${\mathcal M}({\cal I}) := 
\{(i,j) \in {\mathbb Z} \times {\mathbb Z} \colon i \leq {\cal I}, \mbox{ $i$ odd or $j$ odd}\}$. 
We proceed by induction
on ${\cal I}$ by assuming (\ref{39c}) to be true up to ${\cal I}$. For $i \leq {\cal I}$ 
with $(i,j) \in {\mathcal M}({\cal I})$, we have that $(i+1,j) \in {\mathcal M}({\cal I}+1)$ 
implies $(i-1,j) \in {\mathcal M}({\cal I})$ (either $i+1$ is odd and then $i-1$ is odd
or $j$ is odd) and additionally 
$(i-1,j-2) \in {\mathcal M}({\cal I})$. Hence, for $(i+1,j) \in {\mathcal M}({\cal I}+1)$, 
by the induction hypothesis, the right-hand side of (\ref{39}) vanishes, which proves
the induction step. 

We finally consider (\ref{39d}) and we want to show
\begin{equation*}
\left\vert u_{i+1,j}(z)\right\vert +\left\vert v_{i+1,j}(z)\right\vert \leq
C_S\delta ^{-i-1}K^{i+1}(i+1)^{i+1}\;\forall \;z\in G_{\delta }.
\end{equation*}
We set 
$$
C_A:= \sup_{x \in I} \|{\mathbf A}^{-1}(x)\|_{\ell^1}, 
$$
where $\|(x_1,x_2)\|_{\ell^1}:=|x_1| + |x_2|$ denotes the usual $\ell^1$-norm
and ensure that $K$ satisfies $2 C_A/K^2 \leq 1$. 

Let $\kappa \in (0,1)$. \ By (\ref{39}), the induction hypothesis with 
$G_{(1-\kappa )\delta }\subset G_{\delta }$ and Cauchy's Integral Theorem, we
have
\begin{eqnarray*}
\left\vert u_{i+1,j}(z)\right\vert +\left\vert v_{i+1,j}(z)\right\vert &\leq
&C_A \left( \left\vert u_{i-1,j-2}^{\prime \prime }(z)\right\vert +\left\vert
v_{i-1,j}^{\prime \prime }(z)\right\vert \right) \\
&\leq &C_S C_A\frac{2}{(\kappa \delta )^{2}}\left( (1-\kappa )\delta \right)
^{-i+1}K^{i-1}(i-1)^{(i-1)} \\
&\leq &C_S \delta ^{-i-1}K^{i+1}(i+1)^{(i+1)}\left[ \frac{1}{K^{2}}\frac{1}{%
(i+1)^{2}}\frac{2 C_A}{\kappa ^{2}(1-\kappa )^{i-1}}
\left(\frac{i-1}{i+1}
\right)^{i-1}
\right] .
\end{eqnarray*}
The choice $\kappa =\frac{2}{i+1}$ gives 
\begin{eqnarray*}
\left\vert u_{i+1,j}(z)\right\vert +\left\vert v_{i+1,j}(z)\right\vert &\leq
&C_S \delta ^{-(i+1)}K^{i+1}(i+1)^{(i+1)}\left[ \frac{C_A}{2 K^{2}}
\right] 
\leq 
C_S \delta ^{-(i+1)}K^{i+1}(i+1)^{(i+1)}
\end{eqnarray*}
by the choice of $K$. 
%\end{numberedproof}
%-----------------------
%-----------------------
\subsection{Proof 
of Lemma~\ref{lemma:scalar-bvp-constant-coefficients}}
\label{appendix:case_4_lemma:scalar-bvp-constant-coefficients}
%-----------------------
%\begin{numberedproof}{Lemma~\ref{lemma:scalar-bvp-constant-coefficients}}

We provide some details for the case $\operatorname*{Re} z < 0$. 
For $z \in (0,\infty)$, the use of a Green's function gives the following
representation of the solution $u(z)$:
\begin{eqnarray*}
u(z) &= &\frac{1}{2 a^2}\Bigl(
       e^{-a z }\int_0^{a z} e^y f(y/a)\, dy + 
       e^{a z} \int_{a z}^\infty e^{-y} f(y/a) \, dy \\
&& \quad \mbox{} - 
       e^{-a z} \int_0^\infty e^{-y} f(y/a) \, dy
          \Bigr) 
 + g e^{- a z}.
\end{eqnarray*}
Analytic continuation then removes the restriction to $(0,\infty)$.
In order to get the desired bound, we estimate each of these four terms
separately. We restrict here our attention to the case of $\operatorname*{Re} z  < 0$. 

For the first integral, we use as the path of integration the
straight line connecting $0$ and $a z$ to get
\begin{eqnarray*}
\left|e^{-a z}\int_0^{a z} e^y f(y/a)\, dy \right| &\leq &
e^{-\operatorname*{Re}( a z)}\int_0^{1} C_f (q + t|z|)^j |a z|\,
e^{-\operatorname*{Re}(t \oa z)} e^{\operatorname*{Re}( t a z)} \, dt \\ 
&\leq& 
e^{-\operatorname*{Re}( \oa z)}\int_0^{1} C_f (q + t|z|)^j |a z|\,
e^{\operatorname*{Re}((1-t) z (\oa -a))} \, dt \\ 
&\leq &
 C_f e^{-\operatorname*{Re}( \oa z)}
\frac{a}{j+1}\left\{(q + |z|)^{j+1} - 
                         q^{j+1}\right\} .
\end{eqnarray*}
For the third integral, we calculate with 
\cite[eq.\ {8.353.5}]{gradshtein80} 
and the incomplete Gamma-function $\Gamma(\cdot,\cdot)$,
\begin{eqnarray*}
\left| \int_0^\infty e^{-y} f(y/a)\,dy \right| &\leq& C_f 
\int_0^\infty e^{-y - \ua/a y} (q + y/a)^j\,dy \\
&=& C_f a^{-j} (1+\ua/a)^{-(j+1)} e^{a q (1+\ua/a)} \Gamma(j+1,aq(1+\ua/a))\\
&=& C_f a (a+\ua)^{-(j+1)} e^{q (a+\ua)} \Gamma(j+1,q(a+\ua)).
\end{eqnarray*}
In view of the assumption $ (a+\ua) q \ge 2 j + 1 \ge j$, we may employ
the estimate
$$
|\Gamma(\alpha,\xi)| \leq \frac{\left|e^{-\xi} \xi^\alpha\right|}
{|\xi|-\alpha_0}, 
\qquad \alpha_0 = \max\,\{\alpha-1,0\}, 
\quad \operatorname*{Re}( \xi) \ge 0, \quad |\xi| >\alpha_0,
$$
(see, e.g., \cite[Chap.\ 4, Sec.\ 10]{olver74}) to arrive at
\begin{equation}
\label{eq:lemma:scalar-bvp-constant-coefficients-10}
\left| \int_0^\infty e^{-y} f(y/a)\right| 
\leq C_f a q^{j+1} \frac{1}{q(a+\ua)-j}.
\end{equation}
Hence, the third integral can be estimated by 
$$
\left| e^{-az} \int_0^\infty e^{-y} f(y/a)\right| 
\leq C_f a q^{j+1} \frac{1}{q(a+\ua)-j} e^{-\oa \operatorname*{Re}(z)}.
$$
We now turn to the second integral in the representation formula for $u$. We split
the integral as 
\begin{eqnarray*}
\left| e^{a z} \int_{a z}^\infty e^{-y} f(y/a)\, dy\right|
& \leq &
\left| e^{az} \int_{0}^{az} e^{-y} f(y/a)\,dy\right| + 
\left| e^{az} \int_{0}^{\infty} e^{-y} f(y/a)\,dy\right| .  
\end{eqnarray*}
We recognize that the second integral can be estimated using 
(\ref{eq:lemma:scalar-bvp-constant-coefficients-10}). The first integral 
is treated as follows: 
\begin{eqnarray*}
\left| e^{az} \int_{0}^{az} e^{-y} f(y/a)\,dy\right| 
&\leq& C_f 
e^{\operatorname*{Re}(az)} \int_0^1 e^{-t \operatorname*{Re}(az)} |az| 
e^{-t\operatorname*{Re}(z \oa)} (q + t|z|)^j\,dt\\
&\leq& C_f e^{-\oa \operatorname*{Re}(z)} 
       \int_0^1 (q + t|z|)^{j} a|z| e^{(1-t)(a+\oa)\operatorname*{Re} z}\,dt\\
&\leq& C_f e^{-\oa \operatorname*{Re}(z)} \frac{a}{j+1} \left[ (q+|z|)^{j+1} - q^{j+1}\right].
\end{eqnarray*}
Combining the above estimates and recalling $q(a+\ua) - j \ge 2j+1-j \ge j+1$, we conclude 
\begin{eqnarray*}
&& \left| 
       e^{-a z }\int_0^{a z} e^y f(y/a )\, dy + 
       e^{a z} \int_{a z}^\infty e^{-y} f(y/a) \, dy - 
       e^{-a z} \int_0^\infty e^{-y} f(y/a) \, dy
\right| \leq  \\
&& \qquad 
2 C_f e^{-\operatorname*{Re}( \oa z)} (q +|z|)^{j+1} \frac{a}{j+1} .
\end{eqnarray*}
Combining  this estimate with the obvious one for the fourth term,
we arrive at the desired bound.
%\end{numberedproof}
%-----------------------------------------------------------------------------
%-----------------------
\subsection{Proof of Theorem~\ref{thm11}}
\label{appendix:case_4_thm11}
%-----------------------
%\begin{numberedproof}{Theorem~\ref{thm11}}
We begin
by setting
$$
a^\prime:= \frac{\operatorname*{det}{\mathbf A}(0)}{a_{11}(0)} 
= \frac{a_{11}(0) a_{22}(0) - a_{12}(0) a_{21}(0)}{a_{11}(0)},
$$
and 
choose the constants $C_{\widetilde{u}},C_{\widetilde{v}},C_{\widehat{u}%
},C_{\widehat{v}},C_{i},K_{i},\overline{K}_{i}$, $i=1,...,4$ to satisfy the
following:
\begin{subequations}
\label{eq:simplifying-assumptions-1}
\begin{align}
K_1 = K_2  = K_3 = K_4 \ge 1,\\ 
\overline{K}_1 = \overline{K}_2  = \overline{K}_3 = \overline{K}_4 \ge 1,\\
C_{\widetilde u} = C_{\widetilde v} = C_{\widehat u} = C_{\widehat v} \ge 1.
\end{align}
\end{subequations}
Furthermore, the following requirements have to be satisfied: 
with the constants $C_S$, $K$ of Lemma~\ref{lem10} (we assume for notational
simplicity that $\delta = 1$ is admissible in Lemma~\ref{lem10}) 
and $\gamma _{a}$ the constant of analyticity of the data (see (\ref{3})):

\begin{eqnarray}
\label{eq:b-1}
C_2 &\ge & C_S, \\
\label{eq:b-2}
C_3 &\ge & C_S, \\
\label{eq:b-3}
C_1 &\ge & \frac{|a_{12}(0)|}{a_{11}(0)} C_2, \\
\label{eq:b-4}
C_1 &\ge & \frac{|a_{21}(0)|}{a_{22}(0)} C_2, \\
\label{eq:b-5}
C_4 &\ge & \frac{|a_{21}(0)|}{\ua^2} C_3, \\
\label{eq:b-6}
{K}_1 = {K}_2 & > & \max\{1,\gamma_a\}, \\
%\label{eq:b-7}
%K_3 &\ge & 1,\\
%\label{eq:b-8}
%K_3 &\ge & K_1,\\
\label{eq:b-9}
1 &\ge & 
\left[C_a 
\left(1 + \frac{|a_{21}(0)|}{a_{11}(0)}\right) \frac{1}{a^\prime} \left(1+\frac{C_1}{C_2}\right)
\frac{1}{1-\gamma_a/{K}_2}\frac{\gamma_a}{{K}_2} + 
\frac{C_S}{C_2} \left(\frac{K}{{K}_2}\right)^{i+1} 
\right],\\
\label{eq:b-10}
1 &\ge & \left[
\frac{C_2}{C_1} \frac{|a_{12}(0)|}{a_{11}(0)}
+ \frac{C_a}{a_{11}(0)} \frac{\gamma_a}{{K}_1} \frac{1}{1 - \gamma_a/{K}_1}
\left(1 + \frac{C_2}{C_1}\right)
\right],\\
\label{eq:b-11}
1 &\ge & 
\left[
\frac{C_S}{C_3} \left(\frac{1}{\overline{K}_3}\right)^{j+1} 
+ \frac{C_1}{C_3}\left(\frac{\overline{K}_1}{\overline{K}_3}\right)^{j+1} 
+ \overline{K}_4^{-2} \frac{C_4}{C_3} |a_{12}(0)|
\right], \\
\label{eq:b-12-13}
C_{\widehat u} \ua  & > & 4, \\
\label{eq:b-14}
1 &\ge & 
\left[
\frac{C_3}{C_4} \frac{|a_{21}(0)|}{\ua^2} \frac{1}{1-2/(C_{\widehat u}\ua)}
\right],
\\
\label{eq:b-15}
1 &\ge & 
\left[\frac{C_S}{C_3} \left(\frac{K}{{K}_3}\right)^{i+1}\right],
\\
\label{eq:b-16}
1 &\ge & 
\frac{1}{\ua^2}
\left(\frac{1}{1-2/(C_{\widehat v} \ua)}\right)^2
C_a \left[\frac{C_3}{C_4} + \overline{K}_4^{-2}\right] 
\frac{1}{1-\gamma_a/(K_4 \overline{K}_4)},  \\
\label{eq:b-17}
1 &\ge & \frac{4}{\ua^2} |a_{21}(0)|\frac{C_3}{C_4} + \frac{4}{\ua^2} \overline{K}_4^{-2} a_{22}(0), \\
\label{eq:b-18}
1 &\ge &  \overline{K}_1^{-2} \frac{C_1}{C_2} \frac{2 e^{\oa} |a_{21}(0)|}{a^\prime a_{22}(0)}
+ \frac{C_S}{C_2} \left(\frac{1}{\overline{K}_1}\right)^{j+1} 
+ \overline{K}_4^{-2} \frac{C_4}{C_2} \left(\frac{\overline{K}_4}{\overline{K}_1}\right)^{j+1},\\
\label{eq:b-19}
1 &\ge & \left[
\frac{|a_{12}(0)|}{a_{11}(0)} \frac{C_2}{C_1} + \overline{K}_1^{-2} \frac{2 e^{\oa}}{a_{11}(0)} 
\right],\\
%\label{eq:b-20}
%1 &\ge & \Bigl[
%\frac{|a_{21}(0)|}{a_{11}(0)} 2 e^{\oa} \frac{C_1}{C_2} \overline{K}_2^{-2} + 
%\left(C_a + \frac{|a_{21}(0)|}{a_{11}(0)}\right)
%\frac{\gamma_a}{K_1}\frac{1}{1-\gamma_a/K_1} 
%\left( \frac{C_1}{C_2} + 1\right) \\
%&& \qquad \mbox{}
%+ C_S \left(\frac{K}{K_2}\right)^i \left(\frac{1}{\overline{K}_2}\right)^{j+1} + 
%\overline{K}_4^{-2} \frac{C_4}{C_2} \left(\frac{K_4}{K_2}\right)^{i}
%\left(\frac{\overline{K}_4}{\overline{K}_2}\right)^{j}
%\Bigr]\\
\label{eq:b-20}
1 &\ge & 
\Bigl[
\frac{|a_{21}(0)|}{a_{11}(0)} 2 e^{\oa} \frac{C_1}{C_2} \overline{K}_2^{-2} + 
C_a \left(1 + \frac{|a_{21}(0)|}{a_{11}(0)}\right)
\frac{\gamma_a}{K_1}\frac{1}{1-\gamma_a/K_1} 
\left( \frac{C_1}{C_2} + 1\right) \\
&& \qquad \mbox{}
+ \frac{C_S}{C_2} \left(\frac{K}{K_2}\right)^i \left(\frac{1}{\overline{K}_2}\right)^{j+1} + 
\overline{K}_4^{-2} \frac{C_4}{C_2} \left(\frac{K_4}{K_2}\right)^{i}
\left(\frac{\overline{K}_4}{\overline{K}_2}\right)^{j+1}
\Bigr],\\
\label{eq:b-22}
1 &\ge & \left[
\frac{|a_{12}(0)|}{a_{11}(0)} \frac{C_2}{C_1} + 
\overline{K}_1^{-2} \frac{2 e^{\oa}}{a_{11}(0)} + 
\frac{C_a}{a_{11}(0)} \frac{1}{1-\gamma_a/K_1} \frac{\gamma_a}{K_1} \left(1 + \frac{C_2}{C_1}\right)
\right],\\
\label{eq:b-23}
1 &\ge & \Bigl[
\frac{1}{a_{11}(0)}
\left\{C_a \frac{C_4}{C_3} \overline{K}_4^{-2} + 
\frac{\gamma_a}{K_3 \overline{K}_3} \frac{C_a}{1-\gamma_a/(K_3 \overline{K}_3)} 
\left( 1+ \frac{C_4}{C_3} \overline{K}_4^{-2}\right)
\right\} \\
\nonumber 
&&\qquad \mbox{}
+ \frac{C_S}{C_3} \left(\frac{K}{K_3}\right)^i \left(\frac{1}{\overline{K}_3}\right)^{j+1} 
+ \frac{C_1}{C_3} \left(\frac{K_1}{K_3}\right)^i \left(\frac{\overline{K}_1}{\overline{K}_3}\right)^{j+1} 
\Bigr]. 
\end{eqnarray}
Before we proceed with the proof, we make sure that
the requirements (\ref{eq:b-1})--(\ref{eq:b-23}) can be satisfied. 
We make the following simplifying assumptions in addition to 
(\ref{eq:simplifying-assumptions-1}): 
\begin{subequations}
\label{eq:simplifying-assumption-2}
\begin{align}
C_{\widetilde u} = 
C_{\widetilde v} = 
C_{\widehat u} = 
C_{\widehat v} > 8/\ua, \\
\frac{\gamma_a}{K_1}  =
\frac{\gamma_a}{K_2}  =
\frac{\gamma_a}{K_3}  =
\frac{\gamma_a}{K_4}  \leq \frac{1}{2}, \\
K \leq K_1 = K_2 = K_3  = K_4, \\
C_2 = Q, \qquad C_1  = Q^2, \qquad C_3 = Q^3, \qquad C_4 = Q^4.
\end{align}
\end{subequations}
Here, $Q > 0$ will be selected sufficiently large below. 
Then, the requirements (\ref{eq:b-1})--(\ref{eq:b-23}) are satisfied if: 
\begin{eqnarray}
\label{eq:f-1}
Q&\ge & C_S, \\
\label{eq:f-2}
Q^3  &\ge & C_S, \\
\label{eq:f-3}
Q^2  &\ge & \frac{|a_{12}(0)|}{a_{11}(0)} Q, \\
\label{eq:f-4}
Q^2  &\ge & \frac{|a_{21}(0)|}{a_{22}(0)} Q, \\
\label{eq:f-5}
Q^4 &\ge & \frac{|a_{21}(0)|}{\ua^2} Q^3, \\
\label{eq:f-9}
1 &\ge & 
\left[C_a 
\left(1 + \frac{|a_{21}(0)|}{a_{11}(0)}\right) \frac{2}{a^\prime} \left(1+\frac{Q^2}{Q}\right)
\frac{\gamma_a}{{K}_2} + 
\frac{C_S}{Q} 
\right],\\
\label{eq:f-10}
1 &\ge & \left[
\frac{Q}{Q^2} \frac{|a_{12}(0)|}{a_{11}(0)}
+ \frac{C_a}{a_{11}(0)} \frac{2 \gamma_a}{{K}_1} 
\left(1 + \frac{Q}{Q^2}\right)
\right],\\
\label{eq:f-11}
1 &\ge & 
\left[
\frac{C_S}{Q^3} 
+ \frac{Q^2}{Q^3}
+ \overline{K}_4^{-2} \frac{Q^4}{Q^3} |a_{12}(0)|
\right], \\
\label{eq:f-14}
1 &\ge & 
\left[
\frac{Q^3}{Q^4} \frac{2 |a_{21}(0)|}{\ua^2} 
\right],
\\
\label{eq:f-15}
1 &\ge & 
\frac{C_S}{Q^3},
\\
\label{eq:f-16}
1 &\ge & 
\frac{4}{\ua^2}
2 C_a \left[\frac{Q^3}{Q^4} + \overline{K}_4^{-2}\right], \\
\label{eq:f-17}
1 &\ge & \frac{4}{\ua^2} |a_{21}(0)|\frac{Q^3}{Q^4} + \frac{4}{\ua^2} \overline{K}_4^{-2} a_{22}(0), \\
\label{eq:f-18}
1 &\ge &  \overline{K}_1^{-2} \frac{Q^2}{Q} \frac{2 e^{\oa} |a_{21}(0)|}{a^\prime a_{22}(0)}
+ \frac{C_S}{Q} 
+ \overline{K}_4^{-2} \frac{Q^4}{Q}, \\
\label{eq:f-19}
1 &\ge & \left[
\frac{|a_{12}(0)|}{a_{11}(0)} \frac{Q}{Q^2} + \overline{K}_1^{-2} \frac{2 e^{\oa}}{a_{11}(0)} 
\right],\\
\label{eq:f-20}
1 &\ge & 
\Bigl[
\frac{|a_{21}(0)|}{a_{11}(0)} 2 e^{\oa} \frac{Q^2}{Q} \overline{K}_2^{-2} + 
C_a \left(1 + \frac{|a_{21}(0)|}{a_{11}(0)}\right)
\frac{2\gamma_a}{K_1}
\left( \frac{Q^2}{Q} + 1\right) 
+ \frac{C_S}{Q}  + 
\overline{K}_4^{-2} \frac{Q^4}{Q} 
\Bigr],\\
\label{eq:f-22}
1 &\ge & \left[
\frac{|a_{12}(0)|}{a_{11}(0)} \frac{Q}{Q^2} + 
\overline{K}_1^{-2} \frac{2 e^{\oa}}{a_{11}(0)} + 
\frac{2 C_a}{a_{11}(0)} \frac{\gamma_a}{K_1} \left(1 + \frac{Q}{Q^2}\right)
\right],\\
\label{eq:f-23}
1 &\ge & \Bigl[
\frac{1}{a_{11}(0)}
\left\{C_a \frac{Q^4}{Q^3} \overline{K}_4^{-2} + 
\frac{2 \gamma_a}{K_3 \overline{K}_3} 
\left( 1+ \frac{Q^4}{Q^3} \overline{K}_4^{-2}\right)
\right\} 
+ \frac{C_S}{Q^3} 
+ \frac{C_1}{Q^3} 
\Bigr] . 
\end{eqnarray}
It is now easy to see that by first selecting $Q$ sufficiently large and then
choosing the parameters $K_i$ and $\overline{K}_i$ sufficiently large, ensures 
the above requirements.

We now turn to the induction argument.

\bigskip \textbf{(Base) Case }$i=j=0$\textbf{:}

The functions $\widetilde{u}_{0,0}^{L},\widetilde{v}_{0,0}^{L},\widehat{u}%
_{0,0}^{L},\widehat{v}_{0,2}^{L}$ ($\widehat{v}_{0,0}^{L}=0$) satisfy the
following:
\begin{equation}
\left\{ 
\begin{array}{c}
-\left( \widetilde{v}_{0,0}^{L}\right) ^{\prime \prime }+a^\prime%
\widetilde{v}_{0,0}^{L}=0 \\ 
\widetilde{v}_{0,0}^{L}(0)=-v_{0,0}(0)\;,\;\widetilde{v}_{0,0}^{L}(\widetilde x)%
\rightarrow 0\text{ as }\widetilde x\rightarrow \infty%
\end{array}%
\right. ,  \label{61}
\end{equation}
\begin{equation}
\widetilde{u}_{0,0}^{L}=-\frac{a_{12}(0)}{a_{11}(0)}\widetilde{v}_{0,0}^{L},
\label{62}
\end{equation}
\begin{equation}
\left\{ 
\begin{array}{c}
-\left( \widehat{u}_{0,0}^{L}\right) ^{\prime \prime }+a_{11}(0)\widehat{u}%
_{0,0}^{L}=0 \\ 
\widehat{u}_{0,0}^{L}(0)=-u_{0,0}(0)\;,\;\widehat{u}_{0,0}^{L}(\widehat x)\rightarrow 0%
\text{ as }\widehat x\rightarrow \infty%
\end{array}%
\right. ,  \label{63}
\end{equation}
\begin{equation}
\widehat{v}_{0,2}^L(z)=\int_z^\infty \int_t^\infty 
a_{21}(0)\widehat{u}_{0,0}^{L}(\tau )d\tau dt.
\label{64}
\end{equation}
Solution formulas for $\widetilde v^L_{0,0}$ and $\widehat u^L_{0,0}$ 
and Lemma~\ref{lem10} (recall that we assume that $\delta = 1$ is 
admissible in Lemma~\ref{lem10}) give 
us the desired result for $\widetilde{v}_{0,0}^{L}$ and $\widehat{u}_{0,0}^{L}$ 
in view of the requirements (\ref{eq:b-1}), (\ref{eq:b-2}),
while (\ref{62}) gives it for $\widetilde{u}_{0,0}^{L}$, in view of requirement
(\ref{eq:b-4}). 
For 
$\widehat{v}_{0,2}^{L}(z)$ we have from (\ref{64}), the just proven result for 
$\widehat u^L_{0,0}$, and Lemma~\ref{lemma:two-anti-derivatives} (with $j = 0$)
\begin{equation*}
\left\vert \widehat{v}^L_{0,2}(z)\right\vert \leq |a_{21}(0)| C_3 \frac{1}{\ua^2} \decay{z}
\leq C_4 \decay{z},
\end{equation*}
by the choice of $C_4$ in (\ref{eq:b-5}).

\textbf{(Base) Case }$j=0$, $i>0$\textbf{:}

The functions 
$\widetilde{u}_{i,0}^{L},\widetilde{v}_{i,0}^{L},\widehat{u}_{i,0}^{L},\widehat{v}_{i,2}^{L}$ 
satisfy equations (\ref{40})--(\ref{44}), respectively, for up to $i$. We proceed by induction on $i$. 
First, $\widetilde{v}_{i+1,0}^{L}$ satisfies (\ref{eq:41-42}) with $i$ replaced by $i+1$. The right
hand side of that boundary value problem satisfies, in view of the induction hypothesis and 
the choices $K_1 = K_2$, $C_{\widetilde u} = C_{\widetilde v} \ge 1$,
\begin{eqnarray*}
\lefteqn{
\left\vert RHS_{(\ref{41})}\right\vert  
\leq C_a \left(1 + \frac{|a_{21}(0)|}{a_{11}(0)}\right) 
\underset{k=1}{\overset{i+1}{%
\sum }}\gamma _{a}^{k}\left\vert z\right\vert ^{k}\left\{ \left\vert 
\widetilde{u}_{i+1-k,0}^{L}(z)\right\vert +\left\vert \widetilde{v}%
_{i+1-k,0}^{L}(z)\right\vert \right\}  
}\\
&&\leq C_a \left(1 + \frac{|a_{21}(0)|}{a_{11}(0)}\right) 
\underset{k=1}{\overset{i+1}{%
\sum }}\gamma _{a}^{k} |z|^k \frac{1}{(i+1-k)!}(C_{\widetilde u}(i+1-k) + |z|)^{2(i+1-k)} \times \\
&\mbox{}&\times \left( C_1 {K}_1^{i+1-k}  + C_2 {K}_2^{i+1-k}\right)\decay{z} \\
&&\leq C_a \left(1 + \frac{|a_{21}(0)|}{a_{11}(0)}\right) (C_1 + C_2) 
\frac{1}{1-\gamma_a/{K}_2}\frac{\gamma_a}{{K}_2}
{K}_2^{i+1} 
\frac{1}{i!} (C_{\widetilde u} i +|z|)^{2i+1} 
\decay{z},
\end{eqnarray*}
where we used the fact that ${K}_1 =  {K}_2 > \gamma_a$ by (\ref{eq:b-6}). 
Lemma~\ref{lemma:scalar-bvp-constant-coefficients} 
and Lemma~\ref{lem10} yield, for the solution $\widetilde v_{i,0}^L$
of the boundary value problem (\ref{eq:41-42}),
\begin{eqnarray}
\nonumber 
\left\vert \widetilde{v}_{i+1,0}^{L}(z)\right\vert  
&\leq & \Bigl(C_a \left(1 + \frac{|a_{21}(0)|}{a_{11}(0)}\right) \frac{1}{a^\prime} (C_1 + C_2) 
\frac{1}{1-\gamma_a/{K}_2}\frac{\gamma_a}{{K}_2}
{K}_2^{i+1} 
\frac{1}{(i+1)!} (C_{\widetilde u} i +|z|)^{2i+2} \\
\nonumber 
&& \mbox{}
+ C_S K^{i+1} (i+1)^{i+1}\Bigr)
\decay{z}\\
\nonumber 
&\leq & C_2 {K}_2^{i+1} \frac{1}{(i+1)!}(C_{\widetilde u}(i+1) + |z|)^{2(i+1)} \decay{z} \\
&& \mbox{}\times 
\left[C_a 
\left(1 + \frac{|a_{21}(0)|}{a_{11}(0)}\right) \frac{1}{a^\prime} \left(1+\frac{C_1}{C_2}\right)
\frac{1}{1-\gamma_a/{K}_2}\frac{\gamma_a}{{K}_2} + 
\frac{C_S}{C_2} \left(\frac{K}{{K}_1}\right)^{i+1} 
\right]. 
\label{70}
\end{eqnarray}
The expression in square brackets is bounded by $1$ by our requirement (\ref{eq:b-9}).
Next, we consider $\widetilde{u}_{i+1,0}^{L}$ which satisfies (\ref{40})
with $i$ replaced by $i+1$. \ We have by the induction hypothesis and (\ref{70}),
\begin{eqnarray*}
\left\vert \widetilde{u}_{i+1,0}^{L}(z)\right\vert &\leq & 
\Bigl\{ \frac{|a_{12}(0)|}{a_{11}(0)} C_{2}%
{K}_{2}^{i+1}\frac{1}{(i+1)!}\left( C_{%
\widetilde{v}}(i+1)+\left\vert z\right\vert \right) ^{2(i+1)}+ \\
&&+\underset{k=1}{\overset{i+1}{\sum }}\frac{C_{a}}{a_{11}(0)}\gamma _{a}^{k}\left\vert
z\right\vert ^{k}\frac{1}{(i+1-k)!}\left[ C_{1}%
{K}_{1}^{i+1-k}\left( C_{\widetilde{u}}(i+1)+\left\vert
z\right\vert \right) ^{2(i+1-k)}
\right. \\
&&+\left. C_{2}{K}_{2}^{i+1-k}\left( C_{\widetilde{v}%
}(i+1)+\left\vert z\right\vert \right) ^{2(i+1-k)}\right] 
\Bigr\}\decay{z}\\
\\
&\leq &C_{1}{K}_{1}^{i+1}\frac{1}{(i+1)!}\left( C_{\widetilde{u}}(i+1)+\left\vert
z\right\vert \right) ^{2(i+1)} \times \\
&&\times \left[
\frac{C_2}{C_1} \frac{|a_{12}(0)|}{a_{11}(0)}
+ \frac{C_a}{a_{11}(0)} \frac{\gamma_a}{{K}_1} \frac{1}{1 - \gamma_a/{K}_1}
\left(1 + \frac{C_2}{C_1}\right)
\right]
\decay{z},
\end{eqnarray*}
since $K_{1} = K_{2} >\gamma_{a}$ by (\ref{eq:b-6}). 
Again, the expression in square brackets is bounded by $1$ by 
our requirement (\ref{eq:b-10}). 

For $\widehat{u}_{i+1,0}^{L},$ we have by 
Lemma~\ref{lemma:scalar-bvp-constant-coefficients}, (\ref{43}), and
Lemma \ref{lem10},
\begin{eqnarray}
\left\vert \widehat{u}_{i+1,0}^{L}(z)\right\vert &\leq &
e^{-a_{11}(0)\operatorname*{Re}(z)}\left\vert u_{i+1,0}(0)\right\vert 
\leq C_S K^{i+1}(i+1)^{(i+1)} \decay{z}
\nonumber 
\\
& \leq & 
C_3 {K}_3^{i+1} (i+1)^{(i+1)} 
\left[\frac{C_S}{C_3} \left(\frac{K}{{K}_3}\right)^{i+1}\right]
\decay{z}.
\label{71}
\end{eqnarray}
In view of our requirement (\ref{eq:b-15}), the expression in
square brackets is bounded by $1$. 

Finally, for $\widehat{v}_{i+1,2}^{L}$ we have from (\ref{44}), (\ref{71})
and Lemma~\ref{lemma:two-anti-derivatives}, in view of 
${K}_3 = {K}_4$ and $C_{\widehat u} =C_{\widehat v}$,
\begin{eqnarray*}
\left\vert \widehat{v}_{i+1,2}^{L}(z)\right\vert  &\leq &\left\vert \overset{%
\infty }{\underset{z}{\int }}\overset{\infty }{\underset{t}{\int }}a_{21}(0)%
\widehat{u}_{i+1,0}^{L}(\tau )d\tau dt\right\vert  \\
&\leq& \frac{|a_{21}(0)|}{\ua^2} \left(\frac{1}{1-2/(C_{\widetilde u} \ua)}\right)^2
C_3 {K}_3^{i+1} (C_{\widehat u}(i+1) + |z|)^{2(i+1)}\decay{z}\\
&\leq& C_4 {K}_4^{i+1} 
(C_{\widehat v}(i+1) + |z|)^{2(i+1)}\decay{z}
\left[
\frac{C_3}{C_4} \frac{|a_{21}(0)|}{\ua^2} \frac{1}{1-2/(C_{\widehat u}\ua)}
\right].
\end{eqnarray*}
The expression in square brackets is bounded by $1$ by our 
requirements (\ref{eq:b-14}), (\ref{eq:b-12-13}). 

\textbf{(Base) Case }$i=0$, $j>0$\textbf{:}

The functions $\widetilde{u}_{0,j+1}^{L},\widetilde{v}_{0,j+1}^{L},\widehat{u}%
_{0,j+1}^{L},\widehat{v}_{0,j+3}^{L}$ satisfy the following:
\begin{equation}
\left\{ 
\begin{array}{c}
-\left( \widetilde{v}_{0,j+1}^{L}\right) ^{\prime \prime }+a^\prime%
\widetilde{v}_{0,j+1}^{L}=-\frac{a_{21}(0)}{a_{11}(0)}\left( \widetilde{u}%
_{0,j-1}^{L}\right) ^{\prime \prime } \\ 
\widetilde{v}_{0,j+1}^{L}(0)=-\left( v_{0,j+1}(0)+\widehat{v}_{0,j+1}^{L}(0)%
\right) \;,\;\widetilde{v}_{0,j+1}^{L}(\widetilde x)\rightarrow 0\text{ as }\widetilde x\rightarrow
\infty 
\end{array}%
\right. ,  \label{65}
\end{equation}
\begin{equation}
\widetilde{u}_{0,j+1}^{L}=-\frac{a_{12}(0)}{a_{11}(0)}\widetilde{v}_{0,j+1}^{L}+%
\frac{\left( \widetilde{u}_{0,j-1}^{L}\right) ^{\prime \prime }}{a_{11}(0)},
\label{66}
\end{equation}
\begin{equation}
\left\{ 
\begin{array}{c}
-\left( \widehat{u}_{0,j+1}^{L}\right) ^{\prime \prime }+a_{11}(0)\widehat{u}%
_{0,j+1}^{L}=-a_{12}(0) \widehat v_{0,j+1}^{L} \\ 
\widehat{u}_{0,j+1}^{L}(0)=-\left( u_{0,j+1}(0)+\widetilde{u}_{0,j+1}^{L}(0)%
\right) \;,\;\widehat{u}_{0,j}^{L}\rightarrow 0\text{ as }x\rightarrow
\infty 
\end{array}%
\right. ,  \label{67}
\end{equation}
\begin{equation}
\widehat{v}_{0,j+3}^L(z)=\overset{\infty }{\underset{z}{\int }}\overset{\infty 
}{\underset{t}{\int }}\left[ a_{21}(0)\widehat{u}_{0,j+1}^{L}(\tau )+a_{22}(0)%
\widehat{v}_{0,j+1}^{L}(\tau )\right] d\tau dt.  \label{68}
\end{equation}
To establish the desired claims, we proceed by induction on $j$ noting
that the case $j = 0$ and $i = 0$ has been proved already. Assuming that the 
bounds are valid for $i=0$ and up to $j$, we show them for $j+1$. 

We start with $\widehat{v}^L_{0,j+3}$. 
By the induction hypothesis, Lemma~\ref{lemma:two-anti-derivatives}, 
and the assumption $C_{\widehat v} \ua \ge 4$ (see requirement (\ref{eq:b-12-13}))
we have 
\begin{eqnarray*}
\left\vert \widehat{v}^{L}_{0,j+3}(z)\right\vert  &\leq & 
\frac{4}{\ua^2}
\Bigl(
|a_{21}(0)| C_3 \overline{K}_3^{j+1} (C_{\widehat u}(j+1) + |z|)^{2(j+1)}\frac{1}{(j+1)!}
 \\
&& \mbox{} + 
a_{22}(0) C_4 \overline{K}_4^{j-1} (C_{\widehat v}(j-1) + |z|)^{2(j-1)}\frac{1}{(j-1)!}
\Bigr) \decay{z}.
\end{eqnarray*}
In view of $C_{\widehat u} = C_{\widehat v}$ and $\overline{K}_3 = \overline{K}_4$ we get 
\begin{eqnarray*}
\left\vert \widehat{v}^{L}_{0,j+3}(z)\right\vert  &\leq & 
C_4 \overline{K}_4^{j+1} \frac{1}{(j+1)!} (C_{\widehat v}(j+1) + |z|)^{2(j+1)} \decay{z} 
\left[
\frac{4}{\ua^2} |a_{21}(0)| \frac{C_3}{C_4}  
+ 
\frac{4}{\ua^2} |a_{22}(0)| \overline{K}_4^{-2}
\right]; 
\end{eqnarray*}
by requirement (\ref{eq:b-17}), the expression in square brackets is bounded by $1$ as required. 

We next turn our attention to $\widetilde{v}_{0,j+1}^{L}$ which satisfies 
(\ref{65}) and right-hand side (RHS) satisfying
\begin{equation}
\label{eq:foo-200}
\left\vert RHS_{(\ref{65})}\right\vert \leq \frac{\left\vert
a_{21}(0)\right\vert }{\left\vert a_{11}(0)\right\vert }\left\vert \left( 
\widetilde{u}_{0,j-1}^{L}\right) ^{\prime \prime }\right\vert .
\end{equation}
We first study the case $j = 0$. Then $\widetilde u_{0,j-1}^L = 0$, and 
Lemma~\ref{lemma:scalar-bvp-constant-coefficients} yields, together with 
Lemma~\ref{lem10},
$$
|\widetilde v_{0,j+1}^L(z)| e^{-a^\prime \operatorname*{Re}(z)} C_S 
\leq C_2 \overline{K}_2^{j+1} \decay{z}  
\left[ \frac{C_S}{C_2} \overline{K}_2^{-(j+1)} \right].
$$
Since we assume $\overline{K}_2 \ge 1$, our requirement (\ref{eq:b-1}) implies
the desired bound. Returning to (\ref{eq:foo-200}) for the case $j \ge 1$, 
the induction hypothesis and Lemma~\ref{lemma:derivatives-of-entire-fcts}
produce
\begin{equation*}
\left\vert RHS_{(\ref{65})}(z)\right\vert \leq \frac{|a_{21}(0)|}{a_{11}(0)}
{2} e^{\oa}
C_{1}\frac{\overline{K}_{1}^{j-1}}{(j-1)!}\left( C_{\widetilde{u}%
}(j-1)+\left\vert z\right\vert + 1 \right) ^{2(j-1)}\decay{z}. 
\end{equation*}
Therefore, Lemma~\ref{lemma:scalar-bvp-constant-coefficients} 
together with Lemma~\ref{lem10} yields 
\[
\left\vert \widetilde{v}_{0,j+1}^{L}(z)\right\vert  \leq \:\:
\]
\begin{eqnarray}
&\leq &\!\!\!
\left[ \frac{1}{a^\prime} \frac{1}{2j-1} 2 e^{\oa} \frac{|a_{21}(0)|}{a_{22}(0)}
C_{1}\frac{\overline{K}_{1}^{j-1}}{(j-1)!}\left( C_{\widetilde{u}%
}(j-1)+\left\vert z\right\vert + 1 \right) ^{2(j-1)+1} + 
\left\vert v_{0,j+1}(0)\right\vert +\left\vert \widehat{v}^L%
_{0,j+1}(0)\right\vert \right] \decay{z}  
\nonumber  \\
&\leq& 
C_2 \overline{K}_1^{j+1} \frac{1}{(j+1)!}\left(C_{\widetilde v}(j+1) + |z|\right)^{2(j+1)}
\nonumber 
\\
&&\mbox{} \times 
\left[ \overline{K}_1^{-2} \frac{C_1}{C_2} \frac{2 e^{\oa} |a_{21}(0)|}{a^\prime a_{22}(0)}
+ \frac{C_S}{C_2} \left(\frac{1}{\overline{K}_1}\right)^{j+1} 
+ \overline{K}_4^{-2} \frac{C_4}{C_2} \left(\frac{\overline{K}_4}{\overline{K}_1}\right)^{j+1}
\right]
\decay{z},
\label{69}
\end{eqnarray}
where, again, the expression square brackets is bounded by $1$ due to 
our requirement (\ref{eq:b-18}). 

Now, consider $\widetilde{u}_{0,j+1}^{L}$, which satisfies (\ref{66}).  
By (\ref{69}) and Lemma~\ref{lemma:derivatives-of-entire-fcts}, we have 
\begin{eqnarray*}
\left\vert \widetilde{u}_{0,j+1}^{L}(z)\right\vert  &\leq &
\Bigl(
\frac{|a_{12}(0)|}{a_{11}(0)} C_2 \overline{K}_2^{j+1} \frac{1}{(j+1)!}(C_{\widetilde v}(j+1) + |z|)^{2(j+1)} 
\\ 
&& \mbox{} + 
\frac{2 e^{\oa}}{a_{11}(0)} C_1 \overline{K}_1^{j-1}\frac{1}{(j-1)!}  (C_{\widetilde u}(j-1) + 1+ |z|)^{2(j-1)}
\Bigr)\decay{z} \\
&\leq& C_1 \overline{K}_1^{j+1} \frac{1}{(j+1)!} (C_{\widetilde u}(j+1) + |z|)^{2(j+1)}
\left[
\frac{|a_{12}(0)|}{a_{11}(0)} \frac{C_2}{C_1} + \overline{K}_1^{-2} \frac{2 e^{\oa}}{a_{11}(0)} 
\right]\decay{z}; 
\end{eqnarray*}
in view of requirement (\ref{eq:b-19}), the expression in 
square brackets is bounded by $1$ as required .

Finally, for $\widehat{u}_{0,j+1}^{L}$ which satisfies (\ref{67}) 
we have by Lemma~\ref{lemma:scalar-bvp-constant-coefficients}
(the case $j = 0$ needs special treatment in that the third term in the following estimate
is not present) 
\begin{eqnarray*}
\lefteqn{
\left\vert \widehat{u}_{0,j+1}^{L}(z)\right\vert  \leq } \\
&&
\left( \left\vert u_{0,j+1}(0)\right\vert
+\left\vert \widetilde{u}_{0,j+1}^{L}(0)\right\vert + |a_{12}(0)| C_4 \overline{K}_4^{j-1} 
\frac{1}{(j-1)!(2(j-1)+1)}(C_{\widehat v}(j+1) + |z|)^{2(j-1)+1}
\right)  \decay{z}\\
&&\leq C_3 \overline{K}_3^{j+1} \frac{1}{(j+1)!} (C_{\widehat u} (j+1)+|z|)^{2(j+1)}
\left[
\frac{C_S}{C_3} \left(\frac{1}{\overline{K}_3}\right)^{j+1} 
+ \frac{C_1}{C_3}\left(\frac{\overline{K}_1}{\overline{K}_3}\right)^{j+1} 
+ \overline{K}_4^{-2} \frac{C_4}{C_3} |a_{12}(0)|
\right]
\decay{z}. 
\end{eqnarray*}
Again, in view of our requirement (\ref{eq:b-11}), the expression in
square brackets is bounded by $1$.

\textbf{Induction Step:} We proceed by induction on $j$, the induction
hypothesis being that the estimates (\ref{57})--(\ref{60})  have been shown for all 
$i \in \N_0$ up to $j$ and will establish them for all $i \in \N_0$ and $j+1$. 

We start with estimating $\widehat{v}_{i,j+3}^L$. In view of the definition
(\ref{56}), we estimate with the induction hypothesis 
(recall $K_3 = K_4$ and $\overline{K}_3 = \overline{K}_4$; we also point
out that for the case $j=0$, the terms stemming from $\widehat v_{i-k,j+1-k}$ are 
in fact not present)
\begin{eqnarray*}
\lefteqn{
\left| \sum_{k=0}^{\min\{i,j+1\}} \frac{z^k}{k!} \left( a_{21}^{(k)}(0)  \widehat{u}_{i-k,j+1-k}^L(z) + 
a_{22}^{(k)}(0)  \widehat{v}_{i-k,j+1-k}^L(z)\right)\right|
} \\
&& \leq C_a \sum_{k=0}^{\min\{i,j+1\}} \gamma_a^k 
\Bigl[ C_3 K_3^{i-k} \overline{K}_3^{j+1-k} 
\frac{|z|^k}{(i+j+1-2k)!} (C_{\widehat u}(i+j+1-2k)+|z|)^{2(i+j+1-2k)}\\
&& \qquad +  C_4 K_4^{i-k} \overline{K}_4^{j-1-k} \frac{|z|^k}{(i+j-1-2k)!}(C_{\widehat u}(i+j-1-2k)+|z|)^{2(i+j-2k-1)}
\Bigr]\decay{z} \\
&& \leq 
C_4 K_4^{i} \overline{K}_4^{j+1} \frac{(C_{\widehat v}(i+j+1)+|z|)^{2(i+j+1)}}{(i+j+1)!}
\left[\frac{C_3}{C_4} + \overline{K}_4^{-2}\right]
\frac{C_a}{1-\gamma_a/(K_4 \overline{K}_4)}  
\decay{z};
\end{eqnarray*}
here, we employed observations of the form (note that $i+j-2k-1 \ge 0$)
\begin{align*}
& \frac{1}{(i+j-1-2k)!} (C_{\widehat u}(i+j-1-2k)+|z|)^{2(i+j-2k-1)}
\leq
\frac{1}{(i+j-1-2k)!} (C_{\widehat u}(i+j+1)+|z|)^{2(i+j-2k-1)}\\
& \leq \frac{(i+j+1)^{2k+2}}{(i+j+1)!} 
 (C_{\widehat u}(i+j+1)+|z|)^{2(i+j-2k-1)}\\
& =  \frac{(i+j+1)^{2k+2}}{(i+j+1)!} 
 (C_{\widehat u}(i+j+1)+|z|)^{-2k-2}
 (C_{\widehat u}(i+j+1)+|z|)^{2(i+j-k)}\\
&\leq 
\frac{1}{(i+j+1)!} 
 (C_{\widehat u}(i+j+1)+|z|)^{2(i+j)}
\leq 
\frac{1}{(i+j+1)!} 
 (C_{\widehat u}(i+j+1)+|z|)^{2(i+j+1)}. 
\end{align*}
From (\ref{56}) and Lemma~\ref{lemma:two-anti-derivatives}, we therefore get 
\begin{eqnarray*}
\left\vert \widehat{v}_{i,j+3}^{L}(z)\right\vert  &\leq &
C_4 K_4^i \overline{K}_4^{j+1} \frac{(C_{\widehat v}(i+j+1)+|z|)^{2(i+j+1)} }{(i+j+1)!}
\decay{z} \\
&& \mbox{}\times 
\frac{1}{\ua^2}
\left(\frac{1}{1-2/(C_{\widehat v} \ua)}\right)^2
C_a \left[\frac{C_3}{C_4} + \overline{K}_4^{-2}\right] 
\frac{1}{1-\gamma_a/(K_4 \overline{K}_4)} . 
\end{eqnarray*}
By requirement (\ref{eq:b-16}), this is the desired estimate.

We now consider $\widetilde{v}_{i,j+1}^{L}$ which satisfies (\ref{52})--(\ref{53}). 
The required estimate is proved by induction in $i$, the case $i = 0$ having
been studied previously. The right-hand side of the boundary value problem satisfies
\begin{equation*}
\left\vert RHS_{(\ref{52})-(\ref{53})}\right\vert \leq \frac{|a_{21}(0)|}{a_{11}(0)}\left\vert \left( 
\widetilde{u}_{i,j-1}^{L}\right) ^{\prime \prime }(z)\right\vert + 
C_a \left(1 + \frac{|a_{21}(0)|}{a_{11}(0)}\right)
\underset{%
k=1}{\overset{i}{\sum }}\gamma _{a}^{k}|z|^{k} \left( 
|\widetilde{u}_{i-k,j+1}^{L}(z)|+|\widetilde{v}_{i-k,j+1}^{L}(z)|\right).  
\end{equation*}
The induction hypothesis and Lemma~\ref{lemma:derivatives-of-entire-fcts} 
produce 
\begin{eqnarray}
\left\vert RHS_{(\ref{52})-(\ref{53})}\right\vert & \leq &
C_2 \frac{1}{(i+j)!} (C_{\widetilde u}(i+j) + |z|)^{2(i+j)+1} K_2^{i} \overline{K}_2^{j+1} 
\decay{z} \\
&& \mbox{} \times 
\left[
\frac{|a_{21}(0)|}{a_{11}(0)} 2 e^{\oa} \overline{K}_2^{-2}
\frac{C_1}{C_2} 
+ 
C_a \left(1 + \frac{|a_{21}(0)|}{a_{11}(0)}\right)
\frac{\gamma_a}{K_1}\frac{1}{1-\gamma_a/K_1} 
\left( \frac{C_1}{C_2} + 1\right)
\right]. \nonumber
\label{eq:100}
\end{eqnarray}
Lemma~\ref{lemma:scalar-bvp-constant-coefficients} 
and the already proven estimates for $v_{i,j+1}$ and $\widehat v_{i,j+1}^L$ 
lead to 
\begin{eqnarray*}
|\widetilde v_{i,j+1}^L(z)| &\leq& 
C_2 \frac{1}{(i+j+1)!} (C_{\widetilde u}(i+j) + |z|)^{2(i+j+1)} K_2^{i} \overline{K}_2^{j+1} 
\decay{z} \\
&& \mbox{} \times 
\Bigl[
\frac{|a_{21}(0)|}{a_{11}(0)} 2 e^{\oa} \frac{C_1}{C_2} \overline{K}_2^{-2} + 
C_a \left(1 + \frac{|a_{21}(0)|}{a_{11}(0)}\right)
\frac{\gamma_a}{K_1}\frac{1}{1-\gamma_a/K_1} 
\left( \frac{C_1}{C_2} + 1\right) \\
&& \qquad \mbox{}
+ \frac{C_S}{C_2} \left(\frac{K}{K_2}\right)^i \left(\frac{1}{\overline{K}_2}\right)^{j+1} + 
\overline{K}_4^{-2} \frac{C_4}{C_2} \left(\frac{K_4}{K_2}\right)^{i}
\left(\frac{\overline{K}_4}{\overline{K}_2}\right)^{j+1}
\Bigr],
\end{eqnarray*}
where the expression in square brackets is bounded by $1$ due to our requirement 
(\ref{eq:b-20}). 

Next we look at
\begin{equation*}
\widetilde{u}_{i,j+1}^{L}=-\frac{a_{12}(0)}{a_{11}(0)}\widetilde{v}_{i,j+1}^{L}+%
\frac{\left( \widetilde{u}_{i,j-1}^{L}\right) ^{\prime \prime }}{a_{11}(0)}-%
\frac{1}{a_{11}(0)}\underset{k=1}{\overset{i}{\sum }}\frac{\widetilde{x}^{k}%
}{k!}\left[ a_{11}^{(k)}(0)\widetilde{u}_{i-k,j+1}^{L}+a_{12}^{(k)}(0)%
\widetilde{v}_{i-k,j+1}^{L}\right]. 
\end{equation*}
Again, we proceed by induction on $i$, the case $i = 0$ having been handled
already. 
From Lemma~\ref{lemma:derivatives-of-entire-fcts} and the induction hypotheses we get 
\begin{eqnarray*}
\left\vert \widetilde{u}_{i,j+1}^{L}(z)\right\vert  &\leq &
C_1 K_1^{i} \frac{1}{(i+j+1)!}\overline{K}_1^{j+1} (C_{\widetilde u}(i+j+1)+|z|)^{2(i+j+1)} \decay{z} \\
&& \mbox{} \times 
\left[
\frac{|a_{12}(0)|}{a_{11}(0)} \frac{C_2}{C_1} + 
\overline{K}_1^{-2} \frac{2 e^{\oa}}{a_{11}(0)} + 
\frac{C_a}{a_{11}(0)} \frac{1}{1-\gamma_a/K_1} \frac{\gamma_a}{K_1} \left(1 + \frac{C_2}{C_1}\right)
\right].
\end{eqnarray*}
Again, the expression in square brackets is bounded by $1$ in view of our 
requirement (\ref{eq:b-22}). 

Finally, we consider $\widehat{u}_{i,j+1}^{\text{ }L}$ which satisfies (\ref{eq:54-55}). 
The right-hand side of the boundary value problem satisfies
\begin{eqnarray*}
\left\vert RHS_{(\ref{54})}\right\vert  &\leq &C_a |\widehat v_{i,j+1}^L(z)| + C_{a}\underset{k=1}%
{\overset{\min \{i,j+1\}}{\sum }}\gamma _{a}^{k}\left\vert z\right\vert
^{k}\left\{ \left\vert \widehat{u}_{i-k,j+1-k}^{L}\right\vert +\left\vert 
\widehat{v}_{i-k,j+1-k}^{L}\right\vert \right\}  \\
&\leq & 
C_3 K_3^{i}\overline{K}_3^{j+1} \frac{1}{(i+j)!}(C_{\widehat u}(i+j)+|z|)^{2(i+j)+1} \decay{z} \\
&&\mbox{} \times 
\left[C_a \frac{C_4}{C_3} \overline{K}_4^{-2} + 
\frac{\gamma_a}{K_3 \overline{K}_3} \frac{C_a}{1-\gamma_a/(K_3 \overline{K}_3)} 
\left( 1+ \frac{C_4}{C_3} \overline{K}_4^{-2}\right)
\right].
\end{eqnarray*}
Lemma~\ref{lemma:scalar-bvp-constant-coefficients} together with the induction
hypotheses, therefore gives us for the solution $\widehat{u}_{i,j+1}^L$
of (\ref{eq:54-55}),
\begin{eqnarray*}
\left\vert \widehat{u}_{i,j+1}^{\text{ }L}(z)\right\vert  &\leq &C_{3}%
{K}_{3}^{i}\overline{K}_{3}^{j+1} \frac{1}{(i+j+1)!} (C_{\widehat u}(i+j+1)+|z|)^{2(i+j+1)} \decay{z} \\
&& \mbox{} \times 
\Bigl[
\frac{1}{a_{11}(0)}
\left\{C_a \frac{C_4}{C_3} \overline{K}_4^{-2} + 
\frac{\gamma_a}{K_3 \overline{K}_3} \frac{C_a}{1-\gamma_a/(K_3 \overline{K}_3)} 
\left( 1+ \frac{C_4}{C_3} \overline{K}_4^{-2}\right)
\right\} \\
&&\qquad \mbox{}
+ \frac{C_S}{C_3} \left(\frac{K}{K_3}\right)^i \left(\frac{1}{\overline{K}_3}\right)^{j+1} 
+ \frac{C_1}{C_3} \left(\frac{K_1}{K_3}\right)^i \left(\frac{\overline{K}_1}{\overline{K}_3}\right)^{j+1} 
\Bigr]. 
\end{eqnarray*}
By our requirement (\ref{eq:b-23}), the expression in square brackets is bounded by $1$. 
%
%\end{numberedproof}
%-----------------------------------------------------------------------------
%-----------------------
\subsection{Proof of Theorem~\ref{thm_BL_3scales}}
\label{appendix:thm_BL_3scales}
%-----------------------
We first prove two auxiliary lemmas.
\begin{lem}
\label{lemma:convexity} For every $\gamma >0$ the functions 
\begin{eqnarray*}
f_{1}(k)&:=&\gamma ^{k}(i-k)^{i-k}(j-k)^{j-k}, \\
f_{2}(k)&:=&k^{k}\gamma ^{k}(i-k)^{i-k}(j-k)^{j-k},
\end{eqnarray*}
are convex on $(0,\min \{i,j\})$.
\end{lem}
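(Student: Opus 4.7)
The plan is to prove the stronger statement that $f_1$ and $f_2$ are log-convex on $(0,\min\{i,j\})$; since both functions are strictly positive there, log-convexity implies convexity. Thus it suffices to verify that $(\log f_\ell)''(k) \ge 0$ for $\ell = 1, 2$, which is a one-line computation in each case.

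For $f_1$, I would write
$$
\log f_1(k) = k \log \gamma + (i-k)\log(i-k) + (j-k)\log(j-k),
$$
and use the elementary fact that for $n > k > 0$ the function $\varphi_n(k):=(n-k)\log(n-k)$ has $\varphi_n'(k)=-\log(n-k)-1$ and hence $\varphi_n''(k)=1/(n-k)$. Since $k\log\gamma$ is linear in $k$, this gives $(\log f_1)''(k) = \frac{1}{i-k} + \frac{1}{j-k}$, which is strictly positive on $(0,\min\{i,j\})$.

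For $f_2$, the only new term in $\log f_2(k)$ compared to $\log f_1(k)$ is the summand $k\log k$, whose second derivative equals $1/k$ and is positive on $(0,\infty)$. Therefore $(\log f_2)''(k) = \frac{1}{k} + \frac{1}{i-k} + \frac{1}{j-k} > 0$ on $(0,\min\{i,j\})$, and the proof is complete.

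There is essentially no obstacle here; the only point worth flagging is the choice to work with log-convexity rather than convexity of $f_\ell$ directly. A direct computation of $f_\ell''$ would be cumbersome because of the factors $(i-k)^{i-k}$ and $(j-k)^{j-k}$, whereas passing to logarithms turns products into sums and reduces the problem to summing three manifestly nonnegative terms.
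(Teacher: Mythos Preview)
Your proof is correct and is exactly the approach the paper takes: the paper's proof consists of the single remark that one easily checks $\frac{d^2}{dk^2}\ln f_1(k)>0$ and $\frac{d^2}{dk^2}\ln f_2(k)>0$, and you have carried out precisely that computation.
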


\begin{proof} It is easy to check that $\frac{d^{2}}{dk^{2}}\ln f_{1}(k)$ and $%
\frac{d^{2}}{dk^{2}}\ln f_{2}(k)>0$. 
\end{proof}

\begin{lem}
\label{lemma:elementary-properties-of-convex-functions} Let $a\leq b\leq
c\leq d$. Let $f$ be non-negative and convex on $[a,d]$. Then
\begin{equation*}
\Vert f\Vert _{L^{\infty }(b,c)}=\max \{\min \{f(a),f(b)\},\min
\{f(c),f(d)\}\}.
\end{equation*}
\end{lem}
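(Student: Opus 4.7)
The plan is to first reduce the claim to an elementary statement about convex functions on an interval, then dispatch it by case analysis based on the monotone-slope characterization of convexity.

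First I would observe that $f$ restricted to $[b,c] \subset [a,d]$ is still convex, and a convex function on a closed interval attains its maximum at an endpoint. Hence
\[
\|f\|_{L^\infty(b,c)} = M, \qquad M := \max\{f(b),f(c)\}.
\]
Setting $N := \max\{\min\{f(a),f(b)\},\min\{f(c),f(d)\}\}$, the inequality $N \leq M$ is immediate because $\min\{f(a),f(b)\} \leq f(b) \leq M$ and $\min\{f(c),f(d)\} \leq f(c) \leq M$. So the entire content of the lemma is the reverse inequality $M \leq N$, which I would prove by splitting on which endpoint of $[b,c]$ realizes $M$.

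Suppose first $M = f(b)$, so that $f(b) \geq f(c)$. If $b < c$, the monotone-slope characterization of convexity gives
\[
\frac{f(b)-f(a)}{b-a} \leq \frac{f(c)-f(b)}{c-b} \leq 0,
\]
so $f(a) \geq f(b)$, which yields $\min\{f(a),f(b)\} = f(b) = M$, hence $N \geq M$. If instead $b = c$, then $b$ lies in $[a,d]$ and convexity alone gives $f(b) \leq \max\{f(a),f(d)\}$, so at least one of $f(a) \geq f(b)$ or $f(d) \geq f(b)$ holds, and the corresponding $\min$ in $N$ equals $f(b)=M$. The case $M = f(c)$ is entirely symmetric: when $b<c$, the slope $(f(c)-f(b))/(c-b)$ is $\geq 0$ (since $f(c) > f(b)$ in the nontrivial subcase, and $=0$ otherwise), and monotonicity of slopes forces $(f(d)-f(c))/(d-c) \geq 0$, i.e.\ $f(d) \geq f(c)$, giving $\min\{f(c),f(d)\} = f(c) = M$ and therefore $N \geq M$.

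There is no real obstacle here; the only mild nuisance is bookkeeping the degenerate configurations $a=b$, $b=c$, $c=d$, which are all handled by the convexity inequality $f(y) \leq \max\{f(x),f(z)\}$ for $x \leq y \leq z$ in $[a,d]$, and the monotonicity of secant slopes. The whole argument thus reduces to two invocations of the monotone-slope characterization of convex functions plus straightforward case analysis.
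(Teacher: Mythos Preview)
Your proof is correct and reaches the same conclusion as the paper, but the route differs. The paper also begins with $\|f\|_{L^\infty(b,c)}=\max\{f(b),f(c)\}$ and then proves the claimed identity by contradiction: assuming $\max\{\min\{f(a),f(b)\},\min\{f(c),f(d)\}\}<\max\{f(b),f(c)\}$, it uses the convex-combination inequality (writing $b=\lambda a+(1-\lambda)c$, etc.) to force first $f(a)\geq f(b)$ and then $f(c)\leq f(d)$, which contradicts the strict inequality. You instead argue directly, using the monotone-slope characterization of convexity to show that whichever of $f(b),f(c)$ is larger is dominated on the appropriate side ($f(a)\geq f(b)$ or $f(d)\geq f(c)$), so the corresponding $\min$ already equals $M$. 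Your approach is a bit more economical and handles the two cases symmetrically without the contradiction scaffolding; the paper's approach stays closer to the bare definition of convexity. Both are elementary, and the degenerate-endpoint bookkeeping you flag ($a=b$, $b=c$, $c=d$) is indeed the only place requiring care---the paper simply restricts to $a<b<c<d$ and remarks that the general case is similar.
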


\begin{proof} We restrict our attention to the case $a<b<c<d$ -- the general
case can be proved using similar arguments. By convexity, we have $\Vert
f\Vert _{L^{\infty }(b,c)}=\max \{f(b),f(c)\}$. We claim that 
\begin{equation}
\max \{\min \{f(a),f(b)\},\min \{f(c),f(d)\}\}=\max \{f(b),f(c)\}.
\label{eq:lemma:elementary-properties-of-convex-functions-10}
\end{equation}
Suppose $\max \{\min \{f(a),f(b)\},\min \{f(c),f(d)\}\}<\max \{f(b),f(c)\}$.
Then 
\begin{equation}
\min \{f(a),f(b)\}<\max \{f(b),f(c)\}\qquad \mbox{ and }\qquad \min
\{f(c),f(d)\}<\max \{f(b),f(c)\}.
\label{eq:lemma:elementary-properties-of-convex-functions-1}
\end{equation}
If $f(a)<f(b)$, then we write $b=\lambda a+(1-\lambda )c$ for some $\lambda \in
(0,1)$, and use convexity to get 
\begin{equation*}
f(b)=f(\lambda a+(1-\lambda )c)\leq \lambda f(a)+(1-\lambda )f(c)\leq
\lambda f(b)+(1-\lambda )f(c),
\end{equation*}
from which we conclude $f(b)<f(c).$ Thus 
\begin{equation*}
f(a)<f(b)<f(c).
\end{equation*}
The second condition in (\ref{eq:lemma:elementary-properties-of-convex-functions-1}) then produces 
$\min\{f(c),f(d)\}<f(c)$, from which we get $f(d)<f(c)$. On the other hand,
convexity implies upon writing $c=\lambda a+(1-\lambda )d$ for some $\lambda
\in (0,1)$, 
\begin{equation*}
f(c)=f(\lambda a+(1-\lambda d))\leq \lambda f(a)+(1-\lambda )f(d)<\lambda
f(c)+(1-\lambda )f(c)=f(c),
\end{equation*}
which is a contradiction. We conclude that $f(a)\geq f(b)$.

Next, we investigate the possibility $f(d)<f(c)$. We proceed analogously.
Convexity (for the points $b$, $c$, $d$) implies 
\begin{equation*}
f(d)<f(c)<f(b).
\end{equation*}
The first condition in (\ref{eq:lemma:elementary-properties-of-convex-functions-1}) then produces 
$\min\{f(a),f(b)\}<f(b)$ from which we get $f(a)<f(b)$. Using again convexity for
the points $a$, $b$, $c$, yields $f(b)<f(c)$, which is the desired
contradiction. We conclude that $f(c)\leq f(d)$.

Combining the above, have $f(a)\geq f(b)$ and $f(c)\leq f(d)$ and therefore (%
\ref{eq:lemma:elementary-properties-of-convex-functions-10}).
\end{proof}

\begin{numberedproof}{Theorem~\ref{thm_BL_3scales}}
We have to estimate the terms on the right-hand side of (\ref{eq:LhatU^M}). 
A direct check shows that the estimate is valid for the 
special case $M_2 = 0$. Hence, we will assume $M_2 \ge 1$. 

Throughout the proof, we will use that $\widehat x$ is real and strictly positive.
%$\widehat x  > 0$. 

First, we estimate the double sum 
\begin{eqnarray*}
\sum_{i=0}^{M_1} \sum_{j=M_2-1}^{M_2} \mu^i \left(\frac{\varepsilon}{\mu}\right)^j 
|\widehat u_{i,j}^{\prime\prime}|.
\end{eqnarray*}
Using the bounds of Theorem~\ref{thm11} and the Cauchy integral theorem for
derivatives (with contour $\partial B_1(x)$), we get with the aid of 
Lemma~\ref{lemma:elementary-properties-of-factorial},
$$
|\widehat u_{i,j}^{\prime\prime}(x)| \leq C \widetilde \gamma^{i+j} i^i (j+1)^{j+1},
$$
for a suitable constant $\widetilde \gamma$. Therefore, if $\mu \widetilde \gamma (M_1+1) \leq 1/2$, 
then 
\begin{eqnarray*}
\sum_{i=0}^{M_1} \sum_{j=M_2-1}^{M_2} \mu^i \left(\frac{\varepsilon}{\mu}\right)^j 
|\widehat u_{i,j}^{\prime\prime}|
\leq C \left(\frac{\varepsilon}{\mu}\right)^{M_2-1}.
\end{eqnarray*}
Hence, the double sum  in (\ref{eq:LhatU^M}) can be estimated in the desired fashion
by requiring $\gamma \ge \widetilde \gamma/2$.

We now turn to the triple sum in (\ref{eq:LhatU^M}). 
We start by writing the conditions 
(\ref{eq:LhatU^M-conditions-on-i-j-k}) 
on the indices $i$, $j$, and $k$ appearing in the triple sum 
(\ref{eq:LhatU^M}) in a more compact form, by setting 
\begin{equation}
\label{eq:LhatU^M-conditions-on-i-j-k-compact}
{\mathcal I}:= \{(i,j,k)\colon (i \ge M_1+1 \vee j \ge M_2-1) 
\wedge \max\{i-M_1,j-M_2\} \leq k \leq \min\{i,j\} \}.
\end{equation}
Thus, the triple sum can be written as 
\begin{equation}
\label{eq:LhatU^M-compact}
%L_{\varepsilon,\mu }\widehat{{\mathbf{U}}}_{BL}^{M}=
S = \sum_{(i,j,k) \in {\mathcal I}} 
\mu ^{i}(\varepsilon /\mu)^{j}\widehat{x}%
^{k}{\mathbf A}_{k}\left( 
\begin{array}{c}
\widehat{u}_{i-k,j-k} \\ 
\widehat{v}_{i-k,j-k}%
\end{array}%
\right) .
\end{equation}
Using (\ref{59}), (\ref{60}) and $\Vert {\mathbf A}_{k}\Vert_{\ell^1} \leq C_{A}\gamma _{a}^{k}$,
where $C_A:= 2 C_a$, 
we obtain with $K\geq \max \{K_{3},K_{4}\}$, $\overline{K}\geq \max \{%
\overline{K}_{3},\overline{K}_{4}\}$, $\widehat{C}\geq 
\max \{C_{\widehat{u}},C_{%
\widehat{v}}\}$ for $\widehat x \ge 0$, with the aid of 
Lemma~\ref{lemma:elementary-properties-of-factorial},
\begin{eqnarray*}
\left\vert L_{\varepsilon,\mu }\widehat{{\mathbf{U}}}_{BL}^{M}\right\vert  &\leq
&CC_{A}\sum_{(i,j,k) \in {\mathcal I}} 
\mu^{i}(\varepsilon /\mu )^{j}
\widehat{x}^{k}\gamma _{a}^{k}{K}^{i-k}\overline{K}^{j-k}\frac{\left( 
\widehat{C}(i-k+j-k)+|\widehat{x}|\right) ^{2(i-k+j-k)}}{(i-k+j-k)!}e^{-\ua%
\widehat{x}} \\
&\leq &CC_{A}\sum_{(i,j,k) \in {\mathcal I}}
\mu ^{i}(\varepsilon /\mu )^{j}{K}^{i}\overline{K}^{j}
\frac{\widehat{x}^{k}\gamma_{a}^{k}}{\overline{K}^{k}K^{k}}\gamma
^{i-k+j-k}(i-k)^{i-k}(j-k)^{j-k}e^{-3\ua \widehat{x}/4} \\
&\leq &CC_{A}\sum_{ (i,j,k) \in {\mathcal I}}
\mu ^{i-k}(\varepsilon /\mu )^{j-k}{K}^{i}\overline{K}^{j}\frac{(\widehat{x}%
\varepsilon )^{k}\gamma _{a}^{k}}{\overline{K}^{k}K^{k}}\gamma
^{i-k+j-k}(i-k)^{i-k}(j-k)^{j-k}e^{-3\ua \widehat{x}/4}; 
\end{eqnarray*}
here, we selected $\gamma > 0$ suitable in dependence on $\widehat C$ and  $\ua$. 
It is convenient to abbreviate 
$$
f(i,j,k) = 
\mu^{i-k}(\varepsilon/\mu)^{j-k} \widetilde K_1^{i-k} \widetilde K_2^{j-k} 
(\widehat x \varepsilon \gamma_a)^k (i-k)^{i-k} (j-k)^{j-k},  
$$
with $\widetilde K_1:= K \gamma$ and $\widetilde K_2:= \overline{K} \gamma$. 
Hence, we wish to estimate 
\begin{equation}
\label{eq:wish-to-estimate}
e^{-3 \ua \widehat x/4} \sum_{(i,j,k) \in {\mathcal I}} f(i,j,k).
\end{equation}
Next, in order unify the presentation, we consider the cases 
$j = M_2 -1$ and $j = M_2$ separately. That is, we write 
\begin{eqnarray*}
{\mathcal I} &\subset &  \widetilde{\mathcal I} \cup {\mathcal I}_1, \\
{\mathcal I}_1 &:=& 
\{(i,j,k)\colon  j \in \{M_2-1,M_2\} 
\wedge \max\{ i -M_1 ,0\} \leq k \leq \min\{i,j\}\}, \\
\widetilde{\mathcal I} &:=& 
\{(i,j,k)\colon (i \ge M_1 + 1 \vee j \ge M_2 + 1) \wedge
\max\{i-M_1,j-M_2\} \leq k \leq \min\{i,j\}\},
\end{eqnarray*}
and estimate the sums over ${\mathcal I}_1$ and $\widetilde {\mathcal I}$
separately. 

The structure of the remainder of the proof is as follows: 
\begin{itemize}
\item 
In {\em Step 1}, we estimate  
$e^{- \ua \widehat x/4} \sum_{(i,j,k) \in {\mathcal I}_1} f(i,j,k)$; 
\item 
In {\em Step 2}, 
we estimate  
$e^{- \ua \widehat x/4} \sum_{(i,j,k) \in \widetilde {\mathcal I}} f(i,j,k)$; 
\item 
Finally, in {\em Step 3}, we consider the case 
$\widehat x \varepsilon \ge c$ and show that then 
$|L_{\varepsilon,\mu} \widehat{\mathbf U}^M_{BL}(\widehat x)| 
\leq C e^{-\beta \widehat x}$.  
\end{itemize} 

{\em Step 1:}
We estimate 
$$
\sum_{(i,j,k) \in {\mathcal I}_1} f(i,j,k)
= 
\sum_{i = 0}^{M_1} \sum_{j = M_2-1}^{M_2} \sum_{k=0}^{\min\{i,j\}} f(i,j,k)
+ 
\sum_{i = M_1+1}^{\infty} \sum_{j = M_2-1}^{M_2} \sum_{k=i-M_1}^{\min\{i,j\}} f(i,j,k)
=:S_1 + S_2,
$$
using the convexity properties of the function $f$. Specifically, in order
to estimate $S_1$, we first consider the case $M_1 \leq M_2 - 1$. 

{\em Step 1a:}
Assume $M_1 \leq M_2 - 1$. 
Then by convexity of the function $k \mapsto f(i,j,k)$ 
(cf.~Lemma~\ref{lemma:convexity}),
\begin{eqnarray*}
S_1 &=&  \sum_{i=0}^{M_1} \sum_{j=M_2-1}^{M_2} \sum_{k=0}^i f(i,j,k) 
\leq \sum_{i=0}^{M_1} \sum_{j=M_2-1}^{M_2} (i+1) 
\max\{f(i,j,0), f(i,j,i)\}\\
&\leq& (M_1+1)\sum_{i=0}^{M_1} \sum_{j=M_2-1}^{M_2} 
\left[ \mu^{i} (\varepsilon/\mu)^j \widetilde K_1^i\widetilde K_2^j i^i j^j + 
       (\varepsilon/\mu)^{j-i} \widetilde K_2^{j-i} (\widehat x \varepsilon \gamma_a)^i (j-i)^{j-i}
\right]\\
&\leq& C (M_1+1) (\varepsilon/\mu \widetilde K_2 (M_2-1))^{M_2-1} + C (M_1+1)^2 
\max\{(\widetilde K_2 \varepsilon/\mu(M_2-1))^{M_2-1}, (\widehat x \varepsilon\gamma_a)^{M_1}\},
\end{eqnarray*}
where we employed the assumption that $\mu M_1$  and 
$\varepsilon/\mu (M_2-1)$ are sufficiently small 
and the convexity of the second term (as a function of $i$). From 
Lemma~\ref{lemma:elementary-properties-of-convex-functions}, we can bound
$$
e^{-\ua \widehat x/4} \widehat x^{M_1} \leq C (\widehat \gamma M_1)^{M_1},
$$
for suitable $\widehat \gamma$, so that we obtain together with 
$M_1 \leq M_2 - 1$ and the trivial bound $\varepsilon \leq \mu$,
\begin{equation}
\label{eq:foo-100}
e^{-\ua \widehat x/4} S_1 \leq 
C \left[ \left(\frac{\varepsilon}{\mu}(M_2 - 1) \gamma\right)^{M_2-1} + (\mu M_1 \gamma)^{M_1}\right],
\end{equation}
for suitable constants $C$, $\gamma > 0$. 

{\em Step 1b:}
Analogous reasoning covers the case $M_1 = M_2$. More precisely, 
we write in this case 
\begin{eqnarray*}
\sum_{i=0}^{M_1}\sum_{j=M_2-1}^{M_2}  \sum_{k=0}^{\min\{i,j\}} f(i,j,k)
= \sum_{i=0}^{M_1-1} \sum_{j=M_2-1}^{M_2} \sum_{k=0}^i f(i,j,k) 
+ \sum_{j=M_2-1}^{M_2} \sum_{k=0}^j f(M_1,j,k) =:S_{1,1} + S_{1,2}.
\end{eqnarray*}
For $S_{1,1}$, the reasoning of the above case (``\emph{Step 1a}'') is applicable, 
since $M_1 - 1 \leq M_2-1$, and yields 
$$
e^{-\ua \widehat x/4} S_{1,1} \leq 
C \left[ \left(\frac{\varepsilon}{\mu}(M_2 - 1) \gamma\right)^{M_2-1} + 
(\varepsilon (M_1-1) \gamma)^{M_1-1}\right]
\leq 
C \left(\frac{\varepsilon}{\mu}(M_2 - 1) \gamma\right)^{M_2-1}, 
$$
where in the last step, we used the trivial bound
$\varepsilon \leq \varepsilon/\mu$ in view of $\mu \leq 1$ and the 
fact that $M_1 = M_2$. For $S_{1,2}$, we use convexity of 
$f$ in the third argument to arrive at 
\begin{eqnarray*} 
S_{1,2} &\leq& (M_2+1) \sum_{j=M_2-1}^{M_2} \max\{ f(M_1,j,0), f(M_1,j,j)\}. 
\end{eqnarray*} 
Estimating 
$\max\{f(M_1,j,0),f(M_1,j,j)\} \leq f(M_1,j,0) + f(M_1,j,j)$, we 
get 
$$
f(M_1,j,0) + 
f(M_1,j,j) \leq 
\mu^{M_1}(\varepsilon/\mu)^{j} \widetilde K_1^{M_1} \widetilde K_2^{j} 
M_1^{M_1} j^j
+ 
\mu^{M_1-j}\widetilde K_1^{M_1-j} 
(\widehat x \varepsilon \gamma_a)^j (M_1-j)^{M_1-j}.
$$
Using the fact that $\mu M_1$ and $\varepsilon/\mu M_2$ are sufficiently small
and that $M_1 = M_2$, we get 
\begin{eqnarray*}
\lefteqn{
\sum_{j=M_2-1}^{M_2} \max\{f(M_1,j,0),f(M_1,j,j)\} 
\leq 
} \\
&& 
(\mu \widetilde K_1 M_1)^{M_1}
\left(\frac{\varepsilon}{\mu} \widetilde K_2 (M_2-1)\right)^{M_2-1}  
+ (\widehat x \varepsilon \gamma_a)^{M_1} + 
\mu \widetilde K_1 (\widehat x \varepsilon \gamma_a)^{M_2-1}.
\end{eqnarray*}
Upon writing 
$$
\mu (\widehat x \varepsilon \gamma_a)^{M_2-1} = 
\left(\widehat x \frac{\varepsilon}{\mu} \gamma_a\right)^{M_2-1} \mu^{M_2}
\leq 
\left(\widehat x \frac{\varepsilon}{\mu} \gamma_a\right)^{M_2-1} ,
$$
we can use (\ref{eq:foo-100}) to argue as in \emph{Step 1a}.

{\em Step 1c:}
We now consider $S_1$ for the case $M_1 \ge  M_2 + 1$. We write 
$$
S_1 = \sum_{i=0}^{M_2-1} \sum_{j=M_2-1}^{M_2} \sum_{k=0}^{i} f(i,j,k) + 
      \sum_{i=M_2}^{M_1} \sum_{j=M_2-1}^{M_2} \sum_{k=0}^{j} f(i,j,k). 
$$
Checking the arguments for the case $M_1 \leq M_2-1$ 
of \emph{Step 1a}, we see that we can bound 
$$
e^{-\ua \widehat x/4} 
\sum_{i=0}^{M_2-1} \sum_{j=M_2-1}^{M_2} \sum_{k=0}^{i} f(i,j,k) 
\leq C \left[ \left(\frac{\varepsilon}{\mu}(M_2 - 1) \gamma\right)^{M_2-1} + 
(\varepsilon (M_2-1) \gamma)^{M_2-1}\right],
$$
which has the desired form in view of $\mu \leq 1$. 

It therefore remains to estimate 
$\sum_{i=M_2}^{M_1} \sum_{j=M_2-1}^{M_2} \sum_{k=0}^{j} f(i,j,k)$, which we 
do again by exploiting convexity of the function $k \mapsto f(i,j,k)$. We get 
\begin{eqnarray*}
\sum_{i=M_2}^{M_1} \sum_{j=M_2-1}^{M_2} \sum_{k=0}^{j} f(i,j,k)
&\leq& \sum_{i=M_2}^{M_1} \sum_{j=M_2-1}^{M_2} (j+1) 
\max\{f(i,j,0), f(i,j,j)\} \\
&\leq& (M_2 +1)\sum_{i=M_2}^{M_1} \sum_{j=M_2-1}^{M_2} 
\left[ \mu^i \widetilde K_1^i (\varepsilon/\mu)^j \widetilde K_2^j i^i j^j + 
       \mu^{i-j} \widetilde K_1^{i-j} (\widehat x \varepsilon \gamma_a)^j (i-j)^{i-j}
\right]\\
&\leq& C (M_2+1) \left[
(\varepsilon/\mu (M_2-1)\gamma)^{M_2-1} + 
\mu (M_1+1)(\widehat x \varepsilon \gamma_a)^{M_2-1}
+ 
(\widehat x \varepsilon \gamma_a)^{M_2}
\right]\\
&\leq& C (M_2+1) \left[
(\varepsilon/\mu (M_2-1)\gamma)^{M_2-1} + 
(\widehat x \varepsilon \gamma_a)^{M_2-1}
\right],
\end{eqnarray*}
where we exploited again the assumption 
that $\mu (M_1+1)$ and $\varepsilon/\mu (M_2+1)$ are sufficiently
small so that sums can be estimated by convergent geometric series. 
The contribution 
$M_2 (\widehat x \varepsilon \gamma_a)^{M_2-1}$ can now 
be estimated as before using (\ref{eq:foo-100}). 

{\em Step 1d:}
We now turn to $S_2$ and start with assuming $M_1+1 \ge M_2$. 
Then $S_2$ takes the form
$$
S_2 =\sum_{i=M_1+1}^\infty \sum_{j=M_2-1}^{M_2} \sum_{k=i-M_1}^j f(i,j,k)
 =\sum_{i=M_1+1}^{M_1+M_2} \sum_{j=M_2-1}^{M_2} \sum_{k=i-M_1}^j f(i,j,k). 
$$
Convexity of $k \mapsto f(i,j,k)$ allows us to infer 
\begin{eqnarray*}
\max_{k=i-M_1,\ldots,j} f(i,j,k) & \leq & 
\mu^{M_1} \widetilde K_1^{M_1} (\widehat x \varepsilon \gamma_a)^{i-M_1} 
(\varepsilon/\mu)^{j-i+M_1}\widetilde K_2^{j-i+M_1}
M_1^{M_1} (j-i+M_1)^{j-i+M_1} \\
&& \mbox{}+ \mu^{i-j} \widetilde K_1^{i-j} (\widehat x \varepsilon \gamma_a)^j (i-j)^{i-j}.
\end{eqnarray*}
Since $0 \leq j-i+M_1 \leq M_2$, we can exploit that 
$\varepsilon/\mu (M_2+1)$ is sufficiently small and since
$0 \leq i - j \leq M_1$, we can use that $\mu M_1$ is sufficiently small 
to conclude 
$$
S_2 \leq C (M_2 +1)
\left[ 
\sum_{i=M_1+1}^{M_1+M_2} 
(\mu \widetilde K_1 M_1)^{M_1} (\widehat x \varepsilon \gamma_a)^{i-M_1}
+ \sum_{j=M_2-1}^{M_2} (\widehat x \varepsilon \gamma_a)^{j}
\right]. 
$$
%$$
%S_2 \leq C M_2^2  \left[ (\mu M_1 \widetilde K_1)^{M_1} + (\widehat x \varepsilon \gamma_a)^{M_2-1}
%\right]
%$$
With Lemma~\ref{lemma:elementary-properties-of-factorial}, we therefore get 
for suitable $\gamma^\prime$, since $i - M_1 \leq M_2$,
\begin{eqnarray*}
e^{- \widehat x/4} S_2 &\leq& 
C (M_2+1) \left[ 
(\mu M_1 \widetilde K_1)^{M_1} 
\sum_{i=M_1+1}^{M_1+M_2} (\varepsilon M_2 \gamma^\prime)^{i-M_1} 
+ \sum_{j=M_2-1}^{M_2} (\varepsilon M_2 \gamma^\prime)^{j}
\right] \\
&\leq& 
C (M_2+1) \left[ 
(\mu M_1 \widetilde K_1)^{M_1} \varepsilon M_2 + 
(\varepsilon M_2 \gamma^\prime)^{M_2-1}\right], 
\end{eqnarray*}
where, in the last step, we exploited again that $\mu M_1$ and $\varepsilon M_2$
are sufficiently small. Since $M_2 \leq M_1+1$, this last estimate
has the desired form. 

{\em Step 1e:}
Next, we consider the case $M_1 + 1 < M_2$. We write 
\begin{eqnarray*}
S_2 & =&  \sum_{i=M_1+1}^\infty \sum_{j=M_2-1}^{M_2} \sum_{k=i-M_1}^{\min\{i,j\}} f(i,j,k) \\
&=& 
\sum_{i=M_1}^{M_2-1} \sum_{j=M_2-1}^{M_2} \sum_{k=i-M_1}^{i} f(i,j,k) 
+ 
\sum_{i=M_2}^{\infty} \sum_{j=M_2-1}^{M_2} \sum_{k=i-M_1}^{j} f(i,j,k) 
=:S_{2,1} + S_{2,2}. 
\end{eqnarray*}
We note that 
$$
S_{2,2} = \sum_{i=M_2}^{M_1+M_2} \sum_{j=M_2-1}^{M_2} \sum_{k=i-M_1}^j f(i,j,k).
$$
We may bound $S_{2,2}$ using arguments similar to those of \emph{Step 1d}. 
The convexity of $k \mapsto f(i,j,k)$ yields 
\begin{eqnarray*}
\max_{k=i-M_1,\ldots,i} f(i,j,k) & \leq & 
\mu^{M_1} \widetilde K_1^{M_1} (\widehat x \varepsilon \gamma_a)^{i-M_1} 
(\varepsilon/\mu)^{j-i+M_1}\widetilde K_2^{j-i+M_1}
M_1^{M_1} (j-i+M_1)^{j-i+M_1} +\\
&& \mbox{}+ (\widehat x \varepsilon \gamma_a)^i.  
\end{eqnarray*}
Since $0 \leq j-i+M_1 \leq M_1 \leq M_2$, we can estimate by convexity
$$
\left(\frac{\varepsilon}{\mu} \widetilde K_2 (j-i-M_1)\right)^{j-i-M_1} 
\leq \left( 1 + (\varepsilon/\mu \widetilde K_2 M_2)^{M_2}\right)
\leq C, 
$$
since we assume that $\varepsilon/\mu M_2$ is sufficiently small. Hence, 
we get 
$$
S_{2,2} \leq C (M_2+1) 
\sum_{i=M_2}^{M_1+M_2} 
(\mu \widetilde K_1 M_1)^{M_1} 
(\widehat x \varepsilon \gamma_a)^{i-M_1} 
+ (\widehat x \varepsilon \gamma_a)^i.
$$
With Lemma~\ref{lemma:elementary-properties-of-factorial} we therefore get 
$$
e^{- \ua\widehat x/4} S_{2,2} 
\leq C(M_2+1) 
\sum_{i=M_2}^{M_1+M_2} 
(\mu \widetilde K_1 M_1)^{M_1} (\varepsilon \gamma^\prime (i-M_1))^{i-M_1} 
+ (\varepsilon \gamma^\prime i)^i,
$$
for suitable $\gamma^\prime > 0$. Recalling that $M_1 \leq M_2$, we 
obtain by assuming that $\varepsilon M_2$ is sufficiently small, with 
the aid of geometric series arguments, 
$$
e^{- \ua \widehat x/4} S_{2,2} 
\leq C(M_2+1) 
\left[ 
(\mu \widetilde K_1 M_1)^{M_1} 
+ (\varepsilon \gamma^\prime (M_2+1))^{M_2}
\right].
$$

For $S_{2,1}$ we note 
\begin{eqnarray*}
\lefteqn{
\max_{k  = i-M_1,\ldots,i} f(i,j,k) \leq }\\
&& 
(\varepsilon/\mu)^{j-i} \widetilde K_2^{j-i} (j-i)^{j-i} (\widehat x \varepsilon \gamma_a)^i + 
\mu^{M_1} \widetilde K_1^{M_1} M_1^{M_1} (\varepsilon/\mu)^{j-i+M_1} 
(\widehat x \varepsilon \gamma_a)^{i-M_1} 
\widetilde K_2^{j-i+M_1} (j-i + M_1 )^{j-i+M_1}. 
\end{eqnarray*}
We note $i$ and $j$ in the definition of the sum $S_{2,1}$ are 
such that 
$0 \leq j - i\leq M_2$ and $M_1 \leq j-i+M_1 \leq M_2$. Hence, 
this setting simplifies under the assumption
that $\mu M_1$ and $\varepsilon/\mu M_2$ are sufficiently small:
\begin{eqnarray*}
\max_{k  = i-M_1,\ldots,i} f(i,j,k) & \leq &
(\widehat x \varepsilon \gamma_a)^i + 
(\mu \widetilde K_1 M_1)^{M_1} 
(\widehat x \varepsilon \gamma_a)^{i-M_1} 
(\varepsilon/\mu M_2 \widetilde K_2)^{M_1}  \\
& \leq &
(\widehat x \varepsilon \gamma_a)^i + 
(\mu \widetilde K_1 M_1)^{M_1} 
(\widehat x \varepsilon \gamma_a)^{i-M_1}. 
\end{eqnarray*}
With the aid of Lemma~\ref{lemma:elementary-properties-of-factorial}, 
we estimate with suitable $\gamma^\prime > 0$ ,
\begin{eqnarray*}
e^{-\ua  \widehat x/4} S_{2,1} &\leq& 
(M_1+1) \sum_{i=M_1}^{M_2-1} 
(\varepsilon \gamma^\prime i)^i + 
(\mu \widetilde K_1 M_1)^{M_1} 
(\varepsilon \gamma^\prime (i-M_1))^{i-M_1}  \\
&\leq& C (M_1+1) \left[ (\varepsilon M_2 \gamma^\prime)^{M_1} + 
(\mu \widetilde K_1 M_1)^{M_1} \right],
\end{eqnarray*}
where we employed again suitable geometric series arguments. 
Using $\varepsilon = (\varepsilon/\mu) \mu$, we get 
\begin{eqnarray*}
e^{-\ua  \widehat x/4} S_{2,1} &\leq& 
C (M_1+1) \left[ 
\mu^{M_1} \left(\frac{\varepsilon}{\mu} M_2 \gamma^\prime\right)^{M_1} + 
(\mu \widetilde K_1 M_1)^{M_1} \right],
\end{eqnarray*}
which has the desired form since $\varepsilon/\mu M_2$ is assumed to be 
sufficiently small.

{\em Step 2:}
Before proceding, we point out that we make the assumption
\begin{equation}
\label{eq:widehat_x_varepsilon_sufficiently_small}
\widehat x \varepsilon \gamma_a \leq \frac{1}{2},
\end{equation}
as the converse case is covered in \emph{Step 3} below. 
We estimate the contribution arising from the sum over $\widetilde{\mathcal I}$ 
and show
\begin{equation}
\label{eq:goal-of-step-2}
e^{-\ua  \widehat x/4} 
\sum_{(i,j,k) \in \widetilde {\mathcal I}}
f(i,j,k) \leq C \left[ \frac{\varepsilon}{\mu} (\varepsilon M_2 \gamma)^{M_2} + 
\mu (\mu M_1 \gamma)^{M_1}\right]; 
\end{equation}
in fact, the reasoning below shows a slightly sharper estimate. 

Noting the appearance of $\max\{i-M_1,j-M_2\}$, we have 
to study the cases $i - M_1 \ge j-M_2$ and the reverse case $i - M_1 < j - M_2$
separately. We restrict our attention here to the case 
$i-M_{1}\leq j-M_{2}$, since the reverse case is easily obtained using
the same arguments (effectively, $M_{1}$ and $M_{2}$ and $\mu $ and 
$\varepsilon /\mu $ reverse their roles). In this situation, we consider the
following two subcases separately: 
\begin{eqnarray}
(i &\geq &M_{1}+1\ \vee \ j\geq M_{2}+1)\ \wedge \ (i-M_{1}\leq j-M_{2})\
\wedge \ (j-M_{2}\leq k\leq i)\ \wedge \ (i\leq j),  
\qquad 
\qquad 
\label{eq:case1-3scale-remainder} \\
(i &\geq &M_{1}+1\ \vee \ j\geq M_{2}+1)\ \wedge \ (i-M_{1}\leq j-M_{2})\
\wedge \ (j-M_{2}\leq k\leq j)\ \wedge \ (j\leq i).  
\qquad 
\qquad 
\label{eq:case2-3scale-remainder}
\end{eqnarray}
A key ingredient of our proof is the convexity assertion given in Lemma~\ref{lemma:convexity}. 
We recall that a non-negative convex function attains its maximum at the boundary (i.e., in the 
univariate case, at the endpoints of an interval).

{\em Step 2a:} We consider the case (\ref{eq:case1-3scale-remainder}), which can be
 further subdivided into the cases $i\geq M_{1}+1$ and $j\geq M_{2}+1$. 

{\em Step 2a1:} We consider the case (\ref{eq:case1-3scale-remainder}) 
with the further assumption $i \geq M_1+ 1$. We get 
\begin{equation*}
(i\geq M_{1}+1)\ \wedge \ (i\leq j)\ \wedge \ (i\leq j-M_{2}+M_{1})\ \wedge
\ (j-M_{2}\leq k\leq i).
\end{equation*}
That is, we have to estimate the triple sum 
$$
T:= 
\sum_{i\geq M_{1}+1}\sum_{\substack{ j\geq i  \\ j\geq i+M_{2}-M_{1}}}%
^{j\leq i+M_{2}}\sum_{k=j-M_{2}}^{i} f(i,j,k),
$$
where the summation index is $i$ for the outermost sum, $j$ for the middle
sum, and $k$ for the innermost sum. 
This triple sum is estimated using 
convexity of the argument. The innermost sum has at most $i-j+M_2+1$
terms and $k \mapsto f(i,j,k)$ is convex by Lemma~\ref{lemma:convexity}. Hence, we obtain 
\begin{eqnarray*}
T & \leq & 
\sum_{i\geq M_{1}+1}\sum_{\substack{ j\geq i  \\ j\geq i+M_{2}-M_{1}}}%
^{j\leq i+M_{2}} (i-j+M_2+1) \max\{f(i,j,i), f(i,j,j-M_2)\} \\
&\leq& 
(M_1 +1) 
\sum_{i\geq M_{1}+1}\sum_{\substack{ j\geq i  \\ j\geq i+M_{2}-M_{1}}}%
^{j\leq i+M_{2}} \max\{f(i,j,i), f(i,j,j-M_2)\}, 
\end{eqnarray*}
where, in the second step we have used the restrictions on the sum on $j$
to bound $i-j+M_2+1 \leq M_1 + 1$. 
Writing out $f(i,j,i)$ and $f(i,j,j-M_2)$, we have 
\begin{eqnarray*}
f(i,j,i) &=& \widetilde K_2^{j-i} (\varepsilon/\mu)^{j-i} (\widehat x\varepsilon\gamma_a )^i
(j-i)^{j-i}, \\
f(i,j,j-M_2) &=& 
\mu^{i-j+M_2} (\varepsilon/\mu)^{M_2} \widetilde K_1^{i-j+M_2}
(\widehat x \varepsilon \gamma_a)^{j-M_2}
\widetilde K_2^{M_2} 
(i-j+M_2)^{i-j+M_2} M_2^{M_2},  
\end{eqnarray*}
which are again convex functions of $j$ by Lemma~\ref{lemma:convexity}. 
Turning now to the sum on $j$, we see that it has at most $M_2+1$ terms. 
In view of 
Lemma~\ref{lemma:elementary-properties-of-convex-functions}, we bound 
for the relevant $j$: 
\begin{subequations}
\label{eq:fooo}
\begin{eqnarray}
f(i,j,i) \leq A:= \max\{f(i,j,i)|_{j=i+M_2}, f(i,j,i)|_{j=i}\} = \max\{f(i,i+M_2,i), f(i,i,i)\},\\
\nonumber 
f(i,j,j-M_2) \leq B:= \max\{f(i,j,j-M_2)|_{j=i+M_2}, f(i,j,j-M_2)|_{j=i+M_2-M_1}\}  \\
%&& \qquad = 
=\max\{f(i,i+M_2,i), f(i,i+M_2-M_1,i-M_1)\}. 
\end{eqnarray}
\end{subequations}
More explicitly, these are 
\begin{eqnarray*}
A &\leq& (\widehat x \varepsilon\gamma_a)^i 
\max\{1, \widetilde K_2^{M_2} (\varepsilon/\mu)^{M_2} M_2^{M_2}\}, \\
B &\leq & (\varepsilon/\mu)^{M_2} M_2^{M_2} \widetilde K_2^{M_2} 
\max\{ (\widehat x \varepsilon \gamma_a)^i, (\mu \widetilde K_1 M_1)^{M_1}
(\widehat x \varepsilon \gamma_a)^{i-M_1} \} .
\end{eqnarray*}

Writing out the sum and using convexity of the argument, we can estimate 
\begin{eqnarray*}
S &:=&
(M_1+1) \sum_{i\geq M_{1}+1}\sum_{\substack{ j\geq i  \\ j\geq i+M_{2}-M_{1}}}%
^{j\leq i+M_{2}}\sum_{k=j-M_{2}}^{i} \max\{f(i,j,i),f(i,j,j-M_2)\}\\
& \leq & (M_1+1)(\min\{M_1,M_2\}+1) \sum_{i=M_1+1}^\infty A + B .
\end{eqnarray*}
Using the facts that $\mu M_1$ and $\varepsilon/\mu M_2$ are sufficiently
small, we can estimate with geometric series arguments, in view 
of our assumption (\ref{eq:widehat_x_varepsilon_sufficiently_small}): 
\begin{eqnarray*}
S \leq C (M_1+1)(\min\{M_1,M_2\}+1) 
\left[
(\widehat x \varepsilon \gamma_a)^{M_1+1} + (\varepsilon/\mu M_2 \widetilde K_2)^{M_2}
\widehat x \varepsilon \gamma_a
\right].
\end{eqnarray*}
Hence, using Lemma~\ref{lemma:elementary-properties-of-factorial} 
to control $\widehat x^{M_1+1}$ in the first term and $\widehat x$
in the second term, gives with
suitable $\gamma^\prime$, 
\begin{eqnarray*}
e^{-\ua  \widehat x/4} S &\leq& 
C (M_1+1)(\min\{M_1,M_2\}+1) 
\left[ (\varepsilon (M_1 +1)\gamma^\prime)^{M_1+1} + \varepsilon (\varepsilon/\mu \widetilde K_2 M_2)^{M_2}
\right]\\
&\leq& 
C 
\left[ (\varepsilon (M_1 +1)\gamma^{\prime\prime})^{M_1+1} + \frac{\varepsilon}{\mu} (\varepsilon/\mu \gamma^{\prime\prime} M_2)^{M_2}
\right], 
\end{eqnarray*}
where, in the second step, we selected $\gamma^{\prime\prime}$ suitable and used the fact
that $\mu M_1$ can be controlled. Since $\varepsilon \leq \mu$, we obtain the 
desired estimate  (\ref{eq:goal-of-step-2}).

{\em Step 2a2:} The other case is $j\geq M_{2}+1$. This leads to 
\begin{equation*}
(j\geq M_{2}+1)\ \wedge \ (i\leq j)\ \wedge \ (i\leq j-M_{2}+M_{1})\ \wedge
\ (j-M_{2}\leq k\leq i).
\end{equation*}
Writing out the sum, we have 
\begin{eqnarray*}
T^\prime &:= &\sum_{j\geq M_{2}+1}\sum_{i\geq j-M_{2}}^{\substack{ i\leq j  \\ i\leq
j-M_{2}+M_{1}}}\sum_{k=j-M_{2}}^{i} f(i,j,k).
\end{eqnarray*}
The innermost sum has at most $M_2+1$ terms which, by convexity, can be estimated
by 
$$
\max\{f(i,j,i), f(i,j,j-M_2)\}.
$$
Writing out $f(i,j,i)$ and $f(i,j,j-M_2)$, we have 
\begin{eqnarray*}
f(i,j,i) &=& (\varepsilon/\mu)^{j-i} \widetilde K_2^{j-i} (\widehat x \varepsilon\gamma_a)^i 
(j-i)^{j-i}, \\
f(i,j,j-M_2) &=& \mu^{i-j+M_2} \widetilde K_1^{i-j+M_2} (i-j+M_2)^{i-j+M_2} 
(\varepsilon/\mu)^{M_2} \widetilde K_2^{M_2} (\widehat x \varepsilon\gamma_a)^{j - M_2}
M_2^{M_2},
\end{eqnarray*}
which are again convex as functions of $i$. Hence, by 
Lemma~\ref{lemma:elementary-properties-of-convex-functions}, we bound 
for the relevant $j$: 
\begin{eqnarray*}
f(i,j,i) &\leq& A^\prime:=\max\{f(i,j,i)|_{i=j-M_2}, f(i,j,i)|_{i=j}\} \\
&& = \max\{(\varepsilon/\mu M_2 \widetilde K_2)^{M_2} (\widehat x \varepsilon \gamma_a)^{j-M_2}, 
(\widehat x \varepsilon \gamma_a)^{j}\}, \\
f(i,j,j-M_2) &\leq& B^\prime:= \max\{ f(i,j,j-M_2)|_{i=j-M_2}, f(i,j,j-M_2)|_{i=j-M_2+M_1} \}\\
&& \leq (\varepsilon/\mu M_2 \widetilde K_2)^{M_2} (\widehat x \varepsilon \gamma_a)^{j-M_2}
\max\{1, (\mu M_1 \widetilde K_1)^{M_1} \}
\}.
\end{eqnarray*}
The middle sum in $T^\prime$ has at most $\min\{M_1,M_2\}+1$ terms. 
Hence, we arrive at 
\begin{eqnarray*}
T^\prime &\leq & (M_2+1)(\min\{M_1,M_2\}+1) \sum_{j \ge M_2+1} 
(\varepsilon/\mu M_2 \widetilde K_2)^{M_2} (\widehat x \varepsilon\gamma_a)^{j-M_2} + (\widehat x \varepsilon \gamma_a)^{j}\\
&\leq& C (M_2+1)(\min\{M_1,M_2\}+1) 
\left[(\varepsilon/\mu M_2 \widetilde K_2)^{M_2} (\widehat x \varepsilon \gamma_a)+ 
(\widehat x\varepsilon \gamma_a)^{M_2+1}\right], 
\end{eqnarray*}
where, in the second step we have employed geometric sum arguments, 
which are applicable in view of (\ref{eq:widehat_x_varepsilon_sufficiently_small}). 
Reasoning as at the end of \emph{Step 2a1}, we get for suitable $\gamma^\prime$, 
\begin{eqnarray*}
e^{-\ua  \widehat x/4} T^\prime &\leq& 
C (M_2+1)(\min\{M_1,M_2\}+1) 
\left[\varepsilon (\varepsilon/\mu M_2 \widetilde K_2)^{M_2}  + 
(\varepsilon (M_2+1) \gamma^\prime)^{M_2+1}\right] \\
&\leq& C 
\left[\varepsilon (\varepsilon/\mu M_2 \gamma^{\prime\prime})^{M_2}  + 
(\varepsilon (M_2+1) \gamma^{\prime\prime})^{M_2+1}\right], 
\end{eqnarray*}
where, we employed the assumption 
that $\mu M_1$ and $\varepsilon/\mu M_2$ are sufficiently small and, in the second step, 
$\gamma^{\prime\prime}$ is selected appopriately. 
This leads to the desired estimate  (\ref{eq:goal-of-step-2}). 

{\em Step 2b}: The case (\ref{eq:case2-3scale-remainder}) 
is further subdivided into the cases $i\geq M_{1}+1$ and $j\geq M_{2}+1$. 

{\em Step 2b1}: In the fist case, $i \ge M_1+1$, we get 
\begin{equation*}
(i\geq M_{1}+1)\ \wedge \ (j\leq i)\ \wedge \ (i\leq j-M_{2}+M_{1})\ \wedge
\ (j-M_{2}\leq k\leq j).
\end{equation*}
We immediately see that we need 
\begin{equation*}
M_{2}\leq M_{1},
\end{equation*}
for this set of indices to be non-empty.

Writing out the sum and using convexity of the argument, we get 
\begin{eqnarray*}
T^{\prime\prime}
&:=&\sum_{i\geq M_{1}+1}\sum_{j\geq i+M_{2}-M_{1}}^{j\leq
i}\sum_{k=j-M_{2}}^{j}  f(i,j,k).
\end{eqnarray*}
As before, the innermost sum has $M_2+1$ terms and convexity yields, for the terms
of the innermost sum, the upper bound 
$\max\{f(i,j,j), f(i,j,j-M_2)\}$. More explicitly, 
\begin{eqnarray*}
f(i,j,j) &=& \mu^{i-j} \widetilde K_1^{i-j} (\widehat x \varepsilon \gamma_a)^{j} (i-j)^{i-j}, \\
f(i,j,j-M_2) &=& \mu^{i-j+M_2} \widetilde K_1^{i-j+M_2} (\widehat x \varepsilon \gamma_a)^{j-M_2} 
(\varepsilon/\mu)^{M_2} \widetilde K_2^{M_2} M_2^{M_2} 
(i-j+M_2)^{i-j+M_2} .
\end{eqnarray*}
Again, we recognize the functions to be convex in $j$, so that 
for the relevant indices $j$ we have 
\begin{eqnarray*}
f(i,j,j) &\leq& A^{\prime\prime}:= \max\{f(i,j,j)|_{j=i}, f(i,j,j)|_{j=i+M_2-M_1}\} \\
&& \leq \max\{ (\widehat x \varepsilon \gamma_a)^i, 
(\widehat x \varepsilon \gamma_a)^{i+M_2-M_1}(\mu (M_1-M_2)\widetilde K_1)^{M_1-M_2} \},
\\
f(i,j,j-M_2) &\leq& B^{\prime\prime}:= \max\{ f(i,j,j-M_2)|_{j=i}, f(i,j,j-M_2)|_{j=i+M_2-M_1} \}\\
&& \leq \max\{ (\mu M_2 \widetilde K_1)^{M_2} 
    (\widehat x \varepsilon \gamma_a)^{i-M_2}(\varepsilon/\mu M_2 \widetilde K_2)^{M_2},
(\mu M_1 \widetilde K_1)^{M_1} (\widehat x \varepsilon \gamma_a)^{i-M_1} (\varepsilon/\mu M_2 \widetilde K_2)^{M_2}
\}. 
\end{eqnarray*}
The middle sum of $T^{\prime\prime}$ has at most $M_1 - M_2+1$ terms. 
Therefore, we get with our assumption that $\mu M_1$ and $\varepsilon/\mu M_2$ are sufficiently
small and the assumption (\ref{eq:widehat_x_varepsilon_sufficiently_small}),
\begin{eqnarray*}
T^{\prime\prime} &\leq& (M_1 - M_2+1) (M_2+1) 
\Bigl[
(\widehat x \varepsilon \gamma_a)^{M_1+1} 
+ 
(\widehat x \varepsilon \gamma_a)^{M_2+1} (\mu (M_1 - M_2)\widetilde K_1)^{M_1-M_2} \\
&& \qquad \mbox{} 
+ 
(\mu M_2 \widetilde K_1)^{M_2} (\widehat x \varepsilon \gamma_a)^{M_1+1-M_2} 
(\varepsilon/\mu M_2 \widetilde K_2)^{M_2} 
+ 
(\mu M_1 \widetilde K_1)^{M_1} (\widehat x \varepsilon \gamma_a) (\varepsilon/\mu M_2 \widetilde K_2)^{M_2}
\Bigr].
\end{eqnarray*}
Since $M_1 \ge M_2$, we can estimate further with the aid of 
Lemma~\ref{lemma:elementary-properties-of-factorial},
\begin{eqnarray*}
e^{-\ua  \widehat x/4} T^{\prime\prime} 
&\leq &C (M_1-M_2+1) (M_1+1) \Bigl[ (\varepsilon (M_1+1)\gamma^\prime)^{M_1+1} \\
&& \mbox{} + 
                          (\varepsilon (M_2+1)\gamma^\prime)^{M_2+1} (\mu(M_1-M_2)\widetilde K_1)^{M_1-M_2} \\
&& \mbox{} + 
                          (\varepsilon M_2 \widetilde K_1 \widetilde K_2)^{M_2} (\varepsilon (M_1-M_2+1)\gamma^\prime)^{M_1-M_2+1}
+ 
                          \varepsilon (\mu M_1 \widetilde K_1)^{M_1} (\varepsilon/\mu M_2 \widetilde K_2)^{M_2} 
                   \Bigr]\\
&\leq& C \left[(\varepsilon (M_1+1)\gamma^\prime)^{M_1+1} + 
               \frac{\varepsilon}{\mu} (\varepsilon (M_2+1)\gamma^\prime)^{M_2}
\right], 
\end{eqnarray*}
where we used again that $\mu M_1$ and $\varepsilon/\mu M_2$ are sufficiently small.
This leads to the desired estimate  (\ref{eq:goal-of-step-2}). 

{\em Step 2b2:}
The last case is $j\geq M_{2}+1$. This reads: 
\begin{equation*}
(j\geq M_{2}+1)\ \wedge \ (j\leq i)\ \wedge \ (i\leq j-M_{2}+M_{1})\ \wedge
\ (j-M_{2}\leq k\leq j).
\end{equation*}
We see again that 
\begin{equation*}
M_{2}\leq M_{1},
\end{equation*}
is a necessary condition for the set of indices to be non-trivial.

Writing out the sum we have 
\begin{eqnarray*}
T^{\prime\prime\prime}
&:= &\sum_{j\geq M_{2}+1}\sum_{i\geq j}^{j-M_{2}+M_{1}}\sum_{k=j-M_{2}}^{j} f(i,j,k). 
\end{eqnarray*}
Again, we use convexity of $k \mapsto f(i,j,k)$ and observe the estimates  
\begin{eqnarray*}
f(i,j,j) &=& \mu^{i-j} \widetilde K_1^{i-j} (\widehat x \varepsilon \gamma_a)^{j} (i-j)^{i-j}, \\
f(i,j,j-M_2) &=& \mu^{i-j+M_2} \widetilde K_1^{i-j+M_2} (\widehat x \varepsilon \gamma_a)^{j-M_2} 
(\varepsilon/\mu)^{M_2} \widetilde K_2^{M_2} M_2^{M_2} 
(i-j+M_2)^{i-j+M_2} .
\end{eqnarray*}
Since these functions are convex functions of $i$, we can estimate for the relevant 
indices $j$: 
\begin{eqnarray*}
f(i,j,j) &\leq& A^{\prime\prime\prime}:= \max\{f(i,j,j)|_{i=j}, f(i,j,j)|_{i=j-M_2+M_1}\} \\
&& \leq (\widehat x \varepsilon \gamma_a)^j \max\{1,(\mu (M_1 -M_2)\widetilde K_1)^{M_1-M_2}\},\\
f(i,j,j-M_2) &\leq& B^{\prime\prime\prime}:= \max\{f(i,j,j-M_2)|_{i=j}, f(i,j,j-M_2)|_{i=j-M_2+M_1}\} \\
&& \leq (\widehat x \varepsilon \gamma_a)^{j-M_2} (\varepsilon/\mu M_2 \widetilde K_2)^{M_2} 
\max\{(\mu M_2 \widetilde K_1)^{M_2} , (\mu M_1 \widetilde K_1 )^{M_1}\} .
\end{eqnarray*}
Furthermore, in the triple sum $T^{\prime\prime\prime}$ the number of terms in the 
innermost sum is $M_2+1$ whereas the number 
of terms of the middle sum is bounded by $M_1 - M_2+1$. Hence, we can bound 
with geometric series arguments, in view of the assumption 
(\ref{eq:widehat_x_varepsilon_sufficiently_small}),
\begin{eqnarray*}
T^{\prime\prime\prime} &\leq& C (M_2+1)(M_1-M_2+1)
\Bigl[
(\widehat x \varepsilon \gamma_a)^{M_2+1} + (\widehat x \varepsilon \gamma_a)
(\varepsilon/\mu M_2 \widetilde K_2)^{M_2}
\max\{(\mu M_2 \widetilde K_1)^{M_2} , (\mu M_1 \widetilde K_1 )^{M_1}\}
\Bigr]. 
\end{eqnarray*}
Since $M_2 \leq M_1$, we can estimate $\mu M_2 \widetilde K_1 \leq \mu M_1 \widetilde K_1 \leq 1$, 
where we appealed in the last step to our standing assumption that 
$\mu M_1$ is sufficiently small. Hence, 
$\max\{(\mu M_2 \widetilde K_1)^{M_2} , (\mu M_1 \widetilde K_1 )^{M_1}\} \leq 1$. 
Thus, we can simplify 
\begin{eqnarray*}
T^{\prime\prime\prime} &\leq& C (M_2+1)(M_1-M_2+1)
\Bigl[
(\widehat x \varepsilon \gamma_a)^{M_2+1} + (\widehat x \varepsilon \gamma_a)
(\varepsilon/\mu M_2 \widetilde K_2)^{M_2}
\Bigr]. 
\end{eqnarray*}
Hence, with the aid of Lemma~\ref{lemma:elementary-properties-of-factorial}
\begin{eqnarray*}
e^{-\ua \widehat x/4} T^{\prime\prime\prime} 
&\leq& 
 C (M_2+1)(M_1-M_2+1)
\Bigl[
(\varepsilon (M_2+1) \gamma^\prime)^{M_2+1}  + \varepsilon (\varepsilon/\mu M_2 \widetilde K_2)^{M_2}
\Bigr]\\
&\leq& C \frac{\varepsilon}{\mu} 
\Bigl[
(\varepsilon (M_2+1) \gamma^{\prime\prime})^{M_2}  + (\varepsilon/\mu M_2 \gamma^{\prime\prime})^{M_2}
\Bigr]. 
\end{eqnarray*}
This leads to the desired estimate  (\ref{eq:goal-of-step-2}). 

{\em Step 3:} 
We now cover the case when $\widehat x \varepsilon$ is bounded away from zero. 
Specifically, let $c > 0$ be fixed and consider the case 
$\widehat x \varepsilon \ge c$. Then we have the pointwise estimate 
\begin{eqnarray*}
\left\vert \widehat{{\mathbf{U}}}_{BL}^{M}(\widehat{x})\right\vert &\leq
&\sum_{i=0}^{M_{1}}\sum_{j=0}^{M_{2}}\mu ^{i}(\varepsilon /\mu )^{j}\left(
\left\vert \widehat{u}^L_{ij}(\widehat{x})\right\vert +\left\vert \widehat{v}^L%
_{ij}(\widehat{x})\right\vert \right) \\
&\leq &\sum_{i=0}^{M_{1}}\sum_{j=0}^{M_{2}}\mu ^{i}(\varepsilon /\mu )^{j}%
{K}^{i}\overline{K}^{j}\frac{\left( \widehat{C}(i+j)+|\widehat{x}|\right)
^{2(i+j)}}{(i+j)!}e^{-\ua \widehat{x}} \\
&\leq &\sum_{i=0}^{M_{1}}\sum_{j=0}^{M_{2}}\mu ^{i}(\varepsilon /\mu )^{j}%
{K}^{i}\overline{K}^{j}\gamma ^{i+j}i^{i}j^{j}e^{-3\ua\widehat{x} /4} \\
&\leq &e^{-3\ua\widehat{x}/4}\sum_{i=0}^{M_{1}}\mu ^{i}
{K}^{i}\gamma ^{i}i^{i}\sum_{j=0}^{M_{2}}(\varepsilon /\mu )^{j}\overline{K}^{j}\gamma
^{j}j^{j} \\
&\leq &e^{-3\ua\widehat{x}/4}\sum_{i=0}^{M_{1}}\left( \mu 
\overline{K}\gamma M_{1}\right) ^{i}\sum_{j=0}^{M_{2}}\left( (\varepsilon
/\mu )K\gamma M_{2}\right) ^{j},
\end{eqnarray*}
from which the desired result follows, provided $\mu {K}\gamma M_{1}<1$ 
and $(\varepsilon /\mu )\overline{K}\gamma M_{2}<1$. Completely analogously, we 
get bounds for the derivatives of $\widehat{\mathbf U}^M_{BL}$.  

In view of the form of the differential operator $L_{\varepsilon,\mu}$ 
applied to functions of $\widehat x$ (see (\ref{eq:L-on-hat-scale})), 
we get in view of $\mu \ge \varepsilon$, 
$$
|L_{\varepsilon,\mu} \widehat{\mathbf U}^M_{BL}(\widehat x)|
\leq \left[ \varepsilon^2 + \mu^2\right] 
\varepsilon^{-2} C e^{-3 \ua \widehat x/4} 
\leq C \mu^2 \varepsilon^{-2}  e^{-3 \ua \widehat x/4} 
\leq C \varepsilon^{-2} e^{-e \ua \widehat x/4}.
$$
From the assumption $\widehat x \varepsilon \ge c$, we see
that $\varepsilon^{-2} \leq c^{-2} \widehat x^2$, and the 
factor $\widehat x$ can again be absorbed by the 
exponentially decaying $e^{-3 \ua  \widehat x/4}$. 
\end{numberedproof}
%--------------------------
\subsection{Proof of Theorem~\ref{thm_BL_3scales(b)}}
\label{appendix:case_4_thm_BL_3scales(b)}
%--------------------------
%\begin{numberedproof}{Theorem~\ref{thm_BL_3scales(b)}}
We first study the case
\begin{equation}
0 < \widetilde x \mu \leq 1/2.
\end{equation}

The starting point is 
the expression for $L_{\varepsilon,\mu} \widetilde{\mathbf U}^M_{BL}$ 
in (\ref{eq:LwidetildeU}). It 
consists of a 
double sum and a triple sum. We first consider the double sum. 
From the bounds on $\widetilde u_{i,j}$ of Theorem~\ref{thm11} 
and Cauchy's integral theorem
for derivatives as well as Lemma~\ref{lemma:elementary-properties-of-factorial}, we get for suitable $\gamma^\prime > 0$,
\begin{eqnarray*}
\sum_{i = 0}^{M_1} \sum_{j=M_2+1}^{M_2+2} \mu^i (\varepsilon/\mu)^j |\widetilde u_{i,j-2}^{\prime\prime}(\widetilde x)|
& \leq & C e^{-\ua \widetilde x/2} 
\sum_{i=0}^{M_1} \mu^i (i+M_2+1)\gamma^\prime)^{i+M_2} 
(\varepsilon/\mu)^{M_2+1}\\
&\leq& C e^{-\ua \widetilde x} 
(\widetilde K \varepsilon/\mu(M_2+1))^{M_2+1}, 
\end{eqnarray*}
where, in the second step we used Lemma~\ref{lemma:elementary-properties-of-factorial} again.

We next turn to the triple sum in (\ref{eq:LwidetildeU}). 
From (\ref{57}), (\ref{58}),
Lemma~\ref{lemma:elementary-properties-of-factorial}
 and $\Vert {\mathbf A}_{k}\Vert \leq C_{A}\gamma _{A}^{k},$
we obtain with $K\geq \max \{K_{1},K_{2}\}$, 
$\overline{K}\geq \max \{\overline{K}_{1},\overline{K}_{2}\}$, 
$\widetilde{C}\geq \max \{C_{\widetilde{u}},C_{\widetilde{v}}\}$:
\begin{eqnarray*}
\left\vert L_{\varepsilon,\mu }\widetilde{\mathbf{U}}_{BL}^{M}\right\vert  
&\leq &CC_{A}\sum_{i=M_1+1}^\infty\sum_{j=0}^{M_2} \sum_{k=i-M_1}^i
\mu^{i}(\varepsilon /\mu )^{j}\widetilde{x}^{k}\gamma
_{A}^{k}\overline{K}^{i-k}K^{j}\frac{\left( \widetilde{C}(i-k+j)+|\widetilde{%
x}|\right) ^{2(i-k+j)}}{(i-k+j)!}e^{-\ua\operatorname*{Re}(\widetilde{x})} \\
&\leq &CC_{A}\sum_{i=M_1+1}^\infty\sum_{j=0}^{M_2} \sum_{k=i-M_1}^i
\mu^{i}(\varepsilon /\mu )^{j}\overline{K}^{i-k}K^{j}\widetilde x^k \gamma^{i+j-k} (i+j-k)^{i+j-k}
e^{-\ua 3 \widetilde{x}/4}.  
\end{eqnarray*}
The argument is a convex function of $k$. Hence, we can bound for suitable $\gamma > 0$, 
\begin{eqnarray*}
\left\vert L_{\varepsilon,\mu }\widetilde{\mathbf{U}}_{BL}^{M}\right\vert  
&\leq &C e^{-\ua 3 \widetilde x/4} (M_1+1) \sum_{i=M_1+1}^\infty\sum_{j=0}^{M_2} 
\mu^{i}(\varepsilon /\mu )^{j} 
(K\gamma)^j 
\left[j^j \widetilde x^i + \widetilde x^{i-M_1} \overline{K}^{M_1} (j+M_1)^{j+M_1} \right].
\end{eqnarray*} 
In view of our assumption $\widetilde x \mu \leq 1/2$, 
the outer summation on $i$ leads to a convergent 
geometric series. For the inner summation, we use $(j+M_1)^{j+M_1} \leq j^j M_1^{M_1} e^{j+M_1}$,
the assumptions that 
$\mu M_1$ and $\varepsilon/\mu M_2$ are sufficiently small and convergent geometric series arguments
to get, with appropriate $\widetilde \gamma > 0$, 
\begin{eqnarray*}
\left\vert L_{\varepsilon,\mu }\widetilde{\mathbf{U}}_{BL}^{M}\right\vert  
&\leq &C e^{- \ua 3 \widetilde x/4} (M_1+1) 
\left[
(\mu \widetilde x)^{M_1+1} + (\mu M_1 \widetilde \gamma)^{M_1} \mu \widetilde x
\right], 
\end{eqnarray*}
which can be estimated in the desired fashion with the aid of 
Lemma~\ref{lemma:elementary-properties-of-factorial}: 
\begin{eqnarray*}
\left\vert L_{\varepsilon,\mu }\widetilde{\mathbf{U}}_{BL}^{M}\right\vert  
&\leq &
C e^{- \ua \widetilde x/2} (M_1+1) e^{-\ua \widetilde x/4}
\left[
(\mu \widetilde x)^{M_1+1} + (\mu M_1 \widetilde \gamma)^{M_1} \mu \widetilde x
\right] \\
& \leq &
C e^{- \ua \widetilde x/2} 
\left[
(\mu \gamma^\prime (M_1+1) )^{M_1+1} + (\mu (M_1 +1)\widetilde \gamma)^{M_1+1}
\right],  
\end{eqnarray*}
which has the appropriate form. 

We now consider the converse case $\widetilde x \mu \ge 1/2$. 
Here, we need to exploit the fact that the functions $\widetilde{\mathbf U}^M_{BL}$
are exponentially decaying. We observe that the same reasoning as in
Step~3 of the proof of Theorem~\ref{thm_BL_3scales} yields 
\begin{eqnarray*}
\left| \widetilde{\mathbf U}^M_{BL}(\widetilde x)\right| &\leq& 
C e^{-3 \ua \widetilde x/4},
\end{eqnarray*}
and, by Cauchy's integral theorem for derivatives, estimates for the derivatives. 
The arguments of Step~3 of the proof of Theorem~\ref{thm_BL_3scales} therefore, yield 
\begin{eqnarray*}
\left|L_{\varepsilon,\mu} \widetilde{\mathbf U}^M_{BL}(\widetilde x)\right|
\leq C \left[ \varepsilon^2 + \mu^2\right] \mu^{-2} e^{-3\ua \widetilde x/4}. 
\leq C  e^{-3\ua \widetilde x/4}. 
\end{eqnarray*}
%\end{numberedproof}
%-----------------------------------------------------------------------------

\end{document}